\newtheorem{thm}{Theorem}[section]
\newtheorem{lemma}[thm]{Lemma}
\newtheorem{prop}[thm]{Proposition}
\newtheorem{cor}[thm]{Corollary}
\newtheorem{defn}[thm]{Definition}
\newtheorem{claim}[thm]{Claim}
\theoremstyle{remark}
\newtheorem{ex}[thm]{Example}
\newtheorem{rmk}[thm]{Remark}
\numberwithin{figure}{section}
\numberwithin{equation}{section}
\newcommand{\Z}{\mathbb{Z}}
\newcommand{\Q}{\mathbb{Q}}
\newcommand{\R}{\mathbb{R}}
\newcommand{\C}{\mathbb{C}}
\newcommand{\F}{\mathbb{F}}
\newcommand{\T}{\mathbb{T}}
\newcommand{\HF}{\widehat{HF}}
\newcommand{\CF}{\widehat{CF}}
\newcommand{\HFK}{\widehat{HFK}}
\newcommand{\CFK}{\widehat{CFK}}
\newcommand{\CFD}{\widehat{CFD}}
\newcommand{\CFDA}{\widehat{CFDA}}
\newcommand{\de}{\partial}
\newcommand{\ba}{\boldsymbol{\alpha}}
\newcommand{\bb}{\boldsymbol{\beta}}
\newcommand{\bc}{\boldsymbol{\gamma}}
\newcommand{\bd}{\boldsymbol{\delta}}
\newcommand{\x}{\mathbf{x}}
\newcommand{\y}{\mathbf{y}}
\newcommand{\p}{\mathbf{p}}
\newcommand{\q}{\mathbf{q}}
\newcommand{\bs}{\mathbf{s}}
\newcommand{\bfT}{\mathbf{T}}
\newcommand{\bfTheta}{\boldsymbol{\Theta}}
\renewcommand{\epsilon}{\varepsilon}
\newcommand{\spin}{{\rm Spin}^c}
\newcommand{\s}{\mathfrak{s}}
\renewcommand{\L}{\mathcal{L}}
\renewcommand{\H}{\mathcal{H}}
\newcommand{\D}{\mathcal{D}}
\newcommand{\MS}{\mathcal{M}}
\newcommand{\Lhat}{\widehat{\mathcal{L}}}
\newcommand{\stilde}{\tilde}
\newcommand{\ltilde}{\widetilde}
\newcommand{\Sym}{{\rm Sym}}
\newcommand{\sgn}{{\rm sgn}}
\newcommand{\lk}{{\rm lk}}
\renewcommand{\wr}{{\rm wr}}
\newcommand{\bbL}{\mathbb{L}}
\newcommand{\deff}{\textbf}
\title{Ozsv\'ath-Szab\'o invariants of contact surgeries} 
\author{Marco Golla}
\date{}
\begin{document}
\maketitle

\begin{abstract}
We give new tightness criteria for positive surgeries along knots in the 3-sphere, generalising results of Lisca and Stipsicz, and Sahamie. The main tools will be Honda, Kazez and Mati\'c's, Ozsv\'ath and Szab\'o's Floer-theoretic contact invariants. We compute Ozsv\'ath-Szab\'o contact invariant of positive contact surgeries along Legendrian knots in the 3-sphere in terms of the classical invariants of the knot. We also combine a Legendrian cabling construction with contact surgeries to get results about rational contact surgeries.
\end{abstract}

\section{Introduction}

Every contact manifold falls in one of two families: overtwisted or tight. Eliashberg \cite{El} classified overtwisted contact structures on 3-manifolds according to the homotopy type of the underlying plane field, showing that overtwisted structures are in some sense simple. The classification of tight contact structures, on the other hand, provides us with a much harder task, and many questions remain open: which 3-manifolds do support a tight contact structures? If so, how many up to isotopy? And how can we describe them?

Many tools have been developed to detect tightness; among them, Ozsv\'ath and Szab\'o's Floer-theoretic invariant $c$, living in the `hat' flavour of Heegaard Floer (co)homology of the underlying manifold: the set of contact structures with $c \neq 0$ sits in a chain of inclusions between the set of Stein fillable and the set of tight contact structures \cite{OScontact}.

Lisca and Stipsicz \cite{LS1, LS2, LStrans} extensively used this tool and the surgery exact sequences in Heegaard Floer homology to produce examples of tight contact structures on several manifolds, chiefly obtained by surgery on $S^3$ along a knot. The twisted version of $c$ has been used, for example, by Ghiggini and Van Horn-Morris \cite{GVH-M} to classify tight contact structures on some Brieskorn spheres. Vanishing results for these invariants have been given by Sahamie \cite{Sahamie}.

Recall that, for any knot $K\subset S^3$, Ozsv\'ath and Szab\'o defined two concordance invariants $\tau(K),\nu(K)\in\Z$, such that $\tau(K)\le\nu(K)\le\tau(K)+1$. On the other hand, to any Legendrian knot $L\subset(S^3,\xi)$, we can associate two other integers, $tb(L)$ and $r(L)$, the Thurston-Bennequin and the rotation number respectively: these two and the topological type of $L$ are collectively called the \deff{classical invariants} of $L$.

Finally, recall that there are two possible contact structures that are obtained as contact $n$-surgery on a given Legendrian $L$, and we'll denote them with $\xi^\pm_n(L)$. We can now state our main theorem:

\begin{thm}\label{mainthm}
Let $L$ be an oriented Legendrian knot in the standard contact structure $\xi_\textrm{st}$ on $S^3$, and let $K$ be the topological type of $L$.

For positive $n$, $\xi^-_n(L)$ has nonvanishing contact invariant if and only if the following hold:
\begin{itemize}
\item[(SL)] $tb(L)- r(L)=2\tau(K)-1$;
\item[(SC)] $n+tb(L)\ge 2\tau(K)$;
\item[(TN)] $\tau(K) = \nu(K)$.
\end{itemize}
Moreover, if $L'$ is another Legendrian knot with the same classical invariants (whether or not the three conditions hold), $c(\xi_n^-(L')) = c(\xi_n^-(L))$.
\end{thm}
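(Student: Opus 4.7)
My plan rests on three ingredients: (a) the Ding--Geiges--Stipsicz algorithm, which decomposes any contact $r$-surgery into a sequence of contact $(\pm 1)$-surgeries; (b) naturality of the Ozsv\'ath--Szab\'o contact invariant $c$ under the Stein cobordisms arising from Legendrian (contact $(-1)$-)surgery; and (c) the Ozsv\'ath--Szab\'o large surgery formula, which computes $\HF$ of a large integer surgery on $K$ from the knot Floer complex.

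First I would apply the Ding--Geiges--Stipsicz algorithm to write $\xi^-_n(L)$ as contact $(+1)$-surgery on $L$ followed by $n-1$ Legendrian surgeries on negatively stabilized Legendrian pushoffs of $L$. This exhibits $\xi^-_n(L)$ as the top of a Stein cobordism whose bottom is a contact manifold of the form $\xi^+_1(L)$ on $S^3_{tb(L)+1}(K)$. The Legendrian surgeries induce maps on $\HF$ that carry contact class to contact class, so $c(\xi^-_n(L))\neq 0$ if and only if the intermediate class survives the composite cobordism map. The upshot is that we need: (i) the contact invariant of $\xi^+_1(L)$ to be non-zero, and (ii) the cobordism maps to preserve this non-vanishing.

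For (i), I would identify $\HF(S^3_N(K))$ for large $N$ with homology of a subquotient of $CFK^\infty(K)$ via the large surgery formula; under this identification $c(\xi^+_1(L))$ corresponds to an explicit element whose non-vanishing is precisely the equality $\nu(K)=\tau(K)$, i.e.\ condition (TN). For the identification to land in the correct $\spin$ structure on $S^3_{tb(L)+1}(K)$, the rotation number of $L$ must be extremal relative to $tb(L)$ and $\tau(K)$, which is exactly (SL). For (ii), a Chern-class computation on the 2-handle cobordism shows that the composite map on $\HF$ is non-vanishing on the relevant $\spin$ structure precisely when $n + tb(L) \geq 2\tau(K)$, i.e.\ (SC). Failure of each condition kills the class in a distinct way: if (TN) fails the element is already zero in $\HF$ of the intermediate manifold; if (SL) fails the Legendrian invariant sits outside the extremal $\spin$ summand; if (SC) fails the final cobordism map vanishes on that $\spin$ structure for degree/dimension reasons.

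The final assertion, that $c(\xi^-_n(L'))$ depends only on $(K, tb, r)$ and $n$, is then automatic since every quantity in our computation is built from these data. The hardest step will be the careful bookkeeping of $\spin$ structures through the tower of 2-handle cobordisms and the verification of the necessity half: establishing that the three conditions are jointly necessary, not just sufficient, requires exhibiting the vanishing in three genuinely different ways and leveraging injectivity/surjectivity properties of the cobordism-induced maps in the appropriate spin$^c$ summand.
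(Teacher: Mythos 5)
Your outline captures some genuine ingredients of the proof (the Ding--Geiges decomposition, the identity relating contact $+1$-surgery to the surgery cobordism map, and the fact that non-vanishing of $F_{-W_f}$ on $\HF(-S^3)$ is governed by $\nu(K)$), but the central reduction is set up in the wrong direction and this creates a gap that swallows most of the theorem. The composite of the Legendrian-surgery cobordisms in your tower induces a map $\HF(-S^3_{tb(L)+n}(K))\to\HF(-S^3_{tb(L)+1}(K))$ carrying $c(\xi^-_n(L))$ \emph{to} $c(\xi_{+1}(L))$, not the other way around. So non-vanishing of the intermediate class is \emph{sufficient} for non-vanishing of $c(\xi^-_n(L))$, but your ``if and only if'' is false, and your criterion (i) is far too strong: one computes $c(\xi_{+1}(L)) = F_{-W_{tb(L)+1}}(c(\xi_{\rm st}))$, which vanishes unless $tb(L)+1\ge 2\nu(K)$, i.e.\ unless $tb(L)=2\tau(K)-1$ and $r(L)=0$. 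Whenever $r(L)<0$ the theorem asserts $c(\xi^-_n(L))\neq 0$ for all $n\ge 1-r(L)$ even though $c(\xi_{+1}(L))=0$, and your scheme produces nothing in these cases. The mechanism that rescues this in the paper is to work in $SFH(-S^3_{K,tb(L)})$ rather than in $\HF$ of the surgered manifolds: condition (SL) places $EH(L)$ on the top edge of the unstable complex, so $EH(L)=\sigma_-^{n-1}(x)$ for an element $x$ at slope $2\tau(K)-1$ that is in general \emph{not} the invariant of any Legendrian representative, and the identities $\psi^-_{n}\circ\sigma_-^{n-1}=\psi_{+1}$ and $\psi_{+1}=F_{-W_{2\tau}}\circ\psi_\infty$ then give $c(\xi^-_n(L))=F_{-W_{2\tau(K)}}(c(\xi_{\rm st}))\neq 0$. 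This requires the explicit computation of the stabilisation maps $\sigma_\pm$ on $\HFK(-S^3_m(K),\ltilde K)$ (done via bordered Floer homology), which has no counterpart in your plan.

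Two further points are genuinely missing. First, necessity of (TN): your proposed argument (``the element is already zero in $\HF$ of the intermediate manifold'') only shows the image of $c(\xi^-_n(L))$ under a cobordism map vanishes, which says nothing about $c(\xi^-_n(L))$ itself. The paper's proof of this step is entirely different: it uses a Legendrian cabling construction to identify $\xi^-_{2n+1}(L_{2,2n+1})$ with $\xi^-_{n+1/2}(L)\#\eta_2$, invokes Hom's formulas for $\tau$ and $\epsilon$ of cables to show that when $\nu(K)=\tau(K)+1$ the cable $L_{2,2n+1}$ violates (SL), and then propagates the resulting vanishing back down to $c(\xi^-_n(L))$. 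Second, the independence statement is not ``automatic'': a priori $c(\xi^-_n(L))=\psi^-_n(EH(L))$ depends on the Legendrian isotopy class of $L$ through $EH(L)$, which is strictly finer than the classical invariants. One must show that $EH(L)-EH(L')$ is a \emph{stable} element and that the surgery gluing maps annihilate the stable subspace; the latter rests on the vanishing of gluing maps associated to overtwisted layers. Neither fact is addressed in your proposal.
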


\begin{rmk}\label{naturalityrmk}
There is an action of $M(Y):=MCG(Y\setminus B, \de B)$, the mapping class group of $Y$ with a ball removed, relative to the boundary, on $\HF(-Y)$ \cite{JOT}. The contact invariants $c(\xi)\in \HF(-Y)$ , $c(\xi')\in \HF(-Y')$ of two contact manifolds $(Y,\xi)$, $(Y',\xi')$, with $Y$ diffeomorphic to $Y'$, can only be compared using a diffeomorphism $Y\setminus B \to Y'\setminus B'$. Any two such diffeomorphisms differ by an element of $M(Y)$.
 
The equality $c(\xi_n^-(L')) = c(\xi_n^-(L))$ has to be taken as saying that there is a diffeomorphism $S^3_{tb(L)+n}(L)\to S^3_{tb(L)+n}(L')$ that takes $c(\xi_n^-(L'))$ to $c(\xi_n^-(L))$; this is equivalent to saying that $c(\xi^-_n(L))$ and $c(\xi^-_n(L'))$ lie in the same orbit of the action of $M(S^3_{tb(L)+n}(K))$ on $\HF(-S^3_{tb(L)+n}(K))$.
\end{rmk}

\begin{rmk}
As a mnemonic trick, the abbreviations \emph{SL}, \emph{SC} and \emph{TN} stand for ``self-linking'', ``surgery coefficient'' and ``tau-nu'' respectively.

The first condition can be interpreted as a transverse condition, \emph{i.e.} a condition on the self-linking number of the transverse push-off of $L$.

The second condition is a condition on the pair (Legendrian knot, surgery coefficient) $(L,n)$; it can also be read as $n+tb(L)\ge sl(L)+1$ or $n\ge 1-r(L)$.

The third condition could be absorbed in the first one if we just replaced $\tau$ by $\nu$ in \emph{(SL)}, since $\nu(K)$ is either $\tau(K)$ or $\tau(K)+1$; on the other hand, conditions as they are split into contact, topological and Floer-theoretical conditions separately; moreover, we'll realise along the proof that they really are three separate conditions rather than two.
\end{rmk}

In other words, the contact invariant  of an integral surgery along $L\subset(S^3,\xi_{\rm st})$ doesn't contain more information about $L$ than the classical invariants, and in particular can't distinguish surgeries along non-Legendrian isotopic knots that share the same classical invariants.

\begin{rmk}
As we'll see in Section \ref{cont_surg}, the `positive' contact surgery $\xi^+_n(L)$ is isotopic to $\xi^-_n(-L)$: the only condition that gets affected by orientation reversal of $L$ is \emph{(SL)}, so we get an analogous statement about $c(\xi^+_n(L))$ if we replace it with the condition $tb(L)+r(L)=2\tau(K)-1$.
\end{rmk}

\begin{ex}
Let's consider the knot $8_{20}$: it has genus $g(8_{20})=1$, but its slice genus is $g_*(8_{20})=0$ (which in turn implies also $\tau(8_{20})=\nu(8_{20})=0$). On the other hand, its maximal Thurston-Bennequin number is $\overline{tb}(8_{20}) = -2$ and its maximal self-linking number is $\overline{sl}(8_{20})=-1$. In particular, our Theorem \ref{mainthm} applies here, whereas neither the main result in \cite{LS1} or \cite{LStrans} does. We can therefore exhibit new examples of tight contact structures on the manifolds $S^3_q(8_{20})$ for all $q\ge0$ rational (see Corollary \ref{qtight} below).

The knot $m(10_{125})$ has $\tau(m(10_{125})) = -g_*(m(10_{125})) = -1$ and $\overline{sl}(m(10_{125}))=-3$: the first equality implies that $\nu(m(10_{125}))=0$. In particular, \emph{(TN)} doesn't hold for $m(10_{125})$, but it has a Legendrian representative for which \emph{(SL)} does hold. We're grateful to Lenny Ng for this example.
\end{ex}

As a byproduct of the proof of Theorem \ref{mainthm}, without much effort, we get:

\begin{cor}\label{qtight}
If $\tau(K)=1$ (respectively $\tau(K)=0$) and there's a Legendrian representative $L$ of $K$ that satisfies (SL), then for all $q>2\tau(K)-1$ (resp. $q\ge 0$) the manifold $S^3_q(K)$ supports a tight contact structure.
\end{cor}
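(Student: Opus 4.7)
The proof would proceed in two parts, for integer and rational surgery coefficients, combining Theorem~\ref{mainthm} (applied directly to integer surgeries) with the Legendrian cabling construction announced in the abstract (for rational surgeries).

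In the integer case (write $q=n\in\Z$), the conditions (SL) and (SC) of Theorem~\ref{mainthm} come essentially for free. Indeed, (SL) holds for the given $L$ and is preserved under the stabilization that simultaneously decreases $tb$ and $r$ by one, so after stabilizing sufficiently we may arrange $tb(L)<n$ and realise $S^3_n(K)$ as a positive contact surgery of coefficient $n-tb(L)$ on $L$; condition (SC) then reads $n\ge 2\tau(K)$, which is precisely the range specified in the corollary. The real content is (TN), $\nu(K)=\tau(K)$: since $\nu(K)\in\{\tau(K),\tau(K)+1\}$, the task is to exclude $\nu(K)=\tau(K)+1$ when $\tau(K)\in\{0,1\}$ and $\overline{sl}(K)=2\tau(K)-1$ (the equality case of Plamenevskaya's bound which (SL) realises). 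I would try this by inspecting the generator of $\HFK(K)$ at the filtration level singled out by (SL) and comparing with the structure of the knot Floer complex that computes $\nu(K)$, exploiting the very limited shape the complex can take when $\tau(K)\in\{0,1\}$.

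For rational $q$ in the range, my plan is to invoke the Legendrian cabling construction developed later in the paper. The smooth $q$-surgery on $K$ can be realised as an integer contact surgery on a Legendrian cable $L_{a,b}$ of $L$, with the cabling data determined by the continued fraction expansion of $q-tb(L)$. I would compute the classical invariants of $L_{a,b}$ and the $\tau$, $\nu$ of the underlying cabled knot from those of $L$ and $K$, and verify the hypotheses of Theorem~\ref{mainthm} for the cable; the resulting nonvanishing contact invariant on the integer surgery of the cable then provides a tight contact structure on $S^3_q(K)$.

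The main obstacle is the verification of (TN) in the integer case: the restriction $\tau(K)\in\{0,1\}$ and the equality in Plamenevskaya's bound must conspire to exclude $\nu(K)=\tau(K)+1$, and this is where all the Floer-theoretic content of the corollary is concentrated. Once this is in hand, the rest is an application of Theorem~\ref{mainthm} together with the cabling bookkeeping.
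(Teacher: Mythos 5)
Your plan has the right shape---Theorem~\ref{mainthm} for integer slopes, cabling for rational ones---but both halves contain a step that would not go through. For (TN): you propose to deduce $\nu(K)=\tau(K)$ from $\tau(K)\in\{0,1\}$ together with equality in Plamenevskaya's bound, by inspecting the knot Floer complex. The paper does not attempt this, and it would not succeed: those hypotheses do not pin down $\epsilon(K)$, which is exactly the invariant that decides whether $\nu(K)$ equals $\tau(K)$ or $\tau(K)+1$ (the example $m(10_{125})$ in the introduction, where (SL) holds but (TN) fails, shows how independent these conditions are). What the paper actually proves is a slightly stronger proposition with hypothesis $\epsilon(K)\in\{0,1\}$ in place of $\tau(K)\in\{0,1\}$; with that hypothesis (TN) follows for free from Hom's result that $\epsilon(K)=1\Rightarrow\nu(K)=\tau(K)$ and $\epsilon(K)=0\Rightarrow\tau(K)=\nu(K)=0$. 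The Floer-theoretic content you correctly identify as the crux is thus absorbed into the hypothesis, not extracted from it.

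For the rational slopes, realizing $S^3_q(K)$ itself as an integer contact surgery on a Legendrian cable of $L$ is not possible, since $S^3_{pq}(K_{p,q})$ is the connected sum $S^3_{q/p}(K)\# L(p,q)$, not $S^3_q(K)$. The paper's route is instead: propagate from integer slopes to all rational $q\ge 2\tau(K)$ using Corollary~\ref{ge-n}; when $\epsilon(K)=1$, reach the slopes $2\tau(K)-1+1/m$ by performing integer contact surgery on the cable $L_{m,1-mr}$ and invoking the connected-sum splitting of Proposition~\ref{cablingvssurg} to extract nonvanishing of $c(\xi^-_{1/m-r}(L))$; then fill in all of $q>2\tau(K)-1$ via Corollary~\ref{ge-1m}. (When $\epsilon(K)=0$ the last two steps are unnecessary, since $2\tau(K)=0$ already equals the lower bound.) Your sketch omits the connected-sum extraction and the two propagation corollaries, which are the backbone of the rational case.
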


In \cite{LStrans}, a new transverse invariant $\stilde{c}$ was also defined. Given $T$ transverse knot in $(Y,\xi)$, for sufficiently large $f$ we can define contact surgery along $T$ with framing $f$, and take the inverse limit of the contact invariants of these objects. Since we have complete control on these contact invariants for $T\subset S^3$, we can draw the following corollary:

\begin{cor}\label{ctilde}
Given $T$ in $(S^3,\xi)$ of topological type $K$, the transverse invariant $\stilde{c}(T)$ is nonzero if and only if $\xi=\xi_{\rm st}$, $sl(T)=2\tau(K)-1$ and $\tau(K)=\nu(K)$. Moreover, if $T'$ is another transverse knot of the same topological type of $T$ with $sl(T')=sl(T)$, then, up to the action of $MCG(S^3\setminus K)$, $\stilde{c}(T')=\stilde{c}(T)$.
\end{cor}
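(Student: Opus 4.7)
The plan is to reduce Corollary \ref{ctilde} to Theorem \ref{mainthm} through the Legendrian-approximation description of contact surgery on a transverse knot.

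First, if $\xi$ is overtwisted, then for all sufficiently large surgery coefficients the overtwisted disk lies outside the surgery region, so every contact $f$-surgery on $T$ is overtwisted and has vanishing contact invariant; hence $\tilde{c}(T)=0$. We may therefore assume $\xi=\xi_{\rm st}$. Now let $L$ be a Legendrian approximation of $T$, so $sl(T)=tb(L)-r(L)$. Under the standard identification, contact $f$-surgery on $T$ corresponds (for $f$ large enough) to $\xi^-_n(L)$ with $n=f-tb(L)$, and $\tilde{c}(T)$ is the inverse limit of the sequence $c(\xi^-_n(L))$ as $n\to\infty$, up to the mapping class group indeterminacy of Remark \ref{naturalityrmk}.

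By Theorem \ref{mainthm} this term is nonzero exactly when (SL), (SC), (TN) all hold. Conditions (SL) and (TN) are independent of $n$ and read $sl(T)=2\tau(K)-1$ and $\tau(K)=\nu(K)$; condition (SC), namely $n\geq 2\tau(K)-tb(L)$, is automatic once $n$ is large. Hence if either (SL) or (TN) fails, every element of the inverse system is zero and $\tilde{c}(T)=0$. Conversely, if both hold, the entire tail of the system consists of nonzero contact invariants, and a check that the connecting maps of the inverse system send contact invariants to contact invariants will show that $\tilde{c}(T)$ is itself nonzero.

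For the "moreover" statement, given two transverse knots $T,T'$ of the same topological type $K$ with $sl(T')=sl(T)$, pick Legendrian approximations $L,L'$. Since negative stabilization of a Legendrian knot decreases both $tb$ and $r$ by one and so preserves $tb-r$, after sufficiently many negative stabilizations we may arrange $tb(L)=tb(L')$ and $r(L)=r(L')$; the two approximations then share all classical invariants. The last clause of Theorem \ref{mainthm} yields $c(\xi^-_n(L))=c(\xi^-_n(L'))$ up to the action of $MCG(S^3\setminus K)$, and passing to the limit gives the claimed equality $\tilde{c}(T)=\tilde{c}(T')$.

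The technical point requiring most care is the nonvanishing of the inverse limit when (SL) and (TN) hold: one must verify that the cobordism-induced maps defining the inverse system carry contact invariants to contact invariants, so that the compatible nonzero tail survives in the limit. This should be handled by the standard functoriality of the Ozsv\'ath--Szab\'o contact invariant under the Stein cobordisms provided by a $(-1)$-contact surgery on a Legendrian stabilization.
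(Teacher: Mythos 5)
Your overall strategy coincides with the paper's: reduce everything to Theorem \ref{mainthm} via Legendrian approximation, handle the overtwisted case separately, and derive the ``moreover'' statement from the independence clause of Theorem \ref{mainthm} after matching classical invariants by negative stabilisations. The last two steps are correct, and your worry about the connecting maps is unfounded: the compatibility $F_{\overline{W}_n}(c(\xi^-_n(L)))=c(\xi^-_{n-1}(L))$ is precisely what makes the sequence an element of the inverse limit, and this was already established by Lisca and Stipsicz when they defined $\stilde{c}$ in \cite{LStrans}. Once that is granted, the ``if'' direction is immediate: if (SL) and (TN) hold, Theorem \ref{mainthm} gives $c(\xi^-_n(L))\neq 0$ for all large $n$, and an element of an inverse limit of vector spaces is nonzero as soon as one of its components is.

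The genuine gap is in your treatment of the overtwisted case. You assert that ``for all sufficiently large surgery coefficients the overtwisted disk lies outside the surgery region, so every contact $f$-surgery on $T$ is overtwisted.'' The surgery region is a fixed standard neighbourhood of $T$ (or of a Legendrian approximation $L$), which does not shrink as the coefficient grows; the question is whether the complement of the knot contains an overtwisted disc, and this can fail -- non-loose transverse and Legendrian knots exist in overtwisted $S^3$'s. For such a $T$ the surgered manifold need not be overtwisted, and the geometric argument collapses. The paper avoids this entirely with an algebraic argument: if $c(\xi)=0$ then $EH(L)$ is stable (Proposition \ref{stablevsOT} / Lemma \ref{stablevspsiinfty}), and stable elements lie in $\ker\psi^\pm_n$ (Proposition \ref{kills}), so $c(\xi^-_n(L))=\psi^-_n(EH(L))=0$ without any appeal to persistence of overtwisted discs. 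You should replace your heuristic with this argument. Finally, a small stylistic point: the paper proves the ``moreover'' part in slightly greater generality, taking $L'$ to be \emph{any} Legendrian representative with $tb(L')-r(L')=2\tau(K)-1$, not necessarily an approximation of $T'$; your version via simultaneous stabilisation is equivalent.
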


The proof of Theorem \ref{mainthm} has an algebraic flavour, with a topological input coming from a Legendrian cabling construction.

\vskip 0.2 cm

{\bf Organisation}. The paper is organised as follows. In Section \ref{SFH+gluing} we introduce some standard background in Heegaard Floer homology, sutured Floer homology, contact invariants and gluing maps. Section \ref{HFKstab} is devoted to the study of some sutured Floer homology groups and some gluing maps between them. In Section \ref{cont_surg} we prove some useful lemmas about contact surgeries and stabilisations; in Section \ref{cont_cabling} we discuss a Legendrian cabling construction and its interactions with contact surgeries. Finally, Section \ref{mainproof} contains the proof of Theorem \ref{mainthm}, its corollaries; we deferred the proof of some technical lemmas to Section \ref{technical}.

\vskip 0.2cm

{\bf Acknowledgments}. I owe much gratitude to my supervisor, Jake Rasmussen: he suggested that I could think about this problem and patiently supported me. I'd like to thank Matthew Hedden for pointing out a mistake in an earlier version, and for some insight on its solution; Lenny Ng for providing me with some numerical data and some references; and Paolo Ghiggini, Jonathan Hales, Robert Lipshitz, Paolo Lisca, Olga Plamenevskaya, Andr\'as Stipsicz for interesting conversations.

Most of this work has been done while I was visiting the Simons Center for Geometry and Physics in Stony Brook: I acknowledge their hospitality and support.

\section{Sutured Floer homology and gluing maps}\label{SFH+gluing}

\subsection{Sutured manifolds}\label{sutured}

The definition of sutured manifold is due to Gabai \cite{Ga}.

\begin{defn}
A \textbf{sutured manifold}, is a pair $(M,\Gamma)$ where $M$ is an oriented 3-manifold with nonempty boundary $\de M$, and $\Gamma$ is a family of oriented curves in $\de M$ that satifies:
\begin{itemize}\itemsep-1.5pt
 \item $\cup \Gamma$ intersects each component of $\de M$;

 \item $\cup \Gamma$ disconnects $\de M$ into $R_+$ and $R_-$, with $\pm\Gamma = \de R_\pm$ (as oriented manifolds);

 \item $\chi (R_+) = \chi (R_-)$.
\end{itemize}
\end{defn}

\begin{rmk}
The definition we gave is the definition of a balanced sutured manifold, due to Juh\'asz \cite{Ju}, and the condition $\chi(R_+) = \chi(R_-)$ is called the \emph{balancing} condition. Since this is the only kind of sutured manifolds we're dealing with, we prefer to just drop the adjective `balanced' everywhere.
\end{rmk}

\begin{ex}\label{cont_sut_ex1}
Any $M$ oriented 3-manifold with $S^2$-boundary, can be turned into a sutured manifold $(M,\{\gamma\})$ by choosing any simple closed curve $\gamma$ in $\de M$. We'll often write $M=Y(1)$, where $Y = M\cup_\de D^3$ is the ``simplest'' closed 3-manifold containing $M$.

For every integer $f$, we have a sutured manifold $S^3_{K,f}$ given by pairs $(S^3\setminus N(K), \{\gamma_f,-\gamma_f\})$, where $\gamma_f$ is an oriented curve on the boundary torus $\de N(K)$ of an open small neighbourhood $N(K)$ of $K$. The slope of $\gamma_f$ is $\lambda_S + f\cdot \mu$, and $-\gamma_f$ is a parallel push-off of $\gamma_f$, with the opposite orientation. 
Here $\lambda_S$ denotes the Seifert longitude of $K$. We'll use the shorthand $\Gamma_f$ for $\{\gamma_f,-\gamma_f\}$.
\end{ex}

\begin{ex}\label{cont_sut_ex2}
To any Legendrian knot $L\subset (Y,\xi)$ in an arbitrary 3-manifold $Y$ one can associate in a natural way a sutured manifold, that we'll denote with $Y_L$, constructed as follows: there's a standard (open) Legendrian neighbourhood $\nu(L)$ for $L$, with convex boundary. The dividing set $\Gamma_L$ on the boundary consists of two parallel oppositely oriented curves parallel to the contact framing of $L$. The manifold $Y_L$ is then defined as the pair $(Y\setminus \nu(L), \Gamma_L)$. In the case we're mainly interested in, where $Y=S^3$ and $L$ is of topological type $K$, we have $S^3_L = S^3_{K, tb(L)}$. More generally, the same identification $\{{\rm framings}\}\leftrightarrow \Z$ can be made canonical whenever $K$ is nullhomologous in a rationaly homology sphere $Y$ (that is $H_2(Y)=0$), and we then have $Y_L = Y_{K,tb(L)}$.

We'll often use $Y_L$ also to denote the contact manifold with convex boundary $(Y\setminus \nu(L),\xi|_{Y\setminus \nu(L)})$, without creating any confusion.

\end{ex}

There's a decomposition/classification theorem for sutured manifolds, completely analogous to the Heegaard decomposition/Reidemeister-Singer theorem for closed three-manifolds. Consider a compact surface $\Sigma$ with boundary, and a collection of simple cosed curves $\ba,\bb\subset \Sigma$, such that no two $\alpha$-curves intersect, and no two $\beta$-curves intersect; suppose moreover that $|\ba|=|\bb|$. We can build a balanced sutured manifold out of this data as follows: take $\Sigma\times[0,1]$, glue a 2-handle on $\Sigma\times\{0\}$ for each $\alpha$-curve, and a 2-handle on $\Sigma\times\{1\}$ for each $\beta$-curve, and let $M$ be the manifold obtained after smoothing corners; declare $\Gamma = \de\Sigma\times\{1/2\}$. The pair $(M,\Gamma)$ is a balanced sutured manifold, and $(\Sigma,\ba,\bb)$ is called a \deff{(sutured) Heegaard diagram} of $(M,\Gamma)$.

\begin{thm}[\cite{Ju}]
Every balanced sutured manifold admits a Heegaard diagram, and every two such diagrams become diffeomorphic after a finite number of isotopies of the curves, handleslides and stabilisations taking place in the interior of the Heegaard surface.
\end{thm}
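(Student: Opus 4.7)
The plan is to mimic the classical proofs of the analogous statements for closed manifolds, but carried out in the category of sutured manifolds, using a Morse-theoretic construction adapted to the sutured boundary.

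For existence, I would first set up a Morse function $f\colon M\to [0,1]$ compatible with the sutured structure: $f^{-1}(0)=R_-$, $f^{-1}(1)=R_+$, on a collar of $\Gamma$ the function $f$ looks like the height function on $\Gamma\times[0,1]$ with no critical points, and all interior critical points have index $1$ or $2$. Such a function exists by a relative Morse-theory argument: start from any Morse function with the prescribed boundary behaviour (obtained by extending a collar model), cancel index-$0$ critical points against index-$1$ ones (using the path-connectedness of $M$ rel $R_-$) and symmetrically for index $3$. Then $\Sigma=f^{-1}(1/2)$ is a compact surface with $\partial\Sigma=\Gamma$; the descending disks of the index-$1$ points meet $\Sigma$ in the $\alpha$-curves, the ascending disks of the index-$2$ points meet $\Sigma$ in the $\beta$-curves, and a standard computation gives $\chi(R_-)=\chi(\Sigma)+c_1$ and $\chi(R_+)=\chi(\Sigma)+c_2$, where $c_i$ counts the index-$i$ critical points. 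The balancing hypothesis $\chi(R_+)=\chi(R_-)$ thus forces $|\ba|=|\bb|$, and we obtain a balanced sutured Heegaard diagram.

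For uniqueness, I would use Cerf theory rel boundary. Given two Morse functions $f_0,f_1$ realising two diagrams $\H_0,\H_1$, connect them by a generic $1$-parameter family $\{f_t\}$ of functions with the same boundary model on the collar of $\Gamma$. Generically $f_t$ is Morse except at finitely many $t$ where one of the codimension-$1$ degenerations occurs: a critical value of an interior index-$1$ and index-$2$ critical point crosses $1/2$ (producing an isotopy of an $\alpha$- or $\beta$-curve on the middle level), a handle trajectory between two index-$1$ (resp.~index-$2$) points appears (producing a handleslide among the $\alpha$'s or $\bb$'s), or a birth/death of a canceling index-$1$/index-$2$ pair occurs (producing a stabilisation). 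Because the family is constant on the collar of $\Gamma$, every such event occurs in the interior; hence the resulting moves modify $\H_0$ into $\H_1$ by isotopies, handleslides and stabilisations supported in $\textrm{int}(\Sigma)$.

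The main obstacle, and the point requiring the most care, is guaranteeing compatibility with the boundary at every stage: one must arrange both the initial Morse functions and the connecting $1$-parameter family so that they are standard on a collar of $\partial M$ (so that $\Sigma$ always meets the boundary exactly in $\Gamma$, and no critical point or modification ever escapes to $\partial M$). This relative Cerf theory, together with the cancellation arguments needed to kill index-$0$ and index-$3$ critical points while respecting $R_\pm$, is where the balancing hypothesis is essential: without it, one could be forced to introduce extra $\alpha$- or $\beta$-curves near the boundary, breaking the condition $|\ba|=|\bb|$. The remaining computations reducing the Cerf degenerations to the three listed moves are formally identical to the closed case \emph{mutatis mutandis}.
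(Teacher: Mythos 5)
The paper does not reprove this theorem; it simply quotes it from Juh\'asz. Your sketch correctly reproduces the Morse-theoretic and relative Cerf-theoretic argument in \cite{Ju}: a sutured-compatible Morse function gives a Heegaard surface at a middle level, balancing forces $|\ba|=|\bb|$, and a generic one-parameter family rel the collar of $\partial M$ yields the three moves in the interior of $\Sigma$.

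One small slip: in a compression body built by attaching $c_1$ two-handles to $\Sigma\times I$, each two-handle changes the Euler characteristic of the surface by $2$, so the correct relation is $\chi(R_-)=\chi(\Sigma)+2c_1$ (and $\chi(R_+)=\chi(\Sigma)+2c_2$), not $\chi(\Sigma)+c_1$. The conclusion $c_1=c_2$ from the balancing condition is unchanged. You should also note explicitly that the cancellation of index-$0$ and index-$3$ critical points requires that every component of $M$ meet both $R_-$ and $R_+$, which is part of the (balanced) sutured manifold axioms -- the condition that $\Gamma$ meets every component of $\partial M$ together with $M$ having no closed components.
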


There's one further description of a sutured manifold, relying on arc diagrams: an \deff{arc diagram} $\H^a$ is a quintuple $(\Sigma,\ba,\bb^a,\bb^c,D)$, where $\Sigma$ is a closed surface, $\ba$ and $\bb^c$ are sets of simple closed curves in $\Sigma$, with $\ba$ linearly independent in $H_1(\Sigma)$, $D$ is a closed disc disjoint from $(\cup \ba)\cup(\cup \bb)$ and $\bb^c$ is a set of pairwise disjoint closed arcs in $\Sigma\setminus {\rm Int}(D)$ with endpoints on $\de D$ (and elsewhere disjoint from $D$), each disjoint from every $\beta$-curve. We ask for $|\ba| = g = g(\Sigma)$, and $|\bb^c|+|\bb^a|= g$.

We build a sutured manifold out of $\H^a$ in the following way: the set of $\alpha$-curves determines the attaching circles of $g$ 2-handles on $\Sigma\times\{0\}\subset\Sigma\times[0,1]$; we attach a 3-handle (a ball) to fill up the remaining component of the lower boundary; the set $\bb^c$ of $\beta$-\emph{curves} determines the attaching circles of 2-handles on $\Sigma\times\{1\}$. We define $M$ to be the manifold obtained by smoothing corners after these handle attachments; notice that $D$ is an embedded disc in $\de M$, and $\bb^c$ is a set of embedded arcs in $\de M$. Let $R_+$ be a small neighbourhood of $D\cup\bb^c$ that retracts onto it, and $\Gamma = \de R_+$.

\begin{lemma}[\cite{Zarev}]
Every sutured manifold with connected $R_+$ admits an arc diagram.
\end{lemma}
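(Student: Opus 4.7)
The strategy is to close up a sutured Heegaard diagram of $(M,\Gamma)$ across $R_-$, and to encode $R_+$ by a spine consisting of a disk with bands.

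First, since $R_+$ is a connected surface with non-empty boundary $\Gamma$, I fix an embedded closed disk $D\subset\textrm{Int}(R_+)$ together with a collection $\bb^a$ of $2g(R_+)+|\pi_0(\Gamma)|-1$ pairwise disjoint arcs in $R_+\setminus\textrm{Int}(D)$, each with both endpoints on $\de D$, so that $D\cup\bb^a$ is a deformation retract of $R_+$. Next I fix a sutured Heegaard diagram $(\Sigma_0,\ba_0,\bb_0)$ for $(M,\Gamma)$, with the convention that compressing $\Sigma_0$ along $\bb_0$ produces $R_+$, and arrange by isotopy that the spine $D\cup\bb^a$ lies inside $\Sigma_0\setminus\nu(\bb_0)\subset\Sigma_0$. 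Then I form the closed surface
\[
\Sigma:=\Sigma_0\cup_\Gamma R_-,
\]
set $\bb^c:=\bb_0\subset\Sigma_0\subset\Sigma$, and take $\ba:=\ba_0\cup\ba_1$, where the curves of $\ba_1$ are built one per arc of $\bb^a$: for each $\gamma_i\in\bb^a$ I choose a proper arc $\delta_i$ in $\Sigma_0\setminus\nu(\bb_0)$ intersecting $\gamma_i$ once and disjoint from the other $\gamma_j$'s, from $D$, and from $\ba_0$, and then close $\delta_i$ to a simple closed curve $\alpha_i$ by an arc in $R_-\subset\Sigma$ joining its two endpoints on $\Gamma$, all choices being made so that the members of $\ba$ are pairwise disjoint.

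Finally, I verify that $(\Sigma,\ba,\bb^a,\bb^c,D)$ is an arc diagram whose associated sutured manifold is $(M,\Gamma)$. Euler-characteristic bookkeeping and the balancing condition give $|\ba|=g(\Sigma)=|\bb^c|+|\bb^a|$; surgery along $\bb^c$ on $\Sigma$ reproduces $\de M=R_+\cup_\Gamma R_-$, and the surface $R_+$ inside $\de M$ agrees with a neighborhood of $D\cup\bb^a$ by the first step; attaching 2-handles along $\ba$ together with the 3-handle recovers the $\alpha$-side $C_\alpha$ of the sutured Heegaard splitting, which after smoothing is a handlebody with boundary $\Sigma$ (since $C_\alpha\cong R_-\times[0,1]$ with $|\ba_0|$ 1-handles attached, and therefore has free $\pi_1$), and attaching 2-handles along $\bb^c$ recovers the $\beta$-side, reconstructing $M$. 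Linear independence of $\ba$ in $H_1(\Sigma)$ is then automatic, since its curves bound disks generating a Lagrangian in the handlebody $H_\alpha$. The most delicate step is the simultaneous disjointness of the dual arcs $\delta_i$ and their extensions through $R_-$, especially when $R_-$ is disconnected or has small genus; this is handled by stabilizing the initial sutured Heegaard diagram (adding cancelling pairs of handles) to create the necessary room.
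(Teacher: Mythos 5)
The paper does not prove this lemma---it is cited from Zarev---so there is no internal argument to compare against; I will assess your proposal on its own terms. The overall strategy (cap $\Sigma_0$ off with $R_-$ to form the closed surface $\Sigma$, keep $\bb^c=\bb_0$, take $D$ and the arcs $\bb^a$ from a spine of $R_+$ placed in $\Sigma_0\setminus\nu(\bb_0)$, and complete to a cut system $\ba$ for the resulting handlebody) is the right one, and your Euler-characteristic count using the balancing condition does give $|\ba|=g(\Sigma)=|\bb^c|+|\bb^a|$.

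There is, however, a genuine gap in taking $\ba=\ba_0\cup\ba_1$ and asserting that independence in $H_1(\Sigma)$ is ``automatic since the curves bound disks.'' Bounding disjoint compressing disks only places the boundary classes in a Lagrangian; it does not make them independent. In fact $\ba_0$ already fails to be independent in $H_1(\Sigma)$ whenever $R_-$ is disconnected: the curves of $\ba_0$ are the belt circles of the $1$-handles attached to $R_-\times I$, so cutting $\Sigma=\Sigma_0\cup_\Gamma R_-$ along $\ba_0$ produces one piece for each component of $R_-$, and if $R_-$ has $k\ge 2$ components then $\ba_0$ separates $\Sigma$ and is linearly dependent. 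Such $(M,\Gamma)$ exist with $R_+$ connected: balancing only forces $g(R_-)=g(R_+)+k-1$, so take for instance $R_+$ a genus-one surface with two boundary circles and $R_-$ two once-punctured tori. Stabilizing $(\Sigma_0,\ba_0,\bb_0)$ leaves $R_-$ unchanged, so it cannot repair this; relatedly, the two endpoints of a dual arc $\delta_i$ may a priori lie on boundary circles of different components of $R_-$, in which case closing $\delta_i$ up through $R_-$ is impossible. The fix is to abandon the requirement $\ba_0\subset\ba$: as you observe, $H_\alpha\cong R_-\times I$ with $1$-handles is a compact orientable $3$-manifold built from $0$- and $1$-handles only, hence a genuine handlebody with $\de H_\alpha=\Sigma$. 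Take $\ba$ to be \emph{any} complete cut system for $H_\alpha$, isotoped off the small disc $D$; it then consists of $g(\Sigma)$ disjoint curves that are automatically independent in $H_1(\Sigma)$. The arc diagram rebuilds $M$ as $H_\alpha$ together with $2$-handles along $\bb^c=\bb_0$---a description that no longer refers to $\ba_0$---and the sutured structure is recovered from the neighborhood of $D\cup\bb^a$ exactly as in your last step.
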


\subsection{The Floer homology packages}

This is meant to be just a recollection of facts about the Floer homology theories we'll be working with. The standard references for the material in this subsection are \cite{OSHF, OSPA,Lip} for the Heegaard Floer part, and \cite{Ju} for the sutured Floer part.

In order to avoid sign issues, we'll work with $\F=\F_2$ coefficients.

Consider a pointed Heegaard diagram $\H=(\Sigma_g,\ba,\bb,z)$ representing a three-manifold $Y$, and form two Heegaard Floer complexes $\widehat{CF}(Y)$ and $CF^-(Y)$: the underlying modules are freely generated over $\F$ and $\F[U]$ by $g$-tuples of intersection points in $\bigcup_{i,j}(\alpha_i\cap\beta_j)$, so that there's exactly one point on each curve in $\ba\cup\bb$.

The differentials $\widehat{\de}, \de^-$ are harder to define, and count certain pseudo-holomorphic discs in a symmetric product $\Sym^g(\Sigma_g)$, or maps from Riemann surfaces with boundary in $\Sigma_g\times\R\times [0,1]$, with the appropriate boundary conditions. The homology groups $\HF(Y)=H_*(\CF(Y),\widehat{\de})$ and $HF^-(Y)=H_*(CF^-(Y),\de^-)$ so defined are called \deff{Heegaard Floer homologies} of $Y$, and are independent of the (many) choices made along the way \cite{OSHF}.

Sutured Floer homology is a variant of this construction for sutured manifolds $(M,\Gamma)$. The starting point is a sutured Heegaard diagram $\H=(\Sigma,\ba,\bb)$ for $(M,\Gamma)$. We form a complex $SFC(M,\Gamma)$ in the same way, generated over $\F$ by $d$-tuples of intersection points as above, where $d=|\ba|=|\bb|$. The differential $\de$ is defined by counting pseudo-holomorphic discs in $\Sym^d(\Sigma)$ or maps from Riemann surfaces to $\Sigma\times\R\times[0,1]$, again with the appropriate boundary conditions.

The homology $SFH(M,\Gamma)=H_*(SFC(M,\Gamma),\de)$ is called the \deff{sutured Floer homology} of $(M,\Gamma)$, and is shown to be independent of all the choices made \cite{Ju}. It naturally corresponds to a `hat' theory.

There's one more description of sutured Floer homology, due to Zarev \cite{Zarev}, coming from arc diagram representations: given a balanced sutured manifold $(M,\Gamma)$ with $R_+$ connected, we can form a Floer complex starting from an arc diagram associated to it. The underlying module is free over the $g$-tuples of intersection points between $\alpha$-curves and $\beta$-curves and arcs as above; the differential counts holomorphic discs in the symmetric product with boundary on these curves, such that the multiplicity at the regions touching the base-disc $D$ are all 0.

If $R_+$ is not connected, then $(M,\Gamma)$ is a product disc decomposition of a manifold $(M',\Gamma')$ with $R'_+$ connected. Juh\'asz showed that $SFH(M,\Gamma)=SFH(M',\Gamma')$, so we can compute $SFH(M,\Gamma)$ using an arc diagram for $(M', \Gamma')$.

\begin{prop}[\cite{Ju}]\label{HFvsSFH}
For a closed 3-manifold $Y$, $\HF (Y) = SFH(Y(1))$.

For a knot $K$ in a closed 3-manifold $Y$, $\widehat{HFK}(Y,K) = SFH(Y_{K,m})$, where $m$ is the meridian for $K$ in $Y$.
\end{prop}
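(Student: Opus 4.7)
The plan is to exhibit compatible Heegaard diagrams on the two sides and observe that the resulting chain complexes coincide on the nose, so both the generators and the differentials match.

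For the first statement, I start with a pointed Heegaard diagram $(\Sigma_g,\ba,\bb,z)$ representing $Y$. Remove a small open disc $D_z$ around the basepoint to obtain a surface with boundary $\Sigma' = \Sigma_g \setminus D_z$. I claim that $(\Sigma',\ba,\bb)$ is a sutured Heegaard diagram for $Y(1)$: following the handle-attachment recipe from Section~\ref{sutured}, the $\ba$-handles and $\bb$-handles build the complement of a $3$-ball in $Y$, while $\de \Sigma' \times \{1/2\}$ is a single closed curve on $S^2 = \de Y(1)$, giving the unique suture $\gamma$. Since $|\ba| = |\bb| = g = g(\Sigma')$, this is a balanced diagram. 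The generators of $SFC(Y(1))$ are then tuples of intersection points in $\Sym^g(\Sigma')$ hitting each curve exactly once, which is the same underlying $\F$-module as $\CF(Y)$. The sutured differential counts holomorphic discs in $\Sym^g(\Sigma')$, and since $D_z$ has been excised, these are in bijection with holomorphic discs in $\Sym^g(\Sigma_g)$ satisfying $n_z(\phi) = 0$ — precisely the discs counted by $\widehat\de$. Passing to homology yields $\HF(Y) = SFH(Y(1))$.

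For the second statement, I begin from a doubly pointed Heegaard diagram $(\Sigma_g,\ba,\bb,w,z)$ for the pair $(Y,K)$, where $w$ and $z$ are chosen to straddle the knot $K$. Removing two open discs $D_w$ and $D_z$ around the basepoints gives a surface $\Sigma''$ with two boundary components; I claim $(\Sigma'',\ba,\bb)$ is a sutured Heegaard diagram for $Y_{K,m}$. The handle-attachment construction rebuilds $Y \setminus \nu(K)$, and $\de \Sigma'' \times \{1/2\}$ now consists of two parallel oppositely oriented closed curves on the boundary torus; the standard check (using how the basepoints sit with respect to $K$ in a Heegaard diagram adapted to the knot) shows their slope is the meridional one $m$, giving exactly $\Gamma_m$. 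As before, generators coincide with those of $\CFK(Y,K)$, and the sutured differential counts discs in $\Sym^g(\Sigma'')$, equivalently discs in $\Sym^g(\Sigma_g)$ with $n_w(\phi) = n_z(\phi) = 0$, matching the definition of the $\HFK$ differential. Taking homology gives the second equality.

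The main obstacle is to verify carefully that the sutured manifold produced from $(\Sigma',\ba,\bb)$ by the handle-attachment procedure is really $Y(1)$, and not, for example, $Y$ with a ball removed but with the wrong suture arrangement. This is a direct bookkeeping of the construction: in the closed case one caps $D_z$ with a disc inside a $3$-handle, and here one instead leaves $D_z \times \{1/2\}$ as the sole suture, while the two caps $D_z \times \{0\}$ and $D_z \times \{1\}$ become the two halves of $\de Y(1) = S^2$ divided by $\gamma$. The analogous but slightly more intricate check in the knot case — identifying the suture slope as the meridian — is the one genuinely non-formal step; once these topological identifications are made, the chain-level identifications are tautological, and invariance of $SFH$ under the choice of sutured diagram (Juh\'asz's theorem) takes care of dependence on auxiliary data.
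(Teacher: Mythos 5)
Your argument is correct and is precisely the standard proof from Juh\'asz's paper \cite{Ju}, which this paper cites for the statement without reproducing the argument: puncture the pointed (resp.\ doubly pointed) Heegaard diagram at the basepoint(s) to obtain a sutured diagram for $Y(1)$ (resp.\ for the knot complement with two meridional sutures), and observe that generators and differentials coincide on the nose. The two points you flag as needing care --- that the suture on $S^2$ is a single curve bounding the two caps, and that the two sutures on the torus boundary have meridional slope --- are indeed the only non-formal checks, and they go through as you describe.
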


One key feature of Heegaard Floer homology is a TQFT-like behaviour: given a four-dimensional cobordism $W: Y_1 \leadsto Y_2$, to each $\spin$-structure $\mathfrak{t}\in \spin(W)$ we associate a map $F_{W,\mathfrak{t}}: \HF(Y_1)\to\HF(Y_2)$; only a finite number of $\spin$-structures induce a nontrivial map \cite{OStriangles}, so it makes sense to define the total cobordism map $F_W = \sum F_{W,\mathfrak{t}}$. We'll be dealing with cobordisms induced by a single (four-dimensional) 2-handle attachment: in this case, the total cobordism map can be described explicitly as follows.

In such a cobordism, $Y_2$ is obtained from $Y_1$ as an integral surgery along a knot $K$, and in particular $Y_1$ and $Y_2$ can be represented as two Heegaard diagrams $(\Sigma,\ba,\bb)$ and $(\Sigma,\ba,\bc)$ such that the curves $\gamma_2,\dots,\gamma_g$ in $\bc$ are obtained from $\beta_2,\dots,\beta_g$ respectively by a small Hamiltonian perturbation. The two remaining curves $\beta_1$ and $\gamma_1$ represent a pair (meridian, longitude) on the boundary of a neighbourhood of $K$, and in particular they intersect exactly once. There's a canonical intersection point $\bfTheta$ in $(\Sigma,\bb,\bc)$, that corresponds to the top Maslov degree element in $\HF(\Sigma,\bb,\bc) = \HF(\#^{g-1}(S^1\times S^2))$.

The map $CF(Y_1)\to CF(Y_2)$ associated to this handle attachment counts pseudo-holomorphic triangles in $\Sym^g(\Sigma)$ or maps from Riemann surfaces to $\Sigma\times \Delta$ ($\Delta$ being a standard triangle), again, with appropriate boundary conditions, and involving the point $\bfTheta$.

One can also compute every single $F_{W,\mathfrak{t}}$: the domain associated to each holomorphic triangle has a well-defined $\spin$-structure, and we restrict our sum to the triangles whose structure is $\mathfrak{t}$.

Arguably, one of the most useful features of Heegaard Floer homology is the \deff{surgery exact triangle}:

\begin{thm}[\cite{OSPA}]
Given a knot $K$ in a three-manifold $Y$, and three slopes $f,g,h\in H_1(\de N(K))$ such that $f\cdot g = g\cdot h = h\cdot f = 1$, there are three maps induced by appropriate integral surgeries, such that the triangle,
\[
\xymatrix{
\HF(Y_f(K))\ar[rr] & & \HF(Y_g(K))\ar[dl]\\
& \HF(Y_h(K))\ar[ul]
}
\]
is exact. In particular, this holds when $K$ is nullhomologous, $f=\infty$, $g$ is integral and $h=g+1$.
\end{thm}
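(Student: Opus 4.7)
My strategy is to realise all three surgeries through a single multi-Heegaard diagram so that the three maps in the triangle arise as triangle-counting maps on a common chain complex, and then to use the $A_\infty$-associativity of pseudo-holomorphic polygon counts to build chain homotopies between consecutive compositions. Exactness is then extracted from an algebraic lemma together with a local model computation near $\de N(K)$.

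First, starting from a Heegaard diagram $(\Sigma, \ba, \bb)$ for $Y_f(K)$ in which one distinguished curve $\beta_1$ realises the slope $f$ on $\de N(K)$, I would produce $\bc = \{\gamma_1, \beta_2', \ldots, \beta_g'\}$ and $\bd = \{\delta_1, \beta_2'', \ldots, \beta_g''\}$ by replacing $\beta_1$ with curves $\gamma_1, \delta_1$ of slopes $g, h$ on the same torus and taking small Hamiltonian perturbations of the remaining curves. By construction, $(\Sigma, \ba, \bc)$ and $(\Sigma, \ba, \bd)$ are Heegaard diagrams for $Y_g(K)$ and $Y_h(K)$. The hypothesis that $f, g, h$ pairwise intersect in a single point ensures that each of the auxiliary diagrams $(\Sigma, \bb, \bc)$, $(\Sigma, \bc, \bd)$, $(\Sigma, \bd, \bb)$ represents $\#^{g-1}(S^1 \times S^2)$ and carries a canonical top-degree generator $\bfTheta$. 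The three triangle maps $f_1, f_2, f_3$ are then defined by counting pseudo-holomorphic triangles in $\Sym^g(\Sigma)$ with one vertex at the appropriate $\bfTheta$, exactly as in the single 2-handle cobordism map recalled above.

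To show that consecutive compositions are null-homotopic, I would count pseudo-holomorphic rectangles in the quadruple diagrams: the standard $A_\infty$-associativity identity expresses each composition $f_{i+1}\circ f_i$ as the sum of a rectangle-counting chain homotopy and a triangle map whose defining input is the image of $\bfTheta_{\bb\bc}\otimes\bfTheta_{\bc\bd}$ (and cyclic permutations) in the relevant Floer complex. A local calculation in a neighbourhood of $\de N(K)$, where $\beta_1, \gamma_1, \delta_1$ form a small triangle dual to their pairwise intersections, shows that this image is null-homologous, so each consecutive composition is chain-homotopically trivial.

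Finally, to upgrade these null-homotopies to exactness on homology, I would invoke an algebraic exact triangle lemma: provided the three compositions are null-homotopic with specified homotopies and the induced map from the mapping cone of one triangle map to the third complex is a quasi-isomorphism, the cyclic sequence of homology maps is exact. The main obstacle is verifying this quasi-isomorphism. I would approach it by a filtration and neck-stretching argument that localises the calculation near $\de N(K)$: stretching the neck of an annular region containing $\beta_1, \gamma_1, \delta_1$ reduces the problem to a model computation in a solid-torus setup where the three ``surgeries'' and all relevant holomorphic triangles and rectangles can be enumerated explicitly. Once the local model delivers the required quasi-isomorphism, the exact triangle follows uniformly for arbitrary $K \subset Y$, and specialising to $f = \infty$, $g$ integral, $h = g+1$ yields the integer surgery triangle used later in the paper.
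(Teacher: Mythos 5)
The paper does not prove this theorem: it is stated as a background result and attributed directly to \cite{OSPA} (Ozsv\'ath--Szab\'o, \emph{Holomorphic disks and three-manifold invariants: properties and applications}), with no argument supplied. So there is no ``paper's own proof'' to compare against, and what you have produced is an outline of the original Ozsv\'ath--Szab\'o proof.

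As such an outline, the strategy you describe is essentially the standard one and is correct in spirit. Setting up a multi-pointed Heegaard diagram $(\Sigma,\ba,\bb,\bc,\bd)$ in which $\beta_1,\gamma_1,\delta_1$ realize the three slopes on $\de N(K)$ and the other curves are mutual Hamiltonian translates, defining the three triangle maps via the canonical top-degree generators $\bfTheta$, and producing chain homotopies for consecutive compositions by counting holomorphic quadrilaterals is exactly how the triangle is built. Two details deserve a warning. First, the algebraic exactness lemma you invoke is the right one, but the hard half is not just that $f_{i+1}\circ f_i$ is null-homotopic; Ozsv\'ath--Szab\'o also verify that the associated map $f_{i+2}\circ H_i + H_{i+1}\circ f_i$ (equivalently, your mapping-cone comparison map) is a quasi-isomorphism, and this requires counting pseudo-holomorphic \emph{pentagons} in the quadruple diagram and identifying certain $\bfTheta$-products with the top-degree class in $\HF(\#^{g-1}(S^1\times S^2))$. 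Your proposal mentions only rectangles, and it would need the higher polygon counts to close the argument. Second, the ``neck-stretching'' localisation you propose is not how the local model is handled in \cite{OSPA}: there the reduction to a genus-one model is an essentially combinatorial/algebraic manipulation of the polygon counts and $\bfTheta$-generators, rather than a degeneration of the almost-complex structure. A neck-stretching argument would require establishing compactness and gluing results that the original proof avoids. Neither issue is a conceptual obstruction, but your write-up would need both supplemented before it constitutes a proof.
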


\subsection{Floer-theoretic contact invariants}

The first contact invariant to be defined in Heegaard Floer homology was Ozsv\'ath and Szab\'o's $c$ \cite{OScontact}. The definition that we give here was given by Honda, Kazez and Mati\'c \cite{HKM1}, and lead to the fruitful extension to invariants for manifolds with convex boundary, called $EH$, living in sutured Floer homology.

Since the latter is a strict generalisation of the former, we just give the definition of $EH$: if $\xi$ is a contact structure on $Y$, $c(\xi)$ is equivalent to $EH(\xi')$, where $\xi'$ is the restriction of $\xi$ to $Y\setminus B$, and $B$ is a small Darboux ball.

\begin{defn}
A \deff{partial open book} is a triple $(S,P,h)$ where $S$ is a compact open surface, $P$ is a proper subsurface of $S$ which is a union of 1-handles attached to $S\setminus P$ and $h:P\to S$ is an embedding that pointwise fixes a neighborhood of $\de P \cap \de S$.
\end{defn}

We can build a contact manifold with convex boundary out of these data in a fashion similar to the usual open books: instead of considering a mapping torus, though, we glue two asymmetric halves, quotienting the disjoint union $S\times[0,1/2]\coprod P\times [1/2,1]$ by the relations $(x,t)\sim (x,t')$ for $x\in \de S$, $(y,1/2)\sim (y,1/2)$, $(h(y),1/2)\sim (y,1)$ for $y\in P$. The contact structure is uniquely determined if we require -- as we do -- tightness and prescribed sutures on each half $S\times[0,1/2]/\mathord{\sim}$ and $P\times[1/2,1]/\mathord{\sim}$ (see \cite{Ho} for details). Moreover, to any contact manifold with convex boundary we can associate a partial open book, unique up to Giroux stabilisations.

We can build a balanced diagram out of a partial open book. The Heegaard surface $\Sigma$ is obtained by gluing $P$ to $-S$ along the common boundary.

\begin{defn}\label{basis}
A \deff{basis} for $(S,P)$ is a set $\mathbf{a} = \{a_1,\dots,a_k\}$ of arcs properly embedded in $(P,\de P \cap \de S)$ whose homology classes generate $H_1(P,\de P \cap \de S)$.
\end{defn}

Given a basis as above, we produce a set $\mathbf{b} = \{b_1,\dots,b_k\}$ of curves using a Hamiltonian vector field on $P$: we require that under this perturbation the endpoints of $a_i$ move in the direction of $\de P$, and that each $a_i$ intersects $b_i$ in a single point $x_i$, and is disjoint from all the other $b_j$'s.

Finally define the two sets of attaching curves: $\ba = \{\alpha_i\}$ and $\bb = \{\beta_i\}$, where $\alpha_i = a_i \cup -a_i$ and $\beta_i = h(b_i) \cup -b_i$: the sutured manifold associated to $(\Sigma,\ba,\bb)$ is $(M,\Gamma)$. We call $\x(S,P,h)$ the generator $\{x_1,\dots,x_k\}$ in $SFC(\Sigma,\bb,\ba)$ supported inside $P$.

\begin{prop}[\cite{HKM1}]
The chain $\x(S,P,h)\in SFC(\Sigma,\bb,\ba)$ is a cycle, and its class in $SFH(-M,-\Gamma)$ is an invariant of the contact manifold $(M,\xi)$ defined by the partial open book $(S,P,h)$.
\end{prop}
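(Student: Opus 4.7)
This proposition asserts two things: that $\x = \x(S,P,h)$ is a cycle in $SFC(\Sigma,\bb,\ba)$, and that its homology class in $SFH(-M,-\Gamma)$ depends only on the contact manifold $(M,\xi)$ and not on the particular partial open book presenting it. The plan is to treat these in turn; the invariance statement further splits into independence from the basis $\mathbf{a}$ and from the Giroux stabilization class of $(S,P,h)$.

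For the cycle claim, I would analyze the Heegaard diagram $(\Sigma, \ba, \bb)$ locally near each $x_i = a_i \cap b_i$. The perturbation producing $b_i$ is a Hamiltonian push-off in $P$ whose endpoints move outward toward $\de P$, and $x_i$ lies in the interior of $P$. The key observation is that one of the four quadrants at $x_i$ is contained in a region whose closure meets $\de \Sigma$; since the admissibility condition for the sutured differential forces every region touching $\de \Sigma$ to carry multiplicity zero in any counted domain, this quadrant is forbidden. Using positivity of multiplicities and connectedness of $\Sigma$, I would then propagate this constraint from each $x_i$ outward to show that the entire domain of any holomorphic disc with $\x$ as an incoming corner is trivial, giving $\de \x = 0$.

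For basis-independence, any two bases of $H_1(P, \de P \cap \de S)$ are related by a finite sequence of arc slides, each of which translates into an $\alpha$-handleslide on the Heegaard diagram. The standard triangle-counting continuation map $SFC(\Sigma, \ba, \bb) \to SFC(\Sigma, \ba', \bb)$ has, by a small-triangle argument centred at the points $x_i$ and the top-dimensional generator of $HF(\Sigma, \ba, \ba')$, leading term sending $\x$ to $\x'$, so the classes agree. For invariance under Giroux stabilization, stabilizing $(S,P,h)$ adds a 1-handle to $S$ together with a new basis arc that meets its perturbed copy in a single new point; the associated Heegaard diagram differs from the original by a sutured Heegaard stabilization, under which $SFH$ is naturally invariant and the generator $\x$ is enlarged by the new canonical intersection. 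The heart of the proof, and the main obstacle, is the cycle claim: one has to convert the geometric fact that the Hamiltonian perturbation points outward at $\de P$ into an actual combinatorial obstruction to any non-trivial disc with a corner at $\x$; the two invariance steps are then adaptations of standard Heegaard Floer handleslide and stabilization arguments to the sutured setting.
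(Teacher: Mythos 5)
This proposition is not proved in the paper at all: it is background material quoted from Honda--Kazez--Mati\'c \cite{HKM1}, so your proposal can only be compared with their original argument. At the level of architecture your outline does match it: the cycle claim is extracted from the local picture at the points $x_i$ together with the vanishing of multiplicities along $\de\Sigma$, basis independence is proved by realising arc slides as handleslides and identifying the image of $\x$ under the associated triangle-counting map via a small-triangle count, and Giroux stabilisation is absorbed into a (sutured) Heegaard stabilisation. The two invariance steps are described correctly for a sketch.

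The step you yourself identify as the heart of the proof, however, is mis-stated in a way that does not close. At each $x_i$ it is not one but \emph{two} of the four quadrants that lie in the thin strip regions between $a_i$ and $b_i$ touching $\de\Sigma$: there is one such strip at each pair of nearby endpoints of $a_i$ and $b_i$, and since $b_i$ crosses $a_i$ exactly once at $x_i$, the two strips occupy \emph{diagonally opposite} quadrants. This is what makes the argument work, and it makes it purely local: for a positive domain $\D\in\pi_2(\x,\y)$ in the diagram $(\Sigma,\bb,\ba)$, the corner relation at $x_i$ then forces the sum of the multiplicities of the two remaining quadrants to equal $-1$, contradicting positivity, so no nonconstant positive domain leaves $\x$. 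No propagation and no connectedness of $\Sigma$ are needed. With only one quadrant known to vanish, as you assert, the corner relation is still satisfiable (local multiplicities $(0,1,0,0)$ give a legitimate corner), and it is not clear how ``propagating outward'' would exclude such configurations. Two further small points: the vanishing of multiplicities in regions meeting $\de\Sigma$ is part of the \emph{definition} of the sutured differential (the analogue of the condition $n_z=0$), not an admissibility hypothesis; and the ordering of the attaching sets is essential, since the same local computation with the sign of the corner relation reversed shows only that $\x$ is a cocycle in $(\Sigma,\ba,\bb)$ --- your phrase ``incoming corner'' elides exactly the convention that makes the statement one about $SFH(-M,-\Gamma)$.
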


\begin{defn}
$EH(M,\xi)$ is the class $[\x(S,P,h)] \in SFH(-M,-\Gamma)$ for some partial open book $(S,P,h)$ supporting $(M,\xi)$.
\end{defn}

The type of invariants that we're going to deal with are either invariants of (complements of) Legendrian knots or invariants coming from contact structures on closed manifolds: this allows us to consider (except for Section \ref{technical}) only sutured manifolds with sphere/torus boundary and one/two sutures, as described in Examples \ref{cont_sut_ex1} and \ref{cont_sut_ex2}.

Consider a closed contact manifold $(Y,\xi)$, and let $B\subset Y$ be a small, closed Darboux ball with convex boundary. Then consider the manifold $(Y(1),\xi(1))$ where $Y(1)$ is obtained from $Y$ by removing the interior of $B$, and $\xi(1)$ is $\xi|_{Y(1)}$. We can now state the following proposition, that will be our definition of the contact invariant $c$ in Heegaard Floer homology.

\begin{prop}[\cite{HKM1}]
The Ozsv\'ath-Szab\'o class $c(Y,\xi)$ is mapped to the Honda-Kazez-Mati\'c class $EH(Y(1),\xi(1))$ under the isomorphism of \ref{HFvsSFH}.
\end{prop}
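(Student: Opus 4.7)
The plan is to compare the two invariants directly at the chain level: both are represented by explicit tuples of intersection points in combinatorial diagrams, and the isomorphism $\HF(-Y)\cong SFH(-Y(1))$ of Proposition \ref{HFvsSFH} is realised by the transparent operation of puncturing the Heegaard surface at the basepoint. First I would start from a compatible open book for $(Y,\xi)$ used in the OS construction, and from it produce a partial open book for $(Y(1),\xi(1))$ whose associated data match the OS diagram away from a small disk.

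Concretely, fix a compatible open book $(S,\phi)$ for $(Y,\xi)$ with one binding component, a basis $\{a_1,\ldots,a_{2g}\}\subset S$ of properly embedded arcs, and Hamiltonian push-offs $\{b_i\}$ intersecting each $a_i$ once. The OS data is then the pointed Heegaard diagram $(\Sigma,\ba,\bb,z)$ with $\Sigma = S_{1/2}\cup (-S_0)$, $\alpha_i = a_i\cup a_i$, $\beta_i = b_i\cup\phi(b_i)$, $z$ placed near the binding, and distinguished cycle $\x_\xi = \{a_i\cap b_i\}\subset S_{1/2}$. Next I would construct a partial open book $(S',P,h)$ for $(Y(1),\xi(1))$ by letting $S' = S$, taking $P$ to be the complement in $S$ of a small half-disk adjacent to the binding at $z$, and setting $h = \phi$ on $P$. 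Because the arcs $\{a_i\}$ are disjoint from this half-disk, they serve as a basis for $P$ in the sense of Definition \ref{basis}, and their Hamiltonian push-offs produce the same curves $\{b_i\}$ as before.

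With these choices the HKM sutured Heegaard diagram coming from $(S',P,h)$ is literally $(\Sigma\setminus N(z),\ba,\bb)$ with suture $\partial N(z)$; this is precisely the surgery that realises $\HF(-Y)\cong SFH(-Y(1))$ in Proposition \ref{HFvsSFH}, generators correspond tautologically because disks avoiding $z$ match disks avoiding the new boundary, and the HKM generator $\x(S',P,h) = \{a_i\cap b_i\}$ is exactly the tuple $\x_\xi$. This yields $c(Y,\xi)\leftrightarrow EH(Y(1),\xi(1))$.

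The step I expect to be the main obstacle is verifying that the partial open book $(S',P,h)$ described above actually supports $(Y(1),\xi(1))$ and not some other contact structure with convex sphere boundary. This is a local computation near the binding showing that the HKM gluing $S\times[0,1/2]\coprod P\times[1/2,1]/\mathord{\sim}$ reproduces $(Y,\xi)$ with a standard Darboux ball excised, whose boundary sphere inherits exactly one dividing curve matching the prescribed suture.
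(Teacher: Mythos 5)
This proposition is cited from \cite{HKM1} and is not proved in the paper, so there is no ``paper's own proof'' to compare against; your proposal is essentially the argument given by Honda--Kazez--Mati\'c, and the central idea---that the sutured Heegaard diagram built from a restricted open book is exactly the pointed Ozsv\'ath--Szab\'o diagram with a disk around $z$ deleted, with the distinguished generator matching on the nose, and that discs avoiding $z$ are precisely discs avoiding the new boundary---is correct.

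Two remarks. First, as you set it up, $P = S\setminus(\text{small half-disk})$ does \emph{not} satisfy the definition of a partial open book: for $g(S)\ge 1$ that complement is a connected genus-$g$ surface, not a disjoint union of $1$-handles, since a disjoint union of $k$ rectangles has Euler characteristic $k>0$ while $\chi(P)=1-2g<0$. The fix is to choose a handle decomposition of the page $S$ into a $0$-handle $D$ and $2g$ disjoint $1$-handles, put $z$ inside $D$, and set $P$ equal to the union of the $1$-handles with $h=\phi|_P$; then the $a_i$ are the cocores, and $P\cup(-S)$ is still $(S\cup -S)\setminus\mathrm{int}(D)$, so the diagrammatic identification you want survives. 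Second, the step you flag as the ``main obstacle''---verifying that this restricted partial open book actually supports $(Y(1),\xi(1))$---really is the nontrivial content: it is the local model near the binding showing that a compatible open book for $(Y,\xi)$ restricts to a partial open book for $Y$ minus a standard Darboux ball with the convex sphere boundary having a single dividing curve, and it is established in \cite{HKM1} via the relative Giroux correspondence. Without that input the argument shows two cycles agree but not that the second one computes $EH(Y(1),\xi(1))$.
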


As a corollary, all properties of $c$ are inherited by $EH$, and in particular we recall the following:

\begin{cor}
If $(Y,\xi)$ is Stein fillable (respectively overtwisted) then the contact invariant $EH(Y(1),\xi(1))$ doesn't vanish (resp. vanishes).
\end{cor}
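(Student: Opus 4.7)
The plan is to transport the analogous, already-known properties of the Ozsv\'ath--Szab\'o contact class $c$ across the identification given in the preceding proposition. In other words, the corollary is not an independent theorem but a formal consequence of two ingredients: (i) the original Ozsv\'ath--Szab\'o theorems on $c$, and (ii) the fact that the isomorphism $\HF(-Y)\cong SFH(-Y(1))$ of Proposition \ref{HFvsSFH} sends $c(Y,\xi)$ to $EH(Y(1),\xi(1))$.

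First I would recall the two classical statements from \cite{OScontact}: if $(Y,\xi)$ admits a Stein filling, then $c(Y,\xi)\ne 0$ in $\HF(-Y)$; if $(Y,\xi)$ is overtwisted, then $c(Y,\xi)=0$. These are proved by Ozsv\'ath and Szab\'o by, respectively, relating $c$ to the relative invariant of a Stein filling (which is nonzero by a positivity-of-intersections/Lefschetz fibration argument) and by exhibiting an overtwisted disc that forces $c$ to live in the image of a trivial map.

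The second step is purely formal: apply the isomorphism $\Phi:\HF(-Y)\to SFH(-Y(1))$ of Proposition \ref{HFvsSFH}. By the proposition immediately preceding the corollary, $\Phi(c(Y,\xi)) = EH(Y(1),\xi(1))$. Since $\Phi$ is an isomorphism, $c(Y,\xi)\ne 0$ if and only if $EH(Y(1),\xi(1))\ne 0$, and similarly for vanishing. Combining with the Stein-fillable and overtwisted cases above yields exactly the two implications of the corollary.

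There is no real obstacle here: the statement is a dictionary translation, and the only point one has to verify is that removing a Darboux ball and restricting $\xi$ does not alter the fillability or overtwistedness of the ambient contact structure in any way that matters for the invariant -- which is immediate since $(Y(1),\xi(1))$ is, by construction, just the convex-boundary sutured avatar of $(Y,\xi)$ used to define $EH$. So the proof reduces to citing the isomorphism, quoting the two results of Ozsv\'ath--Szab\'o, and observing that an isomorphism preserves (non)vanishing.
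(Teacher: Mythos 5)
Your proposal is correct and is exactly the argument the paper intends: the corollary is stated as an immediate consequence of the preceding proposition identifying $c(Y,\xi)$ with $EH(Y(1),\xi(1))$ under the isomorphism of Proposition \ref{HFvsSFH}, combined with the nonvanishing/vanishing theorems for $c$ from \cite{OScontact}. The paper gives no separate proof beyond the remark that ``all properties of $c$ are inherited by $EH$,'' so your dictionary-translation argument is the same one.
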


The second type of invariants comes from Legendrian knots: let's suppose that $L\subset Y$ is a Legendrian knot with respect to a contact structure $\xi$: then the contact manifold $Y_L$ defined in Example \ref{cont_sut_ex2} has a contact invariant $EH(Y_L) \in SFH(-Y_L)$. We'll denote this invariant by $EH(L)$, considering it as an invariant of the Legendrian isotopy class of $L$ rather than of its complement.

\subsection{Gluing maps}

In their paper \cite{HKM2}, Honda, Kazez and Mati\'c define maps associated to the gluing of a contact manifold to another one along some of the boundary components, and show that these maps preserve their $EH$ invariant. Consider two sutured manifolds $(M,\Gamma) \subset (M',\Gamma')$, where $M$ is embedded in ${\rm Int}(M')$; let $\xi$ be a contact structure on $N:=M'\setminus {\rm Int}(M)$ such that $\de N$ is $\xi$-convex and has dividing curves $\Gamma \cup \Gamma'$. For simplicity, and since this will be the only case we need, we'll restrict to the case when each connected component of $N$ intersects $\de M'$ (\emph{i.e.} gluing $N$ to $M$ doesn't kill any boundary component).

\begin{thm}\label{EHmap}
The contact structure $\xi$ on $N$ induces a \deff{gluing map} $\Phi_\xi$, that is a linear map $\Phi_\xi: SFH(-M,-\Gamma)\to SFH(-M',-\Gamma')$. If $\xi_M$ is a contact structure on $M$ such that $\de M$ is $\xi_M$-convex with dividing curves $\Gamma$, then $\Phi_{\xi}(EH(M,\xi_M)) = EH(M',\xi_M\cup \xi)$.
\end{thm}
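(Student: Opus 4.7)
The plan is to build the gluing map following the blueprint of Honda--Kazez--Mati\'c. First I would choose a partial open book $(S,P,h)$ adapted to $(M,\Gamma)$ in the sense that it computes $SFH(-M,-\Gamma)$ via the associated sutured Heegaard diagram $(\Sigma,\bb,\ba)$; similarly I would choose a partial open book for $(N,\xi)$ whose binding is compatible with the dividing set on the piece of $\de N$ glued to $\de M$. The key input is that these two partial open books can be spliced into a partial open book $(S^+,P^+,h^+)$ for the glued manifold $M'$ that supports the glued contact structure $\xi_M\cup\xi$ on the interior portion, and that the associated Heegaard diagram $(\Sigma^+,\bb^+,\ba^+)$ for $(M',\Gamma')$ contains $(\Sigma,\bb,\ba)$ as a sub-diagram, with the complementary curves and arcs coming entirely from the $N$-piece.

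Next I would define the map on chains by the formula
\[
\Phi_\xi(\x) \;=\; \x \cup \x_N,
\]
where $\x_N$ is the distinguished intersection tuple associated to the partial open book basis chosen for $N$, i.e.\ the analogue of $\x(S,P,h)$ for the $N$-piece. For this to make sense I need to check that $\Phi_\xi$ is a chain map: the Heegaard diagram for $M'$ differs from that for $M$ only in the added curves coming from $N$, and in a neighbourhood of the added basis arcs these curves behave like a product, so any holomorphic disc contributing to $\de(\x\cup\x_N)$ either stays in the $M$-part (and hence contributes to $\de\x$ tensored with $\x_N$) or would force a non-trivial Whitney disc connecting $\x_N$ to a different intersection tuple on the $N$-part; positivity of domain multiplicities near the added sutures rules this out, exactly as in the proof of cycle-ness of the HKM class.

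Once $\Phi_\xi$ is a chain map, the equality $\Phi_\xi(EH(M,\xi_M))=EH(M',\xi_M\cup\xi)$ is tautological by construction, since $EH$ is defined by a distinguished generator and the distinguished generator for the glued partial open book is precisely $\x(S,P,h)\cup\x_N=\x(S^+,P^+,h^+)$. It then only remains to verify that the induced map on homology is independent of the auxiliary choices: different partial open books for $N$ compatible with $\xi$ differ by Giroux stabilisations supported away from $\de M$, and each such stabilisation is realised at the Heegaard level by a sutured stabilisation of the diagram of $M'$ that does not interact with $M$, so it induces the identity on $SFH(-M',-\Gamma')$ and intertwines the candidate gluing maps.

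The main obstacle is the chain-map verification, i.e.\ showing that no holomorphic curve in $\Sym^{d+d_N}(\Sigma^+)$ can ``leak'' between the $M$-part and the $N$-part of the generator. Making this rigorous requires a careful analysis of the domains in the glued Heegaard diagram and a positivity argument ensuring that the added $\alpha$- and $\beta$-curves bound a region in which $\x_N$ is the unique intersection tuple with non-negative multiplicity; this is exactly the technical content of the HKM gluing theorem, and once it is in hand, the remaining steps (well-definedness and compatibility with $EH$) follow formally.
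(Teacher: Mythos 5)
This theorem is not proved in the paper: it is recalled from Honda, Kazez and Mati\'c \cite{HKM2}. The paragraph immediately preceding the statement explicitly attributes the construction of the gluing maps and the preservation of the $EH$ class to that paper, and no argument is given here. So there is no internal proof to compare your sketch against; the most I can do is assess your outline on its own terms as a summary of the cited argument.

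As such a summary it captures the right spirit: one builds a ``contact-compatible'' extension of a sutured Heegaard diagram by adjoining the open-book data for $(N,\xi)$, defines the candidate map by $\x\mapsto\x\cup\x_N$ for the distinguished tuple $\x_N$ supported in the $N$-part, and proves the chain-map property by a positivity-of-domains argument modelled on the cycle-ness proof for the HKM contact class. Two points are worth flagging, though. First, you begin by choosing a partial open book for $(M,\Gamma)$ as well; that is unnecessarily restrictive and in fact does not suffice to define $\Phi_\xi$ on all of $SFH(-M,-\Gamma)$. The HKM construction starts from an \emph{arbitrary} sutured Heegaard diagram for $(M,\Gamma)$ and extends it to a contact-compatible diagram for $(M',\Gamma')$; the partial open book for $M$ enters only when relating $\Phi_\xi$ to $EH(M,\xi_M)$ in the second half of the statement. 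Second, your well-definedness discussion handles Giroux stabilisations of the partial open book for $N$, but the map must also be shown to be independent of the choice of diagram for $M$ (and of the intermediate ``compatible'' extension), which is a separate invariance check. These are real gaps if one wanted a self-contained proof, but since the paper delegates the proof entirely to \cite{HKM2}, they do not reflect a divergence from anything the paper actually does.
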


This theorem has interesting consequences, even in simple cases:

\begin{cor}\label{nonvanishing}
If $(M,\Gamma)$ embeds in a Stein fillable contact manifold $(Y,\xi)$, and $\de M$ is $\xi$-convex, divided by $\Gamma$, then $EH(M,\xi|_M)$ is not trivial.
\end{cor}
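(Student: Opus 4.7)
The plan is to apply Theorem \ref{EHmap} directly, taking $(M',\Gamma')$ to be $Y$ with a small Darboux ball removed, and then invoke the known non-vanishing of the Ozsv\'ath-Szab\'o contact invariant for Stein fillable structures.

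First I would choose a small closed Darboux ball $B\subset Y\setminus M$ (which exists since $M$ has nonempty boundary, so $Y\setminus M$ has nonempty interior) with $\xi$-convex boundary and single dividing curve $\gamma$. Set $M'=Y\setminus\mathrm{Int}(B)=Y(1)$, equipped with suture $\Gamma'=\{\gamma\}$, and let $N=M'\setminus\mathrm{Int}(M)$. By construction $\partial N=\partial M\sqcup\partial B$, both pieces are $\xi$-convex, and the dividing set is precisely $\Gamma\cup\Gamma'$. The hypothesis that every component of $N$ meets $\partial M'=\partial B$ is satisfied after, if necessary, sliding $B$ so that it lies in any preassigned component of $Y\setminus M$; alternatively, any component of $N$ not touching $\partial B$ is closed off from $M$ by $M'$, which contradicts connectedness of $Y$.

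Next I would feed this setup into Theorem \ref{EHmap}: the contact structure $\xi|_N$ produces a gluing map
\[
\Phi_{\xi|_N}\colon SFH(-M,-\Gamma)\longrightarrow SFH(-M',-\Gamma')=SFH(-Y(1),-\{\gamma\}),
\]
and by the second clause of the same theorem it sends $EH(M,\xi|_M)$ to $EH(M',\xi|_M\cup\xi|_N)=EH(Y(1),\xi|_{Y(1)})$. Under the isomorphism of Proposition \ref{HFvsSFH} the target is $\HF(-Y)$, and the image class is precisely $c(Y,\xi)$ by the proposition identifying $EH$ and $c$ on $Y(1)$.

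Finally, since $(Y,\xi)$ is Stein fillable, $c(Y,\xi)\neq 0$ by the corollary stated just before the definition of $EH(L)$. As $\Phi_{\xi|_N}$ sends $EH(M,\xi|_M)$ to a nonzero element, $EH(M,\xi|_M)$ itself must be nonzero. There is no real obstacle here; the only point that requires mild care is verifying the hypothesis on $N$ that every component touches $\partial M'$, which is handled by the freedom to position $B$.
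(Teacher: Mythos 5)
Your proof is essentially the paper's proof: choose a Darboux ball $B$ disjoint from $M$, set $M'=Y(1)$ and $N=Y(1)\setminus\mathrm{Int}(M)$, apply Theorem~\ref{EHmap} to the layer $\xi|_N$, and conclude from $c(Y,\xi)\neq 0$. The one place where you go beyond the paper — attempting to justify the hypothesis that every component of $N$ meets $\partial M'=\partial B$ — is where the argument slips. It is not true that a component $N_0$ of $N$ with $\partial N_0\subset\partial M$ contradicts connectedness of $Y$: take $M=T^2\times[0,1]$ embedded in $S^3$, so that $Y\setminus M$ has two solid torus components and $B$ can live in only one of them; $Y$ is perfectly connected. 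The paper silently elides this point (the restriction in Theorem~\ref{EHmap} is stated ``for simplicity, and since this will be the only case we need''), and in the applications in the paper $M$ is always a knot complement or a punctured three-manifold, for which $Y\setminus M$ is connected. So the gap you are trying to close is real but not closable by the reasoning you give; it is handled either by restricting the class of $(M,\Gamma)$ one cares about, or by invoking the general Honda--Kazez--Mati\'c gluing map without the paper's simplifying hypothesis.
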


\begin{proof}
We know that $c(Y,\xi)$ doesn't vanish, and so does $EH(Y(1),\xi(1))$. Since we allowed ourselves much freedom in the choice of the ball to remove to get $Y(1)$, we can suppose that $M\subset {\rm Int}(Y(1))$. Call $N=Y(1)\setminus{\rm Int}(M)$ the closure of the complement of $M$: the map $\Phi_{\xi|_N}$ carries $EH(M,\xi|_M)$ to $EH(Y(1),\xi(1))$, and since the latter is nonzero, so is the former.
\end{proof}

\begin{rmk}
In the proof we've been using something less than being Stein fillable, but just that $c(Y,\xi)\neq 0$: this is equivalent to being Stein fillable for the 3-sphere and for lens spaces (by results of Eliashberg \cite{El2} and Honda \cite{Ho} respectively), but in general the second condition is weaker (as shown, for example, by Lisca and Stipsicz in \cite{LS1}).
\end{rmk}

There's also a naturality statement, concerning the composition of two gluing maps: suppose that we have three sutured manifolds $(M,\Gamma)\subset(M',\Gamma')\subset(M'',\Gamma'')$ as at the beginning of the section, and suppose that $\xi$ and $\xi'$ are contact structures on $M'\setminus{\rm Int}(M)$ and $M''\setminus{\rm Int}(M')$ respectively, that induce sutures $\Gamma$, $\Gamma'$ and $\Gamma''$ on $\de M$, $\de M'$ and $\de M''$ respectively.

\begin{thm}
If $\xi$ and $\xi'$ are as above, then $\Phi_{\xi\cup\xi'} = \Phi_{\xi'}\circ\Phi_{\xi}$.
\end{thm}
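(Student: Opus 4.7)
The plan is to prove compositionality at the chain level by using a single, nested system of partial open books that realizes all three sutured manifolds simultaneously, and then to appeal to the fact (established in the construction of $\Phi_\xi$) that the gluing map is independent of the partial open book chosen to define it.

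First, I would pick an auxiliary contact structure $\xi_M$ on $M$ with convex boundary divided by $\Gamma$, and a partial open book $(S_M,P_M,h_M)$ supporting $(M,\xi_M)$. Next, choose partial open books $(S_N,P_N,h_N)$ and $(S_{N'},P_{N'},h_{N'})$ for the intermediate contact pieces $(N,\xi)$ and $(N',\xi')$, where $N=M'\setminus{\rm Int}(M)$ and $N'=M''\setminus{\rm Int}(M')$. Using the fact that a partial open book for a contact manifold with convex boundary is unique up to Giroux stabilisation, I may assume (after stabilising) that the boundary identifications match and that the three partial open books glue along their common boundary pieces. The result is a pair of partial open books $(S_M\cup S_N,P_M\cup P_N,h_M\cup h_N)$ and $(S_M\cup S_N\cup S_{N'},P_M\cup P_N\cup P_{N'},h_M\cup h_N\cup h_{N'})$ supporting $(M',\xi_M\cup\xi)$ and $(M'',\xi_M\cup\xi\cup\xi')$ respectively. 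In the associated balanced Heegaard diagrams, the one for $M'$ contains the one for $M$ as a subdiagram (with the extra curves coming from the $N$-portion), and similarly the one for $M''$ contains the one for $M'$.

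Second, I would unwind the chain-level description of the gluing map. In this nested set-up, $\Phi_\xi$ sends a generator $\x$ of $SFC(-M,-\Gamma)$ to the generator $\x\cup\x_N$ of $SFC(-M',-\Gamma')$, where $\x_N$ is the distinguished contact tuple produced by the basis of $(S_N,P_N)$; the content of Theorem \ref{EHmap} is precisely that this chain map is well-defined on homology and carries $EH$ to $EH$. Applied twice, the composition $\Phi_{\xi'}\circ\Phi_\xi$ then sends $\x$ to $(\x\cup\x_N)\cup\x_{N'}=\x\cup\x_N\cup\x_{N'}$. On the other hand, the partial open book for the combined collar $N\cup N'$ (relative to $\xi\cup\xi'$) is, by construction, exactly the union of the partial open books for $N$ and $N'$, so its associated contact tuple is $\x_N\cup\x_{N'}$, and $\Phi_{\xi\cup\xi'}$ sends $\x$ to $\x\cup(\x_N\cup\x_{N'})$. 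The two chain-level outputs coincide term by term.

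Finally, I would conclude by invoking the independence of the gluing map from the choice of partial open book: the identity $\Phi_{\xi\cup\xi'}=\Phi_{\xi'}\circ\Phi_\xi$ holds on the nose for this particular triple of compatible diagrams, and since both sides descend to $SFH$ and are independent of the defining data, the equality holds in general.

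I expect the main obstacle to be Step~1, namely arranging a genuinely nested triple of partial open books. A priori the partial open books coming from $\xi$ and $\xi'$ are only well-defined up to Giroux stabilisation, and one must check that after suitable stabilisations the boundary data agree so that the three pieces can actually be glued simultaneously. This is a purely contact-topological matching issue, essentially the same type of compatibility argument that underlies the well-definedness of $\Phi_\xi$ itself, so once it is set up correctly the remaining verification is the transparent bookkeeping described above.
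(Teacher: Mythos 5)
The paper states this result (attributed to \cite{HKM2}) without reproving it, so there is no in-paper argument to compare against; I will assess your proposal on its merits. Your strategy --- nested partial open books, chain-level extension by contact tuples, and an appeal to independence --- is faithful to the \cite{HKM2} construction of gluing maps and is the natural way to prove compositionality. The architecture is right.

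However, the step you defer to the end is essentially the whole theorem, and it is subtler than ``purely contact-topological matching issue'' suggests. For the chain-level identity $\x\cup\x_N\cup\x_{N'} = \x\cup\x_{N\cup N'}$ to hold literally, the partial open books for $(N,\xi)$ and $(N',\xi')$ must match along $\partial M'$ not merely as page-and-dividing-set data but as \emph{based} partial open books: the contact tuple $\x_N$ is determined by a choice of basis as in Definition \ref{basis}, and Giroux-stabilising to make the two open books agree along $\partial M'$ will in general change that basis and hence the tuple. If the $M'$-diagram you use to compute $\Phi_{\xi'}$ is not literally the diagram produced inside $\Phi_\xi$, you must interpose a chain homotopy equivalence between the two $M'$-diagrams and verify it carries contact tuples to contact tuples --- and that verification is precisely the independence argument you are invoking rather than redoing, so you need to be explicit about exactly what statement it provides and why it suffices. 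A secondary slip: Theorem \ref{EHmap} does not assert that $\x\mapsto\x\cup\x_N$ is well-defined at the chain or homology level; it only records the existence of $\Phi_\xi$ and its effect on contact classes, while the chain-level model and its well-definedness live in the uncited details of the \cite{HKM2} construction. Finally, note that the paper's own technical toolkit in Section \ref{technical} handles gluing maps via Rasmussen's triangle-count model, where compositionality for elementary pieces follows from the associativity of triangle counts (Proposition \ref{associativity}); proceeding handle by handle in that framework gives an alternative and arguably cleaner route to naturality that sidesteps the open-book matching question.
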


Much of our interest will be devoted to stabilisations of Legendrian knots and associated maps, whose discussion will occupy Subsection \ref{substabmaps}: we give a brief summary of the contact side of their story here.

Let's start with a definition, due to Honda \cite{Ho}:
\begin{defn}\label{basic_slices}
Let $\eta$ be a tight contact structure on $T^2\times I$ with two dividing curves on each boundary component: call $\gamma_i$, $-\gamma_i$ the homology class of the two dividing curves on $T^2 \times\{i\}$, and let $s_i\in \Q\cup\{\infty\}$ be their slope. $(T^2 \times I,\eta)$ is a \deff{basic slice} if it is of the form above, and also satisfies the following three conditions:
\begin{itemize}\itemsep-1pt
\item $\{\gamma_0, \gamma_1\}$ is a basis for $H_1(T^2)$;
\item whenever $T_t$ is convex, there are exactly two dividing curves.
\item $\xi$ is \deff{minimally twisting}, \emph{i.e.} if $T_t = T\times \{t\}$ is convex, the slope of the dividing curves on $T_t$ belongs to $[s_0, s_1]$ (where we assume that if $s_0>s_1$ the interval $[s_0,s_1]$ is $[-\infty,s_1]\cup[s_0,\infty]$);
\end{itemize}
\end{defn}

Honda proved the following:

\begin{prop}[\cite{Ho}]
For every integer $t$ there exist exactly two basic slices $(T^2\times I, \xi_j)$ (for $j=1,2$) with boundary slopes $(t,1)$ and $(t-1,1)$. The sutured complement of a stabilisation $L'$ of $L$ is gotten by attaching one of the two basic slices to $Y_L$, where the trivialization of $T^2$ is given by $(0,1)=\mu$ and $(t,1)=c$, where $\mu$ and $c$ are a meridian and the contact framing for $L$ respectively.
\end{prop}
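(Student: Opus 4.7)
The plan is to split the proposition into its two assertions and prove each by reducing to Honda's classification machinery for tight contact structures on $T^2\times I$. The first assertion is the existence and count of basic slices with the prescribed boundary slopes; the second is the identification of the complement of a stabilisation with (the attachment of) such a slice to $Y_L$.

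For the existence/uniqueness of basic slices, I would start by observing that the two boundary slopes $(t,1)$ and $(t-1,1)$ form a $\Z$-basis of $H_1(T^2)$, i.e.\ they correspond to an edge in the Farey tessellation. After applying a diffeomorphism of $T^2$ represented by a matrix in $SL(2,\Z)$, one reduces to a standard normalised pair of boundary slopes, for instance $(0,1)$ and $(-1,1)$. Honda's classification theorem for tight contact structures on $T^2\times I$ with two dividing curves on each boundary component then applies: when the boundary slopes are Farey-adjacent and we impose minimal twisting, the continued-fraction expansion has length one, so the tight structures are in bijection with sign choices on a single bypass layer. This gives exactly two such structures, distinguished by the sign of the Poincar\'e dual of the relative Euler class, and these are by definition the basic slices. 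Pulling the normalisation back by the $SL(2,\Z)$ element gives the result for the boundary slopes $(t,1),(t-1,1)$.

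For the identification with the stabilisation, I would work in the standard model of the contact neighbourhood of a Legendrian knot: $\nu(L)\cong S^1\times D^2$ with a tight, $S^1$-invariant structure whose convex boundary has two dividing curves of slope equal to the contact framing $c$. After a stabilisation, the contact framing drops by $1$, so $c(L')=c-\mu$, and one can choose a Legendrian realisation of $L'$ sitting inside $\nu(L)$ whose own standard neighbourhood $\nu(L')$ is contained in $\nu(L)$. The closure of $\nu(L)\setminus \nu(L')$ is then a thickened torus $T^2\times I$ with convex boundary, the outer slope being that of $c=(t,1)$ and the inner slope being that of $c(L')=(t-1,1)$ in the basis where $\mu=(0,1)$. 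Because it sits inside a tight solid torus and contains no extra dividing curves (the standard model stabilisation, visible in the front projection via the cusp move, produces no new dividing curves on intermediate convex tori), this layer is tight and minimally twisting, so by the first part it is one of the two basic slices.

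The main technical obstacle is the verification in the second part that the layer $\nu(L)\setminus\nu(L')$ really is minimally twisting with exactly two dividing curves on every intermediate convex $T^2$; this is what rules out an overtwisted or non-basic insertion and forces the structure to be one of the two basic slices. The cleanest way to see this is to exhibit an explicit model for the stabilisation directly: take the standard tight structure on $D^2\times\R$ with the front-projection cusp, realise both $\nu(L)$ and $\nu(L')$ as convex subneighbourhoods of the Legendrian, and read off the dividing sets on $\partial\nu(L)$ and $\partial\nu(L')$, confirming both the slope computation and the minimally twisting condition in one go. Which of the two basic slices occurs then depends precisely on whether the stabilisation is positive or negative, which records the sign of the bypass attached.
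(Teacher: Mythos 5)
Note first that the paper gives no proof of this proposition: it is quoted from Honda \cite{Ho}, so there is no argument of the paper's to compare against. Your reconstruction follows the standard lines and is essentially correct. The first half is a direct application of Honda's classification of tight contact structures on $T^2\times I$: after an $SL(2,\Z)$-normalisation, Farey-adjacent boundary slopes together with minimal twisting and two dividing curves per boundary component leave exactly two tight structures, distinguished by the sign of the relative Euler class. The second half identifies the closure of $\nu(L)\setminus\nu(L')$ with one of these.

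The weak link is exactly the one you flag yourself: tightness of the layer is inherited from $\nu(L)$, but minimal twisting and the two-dividing-curve condition on intermediate convex tori are \emph{not} automatic consequences of tightness of the ambient solid torus. A priori an intermediate convex torus could carry more than two dividing curves or have slope outside $[t-1,t]$. The cleanest way to close this is precisely the explicit model you sketch at the end: realise $\nu(L)$ and $\nu(L')$ as concentric $S^1$-invariant convex tori in the standard tight neighbourhood of the Legendrian, so that every intermediate torus is convex with exactly two dividing curves and the slopes interpolate monotonically from $t$ to $t-1$; this forces the layer to be minimally twisting, hence a basic slice. Alternatively one can invoke Honda's factorisation result for tight solid tori, which shows the tight $\nu(L)$ factors uniquely through any smaller convex torus of slope $t-1$ with a minimally twisting outer layer. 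Either route completes the argument; the sign of the slice then records whether the stabilisation was positive or negative, as you say.
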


These two different layer correspond to the positive and negative stabilisation of $L$, once we've chosen an orientation for the knot; reversing the orientation swaps the labelling signs. Since we'll be considering oriented Legendrian knots, we can label the two slices with a sign.

\begin{defn}\label{stabmaps_def}
We call \deff{stabilisation maps} the gluing maps associated to the attachment of a stabilisation basic slice: these will be denoted with $\sigma_\pm$.
\end{defn}

\begin{rmk}
As it happens for the Stipsicz-V\'ertesi map \cite{SV}, also this basic slice attachment corresponds to a single bypass attachment.
\end{rmk}

We also collect here the definition of some gluing maps that we'll be considering later. The letter $\psi$ will be used to denote gluing maps associated to contact surgeries:

\begin{defn}\label{psiinfty_def}
Let $L\subset \nu(L)\subset (Y,\xi)$ be a Legendrian knot, and $\nu(L)$ be its standard neighbourhood. Let $B\subset\nu(L)$ be a ball with convex boundary. The map $\psi_\infty$ is associated to the layer $(\nu(L)\setminus B,\xi|_{\nu(L)\setminus B})$. This map is a homomorphism
\[\psi_\infty: SFH(-Y_{K,n})\to SFH(-Y(1)) = \HF(-Y)\]
for every nullhomologous knot $K\subset Y$.
\end{defn}

More generally, contact $p/q$-surgery is an operation that, given an oriented Legendrian knot $L\subset (Y,\xi)$, removes the standard neighbourhood $\nu(L)$ of $L$ and replaces it with a tight solid torus $(\bfT_{p/q}, \xi_{p/q})$. When $p/q=1$ there's only one such torus, and when $p/q=n>1$ is an integer, there are two such choices, called $(\bfT^\pm_{n}, \xi^\pm_{n})$. When $q>1$ there are many choices for the contact structure on $\bfT_{p/q}$; we still have two ``preferred'' choices, that we denote with $(\bfT^\pm_{p/q}, \xi^\pm_{p/q})$. Notice that, regardless of the value of $p/q$, the manifold $\bfT_{p/q}$ is simply a solid torus $S^1\times D^2$; on the other hand, the resulting sutures do change with $p/q$. We refer the reader to Chapter \ref{cont_surg} for further details.

\begin{defn}\label{psin_def}
Let $B\subset \bfT_{p/q}$ be a closed ball with convex boundary, and define $\bfT_{p/q}(1)$ to be  $\bfT_{p/q}\setminus {\rm Int}(B)$.
\begin{itemize}\itemsep -1.5pt
\item For a positive integer $n$, we define $\psi^\pm_n$ as the gluing map associated to the layer $(\bfT^\pm_{n}(1), \xi_{n}|_{\bfT^\pm_{n}(1)})$.

\item We define $\psi_{+1} = \psi^{\pm}_{+1}$ as the gluing map associated to $(\bfT_{1}(1), \xi_{1}|_{\bfT_{1}(1)})$.

\item For a positive rational $p/q$, we define $\psi^\pm_{p/q}$ as the gluing map associated to the layer $(\bfT^\pm_{p/q}(1), \xi_{p/q}|_{\bfT^\pm_{p/q}(1)})$.
\end{itemize}

Fix a knot $K\subset Y$, together with an open tubular neighbourhood $N(K)$ and a framing $f$, that we look at as a curve in $\de N(K)$; as before, denote with $Y_{K,f}$ the sutured manifold $(Y\setminus N(K), \Gamma_f = \{f, -f\})$. The map $\psi^\pm_{p/q}$ is a homomorphism
\[\psi^\pm_{p/q}: SFH(-Y_{K,f})\to SFH(-Y_{p/q}(K,f)(1)) = \HF(-Y_{p/q}(K,f)),\]
where the notation $Y_{p/q}(K,f)$ stands for the manifold that we obtain by topological $p/q$-surgery along $K$ with respect to the framing given by a meridian for $K$ in $Y$ and the longitude $f$. If $K\subset Y$ is nullhomologous and $Y$ is a rational homology sphere, $K$ has a canonical framing, the Seifert framing $f_S$: in this case, we're going to write $Y_{p/q}(K)$ for $Y_{p/q}(K,f_S)$.
\end{defn}

\section{A few facts on $SFH(S^3_{K,n})$ and $\sigma_{\pm}$}\label{HFKstab}

Given a topological knot $K$ in $S^3$, denote with $S^3_m(K)$ the manifold obtained by (topological) $m$-surgery along $K$, and let $\ltilde{K}$ be the dual knot in $S^3_m(K)$, that is the core of the solid torus we glue back in. Notice that an orientation on $K$ induces an orientation of $\ltilde{K}$, by imposing that the intersection of the meridian $\mu_K$ of $K$ on the boundary of the knot complement has intersection number $+1$ with the meridian $\mu_{\ltilde K}$ of $\ltilde{K}$ on the same surface.

Fix a contact structure $\xi$ on $S^3$ and a Legendrian representative $L$  of $K$: we'll write $t$ for $tb(L)$. Since $t$ measures the difference between the contact and the Seifert framings of $L$, $S^3_{t}(K)_{\ltilde{K},\infty}$ and $S^3_L$ are sutured diffeomorphic: in particular, $EH(L)$ lives in $SFH(-S^3_{t}(K)_{\ltilde{K},\infty}) = \HFK(-S^3_{t}(K),\ltilde{K})$, the identification depending on the choice of an orientation for $K$ (or $\ltilde{K}$).

\subsection{Gradings and concordance invariants}

The groups $\HFK(S^3,K)$ and $\HFK(-S^3_{m}(K),\ltilde{K})$ come with a grading, that we call the \deff{Alexander grading}. A Seifert surface $F\subset S^3$ for $K$ gives a relative homology class
\[
[F,\de F]\in H_2(S^3\setminus N(K), \de N(K)) = H_2(S^3_m(K)\setminus N(\ltilde{K}), \de N(\ltilde{K})).\]

Given a generator\footnote{Notation is sloppy here: the chain complex $(\CFK(S^3,K),\de)$ depends on the choice of a suitable doubly-pointed Heegaard diagram representing $(S^3,K)$: the choice of such diagram doesn't matter, here.} $\x\in\CFK(S^3,K)$, there's an induced relative $\spin$ structure $\s(\x)$ in $\underline{\rm Spin}^c(S^3,K)$ \cite[Section 2]{OSQsurg}, and the Alexander grading of $\x$ is defined as
\[
A(\x) = \frac12\langle c_1(\s(\x)), [F,\de F]\rangle.
\]

On the other hand, given a generator $\x\in\CFK(-S^3_{m}(K),\ltilde{K})$, there's an induced relative $\spin$ structure $\s(\x)\in\underline{\rm Spin}^c(S^3_m(K),\ltilde{K})$, and we can define $A(\x)$ as
\begin{equation}\label{defAlex}
A(\x) = \frac12\langle c_1(\s(\x)), [F,\de F]\rangle-\frac{m}2.
\end{equation}

We now turn to recalling the definition of $\tau(K)$ and $\nu(K)$, due to Ozsv\'ath and Szab\'o \cite{OStau, OSQsurg}, and of a third concordance invariant, $\epsilon(K)$, defined by Hom \cite{Hom}.

Recall that the Alexander grading induces a filtration on the knot Floer chain complex $(\CFK(S^3,K), \de)$, where the differential $\de$ ignores the presence of the second basepoint, that is $H_*(\CFK(S^3,K), \de) = \HF(S^3)$. In particular, every sublevel $\CFK(S^3,K)_{A\le s}$ is preserved by $\de$, and we can take its homology.

\begin{defn}
$\tau(K)$ is the smallest integer $s$ such that the inclusion of the $s$-th filtration sublevel induces a nontrivial map \[H_*(\CFK(S^3,K)_{A\le s}, \de)\longrightarrow \HF(S^3) = \F.\]
\end{defn}

This invariant turns out to provide a powerful lower bound for the slice genus of $K$, in the sense that $|\tau(K)|\le g_*(K)$ \cite{OStau}. One of the properties it enjoys, and that we'll need, is that $\tau(\overline{K}) = -\tau(K)$ for every $K$.

The definition of $\nu$ is somewhat more involved, and it comes from the mapping cone construction \cite{OSint, OSQsurg} that Ozsv\'ath and Szab\'o used to compute the rank of the Heegaard Floer homology of integer and rational surgeries along $K$. We just recall the parts of the construction that we need to get to the definition, without giving any motivation or complete explanation of the mapping cone, following Rasmussen \cite{Jake}.

We define a new complex $(A_s, \de_s)$ for each integer $s$ as follows: the underlying module $A_s$ is just $C=\CFK(S^3,K)$. The differential $\de_s$ takes into account both the differential $\de$ and the differential $\de'$, for which the r\^ole of the basepoints is reversed (\emph{i.e.} $\de$ counts differentials whose domains pass through the basepoint $w$ but not through $z$, while $\de'$ counts differential whose domains pass through $z$ but not through $w$); in the next formula, $\de_K$ is just the ``graded'' differential, that counts only discs whose domains avoid both basepoints:
\[
\de_s \x = \left\{
\begin{array}{ll}
\de \x & {\rm if}\; A(\x)<s\\
\de\x + \de'\x + \de_K\x &{\rm if}\; A(\x)=s\\
\de' \x & {\rm if}\; A(\x)>s
\end{array}
\right.
\]
The quotient complexes $A_s/C_{>s}$ and $A_s/C_{<s}$ come with natural chain maps into $(C,\de)$ and $(C,\de')$ respectively; the composition of the projection with these chain maps gives two maps $v_s, h_s: H_*(A_s,\de_s)\to \HF(S^3) = \F$.

In analogy with the definition of $\tau$, we have the following:

\begin{defn}
$\nu(K)$ is the smallest integer $s$ such that the map $v_s$ is nontrivial.
\end{defn}

Ozsv\'ath and Szab\'o proved that $\nu$ is a concordance invariant, and that the inequalities $\tau(K)\le\nu(K)\le\tau(K)+1$ hold for all knots $K$. We remarked earlier that $\tau$ changes sign when taking the mirror of the knot: $\nu$ doesn't have this property, and the discrepancy between $\nu(K)$ and $-\nu(\overline{K})$ is measured by Hom's invariant $\epsilon$:

\begin{defn}
$\epsilon(K)$ is defined to be $(\tau(K)-\nu(K)) - (\tau(\overline{K})-\nu(\overline{K}))$.
\end{defn}

$\epsilon$ can only take values in $\{-1,0,1\}$, and manifestly changes sign when we take the mirror of the knot. Hom also proves that:

\begin{prop}[\cite{Hom}]
$\epsilon(K)$ controls the relationship between $\tau(K)$ and $\nu(K)$ as follows:
\begin{itemize}\itemsep-1.5pt
\item If $\epsilon(K) = 0$, then $\tau(K) = \nu(K) = \nu(\overline{K}) = 0$.

\item If $\epsilon(K) = 1$ then $\nu(K) = \tau(K)$.

\item If $\epsilon(K) = -1$ then $\nu(K) = \tau(K)+1$.
\end{itemize}
\end{prop}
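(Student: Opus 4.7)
The plan is to reduce the statement to bookkeeping about the integer $\delta(K) := \nu(K) - \tau(K)$. By the Ozsv\'ath-Szab\'o inequality $\tau(K) \le \nu(K) \le \tau(K)+1$ we have $\delta(K) \in \{0,1\}$, and using $\tau(\overline{K}) = -\tau(K)$ the definition of $\epsilon$ rewrites as $\epsilon(K) = \delta(\overline{K}) - \delta(K)$, confirming in particular that $\epsilon(K) \in \{-1,0,1\}$. The two extremal cases are then immediate: $\epsilon(K)=1$ forces $\delta(K)=0$ and $\delta(\overline{K})=1$, giving $\nu(K)=\tau(K)$; symmetrically, $\epsilon(K)=-1$ forces $\delta(K)=1$ and hence $\nu(K)=\tau(K)+1$.

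For $\epsilon(K)=0$ one only gets $\delta(K)=\delta(\overline{K})$, which leaves two a priori sub-cases (both $0$ or both $1$), each consistent with any value of $\tau(K)$. To conclude $\tau(K)=\nu(K)=\nu(\overline{K})=0$, more than numerical inequalities is needed: I would invoke Hom's structural analysis of the full filtered complex $CFK^\infty(K)$, which (after choosing a horizontally and vertically simplified basis) splits, up to acyclic summands, into a distinguished \emph{staircase} piece together with square pieces that contribute to none of $\tau$, $\nu$, $\epsilon$. All four numbers $\tau(K)$, $\nu(K)$, $\tau(\overline{K})$, $\nu(\overline{K})$ can be read off the shape of this staircase, and the filtered duality between $CFK^\infty(K)$ and $CFK^\infty(\overline{K})$ arising from orientation reversal rules out the sub-case $\delta(K)=\delta(\overline{K})=1$ on symmetry grounds. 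The remaining sub-case $\delta(K)=\delta(\overline{K})=0$ then forces the distinguished staircase to be a single generator at bifiltration level $(0,0)$, so that $\tau(K)=\nu(K)=\nu(\overline{K})=0$ as claimed.

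The main obstacle is precisely this structural input. The bare inequalities $\tau\le\nu\le\tau+1$ and $\tau(\overline{K})=-\tau(K)$ do not pin down $\tau(K)$ once $\epsilon(K)=0$; the content of the proposition is really that in these borderline cases the filtered quasi-isomorphism type of $CFK^\infty$ is rigid, which is why any proof must pass through a reduced-basis decomposition rather than through the definitions of $\tau$ and $\nu$ in isolation. Once that picture is in place, the case analysis above reduces to a direct diagrammatic check.
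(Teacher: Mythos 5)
The paper does not actually prove this proposition; it is quoted from Hom's thesis, so there is no in-paper argument to compare yours against, and your proposal has to be judged on its own. Your formal reductions are correct: with the paper's definition $\epsilon(K)=(\tau(K)-\nu(K))-(\tau(\overline{K})-\nu(\overline{K}))$ and $\delta:=\nu-\tau\in\{0,1\}$ one gets $\epsilon(K)=\delta(\overline{K})-\delta(K)$, and the cases $\epsilon=\pm1$ follow at once. You also correctly locate all of the content in the case $\epsilon(K)=0$, which amounts to two non-formal claims: (A) $\delta(K)=\delta(\overline{K})=1$ is impossible, and (B) $\delta(K)=\delta(\overline{K})=0$ forces $\tau(K)=0$.

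The gap is that neither (A) nor (B) is actually established. For (A), ``filtered duality rules out the sub-case on symmetry grounds'' is not an argument: the duality between the full knot complexes of $K$ and $\overline{K}$ says nothing about $\nu$ until you know how $\nu$ is read off a reduced basis --- that is, until you prove Hom's lemma characterising when $v_{\tau(K)}$ is nontrivial in terms of whether the vertical-homology generator at filtration level $\tau(K)$ supports a horizontal differential. That same lemma is exactly what you need for (B): from $\nu(K)=\tau(K)$ and $\nu(\overline{K})=\tau(\overline{K})$ alone you cannot conclude that the staircase is a single generator at $(0,0)$. For instance $T(3,4)$ has $\nu=\tau=3$ with a genuinely nontrivial staircase; what excludes it from the $\epsilon=0$ case is that its mirror has $\delta=1$, and ruling out the both-$\delta$-equal-one configuration is again claim (A). So the two steps you defer to ``Hom's structural analysis'' are precisely the content of the proposition, and as written the proposal reduces the statement to itself. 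Note also that Hom's own route runs in the opposite direction: she defines $\epsilon$ intrinsically from the knot complex, proves $\epsilon(\overline{K})=-\epsilon(K)$, that $\epsilon=0$ implies $\tau=0$, and the $\nu$--$\epsilon$ relation, and only then does the formula the paper adopts as a definition drop out as a corollary; a self-contained proof starting from the paper's definition would have to reconstruct those lemmas rather than cite them.
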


\subsection{Modules}

We now turn our attention back to $\HFK(-S^3_t(K),\ltilde{K}) \simeq SFH(-S^3_{K,t})$. Recall that this is a graded $\F$-vector space, and that we called $A$ the grading.

The group $\CFK(S^3,K)$ is a graded vector space that comes with two differentials, $\de_K$ and $\de$, such that the complex $(\CFK(S^3,K),\de)$ has homology $\HF(S^3) = \F$, while the complex $(\CFK(S^3,K),\de_K)$ has homology $\HFK(S^3,K)$. We call $A$ the Alexander grading on this group as well.

Let's call $d = \dim\HFK(S^3,K)$, and fix a basis $\mathcal{B} = \{\eta_i,\eta'_j \mid 0\le i < d\}$ of $\CFK(S^3,K)$ such that the set $\{\eta^{top}_i, (\eta'_j)^{top}\}$ of the highest nontrivial Alexander-homogeneous components of the $\eta_i$'s and $\eta'_j$'s is still a basis for $\CFK(S^3,K)$, and the following relations hold (see \cite[Section 11.5]{LOT}):
\[
\begin{array}{lll}
\de \eta_0 = 0 &\quad & \de_K \eta_0 = 0 \\ \de \eta_{2i-1} = \eta_{2i} &\quad & \de_K \eta_i =  0 \\
\de \eta_{2j-1}'  = \eta_{2j}' &\quad & \de_K \eta'_{2j-1} = \eta'_{2j}.
\end{array}
\]
Observe that the set of homology classes of the $\eta_i$'s is a basis for $\HFK(S^3,K) = H_*(\CFK(S^3,K),\de_K)$. Finally, call $\delta(i) = A(\eta_{2i})-A(\eta_{2i-1})$. Let's remark that by definition $A(\eta_0) = \tau:=\tau(K)$.

\begin{thm}[\cite{LOT}]\label{Heddenthm}
The \emph{homology} group $\HFK(-S^3_{m}(K),\ltilde{K})$ is an $\F$-vector space with basis $\{d_{i,j}, d_{i,j}^*, u_\ell\mid 1\le i \le k, 1\le j \le \delta(i), 1\le \ell \le |2\tau-m|\}$, where the generators satisfy $A(d_{i,j}) = A(\eta_{2i})-(j-1)-(m-1)/2 = -A(d_{i,j}^*)$ and $A(u_\ell) = \tau-(\ell-1)-(m-1)/2$.
\end{thm}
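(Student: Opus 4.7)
The statement is essentially a specialization of the bordered Floer homology calculation of \cite{LOT}, and I would prove it by invoking their machinery on the sutured manifold $S^3_{K,m}$. Via Proposition~2.4, the group $\HFK(-S^3_m(K), \widetilde{K})$ is identified with $SFH(-S^3_{K,m})$, and the latter can be computed as the box tensor product of a type-$D$ invariant for the $0$-framed knot complement with a type-$A$ invariant for the glued-in solid torus of slope $m$, where the $\alpha$-arc of the solid torus records the dual knot $\widetilde{K}$.

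The first step is to write down $\widehat{CFD}(S^3 \setminus N(K), 0)$ in its ``reduced'' normal form from \cite[Section 11.5]{LOT}. The basis $\mathcal{B} = \{\eta_i, \eta'_j\}$ of $\CFK(S^3,K)$ satisfying the displayed relations is exactly the combinatorial input for that normal form: the vertical pairs $\eta_{2i-1}\to\eta_{2i}$ are translated into ``vertical chains'' of coefficient maps of length $2\delta(i)$, the horizontal pairs $\eta'_{2j-1}\to\eta'_{2j}$ into ``horizontal chains,'' and $\eta_0$ into the unique ``unstable chain'' passing through the $\iota_0$-generator at Alexander level $\tau$. The second step is to box-tensor this module with the torus-algebra type-$A$ module of the $m$-framed solid torus decorated to record the dual knot.

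Counting generators and canceling acyclic summands then reproduces the claimed basis. The horizontal pairs are acyclic under both $\de$ and $\de_K$, so they contribute contractible direct summands to the pairing and can be canceled. Each vertical chain of length $2\delta(i)$ yields $\delta(i)$ symmetric pairs $(d_{i,j}, d_{i,j}^*)$ in homology: the $m$-framed longitude ``wraps'' the chain $\delta(i)$ times, producing one pair for each step. The unstable chain through $\eta_0$ contributes $|2\tau - m|$ leftover generators $u_\ell$, which survive exactly when the framing has not yet passed the $2\tau$ threshold; when $m \ge 2\tau$ they cancel off in pairs, leaving none.

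For the Alexander gradings one applies formula~(3.1) generator by generator: a descendant of $\eta_{2i}$ at step $j$ of its vertical chain carries the relative $\spin$-structure shifted from that of $\eta_{2i}$ by $-(j-1)$, yielding $A(d_{i,j}) = A(\eta_{2i}) - (j-1) - (m-1)/2$. Here the $-m/2$ shift is the one explicitly appearing in (3.1), and the remaining $+1/2$ accounts for the parametrization of the Seifert longitude in the bordered presentation. The symmetry $A(d_{i,j}^*) = -A(d_{i,j})$ is a consequence of the $\spin$-conjugation symmetry of $\HFK$, and the $u_\ell$ gradings follow identically with $A(\eta_0) = \tau$ in place of $A(\eta_{2i})$. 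The main obstacle is the bookkeeping for the $-(m-1)/2$ shift: one must carefully identify relative $\spin$-structures on the $m$-framed surgery with those on the $0$-framed knot complement via the $H_1$-action of the glued-in solid torus, while keeping the half-integer conventions compatible with \cite{LOT}; the rest is a mechanical application of the box tensor algorithm.
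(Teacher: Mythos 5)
Your overall strategy — identify $\HFK(-S^3_m(K),\ltilde{K})$ with $SFH(-S^3_{K,m})$, then compute via the pairing theorem for $\CFD$ of the $0$-framed knot complement in its reduced normal form from \cite[Section 11.5]{LOT} — is indeed what the citation to \cite{LOT} is pointing at, and it is consistent with the bordered machinery the paper uses explicitly later in the proof of Theorem \ref{stabmaps}. The paper's own treatment here is lighter: it cites \cite{LOT}, and then, instead of carrying out the box tensor product, it offers a more hands-on pictorial model $\ltilde{C}=\bigoplus_i C_i$ of $m'$ stacked copies of $\CFK(S^3,K)$ with a shifted grading, in the spirit of Hedden's large-surgery formula, and reads off the homology directly. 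Both routes are correct, and both are ultimately powered by LOT's results.

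However, there are two concrete inaccuracies. First, the generators $d_{i,j}$ and $d^*_{i,j}$ do not arise from a single ``vertical chain of length $2\delta(i)$'' wrapped by the framing. In the LOT normal form one has \emph{two} families of chains, each of length $\delta(i)$: one attached to the arrows with respect to the $z$-basepoint (the $\kappa$-chains, dual to the $d_{i,j}$) and one attached to the arrows with respect to the $w$-basepoint (the $\lambda$-chains, dual to the $d^*_{i,j}$); it is the symmetry of the knot Floer complex under exchanging $z$ and $w$ that pairs them up, not a doubling of a single chain. Second, and more seriously, your claim that the unstable generators ``survive exactly when the framing has not yet passed the $2\tau$ threshold; when $m\ge 2\tau$ they cancel off in pairs, leaving none'' contradicts the statement you are proving. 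The theorem asserts $|2\tau-m|$ unstable generators for every $m$, and indeed for $m>2\tau$ the LOT unstable chain has length $m-2\tau>0$ (with the $D_{23}$ coefficient maps pointing the opposite way); only for $m=2\tau$ does the unstable part vanish. If your pairing computation produced no unstable generators above the threshold, something in the tensor-product bookkeeping would have to be wrong. The grading argument via formula (\ref{defAlex}) and the extra $+1/2$ from the bordered parametrization is plausible but would need to be pinned down, as you acknowledge.
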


Generators with a $*$ are to be thought of as symmetric to the generators without it, and each family $\{d_{i,j}\}_j$ can be interpreted as representing the arrow $\eta_{2i-1}\stackrel{\de}{\mapsto} \eta_{2i}$ (notice that $i$ varies among \emph{positive} integers), counted with a multiplicity equalling its length (\emph{i.e.} the distance it covers in Alexander grading).

\begin{rmk}
Not any basis of $\HFK(-S^3_m(K),\ltilde{K})$ with the same degree properties works for our purposes: we're actually choosing a basis that's compatible with stabilisation maps, as we're going to see in Theorem \ref{stabmaps} (see also Remark \ref{naturality}).
\end{rmk}

\begin{defn}\label{defn3.3}
Call $S_+$ the subspace of $\HFK(-S^3_{m}(K),\ltilde{K})$ generated by $\{d_{i,j}\}$, and $S_-$ the one generated by $\{d_{i,j}^*\}$: the subspace $S=S_+ \oplus S_-$ is the \deff{stable complex}, and elements of $S$ are called \deff{stable elements}. The subspace spanned by $\{u_\ell\}$ is called the \deff{unstable complex} and will be denoted with $U_m$ (although the subscript will be often dropped), so that $\HFK(-S^3_{m}(K),\ltilde{K})$ decomposes as $S_+\oplus U_m \oplus S_-$.
\end{defn}

It's worth remarking that the decomposition given in the definition above is \emph{not} canonical: the three stable subspaces $S_\pm$ and $S$ are canonically defined, but the unstable complex isn't. This issue will be addressed at the end of the next section.

There's a good and handy pictorial description when $|m|$ is sufficiently large; we'll be mostly dealing with negative values of $m$, so let's call $m'=-m\gg 0$. Consider a direct sum $\ltilde{C} = \bigoplus_{i=1}^{m'} C_i$ of $m'$ copies of $C=\CFK(S^3,K)$, and (temporarily) denote by $\x_i$ the copy of the element $\x\in C$ in $C_i$. Endow $\ltilde{C}$ with a shifted Alexander grading:
\[
\ltilde{A}(\x_i) = \left\{\begin{array}{ll}
A(\x)-(i-1)-(m-1)/2 & \text{for } i\le m'/2\\
-A(\x)-(i-1)-(m-1)/2 & \text{for } i>m'/2
\end{array}\right.
\]
for each homogeneous $\x$ in $\CFK(S^3,K)$. We picture this situation by considering each copy of $C$ as a vertical tile of $2g(K)+1$ boxes -- each corresponding to a value for the Alexander grading, possibly containing no generators at all, or more than one generator -- and stacking the $m'$ copies of $C$ in staircase fashion, with $C_1$ as the top block and $C_{m'}$ as the bottom block. Notice that, by our grading convention, the copies in the bottom part of the picture are turned upside down: for example, if $\x^{\rm max}\in C$ has maximal Alexander degree $A(\x) = g(K)$, then $\x^{\rm max}_1$ lies in the top box of $C_1$, while $\x^{\rm max}_{m'}$ lies in the bottom box of $C_{m'}$. Likewise, an element $\x^\tau\in C$ has Alexander degree $A(\x)=\tau$, then $\x^\tau_1$ lies in the $(g(K)-\tau+1)$-th box from the top in $C_1$, and $\x_{m'}^\tau$ lies in the $(g(K)-\tau+1)$-th box from the bottom in $C_{m'}$.

Our construction is slightly different from Hedden's construction, and in general it gives a different chain complex for $\HFK(S^3_m(K),\ltilde{K})$, but their homologies agree.

The situation is depicted in Figure \ref{Heddenfigure}: in this concrete example we have $g(K)=2$ and $\tau(K)=-1$; accordingly, there are $2g(K)+1=5$ boxes in each vertical column and $\x^\tau_1$ lies in the fourth box from the top in $C_1$.

\begin{figure}
\begin{center}
\includegraphics[scale=0.6]{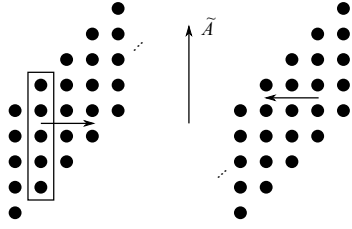}
\end{center}
\caption{We represent here the top (on the right) and bottom (on the left) parts of $\HFK(S^3_m(K),\ltilde{K})$ for  $m\ll 0$. Each vertical tile is a copy of $\CFK(S^3,K)$, and the arrows show the direction of the differentials.}\label{Heddenfigure}
\end{figure}

Now define a differential $\ltilde{\de}$ on $\ltilde{C}$ in the following way:
\[\ltilde{\de}:\left\{\begin{array}{ll}
 (\eta_0)_i \mapsto 0 & \text{for small and large } i\\
 (\eta_{2j-1})_i \mapsto (\eta_{2j})_{i+\delta(j)}\mapsto 0 & \text{for small } i\\
 (\eta_{2j-1})_i \mapsto (\eta_{2j})_{i-\delta(j)}\mapsto 0 & \text{for large } i\\
 (\eta'_{2j-1})_i \mapsto (\eta'_{2j})_i\mapsto 0 & \text{for every } i\\
  \end{array}\right..
\]
We haven't yet defined what the differential does to non-primed generators for intermediate values of $i$: in the picture we have, the differentials are horizontal and point ``inwards'' (see Figure \ref{Heddenfigure}); in particular, every horizontal (\emph{i.e.} Alexander-homogeneous) block of boxes is a subcomplex. We now extend the differential to be any map $\ltilde{\de}$ such that any horizontal block is a subcomplex, and such that the homology of every such subcomplex is $\HF(S^3) = \F$ for intermediate values of the Alexander degree\footnote{We can choose such an extension thanks to the symmetry of $\HFK(S^3,K)$: this symmetry ensures that there are always $d = \dim\HFK(S^3,K)$ generators in each Alexander degree $\ltilde{A}$; since $d$ is odd, we can choose a differential such that the homology is 1-dimensional.}.

We're now going to analyse what happens on the top and bottom part of the complex (\emph{i.e.} when $i$ is small or large, in what follows), when we take the homology.

Pairs $(\eta'_{2j-1})_i, (\eta'_{2j})_i$ cancel out in homology. The element $(\eta_{2j})_i$ is a cycle for each $i,j$, and it's a boundary only when $j>0$ and either $i>\delta(j)$ or $i< m'-\delta(j)$: so there are $2\delta(j)$ surviving copies of $\eta_{2j}$, in degrees $A(\eta_{2j})-k-(m-1)/2$ and $-A(\eta_{2j})+k+(m-1)/2$ for $k=0,\dots,\delta(j)-1$. We can declare $d_{i,j} = [(\eta_{2j})_i]$ and $d_{i,j}^* = [(\eta_{2j})_{m'-i}]$.

The element $(\eta_0)_i$ is a cycle for every $i$, and it's never canceled out, so it survives when taking homology. Given our grading convention, for \emph{small} values of $i$, $\ltilde{A}((\eta_0)_i) = A(\eta_0)-(i-1)-(m-1)/2 = \tau(K)-(i-1)-(m-1)/2$, and in particular we have a nonvanishing class $[(\eta_0)_i] = u_i$ in degrees $\tau(K)-(m-1)/2, \tau(K)-(m-1)/2-1, \dots$ On the other hand, when $i$ is \emph{large}, $[(\eta_0)_i]$ lies in degree $-\tau(K)-(i-1)-(m-1)/2$, and we get a nonvanishing class $[(\eta_0)_i] = u_{2\tau(K)+i+(m-1)/2}$ in degrees $-\tau(K)+(m-1)/2, -\tau(K)+(m-1)/2+1, \dots$

We also have a string of $\F$ summands in between, giving us a strip of unstable elements of length $2\tau(K)-m$, as in Theorem \ref{Heddenthm}.

\begin{rmk}
Something more can be said about $\spin$ structures: $\HFK(S^3_m(K),\ltilde K)$ splits as a sum of subcomplexes $\HFK(S^3_m(K),\ltilde K; \s_i)$ corresponding to the $|m|$ different $\spin$ structures on\footnote{This holds as long as $m\neq 0$: if $m=0$ there are infinitely many of them. We'll be mostly considering $m\ll 0$, so we're not really concerned with it.} $S^3_m(K)$. The Alexander grading $\ltilde{A}$ tells us when two horizontal subcomplexes fall into the same $\spin$ structure: as one could expect, if $\ltilde{A}(x) \equiv \ltilde{A}(y) \mod m$, then $x$ and $y$ belong to the same summand $\HFK(S^3_m(K),\ltilde K; \s)$.
\end{rmk}

\subsection{Stabilisation maps}\label{substabmaps}

We're going to study the action of the two stabilisation maps $\sigma_\pm$ of Definition \ref{stabmaps_def} on the sutured Floer homology groups $SFH(-S^3_L)$: it's worth stressing that these maps do not depend on the contact structure on the knot complement or on the particular Legendrian representative, but just on its Thurston-Bennequin number (which determines domain and codomain).

Notice that if $L$ is a Legendrian knot in $S^3$ with $tb(L)=n$, then, as a sutured manifold, $S^3_L$ is just $S^3_{K,n}$. Moreover, if $L'$ is a stabilisation of $L$, then $S^3_{L'}$ is isomorphic to $S^3_{K,n-1}$ as a sutured manifold.

Recall that we have two families (indexed by the integer $n$) of stabilisation maps, $\sigma_\pm: SFH(-S^3_{K,n})\to SFH(-S^3_{K,n-1})$, corresponding to the gluing of the negative and positive stabilisation layer: if the knot $K$ is oriented, these maps can be labelled as $\sigma_-$ or $\sigma_+$. With a slight abuse of notation, we're going to ignore the dependence of these maps on the framing.

\begin{rmk}\label{orientation_rmk}
Notice that orientation reversal of $L$ or $K$ isn't seen by the sutured groups nor by $EH(L)$, but it swaps the r\^oles of $\sigma_-$ and $\sigma_+$.
\end{rmk}

\begin{rmk}
Let's recall that for an \emph{oriented} Legendrian knot $L$ of topological type $K$ in $S^3$ the Bennequin inequality holds:
\[tb(L)+r(L) \le 2g(K)-1.\]
In \cite{Pl} a sharper result was proved:
\begin{equation}\label{olga}
tb(L)+r(L) \le 2\tau(K)-1.
\end{equation}

This last form of the Bennequin inequality, together with Theorem \ref{Heddenthm}, tells us that, whenever we're considering knots in the standard $S^3$, the unstable complex is never trivial in $SFH(-S^3_{K,n})$: more precisely we're always (strictly) below the threshold $2\tau:=2\tau(K)$, so that $2\tau-m$ is always positive; in particular, the dimension of the unstable complex is always positive and \emph{increases} under stabilisations. We'll state the theorem in its full generality anyway, even though this remark tells us we need just half of it when working in $(S^3,\xi_{\rm st})$.
\end{rmk}

We're going to prove the following result:

\begin{thm}\label{stabmaps}
The maps $\sigma_-, \sigma_+: SFH(-S^3_{K,n})\to SFH(-S^3_{K,n-1})$ act as follows:
\[\begin{array}{lcll}\sigma_-:\left\{\begin{array}{l}
d_{i,j}\mapsto d_{i,j}\\
u_\ell\mapsto u_\ell\\
d^*_{i,j}\mapsto d^*_{i,j+1}
  \end{array}\right. , & &
\sigma_+:\left\{\begin{array}{l}
d_{i,j}\mapsto d_{i,j+1}\\
u_\ell\mapsto u_{\ell+1}\\
d^*_{i,j}\mapsto d^*_{i,j}
  \end{array}\right. & {\rm for }\; n\le 2\tau;\\
  \\
\sigma_-:\left\{\begin{array}{l}
d_{i,j}\mapsto d_{i,j}\\
u_\ell\mapsto u_\ell\\
u_{n-2\tau}\mapsto 0 \\
d^*_{i,j}\mapsto d^*_{i,j+1}
  \end{array}\right. , & &
\sigma_+:\left\{\begin{array}{l}
d_{i,j}\mapsto d_{i,j+1}\\
u_\ell\mapsto u_{\ell-1}\\
u_1\mapsto 0\\
d^*_{i,j}\mapsto d^*_{i,j}
  \end{array}\right. & {\rm for }\; n> 2\tau.

\end{array}
\]
\end{thm}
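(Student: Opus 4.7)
The plan is to reduce the statement to a direct computation in the staircase model for $SFH(-S^3_{K,n})$ developed in the previous subsection, after identifying the two stabilisation maps at the chain level. The argument splits into a geometric input, a model identification, and then a bookkeeping verification on the basis of Theorem \ref{Heddenthm}.

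First I would fix the topological picture. A stabilisation $L'$ of $L$ satisfies $tb(L') = tb(L) - 1$, so as sutured manifolds $S^3_{L'} = S^3_{K,n-1}$ is obtained from $S^3_L = S^3_{K,n}$ by gluing on a basic slice $(T^2 \times I, \xi_\pm)$ with boundary slopes $(n,1)$ and $(n-1,1)$ in the framing $(\mu, c)$ from the proposition before Definition \ref{stabmaps_def}. The two basic slices correspond to the two signs of stabilisation, and, as remarked there, each is realised by a single bypass attachment. This is the key geometric ingredient: bypass maps are exactly the type of gluings for which Theorem \ref{EHmap} applies and for which there is a concrete Floer-theoretic description.

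Second, I would work in the large negative regime $n = -N$ with $N \gg 0$, in which the staircase $\ltilde{C} = \bigoplus_{i=1}^{N} C_i$ (with $C = \CFK(S^3,K)$ and the shifted grading $\ltilde A$) computes $SFH(-S^3_{K,n})$. Going from $n$ to $n-1$ enlarges the staircase by one extra copy of $C$. I expect $\sigma_+$ and $\sigma_-$ to be identified at the chain level with the two natural inclusions $\ltilde{C}_N \hookrightarrow \ltilde{C}_{N+1}$, one keeping the top block fixed and freeing a new bottom block, the other reindexing $C_i \mapsto C_{i+1}$ and freeing a new top block. The hard part of the argument will be proving this chain-level identification: I would do it either by computing directly the bypass triangle map of Honda--Kazez--Mati\'c on an explicit Heegaard diagram adapted to the basic slice attachment, or, more efficiently, by using the naturality of gluing maps (Theorem \ref{EHmap}) under further basic slice attachments. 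In the latter approach one reduces the claim to a model calculation on $T^2 \times I$, where the two basic slices with fixed boundary slopes are distinguished by the sign of the relative Euler class and each induces the expected shift.

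Third, granted this identification, the stated formulas follow by inspecting the homology computed in the previous subsection. In the two stable regions where $|\ltilde A|$ is large, the inclusions act transparently on the surviving cycles $(\eta_{2j})_i$ and $(\eta_{2j})_{N-i}$: one inclusion fixes the bottom index and one shifts it, giving $d_{i,j}\mapsto d_{i,j}$ or $d_{i,j}\mapsto d_{i,j+1}$ (and symmetrically for $d_{i,j}^*$), exactly as asserted. For the unstable strip $U_n$, in the regime $n \le 2\tau$ the strip has positive length $2\tau - n$ which grows by one after the attachment, so the inclusion sends the classes $u_\ell = [(\eta_0)_i]$ to the corresponding classes in the enlarged complex, with an index shift determined by which end gains the new layer; this reproduces the two formulas listed for $n \le 2\tau$. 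In the regime $n > 2\tau$ the upper and lower cycles $(\eta_0)_i$ are close enough that one of them becomes a boundary after the attachment — precisely the generator at the end opposite to the attached slice — which matches $u_{n-2\tau}\mapsto 0$ for $\sigma_-$ and $u_1\mapsto 0$ for $\sigma_+$.

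Finally, to pass from the large negative regime to arbitrary $n$, I would use functoriality of the stabilisation maps under iterated basic slice attachments: for any $n$, composing $\sigma_\pm$ with further stabilisations lands in a regime where the previous identification applies, and the action on the basis $\{d_{i,j}, d_{i,j}^*, u_\ell\}$ is rigid enough that the formulas at intermediate framings are forced. Throughout, the principal obstacle remains the step of matching the bypass gluing map with the explicit chain inclusion into the staircase; once that matching is in place, everything else is linear algebra on the basis provided by Theorem \ref{Heddenthm}.
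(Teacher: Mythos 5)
Your chain-level picture is the right one: the two stabilisation maps should be induced by the two natural inclusions of one staircase into the larger one, and the bookkeeping on the basis $\{d_{i,j},d^*_{i,j},u_\ell\}$ is then exactly as you say. But your proposal leaves the two hard steps unresolved, and they are where the real content lies.

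The paper takes a genuinely different route: it never computes a bypass triangle map directly. Instead it passes through bordered Floer homology, invoking a result of Zarev identifying $\sigma_\pm$ with the adjoints of the type-$D$ coefficient maps $D^W_1$, $D^W_3$ for $W=\CFDA(\tau_\lambda)\boxtimes V$, and then computes these coefficient maps from the explicit bimodule $\CFDA(\tau_\lambda)$ in \cite[App.~A]{LOT}. This has two advantages you should notice. First, the identification of $\sigma_\pm$ with a concrete algebraic map is taken as a black box from Zarev, so the HKM triangle count you describe in option~(a) is sidestepped entirely; option~(a) is plausible in principle (and Section~\ref{technical} of the paper carries out analogous triangle counts for $\psi_\infty$ and $\psi_{+1}$), but you have not set up the relevant triple arc diagram for the bypass, and doing so is not trivial. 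Option~(b), ``reducing to a model calculation on $T^2\times I$,'' is too vague to assess: $T^2\times I$ on its own has no $SFH$ and the gluing map is not local, so it is unclear what the model calculation would be.

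The second, more serious gap is the final step of your argument. The staircase model $\ltilde{C}$ is defined only for $|n|\gg 0$, so your identification of $\sigma_\pm$ with the two inclusions $s_\pm$ only makes sense in that regime — yet the theorem needs to be proved for \emph{every} $n$, and in fact the interesting phenomena in the statement (the change of behaviour on the unstable complex) happen at $n=2\tau$ and $n=2\tau+1$, which are generally nowhere near the large-negative regime. Your proposed remedy — ``composing $\sigma_\pm$ with further stabilisations lands in a regime where the identification applies, and the action on the basis is rigid enough that the formulas at intermediate framings are forced'' — does not work as stated: from knowledge of the composition $\sigma_\pm^N\circ\sigma_\pm$ (a map landing far in the negative regime) and of $\sigma_\pm^N$ alone one cannot recover the first factor $\sigma_\pm$, because the iterated maps are far from injective (e.g.\ $\sigma_-^N$ kills all of $S_-$). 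The paper avoids this entirely because \cite[Thm.~A.11]{LOT} gives $\CFD$, and hence the relevant coefficient maps, uniformly for all framings; there is no ``large negative'' special case and no extension argument needed. If you want to make your proposal work you would either need to carry out the HKM triangle count directly for all $n$, or find a valid argument replacing the ``rigidity'' claim.
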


Notice that we're implicitly choosing an appropriate isomorphism between the group $SFH(-S^3_{K,n})$ and the vector space generated by the $d_{i,j}$'s and the $u_i$'s (see Theorem \ref{Heddenthm} and Remark \ref{naturality}).

There's an interpretation of the maps $\sigma_{\pm}: SFH(-S^3_{K,n})\to SFH(-S^3_{K,n-1})$ in terms of Figure \ref{Heddenfigure}, when $n\ll 0$: fix a chain complex $C$ computing $\HFK(S^3,K)$ and call $(\ltilde{C}_n, \ltilde\de)$ and $(\ltilde{C}_{n-1}, \ltilde\de)$ the two complexes defined in the previous section, computing $SFH(-S^3_{K,n})$ and $SFH(-S^3_{K,n-1})$ starting from $C$. We have two ``obvious'' chain maps $s_\pm: \ltilde{C}_n\to \ltilde{C}_{n-1}$: $s_-$ sends $\x_i\in \ltilde{C}_n$ to $\x_i\in \ltilde{C}_{n-1}$, while $s_+$ sends $\x_i\in \ltilde{C}_n$ to $\x_{i+1}\in \ltilde{C}_{n-1}$. The maps $s_\pm$ induce the two stabilisation maps $\sigma_\pm$ at the homology level.

$s_-$ is the inclusion $\ltilde{C}_{n}\hookrightarrow \ltilde{C}_{n-1}$ that misses the leftmost vertical tile (that is, the copy $C_{1-n}$ of $C$ that's in lowest Alexander degree), while $s_+$ is the inclusion that misses the rightmost vertical tile (the copy $C_1$ of $C$ that lies in highest Alexander degree).

As a corollary (of the proof), we obtain a graded version of the result:

\begin{cor}\label{sigma-grading}
The maps $\sigma_\pm$ are Alexander-homogeneous of degree $\mp 1/2$.
\end{cor}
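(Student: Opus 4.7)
The plan is a direct verification on the basis of $SFH(-S^3_{K,n})$ given in Theorem \ref{Heddenthm}, using the explicit formulas for $\sigma_\pm$ from Theorem \ref{stabmaps}. The Alexander gradings
\[
A_n(d_{i,j}) = A(\eta_{2i}) - (j-1) - \tfrac{n-1}{2}, \quad A_n(d^*_{i,j}) = -A_n(d_{i,j}), \quad A_n(u_\ell) = \tau - (\ell-1) - \tfrac{n-1}{2}
\]
depend on $n$ only through the summand $-(n-1)/2$, which contributes a universal $+1/2$ when passing from level $n$ to level $n-1$. For $\sigma_-$ the only nontrivial index change is $d^*_{i,j}\mapsto d^*_{i,j+1}$, and increasing $j$ by one decreases $A(d)$ by $1$ and hence increases $A(d^*)$ by $1$; combining with the universal $+1/2$, and noting that the fixed-index prescriptions $d_{i,j}\mapsto d_{i,j}$, $u_\ell\mapsto u_\ell$ contribute no extra shift, every nonzero image has grading exactly $+1/2$ above its preimage. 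For $\sigma_+$, each of $d_{i,j}\mapsto d_{i,j+1}$ and $u_\ell\mapsto u_{\ell+1}$ (in the $n\le 2\tau$ regime, respectively $u_\ell\mapsto u_{\ell-1}$ with the dual labelling convention in the $n>2\tau$ regime) contributes $-1$, for a net shift of $-1/2$. Generators sent to $0$ are vacuously Alexander-homogeneous.

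Conceptually, the cleanest proof lifts to the chain-level picture of the previous subsection: for $n\ll 0$, $\sigma_\pm$ is induced by the chain map $s_\pm:\ltilde C_n\to\ltilde C_{n-1}$ defined by $\x_i\mapsto\x_i$ or $\x_i\mapsto\x_{i+1}$, and the grading formula $\ltilde A(\x_i)=A(\x)-(i-1)-(m-1)/2$ at once gives the chain-level shifts $+1/2$ for $s_-$ and $-1+1/2=-1/2$ for $s_+$. These shifts descend to the induced maps on homology, which by construction are the $\sigma_\pm$ acting on the basis of Theorem \ref{Heddenthm}. The calculation for $n\ll 0$ together with the explicit normal form of Theorem \ref{stabmaps} then pins down the shift for every $n$.

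The main obstacle I would anticipate is bookkeeping across the threshold $n=2\tau$, where the qualitative behaviour of $\sigma_+$ on the unstable complex switches from extending the strip (new $u_1$ at the top) to shrinking it (killing $u_1$): one must check that the labelling convention used in Theorem \ref{stabmaps} for $u_\ell$ in the two regimes is compatible with the grading formula of Theorem \ref{Heddenthm} so that the $\pm 1$ contribution from the index change comes with the correct sign. Once that bookkeeping is carried out, no further input is required.
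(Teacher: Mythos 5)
Your conceptual chain-level argument (second paragraph) is essentially the paper's own proof: the paper reads off the Alexander shift from the bordered coefficient maps $D_1^W$, $D_3^W$ computed in the proof of Theorem~\ref{stabmaps}, which is the dual picture to your $s_\pm\colon\ltilde C_n\to\ltilde C_{n-1}$. So the core of the argument is the same, just presented in the $\ltilde C$-model rather than the $W$-model.

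Two remarks on the details. First, the direct verification in your opening paragraph is solid for $\sigma_-$ throughout and for $\sigma_+$ on $S_\pm$ and on the unstable complex when $n\le 2\tau$. In the $n>2\tau$ regime, however, a literal substitution of the published formulas is not enough: combining $A(u_\ell)=\tau-(\ell-1)-(m-1)/2$ from Theorem~\ref{Heddenthm} with $\sigma_+\colon u_\ell\mapsto u_{\ell-1}$ from Theorem~\ref{stabmaps} gives a shift of $+3/2$, not $-1/2$. Your parenthetical appeal to a ``dual labelling convention'' is exactly what's needed, but it is a handwave as written -- one must actually say that in the $n>2\tau$ regime the index $\ell$ on $u_\ell$ is reversed relative to the Alexander ordering used in Theorem~\ref{Heddenthm} (or, equivalently, that the grading of $u_\ell$ there is $-\big(\tau-(\ell-1)-(m-1)/2\big)$). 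The paper's route via the degree of $D_1^W$ and $D_3^W$ avoids this case split entirely, which is precisely why it is the cleaner proof; note also that for knots in $(S^3,\xi_{\rm st})$ only $n\le 2\tau-1$ ever occurs by Plamenevskaya's bound, so the delicate regime does not affect the applications. Second, your sign conventions yield $\sigma_-$ of degree $+1/2$ and $\sigma_+$ of degree $-1/2$, which matches the corollary's $\mp 1/2$; the paper's two-line proof actually writes the opposite assignment (``$\sigma_-$ and $\sigma_+$ shift the degrees by $-1/2$ and $+1/2$''), which appears to be a sign slip against its own statement -- your version has the signs right.
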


Notice that the maps $\sigma_-$ preserve $S_+$ and eventually kill $S_-$, whereas the maps $\sigma_+$ have the opposite behaviour. Moreover, $\sigma_-$ and $\sigma_+$ are injective on the unstable complex for $n\le2\tau$, while they eventually kill it for $n>2\tau$.

Namely, for $n \le 2\tau$, the subcomplex $S_\pm = \bigcup_{m>0}\ker \sigma_\pm^m = \ker \sigma_{\pm}^N$ for some large $N$ (depending on $K$, but not on the slope $n$: any $N>2g(K)$ works), are canonical. For $n<2\tau$, the unstable subspace is \emph{not} canonical, being a section for the projection map $SFH(-S^3_{K,n})\to SFH(-S^3_{K,n})/(S_++S_-)$.

On the other hand, for $m>2\tau$ the situation is reversed: the unstable complex is defined as the intersection of the kernels of $\sigma_\pm^N$, and $S_\pm$ is defined as a section of the projection map $\ker \sigma_{\mp}^N \to (\ker \sigma_{\mp}^N)/(\ker \sigma_-^N\cap\ker \sigma_+^N)$.

The action of $\sigma_\pm$ on the unstable complex is just by degree shift, as in Theorem \ref{stabmaps} (see also figure \ref{unst_cx} below).

\begin{rmk}\label{naturality}
Let's first consider the case $n\le 2\tau(K)$: as noticed above, we can write $S_\pm = \bigcup_n \ker \sigma_\pm^n$, and we immediately obtain that $S_\pm$ is independent of the isomorphism of Theorems \ref{Heddenthm} and \ref{stabmaps}. In fact, the subspaces $S_\pm\subset SFH(-S^3_{K,n})$ are determined by the action of $\sigma_-$, which is independent of the isomorphism of Theorem \ref{Heddenthm}.

The situation for the unstable complex, on the other hand, is completely different: we have a naturally defined \deff{unstable quotient} (see Remark \ref{unstable_quotient} below) $SFH(-S^3_{K,n})/S$. The unstable complex as we defined it is a section of the quotient map $SFH(-S^3_{K,n})\to SFH(-S^3_{K,n})/S$ that satisfies some additional requirements: namely, we need to choose \emph{any} homogeneous section of the quotient map for $n=2\tau(K)-1$, and then we take the subcomplexes generated by compositions of $\sigma_+$ and $\sigma_-$ to generate the unstable complexes in $SFH(-S^3_{K,n})$ for smaller values of $n$. We give below an example to show that there are actually instances where the choice of the section matters.

Finally, let's consider the case $n>2\tau(K)$. Here the situation is reversed: the unstable complex is the intersection $U=\bigcup_n \ker\sigma^n_- \cap \bigcup_n \ker\sigma_+^n$, and is therefore well-defined and independent of the isomorphism. The two unstable complexes, on the other hand, depend on the choice of suitable sections of the quotient maps $\bigcup_n \ker\sigma_\pm^n \to U$.
\end{rmk}

\begin{ex}
We can give a concrete example to show that the choice of the unstable complex is not unique. There's a recent result of Baldwin, Vela--Vick and V\'ertesi \cite{BVV} that relates the combinatorial Legendrian invariants $\widehat{\lambda}_{\pm}(L), \lambda^-_\pm(L)$ of \cite{OST} and the invariants $\Lhat(\pm L), \L^-(L)$ of \cite{LOSS}: there are two Legendrian representatives $L_1, L_2$ of the pretzel knot $K=P(-4,-3,3)=m(10_{140})$ in $(S^3, \xi_{\rm st})$ that have $tb(L_i)=-1$, $r(L_i)=0$, but $\Lhat(L_1) = 0 \neq \Lhat(L_2)$ (the example is found in \cite{NOT}, where they're distinguished by the combinatorial invariants); notice that $\tau(K)=0$ \cite{knotinfo}, therefore the unstable complex in $SFH(-S^3_{L_i})$ has length $|tb(L_i)-2\tau(K)|=1$, and $EH(L_i)$ has the same degree as the degree of the only nonzero element of the unstable complex (see Proposition \ref{EHgrading} below).

Since the mapping class group of $S^3\setminus N(K)$ relative to the boundary is trivial \cite{KS}, the fact that $\Lhat$ distinguishes these two knots for some parametrisation implies that it distinguishes them for all parametrisations (see the discussion preceding Lemma \ref{EHnaturalitylemma} below).

Neither $EH(L_1)$ nor $EH(L_2)$ gets killed by $\sigma^n_+\circ\sigma^n_-$, since the trivial filling (\emph{i.e.} contact $\infty$-surgery) yields back the standard contact structure on $S^3$ (compare with Proposition \ref{stablevspsiinfty} below). In particular, we can define the generator of the unstable complex to be either of $EH(L_1)$ or $EH(L_2)$, and these two element are distinct by \cite{SV}. Here we're using the fact that $tb(L_i) = 2\tau(K)-1$: if this wasn't the case, and $tb(L_i)<2\tau(K)-1$, we'd need to check that an element that doesn't vanish under $\sigma^n_+\circ\sigma^n_-$ is in fact the stabilisation of at least one element in $SFH(-S^3_{K,tb(L_i)+1})$, because of Theorem \ref{stabmaps}.
\end{ex}

\subsection{The proof of Theorem \ref{stabmaps}}

In this section we're going to give a proof of Theorem \ref{stabmaps}: the main point is the interaction of stabilisation maps with bordered Floer homology. The reader is referred to \cite{LOT} (especially Chapter 11 and Appendix A) for definitions and properties.

Let $\H=(\Sigma\setminus\{p\}, \{\beta_1^a, \beta_2^a\}, \bb^c, \ba)$ be a bordered diagram for $S^3\setminus N(K)$ such that the closures of $\beta_1^a, \beta_2^a$ represent the curves $\lambda-(n+1)\mu$ and $\lambda-n\mu$ respectively, where $\lambda\subset \de N(K)$ is the Seifert longitude for $K$, and $\mu\subset \de N(K)$ is the meridian.

Let's call $W = \CFD(\H)$ and $V = \CFD(\H')$, where $\H'$ is the bordered diagram $(\Sigma,\{\mu,\beta^a_2\},\bb^c,\ba)$; as in \cite{LOT}, we'll use the notation $V^j$, $W^j$ to denote the submodules $\iota_jV$, $\iota_jW$ of $V$ and $W$ in the idempotent $\iota_j$, for $j=0,1$. When talking about coefficient maps, we'll use the superscripts $V$, $W$ to distinguish between the maps acting on $V$ and the ones acting on $W$.

Finally, recall that we have four isomorphisms:
\[
\begin{array}{lcl}
H_*(V^0,D^V) \simeq \HFK(S^3,K) & \quad & H_*(V^1,D^V) \simeq SFH(S^3_{K,-n})\\
H_*(W^0,D^W)\simeq SFH(S^3_{K,-n-1}) & \quad & H_*(W^1,D^W)\simeq SFH(S^3_{K,-n}).
\end{array}
\]
$V^0$ is just $\CFK(S^3,K)$, and we can write down explicitly a chain homotopy equivalence between the model for $V^1$ found in \cite{LOT} and the model $\ltilde{C}$ for Theorem \ref{Heddenthm}.

Since $\ltilde{C}$ computes the sutured Floer group $SFH(-S^3_{K,n})$, which is in turn the cohomology group $SFH(S^3_{K,n})$, we expect $V^1$ to be chain homotopic equivalent to the \emph{dual} complex of $\ltilde{C}$: in fact, $V^1$ \emph{is} the dual to $\ltilde{C}$ at the top and at the bottom, and is a sum of copies of $\F = \HF(S^3)$ for intermediate values of the Alexander grading. The differential $D^V$ is the knot Floer differential $\de_K$ on $V^0$ and it's the adjoint of $\ltilde{\de}$ on $V^1$. The maps $D_1^V$ and $D_3^V$ are adjoint to the projections $\ltilde{C}\to C_n\simeq \CFK(S^3,K)$ and $\ltilde{C}\to C_1\simeq \CFK(S^3,K)$ respectively. The map $D_2^V$ is adjoint to the inclusion $C\simeq C_1\hookrightarrow \ltilde{C}$, and the map $D_{23}^V$ is adjoint to the shift map $\x_i\mapsto\x_{i-1}$.

\begin{prop}[\cite{Zarev2}]
The \emph{adjoints} of the coefficient maps $D^W_1$ and $D^W_3$ induce the two stabilisation maps
\[
SFH(-S^3_{K,-n})\simeq H^*(W^1,D^W)  \stackrel{\sigma_{\pm}}{\longrightarrow}H^*(W^0,D^W) \simeq SFH(-S^3_{K,-n-1}).
\]
\end{prop}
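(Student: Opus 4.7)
The plan is to invoke Zarev's bordered-sutured pairing theorem. By definition, $\sigma_\pm$ is the gluing map associated to the attachment of a basic slice $(T^2\times I,\xi^\pm)$ along $\de N(K)$; this basic slice is itself a bordered-sutured cobordism whose two torus boundaries carry dividing sets of slopes $-n$ and $-n-1$. The pairing theorem converts this geometric gluing into an algebraic box tensor product of $W=\widehat{CFD}(\mathcal{H})$ with the bordered-sutured type $DA$ invariant of the basic slice, so the proof reduces to computing that invariant and reading off the resulting action on $W$.

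First, I would compute the bordered-sutured type $DA$ bimodule of a basic slice. Honda's classification shows that the contact structure is determined by a single bypass attachment, and a bypass has a very simple bordered-sutured description: its invariant is homotopy equivalent to a one-generator bimodule whose only nontrivial operation is multiplication by a single elementary Reeb chord. Tracking which of the two bypasses corresponds to which sign of stabilisation, I expect to find that one basic slice contributes multiplication by $\rho_1$ and the other by $\rho_3$, with all other algebra elements acting trivially.

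Second, I would apply the pairing theorem to identify the box tensor product of $W$ with this type $DA$ invariant, at the chain level, with a chain complex computing $SFH(S^3_{K,-n-1})$. Given the one-chord form of the basic slice invariant, this tensor product acts on the underlying module of $W$ as the single coefficient map $D^W_1$ (respectively $D^W_3$). Passing to homology yields a map $H_*(W^1,D^W)\to H_*(W^0,D^W)$, and dualising --- as required because $EH$ and the stabilisation maps live in $SFH(-M,-\Gamma)$ rather than $SFH(M,\Gamma)$ --- turns this into the adjoint map $SFH(-S^3_{K,n})\to SFH(-S^3_{K,n-1})$, which is $\sigma_\pm$.

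The main obstacle is the bookkeeping of conventions: I would need to verify carefully that the two bordered boundary slopes really match the framings $-n$ and $-n-1$ used in the setup, that the idempotent conventions align so that $D^W_1$ and $D^W_3$ connect the $\iota_1$-generators (corresponding to framing $-n$) to the $\iota_0$-generators (framing $-n-1$) in the correct direction, and that the correspondence between the two signs of stabilisation and the two elementary Reeb chords $\rho_1,\rho_3$ is the one asserted in the statement. This last point is essentially Honda's bypass-attachment analysis translated into bordered coordinates; once it is settled, the result follows from the pairing theorem without further computation.
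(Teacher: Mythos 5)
This proposition is cited in the paper from Zarev's in-preparation manuscript [Za2] (``Equivalence of gluing maps for SFH''), and the paper gives no proof of its own; the statement is used as a black box immediately before the proof of Theorem~\ref{stabmaps}. So there is no paper proof to compare your attempt against, and the relevant question is whether your blind argument is itself a valid proof.

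Your outline has the right flavour but glosses over the step that is the actual content of Zarev's cited result. The bordered-sutured pairing theorem identifies $SFH$ of a glued manifold with the homology of a box tensor product of bordered-sutured invariants; it does \emph{not}, on its own, identify the Honda--Kazez--Mati\'c gluing map $\Phi_\xi$ (defined contact-geometrically via partial open books) with the map induced by a box tensor against the $DA$ bimodule of the layer. Establishing that these two maps agree is precisely the ``equivalence of gluing maps'' in the title of [Za2], so your proposal in effect assumes the main theorem it would need. There is also a compatibility issue you don't address: the proposition is phrased in terms of LOT's $\CFD$ of a knot complement (with idempotent decomposition $W^0$, $W^1$ and coefficient maps $D^W_1$, $D^W_3$), whereas you reason directly in Zarev's bordered-sutured theory. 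Translating between the two frameworks at the level of morphisms, not just groups, is nontrivial and needs to be stated. The remaining bookkeeping you flag --- computing the basic-slice bimodule as a one-chord module, matching slopes $-n$ and $-n-1$ to the idempotents $\iota_1$, $\iota_0$, and matching the signs of stabilisation to $\rho_1$ versus $\rho_3$ --- is a reasonable sketch and likely correct, but until the gluing-map equivalence is justified it is bookkeeping for a proof that hasn't been set up.
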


We're now able to prove Theorem \ref{stabmaps}, which is just a computation in light of the previous proposition.

\begin{proof}
It's shown in \cite[Appendix A]{LOT} that $W = \CFDA(\tau_\lambda)\boxtimes V$. $\CFDA(\tau_\lambda)$ is an $\mathcal{A}(T^2)$-bimodule, which is generated over $\F$ by three vectors $\p,\q,\bs$. The action of the idempotents on $\CFDA(\tau_\lambda)$ is so that, as vector spaces:
\[
\begin{array}{lll}
W^0 = \p\boxtimes V^0 \oplus \bs\boxtimes V^1 & \quad &  W^1 = \q\boxtimes V^1.
\end{array}
\]
We want to compute the action of the coefficient maps $D^W$, $D^W_1$, $D^W_3$ on $W$. We refer to the computations of \cite[Chapter A.3.1]{LOT}: there we find that
\[
\begin{array}{lll}
m_{0,1,1}(\p,\rho_3) = \rho_3\otimes \q & \quad & m_{0,1,0}(\bs) = \rho_1\otimes\q\\
m_{0,1,1}(\bs,\rho_2) = \p & \quad & m_{0,1,1}(\bs,\rho_{23}) = \rho_3\otimes\q.
\end{array}
\]
In particular, for all $\x\in V^0$, $\y\in V^1$ we have:
\begin{equation}\label{DW13}
\begin{array}{l}
D^W: \p\boxtimes\x + \bs\boxtimes\y \mapsto \p\boxtimes(D^V\x + D^V_2\y) + \bs\boxtimes D^V\y;\\
D^W: \q\boxtimes \y \mapsto \q\boxtimes D\y \\
D_1^W: \p\boxtimes\x + \bs\boxtimes\y \mapsto \q\boxtimes\y;\\
D_3^W: \p\boxtimes\x + \bs\boxtimes\y \mapsto \q\boxtimes (D_3^V\x + D_{23}^V\y),
\end{array}
\end{equation}
The model for $SFH(-S^3_{K,-n-1})$ given by the dual of $W^0$ agrees with the model of Theorem \ref{Heddenthm}, under the linear isomorphism that identifies the subspace $\p\boxtimes V^0$ with the dual of $C_1$, sitting as the leftmost column in Figure \ref{Heddenfigure}, and the subspace $\bs\boxtimes V^1$ with the dual to $\bigoplus_{k\ge 2} C_k$, consisting of the $n$ rightmost columns.

The adjoint of $D^W$ acts on the dual of $W^0$ so that the dual of $\bs\boxtimes V^1$ is a subcomplex. By equations \ref{DW13}, $D_1^W(\p\boxtimes\x + \bs\boxtimes\y) = \q\boxtimes\x$: in other words, $\p\boxtimes V^0 \subset \ker D_1^W$ and $D_1^W$ is the isomorphism $\bs\boxtimes V^1$ onto $W^1\simeq V^1$ that is the identity on the second factor. In particular, the adjoint of $D_1^W$ is the inclusion of the dual of $W^1$ into the dual of $W^0$ as the subcomplex dual to $\bs\boxtimes V^1$, that is the subcomplex generated by the $C_k$ with $k\ge 2$.

Similarly, the adjoint of $D_3^W$ is seen to act as the inclusion of the dual of $W^1$ into the dual of $W^1$ as the subcomplex generated by the $C_k$ with $k\le n$.

We've given a concrete identification of $H_*(\ltilde{C})$ with the model of Definition \ref{defn3.3}, where the class of $(\eta_{2j})_i$ is identified with $d_{i,j}$ for $i\le \delta(j)$ and with $d_{m'-i,j}$ for $i\ge m'-\delta(j)$, and $(\eta_0)_i$ is identified with $u_i$. The adjoint of $D_1^W$ is just the inclusion map $\x_i\mapsto \x_i$, whereas the adjoint of $D_3^W$ is the inclusion $\x_i\mapsto\x_{i+1}$ for all $i$'s: in particular, the induced maps act on homology as claimed.

The result for arbitrary framing parameter $n$ follows from \cite[Theorem A.11]{LOT}: they prove that $V^1=\iota_1CFD(S^3\setminus K)$ decomposes of the sum of a stable complex (containing the dual to $S_+\oplus S_-$) and an unstable chain (containing the dual to $U$) as follows. We can pick bases $\{\xi_i\}, \{\eta_i\}$ for $\CFK(S^3,K)$ playing the r\^oles of the basis $\mathcal{B}$ used in Theorem \ref{Heddenthm}\footnote{We're going to forget about primed elements, as they don't play any r\^ole in homology.}, one with respect to the basepoint $z$ and the other with respect to the basepoint $w$. We also introduce strings of elements $\{\kappa^k_i\}, \{\lambda^k_i\}$ of length $\delta(i)$ associated to each arrow $\xi_i\stackrel{\de_z}{\longrightarrow}\xi_{i+1}$ and $\eta_i\stackrel{\de_w}{\longrightarrow}\eta_{i+1}$ respectively, both of length $\delta(i)$.

The stable complex in $V^1$ looks like:
\[
\begin{array}{l}
\xi_i\stackrel{D_1^V}{\longrightarrow}\kappa_1^i\stackrel{D_{23}^V}{\longleftarrow}\kappa_2^i\stackrel{D_{23}^V}{\longleftarrow}\dots\stackrel{D_{23}^V}{\longleftarrow}\kappa_{\delta(i)}^i\stackrel{D_{123}^V}{\longleftarrow}\xi_{i+1};\\
\eta_i\stackrel{D_3^V}{\longrightarrow}\lambda^i_{1}\stackrel{D_{23}^V}{\longrightarrow}\lambda^i_{2}\stackrel{D_{23}^V}{\longrightarrow}\dots\stackrel{D_{23}^V}{\longrightarrow}\lambda^i_{\delta(i)}\stackrel{D_{2}^V}{\longrightarrow}\eta_{i+1},
\end{array}
\]
and it's immediate to find an identification of the $\iota_1$ part of the stable complex with the dual of $S_+\oplus S_-$ in Theorem \ref{Heddenthm}, $d_{i,j}$ with the dual of $\kappa^{2i-1}_j$ and $d_{i,j}^*$ with the dual of $\lambda_j^{2i-1}$.

The unstable chain, on the other hand, depends on the framing as follows:
\[
\begin{array}{cll}
\xi_0\stackrel{D_1^V}{\longrightarrow}\mu_1\stackrel{D_{23}^V}{\longleftarrow}\mu_2\stackrel{D_{23}^V}{\longleftarrow}\dots\stackrel{D_{23}^V}{\longleftarrow}\mu_{2\tau(K)-n}\stackrel{D_{3}^V}{\longleftarrow}\eta_0 & & \text{for } n<2\tau(K);\\
\xi_0\stackrel{D_{12}^V}{\longrightarrow}\eta_0 & & \text{for } n=2\tau(K)\\
\xi_0\stackrel{D_{123}^V}{\longrightarrow}\mu_1\stackrel{D_{23}^V}{\longrightarrow}\mu_2\stackrel{D_{23}^V}{\longrightarrow}\dots\stackrel{D_{23}^V}{\longrightarrow}\mu_{n-2\tau(K)}\stackrel{D_{2}^V}{\longrightarrow}\eta_0 & & \text{for } n>2\tau(K),
\end{array}
\]
and we can identify $u_k$ in the unstable complex of $SFH(-S^3_{K,n})$ with the dual of $\mu_k$.

Let's call $W:=\CFDA(\tau_\lambda)\boxtimes V$: then, as above
\[
\begin{array}{lll}
W^0 = \p\boxtimes V^0 \oplus \bs\boxtimes V^1 & \quad &  W^1 = \q\boxtimes V^1,
\end{array}
\]
and the action of the maps $D^W, D^W_1$ and $D^W_3$ is controlled by equations \ref{DW13}. We have an obvious identification of the dual of $W^1$ with $SFH(-S^3_{K,n})$ that respects the stable-unstable decomposition.

The stable complex $S_+\subset SFH(-S^3_{K,n-1})$ is identified with the cohomology of the subcomplex of $W^0$ spanned by $\bs\boxtimes\kappa^j_i$ via the map $d_{i,j}\leftrightarrow (\bs\boxtimes\kappa_j^{2i-1})^*$. $S_-\subset SFH(-S^3_{K,n-1})$ is identified with the cohomology of the subcomplex of $W^0$ spanned by $\bs\boxtimes\lambda_i^j$ and $\p\boxtimes\eta_i$ via the map $d^*_{i,1}\leftrightarrow (\p\boxtimes\eta_{2i-1})^*, d^*_{i,j}\leftrightarrow (\p\boxtimes\lambda^{2i-1}_{j-1})^*$. Notice that equations \ref{DW13} imply that for every odd $i$ there's an arrow $\bs\boxtimes\lambda^i_{\delta(i)}\stackrel{D^W}{\longrightarrow}\p\boxtimes\eta_{i+1}$, so that the homology of the stable complex in $V$ has constant rank. 

The unstable complex of $SFH(-S^3_{K,n})$ is identified with the cohomology of the subspace of $W^0$ spanned by $\bs\boxtimes\mu_\ell$ and $\p\boxtimes\eta_0$ via one of these two maps: if $n\le 2\tau(K)$, we identify $u_k$ with the dual of $\bs\boxtimes\mu_k$ and $u_{2\tau(K)-n+1}$ with the dual of $\p\boxtimes\eta_0$; if $n>2\tau(K)$, we just identify $u_k$ with the dual of $\bs\boxtimes\mu_k$. Notice that in the latter case there's an arrow $\bs\boxtimes\mu_{n-2\tau(K)}\stackrel{D^W}{\longrightarrow}\p\boxtimes\eta_0$ that cancels out the two generators involved, in cohomology, so that both maps are isomorphisms.

Now, the maps $D_1^W, D_3^W$ act on the stable complex as follows:
\[
\xymatrix{
& \bs\boxtimes\kappa^i_1\ar[dl]_{D^W_3}\ar[d]_{D^W_1} & \bs\boxtimes\kappa^i_2\ar[dl]_{D^W_3}\ar[d]_{D^W_1} & \dots & & \bs\boxtimes\kappa^i_{\delta(i)}\ar[dl]_{D^W_3}\ar[d]_{D^W_1}\\
0 & \q\boxtimes\kappa^i_1 & \q\boxtimes\kappa^i_2 & \dots  &\q\boxtimes\kappa^i_{\delta(i)-1} & \q\boxtimes\kappa^i_{\delta(i)};
}
\]
\[
\xymatrix{
\p\boxtimes\eta_i\ar[dr]^{D^W_3} & \bs\boxtimes\lambda^i_1\ar[d]^{D^W_1}\ar[dr]^{D^W_3} & \dots & & \bs\boxtimes\lambda^i_{\delta(i)}\ar[d]^{D^W_1}\ar[r]^{D^W} & \p\boxtimes\eta_{i+1}\\
& \q\boxtimes\lambda^i_1 & \q\boxtimes\lambda^i_2 & \dots  &\q\boxtimes\kappa^i_{\delta(i)}.
}
\]
Finally, the action of the maps on the unstable complex depends on the framing: if $n=2\tau(K)$ or $n=2\tau(K)+1$ there's nothing to prove, since either the unstable complex in the domain or the unstable complex in the range of $D^W_1, D^W_3$ is trivial for these framing.

Let $m=|2\tau(K)-n|$. If $n<2\tau(K)$, the action is as follows:
\[
\xymatrix{
& \bs\boxtimes\mu_1\ar[dl]_{D^W_3}\ar[d]_{D^W_1} & \bs\boxtimes\mu_2\ar[dl]_{D^W_3}\ar[d]_{D^W_1} & \dots & \bs\boxtimes\mu_{m}\ar[d]_{D^W_1} & \p\boxtimes\eta_0\ar[dl]_{D^W_3}\\
0 & \q\boxtimes\mu_1 & \q\boxtimes\mu_2 & \dots  &\q\boxtimes\mu_{m};
}
\]
if $n>2\tau(K)+1$, the action is:
\[
\xymatrix{
\bs\boxtimes\mu_1\ar[d]^{D^W_1}\ar[dr]^{D^W_3} & \bs\boxtimes\mu_2\ar[d]^{D^W_1} & \dots & \bs\boxtimes\mu_{m-1}\ar[d]^{D^W_1}\ar[dr]^{D^W_3} & \bs\boxtimes\mu_{m}\ar[d]^{D^W_1}\ar[r]^{D^W} & \p\boxtimes\eta_0\\
\q\boxtimes\mu_1 & \q\boxtimes\mu_2 & \dots  &\q\boxtimes\mu_{m-1}&\q\boxtimes\mu_{m}.
}
\]
Using the identification discussed above, Theorem \ref{Heddenthm} follows.
\end{proof}

The proof of Corollary \ref{sigma-grading} is straightforward.

\begin{proof}
According to the computations above, $D_1^W$ shifts the degree by $1/2$ and $D_3^W$ shifts the degree by $-1/2$. Their adjoints, $\sigma_-$ and $\sigma_+$ shift the degrees by $-1/2$ and $+1/2$ respectively.
\end{proof}

\subsection{Sutured Legendrian invariants}

Let $L$ be a Legendrian knot in $(S^3,\xi)$, of topological type $K$. Recall that in \cite{me} the following two facts have been proved:

\begin{prop}\label{stablevsOT}
$\xi$ is overtwisted if and only if $\sigma_+^N(\sigma^N_-(EH(L)))=0$ for sufficiently large $N$, that is if and only if $x$ is stable.
\end{prop}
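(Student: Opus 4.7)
The plan is to interpret the iterated stabilization maps geometrically, then tackle the two directions with different tools. By naturality of gluing maps (Theorem \ref{EHmap}) together with the geometric content of $\sigma_\pm$ (Definition \ref{stabmaps_def}), we have
\[
\sigma_+^N(\sigma_-^N(EH(L))) = EH(L_N),
\]
where $L_N$ denotes the Legendrian knot obtained from $L$ by $N$ negative followed by $N$ positive Legendrian stabilizations. From the formulas in Theorem \ref{stabmaps} one also reads off that $\sigma_+^N \sigma_-^N$ eventually annihilates a class $x$ if and only if $x$ lies in the stable complex $S_+ \oplus S_-$; this justifies the equivalence with the ``$x$ is stable'' formulation. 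So the proposition reduces to showing that $\xi$ is overtwisted if and only if $EH(L_N) = 0$ for all sufficiently large $N$.

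For the forward direction, suppose $\xi$ is overtwisted. I would appeal to the standard fact that every null-homologous Legendrian knot in an overtwisted contact 3-manifold becomes loose --- i.e., acquires an overtwisted complement --- after sufficiently many stabilizations of each sign. Thus for $N \gg 0$ the sutured contact manifold $S^3_{L_N}$ is itself overtwisted, and the usual vanishing result for overtwisted contact invariants gives $EH(L_N) = 0$.

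For the backward direction, the plan is to invoke the gluing map $\psi_\infty$ of Definition \ref{psiinfty_def}, corresponding to refilling the standard neighborhood of $L$. Since contact $\infty$-surgery is trivial, $\psi_\infty(EH(L)) = c(\xi) \in \HF(-S^3)$. The technical core, which I would state and prove separately as Proposition \ref{stablevspsiinfty}, is that $\psi_\infty$ vanishes on the stable complex $S_+ \oplus S_-$. Given this, if $EH(L)$ is stable then $c(\xi) = 0$, and since Eliashberg's classification identifies $\xi_{\rm st}$ as the unique tight contact structure on $S^3$, this forces $\xi$ to be overtwisted.

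The main obstacle is establishing Proposition \ref{stablevspsiinfty}: that $\psi_\infty$ kills $S_+ \oplus S_-$ and is non-trivial on the unstable complex. I would follow the same bordered Floer strategy used in the proof of Theorem \ref{stabmaps}: realize $\psi_\infty$ via the $\infty$-filling on the $\CFD$ side of the knot complement, and directly compute that the generators dual to $\kappa^i_j$, $\lambda^i_j$ (corresponding to $d_{i,j}$, $d^*_{i,j}$) are annihilated, while the generators dual to $\mu_\ell$ (corresponding to $u_\ell$) survive and map to a generator of $\HF(-S^3) = \F$.
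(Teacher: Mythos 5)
Your reduction of the statement to the claim that $\xi$ is overtwisted if and only if $EH(L)$ is stable, via Theorem \ref{stabmaps} and the geometric reading $\sigma_+^N\sigma_-^N(EH(L)) = EH(L_N)$, is correct and matches the paper.

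For the direction ``stable implies overtwisted'' you route through Lemma \ref{stablevspsiinfty} (stable $\Rightarrow$ $\psi_\infty = 0$ $\Rightarrow$ $c(\xi)=0$ $\Rightarrow$ overtwisted); this is sound, and your proposed bordered-Floer proof of that lemma is a plausible strategy. The paper takes a shorter route here: since a stabilisation basic slice followed by the $\infty$-filling solid torus is again an $\infty$-filling, $\psi_\infty\circ\sigma_\pm = \psi_\infty$, so $\psi_\infty(EH(L)) = \psi_\infty(\sigma_+^N\sigma_-^N(EH(L))) = 0$ directly, without needing the full strength of the lemma for this implication.

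The genuine gap is in the converse direction, ``overtwisted implies stable.'' You invoke as a ``standard fact'' that any null-homologous Legendrian knot in an overtwisted contact $3$-manifold becomes loose after sufficiently many stabilisations of each sign. This is a strictly stronger geometric assertion than the Floer-theoretic vanishing you need, it is not established in this paper, and I do not think it can simply be cited as known: non-loose knots in overtwisted contact structures are subtle (there are infinite families of non-loose Legendrian representatives of some knot types), and the known Bennequin-type constraints on $(tb,r)$ for non-loose knots do not obviously force looseness along the stabilisation sequence $tb\mapsto tb-2N$, $r\mapsto r$. Without a proof or a precise reference, this step does not go through. The paper avoids the issue entirely with a more elementary argument: $\sigma_+^N\sigma_-^N(EH(L))$ is homogeneous of Alexander degree $-r(L)/2$ by Proposition \ref{EHgrading} and Corollary \ref{sigma-grading}; for $N$ large the unstable quotient of $SFH(-S^3_{K,tb(L)-2N})$ has rank one in that degree and its generator is $EH(L')$ for a Legendrian representative $L'$ of $K$ in $(S^3,\xi_{\rm st})$ with $tb(L')=tb(L)-2N$, $r(L')=r(L)$; so if $\sigma_+^N\sigma_-^N(EH(L))$ were nonzero modulo stable elements, applying $\psi_\infty$ and using $\psi_\infty\circ\sigma_\pm=\psi_\infty$ would give $c(\xi)=c(\xi_{\rm st})\neq 0$, contradicting the overtwistedness of $\xi$. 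You should replace your appeal to looseness by this uniqueness-plus-contradiction argument, or else supply a proof of the looseness claim.
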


\begin{proof}[Sketch of proof]
The `if' direction is easy, since the union of a stabilisation basic slice and the $\infty$-surgery layer is still an $\infty$-surgery layer.

The `only if' direction follows from the remark that, in the relevant Alexander grading component, there is only one non-vanishing element $x_0$: for sufficiently large $N$, $x_0$ is the contact element of a Legendrian representative of $K$ in the standard $S^3$. We can now argue by contradiction.
\end{proof}

The following lemma is a part of the proof of the proposition above, and turns out to be useful below.

\begin{lemma}\label{stablevspsiinfty}
A homogeneous element $x\in SFH(-S^3_{K,n})$ is stable if and only if $\psi_\infty(x)=0$.
\end{lemma}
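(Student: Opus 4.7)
My plan is to prove both directions, with the forward direction being essentially formal and the backward direction requiring an identification of unstable elements with Legendrian contact classes in $(S^3, \xi_{\rm st})$.

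For the forward direction, the key observation is that gluing a stabilisation basic slice onto the $\infty$-surgery layer again produces an $\infty$-surgery layer: the standard Legendrian neighbourhood of $L$ decomposes contactomorphically as the standard Legendrian neighbourhood of a $\pm$-stabilisation $L'$ union the corresponding stabilisation basic slice. By composition of gluing maps (Honda--Kazez--Mati\'c), this yields $\psi_\infty \circ \sigma_\pm = \psi_\infty$, hence $\psi_\infty \circ \sigma_+^N \circ \sigma_-^N = \psi_\infty$ for every $N$. If $x$ is stable, Theorem \ref{stabmaps} ensures $\sigma_+^N \sigma_-^N(x) = 0$ for $N$ large, so $\psi_\infty(x) = 0$.

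For the backward direction I would argue by contrapositive. Let $x$ be homogeneous of Alexander degree $a$ with non-vanishing unstable component; the aim is to show $\psi_\infty(x) \neq 0$. In the relevant regime $n \leq 2\tau$, Theorem \ref{stabmaps} together with Corollary \ref{sigma-grading} imply that $y_N := \sigma_+^N \sigma_-^N(x) \in SFH(-S^3_{K, n-2N})$ still has a non-zero unstable component (shifted to $u_{\ell + N}$) at Alexander degree $a$, while $\psi_\infty(y_N) = \psi_\infty(x)$ by the forward direction. For $N$ large, each Alexander degree in the support of the unstable complex of $SFH(-S^3_{K, n-2N})$ contains at most one unstable generator, so there is a unique non-stable element at grading $a$; simultaneously, stabilising a maximal-$tb$ Legendrian representative $\bar L$ of $K$ in $(S^3, \xi_{\rm st})$ produces Legendrian knots $L'$ with $tb(L') = n - 2N$ and rotation number ranging in a window of width $\bar{tb} - n + 2N + 1$, which for $N$ large contains the unique value of $r(L')$ yielding $A(EH(L')) = a$. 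Since $\psi_\infty(EH(L')) = c(\xi_{\rm st}) \neq 0$, the forward direction forces $EH(L')$ to be non-stable, so at Alexander grading $a$ it coincides, modulo stable and up to a nonzero scalar, with the unstable component of $y_N$. Hence $\psi_\infty(y_N) = c \cdot c(\xi_{\rm st}) \neq 0$, so $\psi_\infty(x) \neq 0$.

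The main obstacle I expect is the matching step in the backward direction: realising each unstable element, modulo stable, as a Legendrian contact class in the standard $S^3$. Plamenevskaya's sharp Bennequin inequality $|r(L')| \leq 2\tau - 1 - tb(L')$ restricts the Alexander gradings available at any fixed $tb$, so direct matching at the original framing $n$ may fail; the fix is to trade $tb$ for a proportionally wider rotation window by iterating $\sigma_\pm$, which eventually covers any prescribed $a$.
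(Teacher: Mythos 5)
Your argument is correct and follows exactly the strategy the paper indicates in the sketch of proof of Proposition~\ref{stablevsOT}: the forward direction uses $\psi_\infty\circ\sigma_\pm=\psi_\infty$ together with Theorem~\ref{stabmaps}, and the backward direction iterates $\sigma_+^N\sigma_-^N$ until the unstable generator in the given Alexander degree is realised as $EH(L')$ for a Legendrian representative $L'\subset(S^3,\xi_{\rm st})$ (which exists for $N$ large by a mountain-range count), whence $\psi_\infty(x)=\psi_\infty(EH(L'))=c(\xi_{\rm st})\neq 0$. Your explicit restriction to $n\le 2\tau$ is appropriate, since that is the only regime where the lemma is used (by Plamenevskaya's bound) and where the backward implication actually holds.
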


\begin{rmk}
If $tb(L)>2\tau(K)-1$, then $L$ violates Plamenevskaya's inequality, automatically implying that $\xi$ is overtwisted. On the other hand, if $tb(L)\le 2\tau(K)-1$, the proposition can be rephrased as follows: $EH(L)$ is stable in $SFH(-S^3_L)$ if and only if $\xi$ is overtwisted.
\end{rmk}

This is basically a rephrasing of the Theorem 1.2 in \cite{LOSS} telling us that $\L^-(L)$ is mapped to $c(\xi)$ if we set $U=1$.

\begin{prop}\label{EHgrading}
Identifying $SFH(-S^3_L) = \HFK(S^3_{tb(L)}(K), \ltilde{K})$ as in Proposition \ref{HFvsSFH}, $EH(L)$ is homogenous of Alexander degree $-r(L)/2$.
\end{prop}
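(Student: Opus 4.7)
My plan is to compute $A(EH(L))$ directly from the definition~(\ref{defAlex}) of the Alexander grading, by identifying the $\spin$ summand in which $EH(L)$ lives and evaluating the associated first Chern class on the Seifert surface class.

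The cycle $\x(S,P,h) \in SFC(-S^3_L)$ representing $EH(L)$ is $\spin$-homogeneous, and the associated $\spin$ structure $\s(EH(L)) = \s_\xi$ is the one induced by the contact $2$-plane field $\xi$ on the complement $S^3\setminus N(L)$. This is a standard property of the Honda--Kazez--Mati\'c construction, visible directly from the partial open book description: the vector field used to define $\s_\xi$ is homotopic to a vector field whose orthogonal plane distribution is $\xi$. Setting $m = tb(L) = t$ in~(\ref{defAlex}), the proposition reduces to the identity
\[
\langle c_1(\s_\xi),\, [F, \partial F]\rangle \;=\; t - r(L),
\]
where $F$ is a Seifert surface for $K$.

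The right-hand side is exactly $sl(L^+)$, the self-linking number of the positive transverse push-off of $L$, and this is no coincidence. In the sutured convention, $c_1(\s_\xi)$ is computed as the relative Euler class of $\xi|_F$ with respect to a trivialization of $\xi|_{\partial N(L)}$ dictated by the sutures; since the sutures on $\partial S^3_L$ are two parallel copies of the contact framing, the suture-compatible trivialization of $\xi$ along $L$ is precisely the one given by the positive transverse push-off direction. By definition of the self-linking number, the relative Euler class of $\xi|_F$ with respect to this trivialization evaluates to $sl(L^+) = tb(L) - r(L)$, giving the required identity, and substituting back yields $A(EH(L)) = -r(L)/2$.

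The main subtlety is pinning down the suture-compatible trivialization in the Chern class computation; once the conventions are aligned, the rest is a routine contact-topology exercise. A consistency check is provided by Corollary~\ref{sigma-grading}: positive and negative Legendrian stabilizations act on the classical invariants by $(tb, r) \mapsto (tb-1, r\pm 1)$, so $-r/2$ shifts by $\mp 1/2$, which matches exactly the Alexander degree shift $\mp 1/2$ of $\sigma_\pm$. Combined with $EH(S_\pm L) = \sigma_\pm EH(L)$, this shows that the formula propagates correctly through stabilizations, so that it is enough to establish it on any single Legendrian representative that reaches $L$ by a sequence of stabilizations (and ultimately traces back to the max-$tb$ unknot, where both sides are zero).
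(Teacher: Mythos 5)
Your approach is genuinely different from the paper's. The paper's proof is a single sentence: it observes that the statement is a reinterpretation of the known fact that $\L^-(L)\in HFK^-(-S^3,K)$ has Alexander degree $(tb(L)-r(L)+1)/2$, thereby outsourcing the grading computation to the LOSS construction, where the conventions are already fixed in the literature. You instead propose computing the degree from scratch via equation (\ref{defAlex}): identify the relative $\spin$ structure of $EH(L)$ as the one induced by the contact plane field, and evaluate $c_1(\s_\xi)$ on the Seifert surface class. This is more self-contained and conceptually illuminating (it explains \emph{why} the self-linking number appears), but it is also more exposed to convention errors: the crucial identity $\langle c_1(\s_\xi),[F,\de F]\rangle = tb(L)-r(L)$ is asserted rather than verified, and the step ``the suture-compatible trivialization of $\xi$ along $\de N(L)$ is the positive transverse push-off direction'' is precisely the kind of claim that is easy to get wrong by a sign or by a unit shift --- indeed, this is the entire content of the proof. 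Your proposal also has to implicitly reconcile a sign: equation (\ref{defAlex}) is stated for $\HFK(-S^3_m(K),\ltilde{K})$ while the proposition identifies $SFH(-S^3_L)$ with $\HFK(S^3_{tb(L)}(K),\ltilde{K})$; these are dual groups and the Alexander gradings are related by a sign, so some care is needed to make sure the formula being used matches the identification in the statement.

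Separately, the concluding parenthetical of your consistency check is mistaken: stabilisations preserve the topological knot type, so a nontrivial Legendrian knot never arises from the maximal-$tb$ unknot by stabilisation. If you wanted to run a ``base case plus propagation'' argument, the base case would have to be taken within each fixed topological type (e.g.\ a max-$tb$ representative of $K$), and even then the Legendrian mountain range may have several peaks, so this does not reduce to a single universal base case. This is only a sanity check and does not affect the main computation, but as stated it is not a valid reduction.
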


This is a reinterpretation of the fact that $\L^-(L)\in HFK^-(-S^3,K)$ has Alexander degree $2A(\L^-(L)) = tb(L)-r(L)+1$.

\section{Contact surgeries}\label{cont_surg}

Suppose now that $L$ is a Legendrian knot in $(Y,\xi)$ of topological type $K$: contact surgery on $L$ is an operation on $Y_L= Y\setminus {\rm Int}(\nu(L))$ that consists of gluing a solid torus $S^1\times D^2$ with a tight contact structure $\eta$ such that the boundary of the torus is $\eta$-convex.

Such a tight $\eta$ exists on $S^1\times D^2$ as long as the $\eta$-dividing curves on the boundary are not parallel to $S^1\times \{*\}$ (see \cite{Ho}).

We want the gluing to respect the dividing curves on the boundary of $Y_L$ and $S^1\times D^2$, so that we can glue $\xi$ and $\eta$ to get a contact structure on the surgered manifold $Y'$. In particular, this can be done whenever we don't fill in the meridional slope for $L$.

We have a natural basis for $\de Y_L = T^2$, given by the meridian $\mu$ for $L\subset Y$ and the dividing curve $\gamma$ that is homologous to $L$ in $\nu(L)$. The slope of the curve $\{*\}\times \de D^2$ in $\de Y_L$ is measured with respect to this natural basis, and we'll refer to \deff{contact} $p/q$-\deff{surgery} along $L$ to indicate any contact structure obtained with this process on $Y_{p\mu+q\gamma}(L)$.

\begin{rmk}
Up to isotopy, there's only one contact structure $\eta$ on $S^1\times D^2$ that gives contact $\pm1$-surgery along $L$: we denote the resulting contact structure on the surgered manifold with $\xi_{+1}(L)$, or simply $\xi_{+1}$ if $L$ is clear from the context. Similarly, there is only one $\eta$ that gives contact $1/m$-surgery, that we'll denote with $\xi_{1/m}(L)$.
\end{rmk}

\begin{rmk}
Whenever $K$ is nullhomologous in $Y$, \emph{e.g.} when $Y=S^3$, there is another natural framing for $K$, the Seifert framing: one easily checks that doing contact $p/q$-surgery on a Legendrian knot $L$ with $tb(L)=t$ produces a contact structure on $Y_{t+p/q}(K)$, where the surgery coefficient here is measured with respect to the Seifert framing, so that the difference between the contact and the topological surgery framings is just a global shift.
\end{rmk}

Let's recall here Ding and Geiges's algorithm to identify contact $p/q$-surgery along a Legendrian knot $L$ in $(Y,\xi)$ as a sequence of contact $\pm1$-surgeries, when $p/q$ is positive. Pick the minimal integer $k$ such that $q-kp$ is negative, and call $r$ the number $1+p/(kp-q)$. Now consider the negative continued fraction expansion $[a_0,\dots,a_\ell]$ of $r$: inductively, $a_0 = \lceil r \rceil$ is the smallest integer $a_0\ge r$, and $r = a_0-1/[a_1,\dots,a_\ell]$. Notice that $a_i\ge 2$ for each $i$, by construction.

Define the following link $\bbL = \bbL_+\cup \bbL_-$: $\bbL_+$ is the union of $k$ Legendrian pushoffs of $L$; $\bbL_- = L_0\cup L_1\cup\dots\cup L_\ell\subset Y\setminus \bbL_+$ is constructed as follows: $L_0$ is any $(a_0-2)$-th stabilisation of a pushoff of $L$, $L_{j+1}$ is any $(a_{j+1}-2)$-th stabilisation of a pushoff of $L_j$ for $0\le j \le \ell-1$. If we have more fractions floating around, we'll denote the link associated to $p/q$ as $\bbL(p/q)$, and the two sublinks as $\bbL^\pm(p/q)$.

Notice that $\bbL_-$ depends on the choice of the signs of the stabilisations along the way: we suppress this dependence from the notation.

\begin{thm}[\cite{DG}]
Contact $p/q$-surgery is obtained from $Y$ as contact $+1$-surgery along the link $\bbL_+$ and Legendrian surgery along the link $\bbL_-$.
\end{thm}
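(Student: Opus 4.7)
The plan is to realise the tight contact solid torus $(\bfT_{p/q}, \xi_{p/q})$ being glued in to replace $\nu(L)$ as a stack of basic slices, and then to interpret each basic slice (or block of basic slices) as the surgery layer of a single contact $\pm 1$ surgery on an appropriate Legendrian knot. The combinatorics of this decomposition is controlled by the continued fraction expansion in the statement, so that the link $\bbL = \bbL_+ \cup \bbL_-$ is built to record exactly this data.

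First I would handle the contribution of $\bbL_+$. A Legendrian pushoff $L^*$ of $L$ sits in the standard neighbourhood $\nu(L)$ with meridian parallel to the contact framing $\gamma$ of $L$. Doing contact $(+1)$-surgery on $L^*$ amounts to gluing the unique $T^2 \times I$ layer realising this surgery; in the $(\mu, \gamma)$ basis for $\de Y_L$ this layer shifts the slope $s$ of the dividing set on the outer boundary by the Farey move $s \mapsto s/(1-s)$, or equivalently can be absorbed into a change of basis that replaces $p/q$ by $p/(q-p)$. Iterating this $k$ times with $k$ the minimal integer for which $q - kp < 0$ reduces contact $p/q$-surgery along $L$ to contact $r$-surgery along the innermost pushoff, with $r = 1 + p/(kp-q) > 1$; this is precisely the reduction built into the definition of $\bbL_+$.

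For the remaining contact $r$-surgery I would invoke Honda's classification of tight contact structures on the solid torus \cite{Ho}. The negative continued fraction $r = [a_0, \ldots, a_\ell]$ produces a canonical path in the Farey tessellation, and every tight contact structure on the corresponding solid torus decomposes (after choices of signs) as a chain of $\sum_j (a_j - 1)$ basic slices grouped in blocks of sizes $a_j - 1$. By the proposition preceding Definition \ref{stabmaps_def}, a single basic slice is exactly the $T^2 \times I$ layer separating the standard neighbourhood of a Legendrian knot from that of a once-stabilised copy. Hence contact $(-1)$-surgery on an $(a_0 - 2)$-fold stabilisation $L_0$ of a pushoff of $L$ attaches the first block; contact $(-1)$-surgery on an $(a_1 - 2)$-fold stabilisation $L_1$ of a pushoff of $L_0$ attaches the second block, and so on. Concatenating these identifications across $j = 0, \ldots, \ell$ rebuilds the full solid torus $\bfT_{p/q}$ as the union of the layers produced by Legendrian surgeries along $L_0, \ldots, L_\ell$, stacked on top of the $+1$-surgery layers contributed by $\bbL_+$.

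The main obstacle is sign and framing bookkeeping. Two points deserve care: (i) the Farey-graph arithmetic in the first step must actually match the formula $r = 1 + p/(kp-q)$, which requires keeping careful track of how the basis $(\mu, \gamma)$ evolves after each $+1$-surgery; and (ii) the signs of the stabilisations making up $\bbL_-$ correspond to the two possible choices of each basic slice in Honda's classification, so different sign patterns yield different tight contact structures on $\bfT_{p/q}$. The statement is therefore really the assertion that \emph{every} tight filling is realised by \emph{some} choice of signs, which follows from Honda's enumeration matching the number of sign patterns of the stabilisations along $\bbL_-$; the preferred structures $\xi_{p/q}^{\pm}$ of Definition \ref{psin_def} then correspond to the preferred extremal sign patterns.
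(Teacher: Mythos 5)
This theorem is quoted from Ding and Geiges \cite{DG} and the paper offers no proof of it, so there is nothing internal to compare your argument against; judged on its own terms, your sketch follows the standard argument of \cite{DG}. The two ingredients you isolate are the right ones: the arithmetic of contact $+1$-surgeries on Legendrian pushoffs, which converts the coefficient $p/q$ into $p/(q-p)$ at each step and after $k$ steps leaves a surgery with negative coefficient relative to the contact framing; and Honda's classification of tight contact structures on the solid torus \cite{Ho}, which decomposes the glued-in torus $\bfT_{p/q}$ into blocks of basic slices governed by the negative continued fraction expansion, each block being exactly the layer produced by Legendrian surgery on a suitably stabilised pushoff.

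The one place where your write-up is thinner than the actual proof is the final surjectivity step. You assert that ``every tight filling is realised by some choice of signs'' because the number of sign patterns of the stabilisations along $\bbL_-$ matches Honda's enumeration. As stated this is not quite a counting argument one can wave at: the raw number of sign patterns is $\prod_j 2^{a_j-2}$, which overcounts, since reordering the signs of the stabilisations of a single $L_j$ (equivalently, shuffling basic slices within one continued-fraction block) produces isotopic contact structures; only the number of positive stabilisations on each $L_j$ matters, giving $\prod_j (a_j-1)$ structures, and Honda's count has the form $\bigl|(r_0+1)\cdots(r_{k-1}+1)\,r_k\bigr|$, with the last factor treated asymmetrically. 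So the identification of sign patterns with tight structures requires invoking Honda's gluing/shuffling results for basic slices rather than a pure cardinality match, and one must check the index conventions at the two ends of the chain. This is genuinely the delicate part of \cite{DG}, but it is bookkeeping of the kind you explicitly flag, not a conceptual gap.
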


\begin{ex}
For $n>1$, the algorithm gives us $k=1$, and the continued fraction expansion $[3,2,\dots,2]$, where there are $n-2$ 2's. Thus there are exactly two isotopy classes of contact $+n$-surgeries, depending on the choice of a positive or negative stabilisation of $L$: we'll denote them with $\xi^\pm_n(L)$, or $\xi_n^{\pm}$, sticking to Lisca and Stipsicz's convention \cite{LStrans}.
\end{ex}

\begin{rmk}
When $n=1$, $\xi^+_n = \xi^-_n = \xi_{+1}$, so the distinction between the choice of the two signs disappears.
\end{rmk}

\begin{rmk}
Let's observe here that, since $-L^\pm = (-L)^\mp$, positive contact surgeries on $L$ are dual to contact surgeries on $-L$: doing $p/q$ surgery on $L$, for a given choice of signs and doing $p/q$ surgery on $-L$ with the opposite choice of signs gives isotopic contact structures, since the two links $\bbL^+$ and $\bbL^-$ are isotopic.

In particular, as noted in the introduction, $\xi_n^-(L)$ is isotopic to $\xi^+_n(-L)$.
\end{rmk}

We'll denote with $\xi_{p/q}(L)$ any of the contact structures constructed using the algorithm above.

We want to find an open book decomposition compatible with $\xi_{p/q}(L)$, following Ozbagci \cite{Ozbob}: fix an open book $(F,h)$ for $(Y,\xi)$ compatible with $L$, in the sense that $L$ lives in a page $F$ of the open book, and is nontrivial in $H_1(F)$. The sequence of stabilisations prescribes a sequence of stabilisations of the open book, and a sequence of curves $L, L_0,\dots, L_\ell$ in the resulting page $F'$. Call $h'$ the monodromy given by the sequence of stabilisations on $(F,h)$.

An open book for $\xi_{p/q}(L)$ is given by composing $h'$ with $k$ negative Dehn twists along $L$ and a positive Dehn twist along $L_i$ for each $i$.

\begin{prop}
Let $m\ge 1, n\ge 0$ be integers, and let $p/q$ be a rational number such that\footnote{Here we adopt the convention that $1/0 = +\infty$.} $n+1/m\le p/q < n+1/(m-1)$.

Then $\xi_{p/q}(L)$ is obtained from $\xi_{n+1/m}(L)$ through Legendrian surgeries, if the first $m$ stabilisation choices for $p/q$-surgery coincide with the choices for $n+1/m$.
\end{prop}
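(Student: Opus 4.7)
The plan is to compare the Ding--Geiges surgery descriptions for $\xi_{p/q}(L)$ and $\xi_{n+1/m}(L)$ and to exhibit the former as a Legendrian-surgery extension of the latter. Write $\bbL(p/q)=\bbL_+(p/q)\cup\bbL_-(p/q)$ for the full surgery link produced by the algorithm. The strategy is to show that $\bbL(p/q)$ contains $\bbL(n+1/m)$ as an initial sub-link with matching $\pm 1$-coefficients, and that every additional component lies in $\bbL_-(p/q)$, so that attaching these extra components amounts to performing Legendrian surgeries on $\xi_{n+1/m}(L)$.

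For $n\ge 1$, both rationals exceed $1$, so the parameter $k$ in the algorithm equals $1$ in each case and $\bbL_+$ reduces to a single push-off of $L$ with $+1$-surgery. Writing $p/q = n+1/s$ with $s \in (m-1, m]$, one computes $r=1+p/(p-q) = 2 + 1/(n-1+1/s)$; an inductive expansion of the negative continued fraction will show that
\[
r = \bigl[\,3,\underbrace{2,\ldots,2}_{n-2},\, m+1,\; \text{further terms expanding } 1/(m-s)\,\bigr]
\]
for $n\ge 2$, and $r = [m+2,\text{further terms}]$ for $n=1$. In either case the first $n$ entries match the continued fraction for $\tilde r = 1 + \tilde p/(\tilde p - \tilde q)$ at $\tilde p/\tilde q = n+1/m$, so the initial segment $L_0\cup\cdots\cup L_{n-1}$ of $\bbL_-(p/q)$ is identical to $\bbL_-(n+1/m)$. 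The $1+0+\cdots+0+(m-1)=m$ stabilisation choices used in building this chain are precisely the ``first $m$ stabilisation choices'' referred to in the hypothesis, and the remaining components $L_n, L_{n+1}, \ldots$ of $\bbL_-(p/q)$ are iterated stabilised push-offs of $L_{n-1}$ with $-1$-contact surgery, i.e.\ Legendrian surgeries.

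For $n=0$ the parameter $k$ jumps from $m+1$ at $p/q = 1/m$ to $m$ for any $p/q \in (1/m,1/(m-1))$, so the two surgery diagrams do not literally extend one another. The fix will be to invoke the standard contact-surgery cancellation of a $+1/-1$ pair on parallel unstabilised Legendrian push-offs: in the algorithmic description of $\xi_{1/m}(L)$, the $(m+1)$-st push-off of $L$ with $+1$-surgery cancels with the unstabilised $L_0$ carrying $-1$-surgery, reducing the diagram to $m$ push-offs of $L$ with $+1$-surgery. This reduced diagram coincides with $\bbL_+(p/q)$ for every $p/q$ in the interval, so the remaining components of $\bbL_-(p/q)$ again realise $\xi_{p/q}(L)$ as a Legendrian-surgery extension of $\xi_{1/m}(L)$; since the reduced description has no stabilisations, the hypothesis is vacuous here.

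The hardest step is the $n=0$ case, where the raw Ding--Geiges diagrams do not extend one another and one must pass through the $+1/-1$ cancellation and verify that the reduced diagram is compatible with every admissible continuation of the continued fraction for $p/q$. In contrast, for $n\ge 1$ the proof will reduce to a careful book-keeping exercise on the continued fraction, checking that the first $n$ entries stay constant on the interval and that each subsequent entry contributes a Legendrian surgery component.
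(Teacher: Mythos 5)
Your approach is essentially the same as the paper's: both proofs run the Ding--Geiges algorithm on $p/q$ and on $n+1/m$, identify the two parts $\bbL_+$, and reduce matters to the claim that the negative continued fraction for $n+1/m$ is an initial segment of that for $p/q$. The paper isolates exactly that claim as a lemma and proves it by induction on $n$: the base case $n=1$ is the ceiling computation you give (yielding $a_0=m+2$), and the inductive step shows both expansions begin with a $3$ and reduces to the case of $n+1/m$ versus $p/q-1$. You state the explicit form of the expansion and remark that ``an inductive expansion \dots\ will show'' it, but you do not carry out the induction; this is the single load-bearing step, so as written the argument is incomplete rather than wrong. In particular, for $n\ge 2$ one must actually check that the $n$-th entry of the expansion of $r_\alpha$ stays equal to $m+1$ across the whole interval $[n+1/m,\,n+1/(m-1))$, and this is precisely what the induction buys you.

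The one place where you genuinely diverge is $n=0$. The paper simply uses the fact that contact $1/m$-surgery is a stack of $m$ copies of contact $(+1)$-surgery (so $\bbL_-(1/m)=\varnothing$ and the initial-segment condition is vacuous), whereas you run the literal algorithm, which produces $k=m+1$ push-offs plus one unstabilised $(-1)$-component, and then invoke the standard $(+1)/(-1)$ cancellation to reduce to the paper's diagram. Both are correct, and your version is arguably more honest about what the raw algorithm outputs, but it spends effort arriving at a description the paper takes as known. The bookkeeping on the stabilisation count $(a_0-2)+\cdots+(a_{n-1}-2)=1+0+\cdots+0+(m-1)=m$ is right and matches the paper. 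To turn this into a complete proof, supply the induction establishing the initial-segment property of the continued fractions; the rest is sound.
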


\begin{proof}
Let's apply the Ding-Geiges algorithm to $\alpha:=p/q$ and $\beta:=n+1/m$: if $n=0$, the number $k$ given by the algorithm is $m$ for both coefficients, and the continued fraction expansion for $p/q$ has the continued fraction expansion for $1/m$ (which is the empty expansion) as an initial segment; if $n\ge 1$, the number $k$ is 1 for both fractions, and the algorithm tells us to expand the two fractions $r_\beta = 1-(mn+1)/(m-mn-1)$ and $r_\alpha=1-p/(q-p)$.

\begin{lemma}\label{cfe}
The continued fraction expansion for $r_\alpha$ contains the expansion for $r_\beta$ as an initial segment also when $n\ge 1$.
\end{lemma}

Once we have the lemma, together with the previous considerations, we see that the two links $\bbL_+$ associated to $\xi_{\alpha}(L)$ and $\xi_{\beta}(L)$ are equal; if we choose stabilisations carefully, the link $\bbL_-(\alpha)$ associated to $p/q$ and $L$ contains the link $\bbL_-(\beta)$ associated to $m+1/n$ and $L$, so that $\xi_{\alpha}(L)$ is obtained from $\xi_{\beta}(L)$ through Legendrian surgery on $\bbL_-(\alpha)\setminus \bbL_+(\beta)$.
\end{proof}

Before proving Lemma \ref{cfe}, let's analyse what happens with contact $n+1/m$-surgery, via Ding-Geiges algorithm.

\begin{rmk}
Contact $1/m$-surgery is just a sequence of $+1$-surgeries; on the other hand, when $n\ge 1$, the link $\bbL_+$ consists of $L$ only, and $\bbL_-$ is non-empty: let's now distinguish between $n=1$ and $n\ge 2$.

The fraction to expand, when $n=1$, is just $1+(m+1)$, so the expansion is $[m+2]$: in this case, $\bbL_-$ consists of an $m$-th stabilisation of a pushoff of $L$.

For larger values of $n$, the fraction to expand is $1+(mn+1)/(m-mn-1)$: by induction on $n$, its continued fraction expansion is $[3,2,\dots,2,m+1]$, where the sequence of 2's has length $n-2$ (but if $m=1$ there are $n-1$ 2's in total).

In any case, there are $m$ stabilisations to be chosen if $n\ge 1$.
\end{rmk}

\begin{proof}[Proof of Lemma \ref{cfe}]
As before, we let $\alpha := p/q$.

The statement is trivial if $\alpha = p/q = n+1/m$, so we can assume that both inequalities in the statement of the proposition are strict.

We'll prove the nontrivial case by induction on $n$. When $n=1$, the fraction associated to $1+1/m$ is $1+(1+m)/1$, whose continued fraction expansion is $[m+2]$. We need to expand the fraction $1+p/(p-q) = (2p-q)/(p-q) = (2\alpha-1)/(\alpha-1)$: the inequality $1+1/m < \alpha < 1+1/(m-1)$ can be read as $1/m < \alpha-1 < 1/(m-1)$, so that
\[
2+m-1< 2+\frac1{\alpha-1} = \frac{2\alpha-1}{\alpha-1} < 2+m:
\]
in particular, the first element of the continued fraction expansion we're looking at is $\lceil\frac{2\alpha-1}{\alpha-1}\rceil=m+2$, as we wanted.

Let's suppose that the statement holds for $n\ge 1$, and prove that it holds for $n+1$: the fraction associated to $n+1+1/m$ is $(2mn+m+2)/(mn+1) = 3-((mn-m+1)/(mn+1))^{-1}$, and the one associated to $p/q+1$ is $(2p-q)/(p-q) = 3-((p-2q)/(p-q))^{-1}$. In particular, both expansions start with a 3, and the first one continues with the expansion of $(mn+1)/(mn-m+1)$; in order to prove the statement, it's enough to show that the continued fraction expansion of $(mn-m+1)/(mn+1)$ is an initial segment of the expansion of $(p-2q)/(p-q)$.

Now, the algorithm for $n+1/m$ and $p/q - 1$ tells us to expand the two fractions $1+(mn+1)/(mn-m-1)$ and $1+(p-q)/(2p-q)$, and by the inductive hypothesis the expansion of the first one is the initial segment of the expansion of the second one. The result follows.
\end{proof}

We can immediately draw two corollaries:

\begin{cor}\label{ge-n}
If $n$ is a positive integer and $c(\xi^-_n(L))\neq 0$, then for every $p/q\ge n$, $c(\xi_{p/q}(L))\neq 0$ whenever the first stabilisation for $p/q$-surgery is a negative.
\end{cor}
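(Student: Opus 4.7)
The plan is to show that $\xi_{p/q}(L)$ is obtained from $\xi_n^-(L)$ by a sequence of Legendrian (i.e., contact $-1$) surgeries, and then invoke the standard fact, due to Ozsv\'ath and Szab\'o, that Legendrian surgery preserves non-vanishing of the contact invariant. The statement is vacuous when $p/q=n$, so I may assume $p/q>n$.

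First I would choose the unique integer $m\ge 1$ with $n+1/m\le p/q<n+1/(m-1)$ (using the convention $1/0=\infty$, so that $m=1$ covers all $p/q\ge n+1$). By the Proposition above, $\xi_{p/q}(L)$ is obtained from $\xi_{n+1/m}(L)$ by Legendrian surgeries as soon as the first $m$ stabilisation choices used to build $\xi_{p/q}(L)$ agree with those of $\xi_{n+1/m}(L)$. Since the first stabilisation for $p/q$ is negative by hypothesis, I simply match the two sequences and in particular arrange the first stabilisation of $\xi_{n+1/m}(L)$ to be negative.

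Second, I would directly compare the Ding--Geiges presentations of $\xi_n^-(L)$ and $\xi_{n+1/m}(L)$ using the computations recorded in the remark preceding Lemma \ref{cfe}. For $n\ge 2$ the continued-fraction expansion of $r$ is $[3,2,\dots,2]$ with $n-2$ twos for $p/q=n$, and $[3,2,\dots,2,m+1]$ with the same $n-2$ twos for $p/q=n+1/m$; the second is an extension of the first. Consequently $\bbL_+(n+1/m)=\bbL_+(n)=\{L\}$ and $\bbL_-(n+1/m)=\bbL_-(n)\cup L_{n-1}$, where $L_{n-1}$ is an $(m-1)$-stabilisation of a push-off of the last component of $\bbL_-(n)$. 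The $n=1$ case is handled analogously, with $\bbL_-(1)=\emptyset$ and $\bbL_-(1+1/m)$ equal to a single $m$-stabilisation of a push-off of $L$. In both cases, matching the first stabilisation choice to the negative one forces $L_0$ to be the same Legendrian in the two presentations, so $\xi_{n+1/m}(L)$ is obtained from $\xi_n^-(L)$ by Legendrian surgery on the single extra component.

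Combining the two steps, $\xi_{p/q}(L)$ is obtained from $\xi_n^-(L)$ by a sequence of Legendrian surgeries; since $c(\xi_n^-(L))\neq 0$ by hypothesis and Legendrian surgery sends the contact class to the contact class, we conclude $c(\xi_{p/q}(L))\neq 0$. The only delicate point is the bookkeeping on stabilisation choices: one must verify that the ``first stabilisation'' appearing in the Ding--Geiges algorithm for $n$-surgery (the choice of sign in producing $L_0$ from $L$) is indeed the same data as that for $(n+1/m)$-surgery, so that negativity of the first choice in $\xi_{p/q}(L)$ indeed selects $\xi_n^-(L)$ on the other side. This is immediate once one observes that both algorithms share the same initial segment $a_0=3$ of the continued-fraction expansion.
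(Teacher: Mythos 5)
Your argument is correct, and it rests on the intended ingredients: the Proposition that exhibits $\xi_{p/q}(L)$ as Legendrian surgery on an intermediate contact $(n+1/m)$-surgery (built on Lemma~\ref{cfe}), together with the fact that Legendrian surgery preserves nonvanishing of the Ozsv\'ath--Szab\'o contact class. However, the detour through $\xi_{n+1/m}(L)$ followed by the hand comparison of continued-fraction expansions for $n$ versus $n+1/m$ is unnecessary: the same Proposition, applied with the parameters $n-1$ and $1$ in place of $n$ and $m$, has $(n-1)+1/1 = n \le p/q < (n-1)+1/0 = \infty$ and therefore directly exhibits $\xi_{p/q}(L)$ as obtained from $\xi_n(L)$ through Legendrian surgeries whenever the single stabilisation choice in the Ding--Geiges algorithm for $n$-surgery agrees with the first stabilisation choice for $p/q$-surgery. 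Taking that sign to be negative selects $\xi_n^-(L)$, and the conclusion follows in one step. Your second step is in effect a rederivation of this special instance of the Proposition; there is nothing wrong with it, but it obscures the fact that the corollary really is an immediate consequence.
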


\begin{cor}\label{ge-1m}
If $c(\xi_{n+1/m}(L))\neq 0$ for a positive integer $n$ and all positive integers $m$, then for all $p/q > n$ there is a sign choice for the Ding-Geiges algorithm such that $c(\xi_{p/q}(L))\neq 0$.
\end{cor}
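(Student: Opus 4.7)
The plan is to reduce the corollary to the previous proposition together with the fact that negative contact surgery (equivalently, Legendrian surgery) along a Legendrian knot in a contact manifold with non-zero contact invariant still has non-zero contact invariant. This last fact is a standard consequence of the functoriality of $c$ under Stein/Weinstein 2-handle attachments, combined with the fact that $c$ of the standard contact structure on $S^3$ does not vanish; we will use it as a black box.

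First, given any rational number $p/q > n$, I would choose the unique positive integer $m$ for which
\[
n + \frac{1}{m} \;\le\; \frac{p}{q} \;<\; n + \frac{1}{m-1},
\]
with the convention $1/0 = +\infty$ (so that $m=1$ corresponds exactly to $p/q \ge n+1$). This $m$ is the hypothesis value of the corollary we want to invoke.

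Second, I would invoke the preceding proposition: for this particular $m$, one can run the Ding--Geiges algorithm on $p/q$ in such a way that its first $m$ stabilisation choices match those used for $n + 1/m$. With this matched choice of signs, $\xi_{p/q}(L)$ is obtained from $\xi_{n+1/m}(L)$ by a sequence of Legendrian surgeries, namely contact $-1$-surgeries along the extra components of $\bbL_-(p/q)\setminus \bbL_-(n+1/m)$ produced by the remaining entries of the continued fraction expansion.

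Third, since by hypothesis $c(\xi_{n+1/m}(L)) \neq 0$, and Legendrian surgery preserves non-vanishing of the contact invariant (the cobordism is Stein, hence induces a map on $\HF$ that sends contact class to contact class, and the non-zero contact class maps to the non-zero contact class of the result), we conclude $c(\xi_{p/q}(L)) \neq 0$. The required sign choice for the Ding--Geiges algorithm is precisely the one that matches the first $m$ stabilisations to those of $\xi_{n+1/m}(L)$; the later stabilisation signs are irrelevant to the non-vanishing and can be chosen arbitrarily.

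The only possible obstacle is the assertion that negative contact surgery preserves non-vanishing of $c$; I would either cite the Ozsv\'ath--Szab\'o naturality result for contact invariants under Stein cobordisms, or observe that this is already implicit in the formalism of gluing maps in Theorem \ref{EHmap} together with Corollary \ref{nonvanishing}, since attaching a basic slice for a Legendrian surgery is a gluing map that carries $EH$ to $EH$.
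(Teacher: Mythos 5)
Your proof is correct and is essentially the argument the paper leaves implicit: the paper states this as an "immediate corollary" of the preceding proposition, and your three-step spelling-out (pick the unique $m$ with $n+1/m\le p/q<n+1/(m-1)$, invoke the proposition with matched first $m$ stabilisations, then use that Legendrian surgery preserves non-vanishing of $c$ via the Stein cobordism map) is exactly what is intended. Your second suggested justification for the last step (via Theorem \ref{EHmap} and Corollary \ref{nonvanishing}) is somewhat imprecise as stated — Corollary \ref{nonvanishing} alone does not directly give preservation of non-vanishing under surgery — but the first justification (Ozsv\'ath--Szab\'o's naturality for Stein 2-handle attachments) is the standard and correct one, so the proof stands.
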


For the following proposition, let's introduce the following notation: for $n\ge 0,m\ge 1$ integers, we denote with $\xi^-_{n+1/m}(L)$ any contact $(n+1/m)$-surgery on $L$ such that all the $m$ stabilisations are chosen to be negative. In particular, when $m=1$, this is consistent with Lisca and Stipsicz's notation for integral surgeries; it is understood that $\xi^-_{1/m}(L)$ is just $\xi_{1/m}(L)$, since there are no stabilisations involved.

\begin{prop}\label{stabvsQsurg}
For $n\ge 0, m\ge 1$ integers, the two contact structures $\xi^-_{n+1/m}(L)$ and $\xi^-_{n+1+1/m}(L^-)$ are isotopic.
\end{prop}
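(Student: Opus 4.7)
The plan is to compare the two contact surgery diagrams produced by the Ding--Geiges algorithm and match them via a cancellation move. First, since $tb(L^-) = tb(L) - 1$, the two surgeries have the same topological surgery coefficient $tb(L) + n + 1/m$, so they live on the same underlying 3-manifold. Applying the Ding--Geiges algorithm: for $n \ge 1$, the computation in Lemma \ref{cfe} (and the preceding Remark) shows that the negative continued fraction expansion of $n+1/m$ is $[3,2,\ldots,2,m+1]$ with $n-2$ twos, while that of $n+1+1/m$ is of the same form with $n-1$ twos---the two expansions agree except for one extra $2$ in the middle. The case $n = 0$ (expansions $[2]$ and $[m+2]$) is handled by a direct parallel computation.

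In the diagram for $\xi^-_{n+1+1/m}(L^-)$, the $+1$-surgered component is (a pushoff of) $L^-$, and the first component $L_0'$ of $\bbL_-$ is a $1$-stabilization of a pushoff of $L^-$, which by the hypothesis is negative. Since $L^-$ is itself a negative stabilization of $L$, the standard cancellation move for contact surgeries (see \cite{DG}) allows one to trade the pair ($+1$-surgery on $L^-$, $-1$-surgery on $L_0'$) for a single $+1$-surgery on $L$, absorbing the extra $2$ in the continued fraction expansion. After this cancellation the remaining components of $\bbL_-$ in the two diagrams align componentwise in the chain of negative stabilizations, and both diagrams encode the same contact surgery, yielding the desired contactomorphism.

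The main obstacle is the careful bookkeeping of sign choices and contact framings along the chain of stabilizations, together with the explicit verification of the cancellation move---in particular, confirming that the stabilization $L_0'$ of a pushoff of $L^-$ is Legendrian isotopic to a two-fold negative stabilization of a pushoff of $L$. A cleaner alternative bypasses this bookkeeping via Ozbagci's open book construction: both contact structures are supported by the same stabilized open book of $(S^3, \xi_{\rm st})$ with monodromies composed with the same sequence of Dehn twists, since the extra Giroux stabilization needed to place $L^-$ on a page corresponds precisely to the extra $2$ in the continued fraction expansion. This immediately identifies the two supported contact structures up to isotopy.
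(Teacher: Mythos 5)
The key step of your argument --- trading the pair ($+1$-surgery on $L^-$, $-1$-surgery on the stabilised pushoff $L_0'$) for a single $+1$-surgery on $L$ --- is not the cancellation lemma of \cite{DG}. That lemma cancels contact $(+1)$- and $(-1)$-surgeries along two Legendrian \emph{pushoffs of the same knot}; here the $(-1)$-component is a \emph{stabilisation} of a pushoff of $L^-$, and the move you invoke is essentially the $m=1$ instance of the very proposition being proved (this is the content of Lisca--Stipsicz's result in \cite{LStrans}, which the paper's proof refines; it is not a formal Kirby-type cancellation available off the shelf). Even granting the move, the remaining components of $\bbL_-$ do not ``align componentwise'': after the trade, $L_1'$ is a pushoff of a $2$-fold negative stabilisation of a pushoff of $L$, so $tb(L_1')=tb(L)-2$, while the first component $L_0$ of $\bbL_-(n+1/m)$ built from $L$ has $tb(L_0)=tb(L)-1$; reconciling the two chains requires exactly the handleslide/isotopy bookkeeping you defer. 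The case $n=0$ is also structurally different from what you describe: $\xi_{1/m}(L)$ has $m$ components with coefficient $+1$ and none with $-1$ (the continued fraction expansion is empty, not $[2]$), versus one component of each sign for $\xi^-_{1+1/m}(L^-)$, so there is no ``extra $2$ in the middle'' and no single cancellation to perform there.

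The deeper gap is the conclusion. Your argument, in both versions, at best produces a \emph{contactomorphism} --- indeed you write ``yielding the desired contactomorphism'' --- whereas the proposition asserts an \emph{isotopy}. The paper's proof is organised around exactly this distinction: the open-book route (your ``cleaner alternative'', which is in fact the paper's approach, but made precise via the lantern relation and Giroux destabilisation --- the two monodromies are not literally equal, so nothing is ``immediate'') requires a conjugation in the inductive step and hence a priori only yields a contactomorphism. Upgrading to an isotopy, and pinning down that the glued-in solid torus is the \emph{negatively} stabilised surgery torus rather than one of the other tight $(n+1/m)$-surgery tori, takes a separate argument using tightness in a model case (stabilised positive trefoils) together with the Lisca--Stipsicz overtwisted-disc trick of Remark \ref{mix_stab}. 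None of this is addressed in your proposal.
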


\begin{rmk}
The case $m=1$ in the proposition is proved by Lisca and Stipsicz \cite{LStrans}: the proof we present here is a refinement of their first proof.
\end{rmk}

\begin{proof}
We'll prove the result by induction on $n$.

When $n=0$, we're comparing $\xi_{1/m}(L)$ with $\xi_{1+1/m}(L^-)$. Suppose we have an open book $(F, h, L)$ for $(Y,\xi)$ compatible with $L$. According to Ozbagci \cite{Ozbob}, the open book $(F,D_L^{-m}\circ h)$ supports the contact structure $\xi_{1/m}(L)$.

We can construct an open book supporting $\xi^-_{1+1/m}$ as follows: the Ding-Geiges algorithm tells us that we need to push-off and stabilise (negatively, according to our choice) $L$ $m$ times, and do $+1$-surgery on $L$ and $-1$ on the push-off. We can realise $L$ and the push-off on the page of the same open book by doing $m$ positive stabilisations (using boundary-parallel arcs for the Murasugi sum inside $F$); the push-off is represented by a curve $L_1$ on the page, parallel to $L$ except that it runs once along each of the $m$ handles (see figure \ref{1miso}). Call $(F', h')$ the monodromy for $\xi^-_{1+1/m}(L^-)$, as shown in Figure \ref{1miso}.

\begin{figure}[h!]
\begin{center}
\includegraphics[scale=.75]{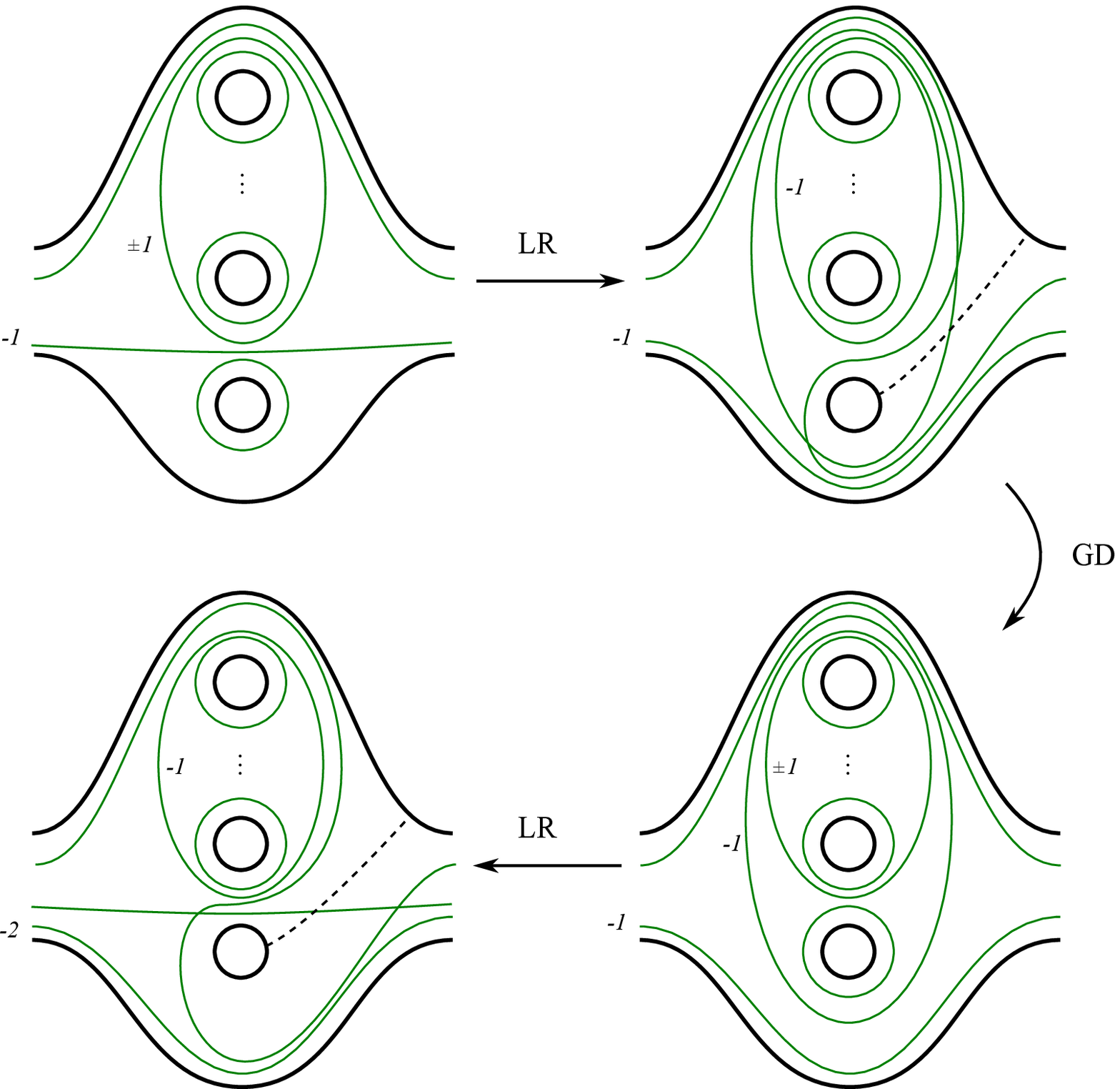}
\end{center}
\caption{The sequence of moves in Claim \ref{11m1mclaim}.}\label{1miso}
\end{figure}

\begin{claim}\label{11m1mclaim}
$\xi^-_{1+1/m}(L^-)$ is isotopic to $\xi_{1/m}(L)$.
\end{claim}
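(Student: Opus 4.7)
The plan is to prove the claim by exhibiting open books for both contact structures and showing they coincide after Giroux (de)stabilization. First I would record the two supporting monodromies. By Ozbagci's construction, $\xi_{1/m}(L)$ is supported by $(F, D_L^{-m}\circ h)$ on the original page. For $\xi^-_{1+1/m}(L^-)$, applying the Ding--Geiges algorithm to $p/q = 1+1/m$ (which yields $k=1$ and continued-fraction expansion $[m+2]$) identifies the surgery link: $\bbL_+$ consists of a single pushoff of $L^-$, carrying contact $+1$-surgery, and $\bbL_-$ consists of a single further curve $L_0$, namely a pushoff of $L^-$ negatively stabilised $m$ times, carrying Legendrian surgery.

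Next I would realise both $L^-$ and $L_0$ on a common page. Starting from $(F,h)$, perform one positive Giroux stabilisation of the open book (adding a 1-handle $H_0$ with co-core arc $c_0$) so that $L^-$ appears as the union of $L$ with an arc through $H_0$, encoding the negative stabilisation $L \leadsto L^-$. Then perform $m$ further stabilisations (adding handles $H_1,\dots,H_m$ with co-cores $c_1,\dots,c_m$) so that $L_0$ lies on the resulting page $F'$ as a pushoff of $L^-$ routed additionally through $H_1,\dots,H_m$. The stabilised monodromy is $h' = h\circ D_{c_0}\circ\cdots\circ D_{c_m}$, and the open book supporting $\xi^-_{1+1/m}(L^-)$ is then $(F', D_{L_0}\circ D_{L^-}^{-1}\circ h')$: Legendrian surgery along $L_0$ contributes a positive Dehn twist $D_{L_0}$, while contact $+1$-surgery along $L^-$ contributes a negative Dehn twist $D_{L^-}^{-1}$.

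The heart of the argument is then to show that, as mapping classes of $F'$, the composition $D_{L_0}\circ D_{L^-}^{-1}\circ D_{c_0}\circ\cdots\circ D_{c_m}$ can be rewritten as $D_{c_0}\circ\cdots\circ D_{c_m}\circ D_L^{-m}$, so that $m+1$ successive destabilisations (each collapsing one handle $H_i$ together with its companion twist $D_{c_i}$) return the open book to $(F, D_L^{-m}\circ h)$, as desired. I would prove the identity by a sequence of elementary moves on $F'$: use that $L_0$ differs from $L^-$ only by extra traversals of $H_1,\dots,H_m$ to express $D_{L_0}$ as a conjugate of $D_{L^-}$ by the stabilisation twists $D_{c_i}$, cancel the resulting $D_{L^-}$ against the factor $D_{L^-}^{-1}$, and redistribute the remaining twists so that one negative Dehn twist along a pushoff of $L$ emerges at each destabilisation. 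This is precisely the content of the sequence of moves depicted in Figure \ref{1miso}.

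The main obstacle is carrying out the mapping-class identity with the correct sign conventions and ordering. The routing of $L^-$ and $L_0$ through the handles, the choice of \emph{negative} stabilisation sign, and the correspondence between contact surgery coefficients and Dehn twist signs (contact $+1$-surgery $\leftrightarrow$ negative twist, Legendrian surgery $\leftrightarrow$ positive twist) all need to be tracked carefully, which is precisely why the explicit picture is needed.
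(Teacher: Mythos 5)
Your setup agrees with the paper's: run the Ding--Geiges algorithm for $1+1/m$ (getting $+1$-surgery on a pushoff and Legendrian surgery on an $m$-fold negatively stabilised pushoff), realise both surgery curves on a common page obtained from $(F,h)$ by positive Giroux stabilisations, write the resulting monodromy, and aim to destabilise back to $(F, D_L^{-m}\circ h)$. The problem is the step you call ``the heart of the argument''. Writing $L_0=\phi(L^-)$ with $\phi$ a product of the stabilisation twists $D_{c_i}$ gives $D_{L_0}=\phi D_{L^-}\phi^{-1}$, so the relevant piece of the monodromy is the commutator $\phi D_{L^-}\phi^{-1}D_{L^-}^{-1}$. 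You cannot ``cancel the resulting $D_{L^-}$ against the factor $D_{L^-}^{-1}$'': the conjugating element $\phi^{-1}$ sits between them, and $D_{L^-}$ does not commute with $\phi$ precisely because $L^-$ meets the stabilisation curves. Evaluating this commutator as a product of (negative) Dehn twists along pushoffs of $L$ and boundary-parallel-type curves is the entire content of the claim, and it is not achievable by conjugation and reordering alone; it requires a genuine relation in the mapping class group.

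The paper supplies exactly this missing input: it inserts a cancelling pair of Dehn twists along an auxiliary curve and applies the \emph{lantern relation}, which rewrites the monodromy so that a Giroux destabilisation arc becomes visible; iterating this lantern-plus-destabilisation step $m$ times lands on $(F, D_L^{-m}\circ h)$. (A pleasant feature, which the paper points out and which you would also want to record, is that in this particular claim each destabilisation can be performed without conjugating the monodromy, which is what upgrades the conclusion from contactomorphism to isotopy; this is not automatic and fails for the inductive step in Claim \ref{n11mn1mclaim}.) So your proposal has the right architecture but omits the one non-elementary ingredient --- the lantern relation --- on which the monodromy identity actually rests; as written, the ``redistribute the remaining twists'' step is an assertion of the conclusion rather than a proof of it.
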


\begin{proof}
We now apply the lantern relation to the monodromy $h'$, where we insert a pair of canceling Dehn twists along the curve labelled with a $\pm1$ in Figure \ref{1miso}: after applying the relation, we see a destabilisation arc (dashed in the figure). After Giroux destabilising, we decrease the number of boundary components and we obtain the monodromy at the bottom right of the figure. If we insert another pair of opposite Dehn twist along the $\pm1$ curve, we can apply the lantern relation once again, and we see another destabilisation arc. The resulting open book looks now exactly like the one we had in the previous step (bottom right in the figure), with one less boundary component. After $m-1$ application of the lantern relation-destabilisation process, we end up with the open book we described for $\xi_{1/m}(L)$.
\end{proof}

Notice how, in this process, we always destabilised without any need for conjugation, so we actually proved that the two contact structures are isotopic rather than isomorphic. On the contrary, for the inductive step we'll first show:

\begin{claim}\label{n11mn1mclaim}
$\xi_{n+1+1/m}^-(L^-)$ is \emph{contactomorphic} to $\xi^-_{n+1/m}(L)$.
\end{claim}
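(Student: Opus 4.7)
The plan is to extend the open-book manipulation argument of Claim \ref{11m1mclaim} by applying the lantern relation only to the ``head'' of the Ding--Geiges chain, and to handle the extra twists arising from the remainder of the chain by a conjugation of the monodromy --- which is exactly what downgrades the conclusion from the isotopy of the base case to a mere contactomorphism. Concretely, I would start with an open book $(F,h)$ for $(Y,\xi)$ containing $L$ on a page, and positively stabilise it enough times to place the curves $L^-$, $L'_0$, $L'_1,\dots,L'_n$ (read off from the continued fraction expansion $[3,2,\dots,2,m+1]$ for $(n+1)+1/m$) on a stabilised page $F''$. Ozbagci's algorithm then yields a monodromy supporting $\xi^-_{n+1+1/m}(L^-)$ of the form (stabilised $h$) composed with $D_{L^-}^{-1}$ and $D_{L'_i}^{+1}$ for $0\le i \le n$; the analogous construction for $\xi^-_{n+1/m}(L)$ lives on a page with one fewer positive stabilisation and produces a chain with one fewer Legendrian curve.

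Next I would apply the lantern relation and Giroux destabilisation $m$ times to the pair $(L^-, L'_0)$ together with the stabilisation handles they traverse, exactly as in the proof of Claim \ref{11m1mclaim}. Each iteration inserts a canceling pair of twists along the lantern's $\pm 1$-curve, applies the relation to expose a destabilisation arc, and removes a boundary component. After $m$ such steps the initial pair $D_{L^-}^{-1}D_{L'_0}^{+1}$ is replaced by a single twist $D_L^{-1}$ on a page $F'$ obtained from $F''$ by $m$ destabilisations, and the curve $L'_0$ is absorbed into $L$. Because each $L'_i$ with $i \ge 1$ is built from $L'_0$ by pushoffs and a final $(m-1)$-stabilisation --- the same recipe that defines $L_{i-1}$ from $L_0$ in the target open book --- once the relevant handles have been destabilised the chain $L'_1,\dots,L'_n$ matches, Legendrianly, the target chain $L_0,\dots,L_{n-1}$ on $F'$. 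The resulting monodromy is then the Ozbagci monodromy for $\xi^-_{n+1/m}(L)$, up to the conjugations introduced along the way.

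The main obstacle is precisely that the destabilising arcs produced by the lantern relation in the base case can now be obstructed by the Dehn twists along $L'_1,\dots,L'_n$, which were absent there. Clearing these obstructions forces us to conjugate the remainder of the monodromy by mapping classes supported near the arcs, so the two Ozbagci monodromies agree only up to conjugation in $MCG(F')$. By the Giroux correspondence, conjugate monodromies support contactomorphic --- but not necessarily isotopic --- contact structures, which yields Claim \ref{n11mn1mclaim}. This unavoidable conjugation is exactly why the inductive step yields only a contactomorphism, in contrast with the conjugation-free base case.
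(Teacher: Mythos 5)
Your proposal misapplies the base case, and the count of lantern moves is off. In Claim~\ref{11m1mclaim} the curve $L'_0$ is the $m$-th stabilisation of a pushoff of $L^-$, i.e.\ $L'_0$ runs through $m$ extra stabilisation handles beyond $L^-$, which is what justifies iterating the lantern/destabilisation process $m-1$ times and ending up with $D_L^{-m}$. For the present claim, the Ding--Geiges data for $(n+1)+1/m$ give $\bbL_-=\{L'_0,\dots,L'_n\}$ where $L'_0$ is only a \emph{single} stabilisation of a pushoff of $L^-$ (the extra $m-1$ stabilisations are concentrated on the \emph{last} curve $L'_n$, not the first). So there is no room to run the lantern relation $m$ times ``on the pair $(L^-,L'_0)$'': the configuration is not that of the base case. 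Moreover the intermediate claim that $D_{L^-}^{-1}D_{L'_0}^{+1}$ collapses to $D_L^{-1}$ is wrong: in the base case the analogous pair collapses to $D_L^{-m}$, and here the target monodromy still keeps both a $D_L^{-1}$ \emph{and} a $D_{L_0}^{+1}$ factor, so the pair should not collapse to a single twist at all.

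What the paper actually does is much shorter: it applies the lantern relation exactly \emph{once}, after which a single destabilisation arc appears. The arc intersects one curve $d$ so that the monodromy factorises as $h_1\circ D_d\circ h_2$; a Giroux destabilisation requires the shape $D_d\circ h_3$, so one conjugates by $h_1$ before destabilising. That single conjugation is the sole reason the conclusion drops from ``isotopic'' to ``contactomorphic''; it is not caused by iterated obstructions from $D_{L'_1},\dots,D_{L'_n}$. After the one destabilisation, the open book obtained is already the Ozbagci open book for $\xi^-_{n+1/m}(L)$, so there is no separate matching argument for the tail of the chain. If you want to fix your write-up, you should drop the ``apply the base case $m$ times'' step entirely, insert a single pair of cancelling twists on the lantern's $\pm1$-curve, apply the lantern relation once, identify the destabilisation arc and the curve $d$ crossing it, conjugate by the prefix $h_1$, and destabilise; then observe directly that the result is the open book for $\xi^-_{n+1/m}(L)$.
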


\begin{proof}
We now refer to Figure \ref{n1miso}: the open book at the top left corresponds to the surgery $\xi_{n+1+1/m}^-(L^-)$; after applying the lantern relation once and conjugating, we find the destabilisation arc (dashed in the figure). The destabilisation arc intersects a single curve $d$ such that the monodromy factorises as $h_1\circ D_d\circ h_2$: in order to destabilise, we need to have a monodromy of the form $D_d \circ h_3$, so we need to conjugate $h$ with $h_1$; by conjugating we lose the isotopy result. After Giroux destabilising, we obtain the open book at the bottom, that represents $\xi_{n+1/m}^-(L)$.
\end{proof}

\begin{figure}[h!]
\begin{center}
\includegraphics[scale=.6]{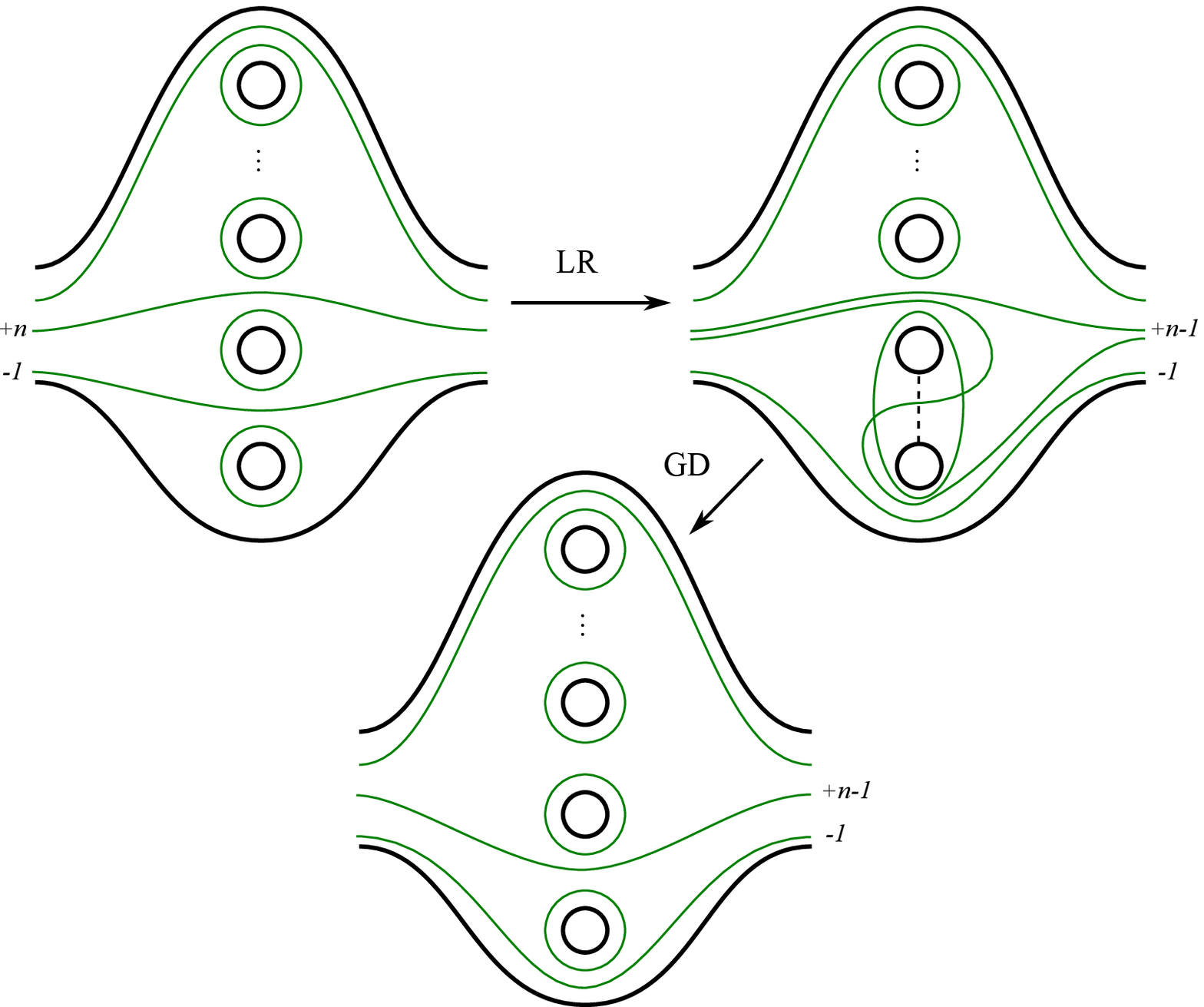}
\end{center}
\caption{The sequence of moves in Claim \ref{n11mn1mclaim}}\label{n1miso}
\end{figure}

\begin{rmk}\label{mix_stab}
In the proof of the inductive step, we never used the fact that all the last $m-1$ stabilisations are negative, but just that the first one is: in other words, $\xi_{n+1+1/m}(L^-)$ is isomorphic to $\xi_{n+1/m}(L)$ if the first stabilisation is negative for both surgeries, and the number of positive stabilisations on the last push-off is the same.

Moreover, using Lisca and Stipsicz's trick (see \cite[Lemma 2.3]{LStrans}), one proves that $\xi_{n+1+1/m}(L^-)$ is overtwisted if the first (respectively any) stabilisation is positive for all $n\ge 1$ (resp. for $n=0$): they prove the result by exhibiting an overtwisted disc in the surgered manifold, which is isotopic to the core of the first surgery handle relative to $L$ (not relative to $L^-$).
\end{rmk}

Using the remark, we can now complete the proof:

\begin{claim}
$\xi_{n+1+1/m}^-(L^-)$ is \emph{isotopic} to $\xi^-_{n+1/m}(L)$.
\end{claim}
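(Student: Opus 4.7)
The preceding Claim \ref{n11mn1mclaim} already gives a contactomorphism $\Psi$ between $\xi_1 := \xi^-_{n+1+1/m}(L^-)$ and $\xi_2 := \xi^-_{n+1/m}(L)$; the isotopy was lost because we had to conjugate the monodromy by some interior element $g$ before a destabilisation arc became available. The plan is to upgrade $\Psi$ to an isotopy by combining it, in the style of \cite{LStrans}, with the overtwisted-disc observation of Remark \ref{mix_stab} and Eliashberg's uniqueness theorem for overtwisted structures.

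First, I would consider the ``sign-flipped companions'' $\xi_1^+$ and $\xi_2^+$ of $\xi_1$ and $\xi_2$, obtained by running exactly the same Ding--Geiges algorithm but choosing the \emph{first} stabilisation to be positive instead of negative (keeping the remaining $m-1$ stabilisations negative). The open-book manipulations in Claim \ref{n11mn1mclaim} (lantern relation, interior conjugation, Giroux destabilisation) do not see the sign of any stabilisation, so running them verbatim produces a contactomorphism $\Psi^+\colon \xi_1^+\to \xi_2^+$ induced by the same interior element $g$. By the Lisca--Stipsicz observation quoted in Remark \ref{mix_stab}, both $\xi_i^+$ are overtwisted; a direct $d_3$-computation from the surgery description (using the rotation-number shift $r(L^-)=r(L)-1$ and the fact that the two surgery links differ by the specific lantern move of the claim) shows that $\xi_1^+$ and $\xi_2^+$ are homotopic as 2-plane fields. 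Eliashberg's theorem then gives an actual isotopy $\xi_1^+ \simeq \xi_2^+$.

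Second, I would descend from this isotopy of overtwisted structures to the desired tight isotopy. The key point is that $\xi_i^+$ and $\xi_i$ agree outside the basic slice attached along the first stabilisation of the surgery knot, and the overtwisted disc produced in Remark \ref{mix_stab} is isotopic to the core of the first surgery handle \emph{relative to $L$} (not to $L^-$). I would arrange the Eliashberg isotopy between $\xi_1^+$ and $\xi_2^+$ to be standard on a regular neighbourhood of this core, so that it restricts on the complement of the basic slices to an isotopy between $\xi_1$ and $\xi_2$. Combined with the base case $n=0$ (Claim \ref{11m1mclaim}), this completes the induction.

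The main technical obstacle is the last step: making Eliashberg's isotopy respect the basic-slice decomposition carrying the sign choice. Concretely, one needs to know that two overtwisted contact structures in the same homotopy class, each equipped with a distinguished overtwisted disc isotopic to a common smooth disc, are isotopic through an isotopy fixing a neighbourhood of that disc -- a refinement of Eliashberg's argument that is implicit in the Lisca--Stipsicz strategy. Once this refinement is in place, the conjugating element $g$ becomes invisible at the level of isotopy classes, and the claim follows.
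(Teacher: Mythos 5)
Your approach is genuinely different from the paper's, and the paper's route is worth comparing because it avoids the serious gap in your final step. The paper never invokes Eliashberg's classification of overtwisted structures. Instead, it first establishes that the solid torus glued to $S^3_L$ to produce $\xi^-_{n+1+1/m}(L^-)$ is tight (by testing against a specific $L=L_0$, the maximal-$tb$ trefoil, for which the surgery has nonvanishing contact invariant); since tight contact structures on this solid torus producing an $(n+1/m)$-surgery are exactly the ones appearing in the Ding--Geiges algorithm, the structure must be one of the $\xi_{n+1/m}(L)$. It then uses the Lisca--Stipsicz overtwisted-disc observation from Remark \ref{mix_stab}, again applied to a test knot with known tight surgeries, to rule out any sign choice involving a positive stabilisation and thus pin down the answer to $\xi^-_{n+1/m}(L)$. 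The decisive point is that the glued solid torus is independent of $L$, so matching it on one good example determines it in general.

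The gap in your proposal is the ``descent'' step. Eliashberg's theorem gives you an isotopy between the overtwisted companions $\xi_1^+$ and $\xi_2^+$, but there is no mechanism for converting that into an isotopy between the tight structures $\xi_1$ and $\xi_2$. The refinement you ask for --- two overtwisted structures in the same homotopy class, equipped with isotopic overtwisted discs, are isotopic rel a neighbourhood of those discs --- is not what Eliashberg's argument gives, and I do not believe it is ``implicit in the Lisca--Stipsicz strategy'' (their argument only produces an overtwisted disc; it never compares two overtwisted structures). But even granting such a refinement, the conclusion would not follow: fixing a neighbourhood of the disc during the isotopy does not make the isotopy respect the surgery-torus decomposition, and restricting to the complement of that neighbourhood does not reconstruct $\xi_i$ from $\xi_i^+$, since the two structures differ on the whole basic slice, not just on a ball around the disc. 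In short, overtwisted flexibility cannot be leveraged to produce tight isotopies, and that is the whole content of the claim you are trying to prove. (There is also a minor issue: homotopy of plane fields requires checking the $\spin$-structure as well as $d_3$, though that is presumably fixable.)
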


\begin{proof}
We first argue that $\xi_{n+1+1/m}^-(L^-)$ is isotopic to one of the contact $(n+1/m)$-surgeries on $L$, by showing that the relevant solid torus is tight: if $L_0$ is the Legendrian representative of the positive trefoil with $tb(L_0)=1$, by the main result of \cite{LS1}\footnote{It also follows from Theorem \ref{mainthm} of this paper, and precisely from the implication whose proof is independent of this discussion.} we know that $\xi^-_{n+1+1/m}(L_0^-)$ is tight; this implies that the solid torus we attach to $S^3_L$ (not to $S^3_{L^-}$) to obtain $\xi_{n+1+1/m}^-(L^-)$ is tight. This solid torus is the union of a negative stabilisation layer and a surgery layer, and, since it's tight, it's one of the solid tori that we glue in to $S^3_L$ to get one of the contact $(n+1/m)$-surgeries: let's call this surgery $\overline{\xi}_{n+1/m}(L)$.

The argument above also shows that the choice of the signs of the stabilisations in the algorithm is independent of the Legendrian knot $L$: to get the claim, it suffices to prove the result in a particular case.

Consider the case $L=L_1$, where $L_1$ is the $n$-th negative stabilisation of a Legendrian positive torus knot $L_0$ with maximal Thurston-Bennequin number. By induction, $\overline{\xi}_{n+1/m}(L_1)$ is contactomorphic to $\xi_{1/m}(L_0)$, which in turn has nonvanishing contact invariant, so $c(\overline{\xi}_{n+1/m}(L_1))\neq 0$. Using the Lisca-Stipsicz trick mentioned above, we immediately see that the first push-off of $L_1$ has to be \emph{negatively} stabilised, and this already concludes the proof in the case $m=1$ (since there are no more stabilisation choices).

If $m>1$, the algorithm tells us that there are $m-1$ further stabilisations to do, and these latter stabilisations commute: let's suppose that $p\ge0$ of them are positive.

Thanks to Remark \ref{mix_stab} above, $\overline{\xi}_{n+1/m}(L_1)$ is contactomorphic the ${1+1/m}$-surgery on $L_0^-$ where the first (only) push-off of $L_0^-$ has been positively stabilised $p$ times and negatively stabilised $m-p$ times; by the second part of the remark, this latter contact structure is overtwisted if $p>0$. Since $\overline{\xi}_{n+1/m}(L_1)$ is isotopic to $\overline{\xi}_{n+1+1/m}(L_1^-)$, which in turn is contactomorphic to $\xi_{1/m}(L_0)$, and the latter is tight, we get $p=0$, \emph{i.e.} $\overline{\xi}_{n+1/m}(L_1)$ is isotopic to $\xi^-{n+1/m}(L_1)$ and is not isotopic to any other $(n+1/m)$-surgery on $L_1$.
\end{proof}

This also concludes the proof of the proposition.
\end{proof}

\begin{rmk}
Doing contact surgery along $L$ corresponds to gluing a solid torus with a tight contact structure to $S^3_L$. In particular, every contact $p/q$-surgery induces a map $\psi_{p/q}$ between $SFH(-S^3_{K,t})$ and $SFH(-S^3_{t+p/q}(K))$. When $p/q = n+1/m$, we'll denote the map corresponding to $\xi^-_{n+1/m}$ as $\psi^-_{n+1/m}$.

Notice that, when $n=0$, the sign choice is immaterial, and the map corresponds to $1/m$-surgery on $L$.
\end{rmk}

\section{Cables}\label{cont_cabling}

\subsection{Topological cabling}

Let $K$ be a nullhomologous knot in a 3-manifold $Y$. Take a tubular neighbourhood $K\subset N(K) \subset Y$, where we identify $N(K)$ with $\{z\in \C\mid |z|\le 1\}\times S^1$ in such a way that $K=\{0\}\times S^1$ and $\lambda = \{1\}\times S^1$ is nullhomologous in $Y$. Together with the meridian $\mu = \{|z|=1\}\times \{*\}$, $\lambda$ gives a parametrisation of $\de N(K)$.

\begin{defn}
Given $p>0$ and $q$ relatively prime integers, we define the $(p,q)$-\deff{cable} $K_{p,q}$ of $K$ to be any simple closed curve in $\de N(K)$, homologous to $p\lambda+q\mu$.
\end{defn}

\begin{rmk}
Here we adopt Hedden's and Hom's convention for the labelling of $p$ and $q$; Etnyre and Honda use the opposite convention.
\end{rmk}

Let's recall the following classical result:

\begin{prop}
The manifold $S^3_{pq}(K_{p,q})$ obtained with $pq$ surgery on $S^3$ along $K_{p,q}$ is diffeomorphic to the connected sum $S^3_{q/p}(K)\# L(p,q)$.
\end{prop}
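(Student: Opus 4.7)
The plan is to analyze the surgery by decomposing $S^3$ along the torus $T = \partial N(K)$. Write $S^3 = N(K) \cup_T X$, where $X = S^3 \setminus \operatorname{int} N(K)$ is the exterior of $K$, so that the cable $K_{p,q}$ lies on $T$. The first step is to identify the framings: a parallel pushoff of $K_{p,q}$ on $T$ realises the $pq$-framing of $K_{p,q}$ in $S^3$. This can be checked by a direct linking-number computation using $K_{p,q} = p\lambda + q\mu$ on $T$, or by reducing to the case when $K$ is unknotted (where it is the classical computation of the surface framing of the $(p,q)$-torus knot) and observing that the computation is local to a collar of $T$.

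Next, localise the surgery to that collar. Let $T \times [-1,1]$ be a thin bicollar of $T$, with $K_{p,q}$ placed on $T \times \{0\}$ and a tubular neighbourhood $N(K_{p,q})$ chosen inside the collar. Setting $C = (T \times [-1,1]) \setminus \operatorname{int} N(K_{p,q})$, one obtains
\[ S^3_{pq}(K_{p,q}) = N(K) \cup_{T \times \{-1\}} (C \cup W) \cup_{T \times \{1\}} X, \]
where $W$ is the surgery solid torus glued to $\partial N(K_{p,q})$ so that its meridian maps to the surface pushoff of $K_{p,q}$, which by the first step is the $pq$-framed longitude.

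The crux is the identification $C \cup W \cong (T \times I) \mathbin{\#} L(p,q)$, with the two boundary tori of $C \cup W$ corresponding to the two ends of $T \times I$. A separating $S^2$ in $C \cup W$ can be produced explicitly: the meridian disc $D_W \subset W$ has boundary a pushoff of $K_{p,q}$ on $T$, and together with the annulus on $T$ cobounded by $K_{p,q}$ and its pushoff, it yields an embedded $2$-sphere in $C \cup W$ whose two sides exhibit $C \cup W$ as a connected sum. That the closed summand is precisely $L(p,q)$ (and not some other lens space) is checked by specialising to $K$ the unknot, where the entire statement becomes Moser's theorem for surgeries on torus knots; the identification of the summand then carries over to all $K$ by locality of the cable space.

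Finally, the complementary summand $N(K) \cup_T (T \times I)' \cup_T X$ must be identified with $S^3_{q/p}(K)$. This is done by tracing how the twist introduced by the surgery modifies the gluing along $T$: following the meridian of $N(K)$ through the modified collar, it is identified on $\partial X$ with the curve $q\mu + p\lambda$, which is exactly the defining gluing prescription for $(q/p)$-surgery on $K$. Combining the two factors yields the claimed decomposition. The main obstacle is the identification in the previous paragraph; once that is in place, the remainder of the argument is bookkeeping with meridians and longitudes.
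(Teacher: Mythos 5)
The paper states this as a classical fact and does not prove it, so there is nothing to compare your argument against; what follows is a review of the argument on its own terms.

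Your framing computation is correct: the surface framing of $K_{p,q}=p\lambda+q\mu$ on $T=\partial N(K)$ is indeed $pq$. The problem is the step you yourself flag as the crux, namely the claim that $C\cup W\cong (T^2\times I)\#L(p,q)$. This is false for $p>1$. With $C=(T\times[-1,1])\setminus\operatorname{int}N(K_{p,q})$, the reducing sphere one actually finds is $S=A\cup D_1\cup D_2$, where $A=(T\times\{0\})\setminus N(K_{p,q})$ is the \emph{complementary} annulus (its two boundary circles are both pushoffs of $K_{p,q}$ of surface slope $pq$), and $D_1,D_2$ are two disjoint meridian discs of $W$; your description (one meridian disc plus an annular neighbourhood of $K_{p,q}$) does not close up to a sphere. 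This $S$ cuts $C\cup W$ into two pieces $P_1, P_2$, each of which is $T^2\times I$ with a $2$-handle attached along a primitive slope on one boundary torus, hence each $P_i\cong (S^1\times D^2)\setminus B^3$. Thus $C\cup W\cong(S^1\times D^2)\#(S^1\times D^2)$, which has $H_1\cong\Z^2$ and free $\pi_1\cong\Z*\Z$, whereas $(T^2\times I)\#L(p,q)$ has $H_1\cong\Z^2\oplus\Z/p$. So the cable--surgery collar carries no lens-space summand on its own, and the reduction you propose (peel off the $L(p,q)$ from the collar, then account for the remainder by a twist along $T$) cannot be made to work.

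The lens space only appears once the inner solid torus $N(K)$ is glued back on: one side of $S$ is $N(K)\cup_{T\times\{-1\}}P_1$, which is a genus-one Heegaard union of two solid tori with meridians $\mu$ and $p\lambda+q\mu$ and hence is $L(p,q)\setminus B^3$; the other side is $X\cup_{T\times\{1\}}P_2$, which is the exterior $X$ of $K$ with a $2$-handle attached along $p\lambda+q\mu$, i.e.\ $S^3_{q/p}(K)\setminus B^3$. In other words, the connected-sum sphere does live inside the collar as you intended, but neither summand is contained in the collar, so the decomposition cannot be read off from $C\cup W$ alone as an abstract manifold. Your final bookkeeping step, tracing the meridian $\mu$ of $N(K)$ through the collar to $q\mu+p\lambda$, is also inconsistent: the product structure of $T^2\times I$ carries $\mu$ to $\mu$, and there is no room for the large $\mathrm{GL}_2(\Z)$ twist you would need. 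Repairing the proof requires replacing the claim about $C\cup W$ by the two identifications $N(K)\cup P_1\cong L(p,q)\setminus B^3$ and $X\cup P_2\cong S^3_{q/p}(K)\setminus B^3$ sketched above.
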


\begin{rmk}
It's an open problem, called the cabling conjecture, whether the only rational surgeries that are reducible are surgeries along cables, as in the lemma.
\end{rmk}

We're interested in the behaviour of $\tau$ and $\epsilon$ under cabling. Hom answered precisely this question:

\begin{thm}[\cite{Hom}]\label{homthm}
$\tau(K_{p,q})$ and $\epsilon(K_{p,q})$ are determined by $p,q,\tau(K),\epsilon(K)$ in the following way:
\begin{itemize}
\item[1.] if $\epsilon(K)=0$, then $\tau(K_{p,q}) = \frac{(p-1)(q-\sgn(q))}2$; if $|q|\le1$, $\epsilon(K_{p,q})=0$, and $\epsilon(K_{p,q})=\sgn(q)$ otherwise;
\item[2.] if $\epsilon(K)\neq 0$, then $\tau(K_{p,q}) = p\tau(K)+\frac{(p-1)(q-\epsilon(K))}2$ and $\epsilon(K_{p,q}) = \epsilon(K)$.
\end{itemize}
\end{thm}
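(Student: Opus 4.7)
The plan is to proceed via bordered Heegaard Floer homology, imitating the framework used in the proof of Theorem \ref{stabmaps} above. The starting point is the pairing theorem: if $P_{p,q}$ denotes the $(p,q)$-cabling pattern in the solid torus, then
\[
\CFK(S^3, K_{p,q}) \simeq \widehat{\mathit{CFA}}(P_{p,q}) \boxtimes \CFD(S^3\setminus N(K))
\]
as filtered chain complexes. Thus, up to computing the pattern bimodule once and for all, the invariants of the cable are extracted by tensoring with the companion's type $D$ module.

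Next, I would use the structural result for $\CFD$ of a knot complement recalled in the proof of Theorem \ref{stabmaps}: after homotopy reduction, $\CFD(S^3\setminus N(K))$ splits as a direct sum of stable chains $\{\kappa^i_\bullet, \lambda^i_\bullet\}$ (whose shape is insensitive to $\tau$ and $\epsilon$) together with a single unstable chain whose length and orientation are dictated precisely by $\tau(K)$ and $\epsilon(K)$: when $\epsilon(K) = 0$ the unstable piece collapses to a single length-zero arc in Alexander degree $\tau(K)=0$; when $\epsilon(K) = +1$ it is a single $D_1$-type arrow out of $\xi_0$; when $\epsilon(K) = -1$ it is a single $D_3$-type arrow out of $\eta_0$. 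After pairing with $\widehat{\mathit{CFA}}(P_{p,q})$, the stable summands contribute acyclic pieces that do not affect $\tau$ or $\epsilon$; all non-trivial information is carried by the image of the unstable chain.

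The third step is the actual computation of $\widehat{\mathit{CFA}}(P_{p,q}) \boxtimes (\text{unstable chain})$, done case by case. When $\epsilon(K)=0$, tensoring with the trivial unstable chain reduces the problem to pairing with the unknot complement; but $P_{p,q}$ applied to the unknot gives the torus knot $T_{p,q}$, whose invariants are classical: $\tau(T_{p,q}) = (p-1)(q-\sgn q)/2$, and $\epsilon(T_{p,q})$ vanishes if $|q|\le 1$ and equals $\sgn q$ otherwise. When $\epsilon(K) = \pm 1$, pairing the unstable $D_1$- or $D_3$-arrow against $\widehat{\mathit{CFA}}(P_{p,q})$ produces a new unstable chain with $p$ generators whose total Alexander shift is computed to be exactly $p\tau(K) + (p-1)(q-\epsilon(K))/2$; a direct inspection of the direction of the surviving edge in the resulting standard complex then shows $\epsilon(K_{p,q})=\epsilon(K)$.

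The main obstacle, and the step that would consume the bulk of the write-up, is the explicit bookkeeping of $\widehat{\mathit{CFA}}(P_{p,q})$: the pattern module has $p$ generators in each idempotent (corresponding to the $p$ strands of the cable running through a meridional disc), and the actions of $\rho_1, \rho_2, \rho_3$ and their concatenations encode a nontrivial winding pattern that depends on $q \bmod p$. Matching these actions to the single unstable $D_\bullet$-arrow requires a careful case analysis on the sign of $q$ and on the direction of the unstable arrow, and is exactly where the shift $(p-1)(q-\epsilon(K))/2$ emerges. Once the pattern bimodule is in hand, both formulas of the theorem fall out simultaneously by inspection of Alexander gradings and edge orientations of the surviving generators in the box tensor product.
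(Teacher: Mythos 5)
The statement you were asked to prove is cited in the paper as a result of Hom \cite{Hom}; the paper itself contains no proof of it, so there is nothing internal to compare against. Judged on its own terms, your proposal correctly identifies the strategy of Hom's original argument: the pairing theorem applied to the $(p,q)$-cable pattern module $\widehat{\mathit{CFA}}(P_{p,q})$ in the solid torus, box-tensored against $\CFD$ of the companion complement, together with the Lipshitz--Ozsv\'ath--Thurston structural decomposition of $\CFD$ into stable and unstable pieces (exactly the form recalled in the proof of Theorem \ref{stabmaps}). So the roadmap is the right one.

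That said, the proposal has a substantive imprecision and leaves the actual content unproved. The shape of the unstable chain of $\CFD(S^3\setminus N(K))$ — whether it begins with a $D_1$, $D_{12}$, or $D_{123}$ arrow out of $\xi_0$ — is governed by the comparison of the chosen framing with $2\tau(K)$, not by $\epsilon(K)$ as you assert. The invariant $\epsilon$ is instead read off from the horizontal and vertical arrows of the reduced $\CFK^\infty$ complex adjacent to the distinguished generator, i.e.\ from the stable summands incident to $\xi_0$ and $\eta_0$; so declaring the stable summands uniformly irrelevant ``acyclic junk'' discards precisely the information needed to recover the $\epsilon$-dependence in the cabling formula. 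More importantly, the step you yourself flag as the ``main obstacle'' — computing $\widehat{\mathit{CFA}}(P_{p,q})$ and carrying out the box tensor, with the attendant Alexander-grading bookkeeping that produces $p\tau(K)+(p-1)(q-\epsilon(K))/2$ and determines the sign of the surviving edge — is not an obstacle to the proof but the entirety of it. The formula cannot be asserted; it has to come out of that computation. As it stands, this is an accurate sketch of the method but not a proof of the theorem.
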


\subsection{Legendrian cabling}

We want to construct Legendrian cables of Legendrian knots through a ``standard'' construction: similar ideas appeared in \cite{Ru} (for Whitehead doubles), \cite{EH}, \cite{DGls} and, more recently, in \cite{CFHH}.

Consider an oriented Legendrian knot $L\subset (S^3,\xi_{\rm st})$ and its front projection; take $m$ push-offs of $L$ under the flow of $\de/\de z$, and twist them away from cusps, as in Figure \ref{L_mn}: notice that the twists are performed on strands that point \emph{to the right}. When $n\ge0$ is coprime with $m$, this is still the front projection of an oriented Legendrian knot, so the following definition makes sense:

\begin{figure}
\begin{center}
\includegraphics[scale=0.8]{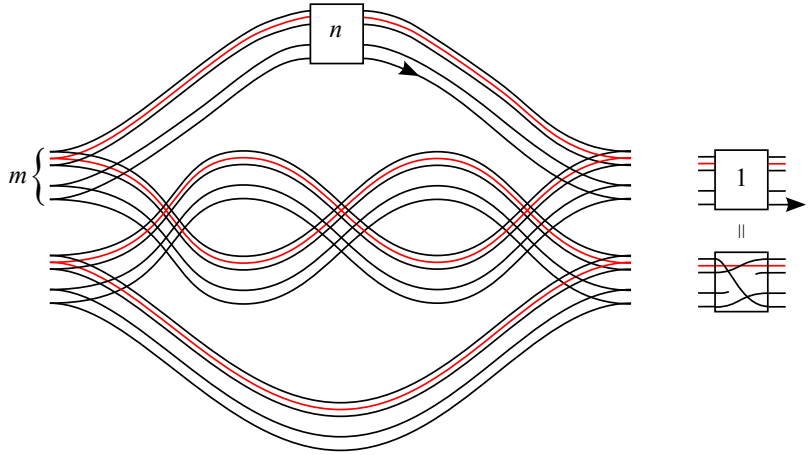}
\end{center}
\caption{A Legendrian cable (in black) of L (in red).}\label{L_mn}
\end{figure}

\begin{defn}
We'll call \deff{Legendrian} $(m,n)$-\deff{cable} the Legendrian knot $L_{m,n}$ obtained by the procedure we just described.
\end{defn}

\begin{rmk}
We defined $L_{m,n}$ starting from the front projection of $L$, so \emph{a priori} $L_{m,n}$ depends on the diagram and on the position of the twists. On the other hand, one can see that we can trade a twist (or better, one $m$-th of a twist) for a cusp displacement (see the first two diagrams in Figure \ref{L21L23} for an example when $m=2$), and vice-versa. In this trade, though, we need to change the box in Figure \ref{L_mn} with its horizontal reflection (denoted with $-n$ in Figure \ref{Lmn-wd}); however, moving through two cusps we recover the original picture. So we can move the twists across two consecutive cusps: according to our orientation convention when placing the twists, $L_{m,n}$ is independendent of their position in the diagram.

As for the independence of the diagram of $L$, one can check that there is a ``multiple strand'' version of the Legendrian Reidemeister moves \cite{EtnLTK} (\emph{i.e.} $L_{m,0}$ is well-defined): since these ``multiple strand'' moves are local, and we can move the twists away from the region involved, we get that $L_{m,n}$ is well-defined.
\end{rmk}

\begin{figure}
\begin{center}
\includegraphics[scale=0.43]{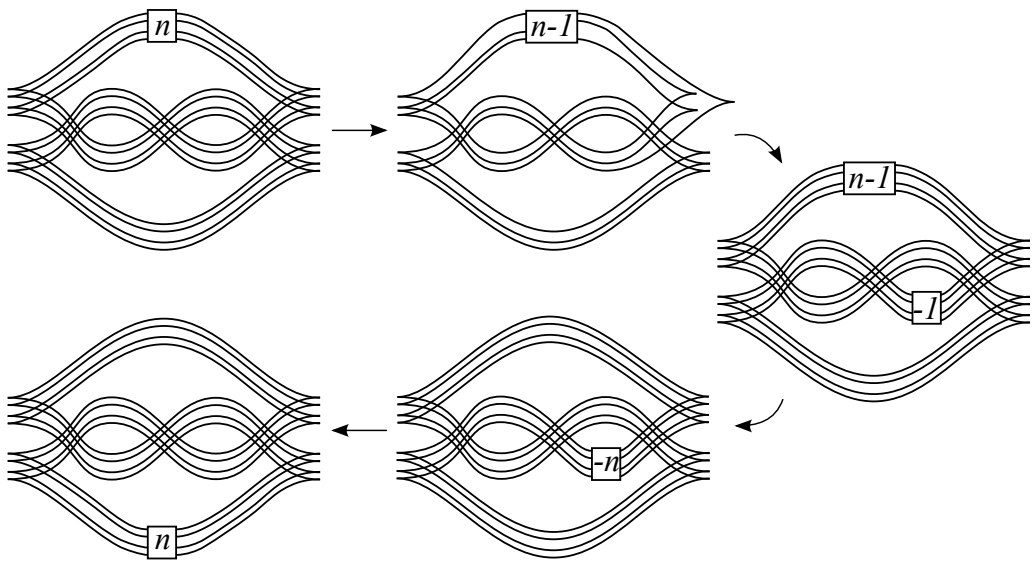}
\end{center}
\caption{A sequence of moves showing that $L_{m,n}$ doesn't depend on the position of the twists}\label{Lmn-wd}
\end{figure}

\begin{rmk}\label{uniquecabling}
Legendrian representatives of cables have proven to be a remarkable source of examples: for example, some (3,2)-cables of the trefoil (and, more generally, cables of positive torus knots) are not Legendrian or transversely simple (see \cite{EHcables, ELT}). On the other hand, this is the only possible \emph{local} construction, \emph{i.e.} a cabling construction obtained by attaching a cabling layer to $S^3_L$ (see \cite{DGls}). In particular, the main theorem in \cite{DGls} gives another proof of the well-definedness of $L_{m,n}$, which also gets rid of the orientation choice we made: we can place the box on a left-oriented strand, and this still gives a Legendrian representative of the same knot (see Proposition \ref{top-type} below) with the same classical invariants. Ding and Geiges' result now imply that the two (\emph{a priori} different) constructions give the same Legendrian knot.
\end{rmk}

We want to compute the classical invariants of $L_{m,n}$, given the classical invariants of $L$: say that $L$ is of topological type $K$ and has Thurston-Bennequin and rotation numbers $t=tb(L)$ and $r=r(L)$ respectively. From now on, we denote with $L$ and $L_{m,n}$ both the knots and their front projections.

\begin{prop}\label{top-type}
$L_{m,n}$ is of topological type $K_{m,mt+n}$.
\end{prop}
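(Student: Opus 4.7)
The plan is to identify $L_{m,n}$ with a specific curve on the torus $\partial N(L)$ and read off its homology class with respect to the Seifert basis $(\lambda,\mu)$; by definition of the cable $K_{p,q}$, the homology class determines the topological type.

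First, I would set up the framings on $\partial N(L)$. Recall that $L$ admits a standard Legendrian neighborhood $N(L)$ whose boundary carries the contact framing: a longitude $\lambda_c$ on $\partial N(L)$ isotopic to the Legendrian pushoff of $L$, satisfying $\lambda_c = \lambda + t\mu$ in $H_1(\partial N(L))$ where $t = tb(L)$. This identification is essentially the definition of the Thurston--Bennequin number.

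Next I would argue that the $m$ parallel push-offs appearing in the front projection of $L_{m,n}$ realize $m$ parallel copies of $\lambda_c$ on $\partial N(L)$. The $m$-fold push-off in the front projection is the vertical push-off in the $\partial/\partial z$-direction, i.e.\ in the direction of the Reeb field of the standard contact structure on $\R^3$. A small Reeb push-off is isotopic (within the tubular neighborhood of $L$ and away from $L$ itself) to a small pushoff in a direction tangent to $\xi$ transverse to $L$; since linking number is invariant under such isotopies, the Reeb push-off and the Legendrian (contact) push-off have the same linking number with $L$, namely $tb(L) = t$. Consequently, before inserting the twist box, the $m$ strands sit on $\partial N(L)$ as $m$ parallel copies of the curve $\lambda_c$, i.e.\ as an $m$-component link in the class $m\lambda_c \in H_1(\partial N(L))$.

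Finally, the twist box connects these $m$ strands into a single knot. The convention for the box is that $n$ twists add $n$ meridional wraps to the $m$-strand cable (this is consistent with the trade of twists for cusps noted before the proposition: one $m$-th of a twist is exchanged for a pair of cusp displacements, which are contained in a disk on $\partial N(L)$ and hence do not affect the overall homology class). Thus the resulting simple closed curve $L_{m,n}$ is homologous on $\partial N(L)$ to
\[
m\lambda_c + n\mu = m(\lambda + t\mu) + n\mu = m\lambda + (mt + n)\mu,
\]
which, by the definition of $K_{p,q}$ given in Section~\ref{cont_cabling}, is the $(m, mt+n)$-cable of $K$. The main subtlety is the Reeb-versus-contact pushoff identification in the second step; the twist-box count is a bookkeeping check, and Remark~\ref{uniquecabling} (invoking \cite{DGls}) reassures us that the construction is independent of all the choices made along the way.
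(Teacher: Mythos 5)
Your proof is correct and reaches the same conclusion by a cleaner, more conceptual route than the paper's. The paper also isotopes $L_{m,n}$ onto $\partial N(L)$ and observes $[L_{m,n}] = m[L]$ in $H_1(N(L))$, but then determines the cabling coefficient $q$ by an explicit diagrammatic linking-number computation: it counts the signed crossings between $L$ and $L_{m,n}$ in the front projection, separating contributions from crossings of $L$ ($2m\wr(L)+2n$), from cusps of $L$ ($-mc(L)$), and obtains $q = m(\wr(L)-c(L)/2)+n = mt+n$. You instead leverage the framing identity $\lambda_c = \lambda + t\mu$ (the defining property of $tb$) together with the standard torus-cable picture: $m$ parallel copies of $\lambda_c$ plus $n$ meridional twists gives the class $m\lambda_c + n\mu = m\lambda + (mt+n)\mu$. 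The paper's crossing count is in effect a diagrammatic re-derivation of exactly this identity from the writhe-and-cusp formula for $tb$, so your approach is shorter but relies on the reader already accepting both the Reeb-pushoff-equals-contact-framing fact and the convention that the twist box contributes $n\mu$; the paper's version is more self-contained since it verifies the slope directly from the figure.

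One small imprecision: you describe the Reeb pushoff as isotopic to a ``pushoff in a direction tangent to $\xi$ transverse to $L$''. It would be cleaner to say that $\partial/\partial z$ is transverse to $\xi_{\rm st}$, and that any nowhere-zero section of the normal bundle of $L$ transverse to $\xi$ realizes the contact framing by definition; the equivalence with the in-$\xi$ pushoff framing (which you implicitly invoke) is the standard but nontrivial observation that the line fields $\xi|_L/TL$ and $TS^3|_L/\xi|_L$ are homotopic in the normal bundle. This doesn't affect the correctness of your conclusion, but the justification as written conflates two different pushoff directions that happen to yield the same framing.
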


\begin{proof}
The curve $L_{m,n}$ is topologically isotopic to a curve on the boundary $\de N(L)$ of a tubular neighbourhood of $L$, and, by counting intersections with a disc section of $N(L)$, we get $[L_{m,n}] = m[L]\in H_1(N(L))$. So $L_{m,n}$ is of topological type $K_{m,q}$ for some $q$.

To determine this last parameter, we compute the linking number of $L$ and $L_{m,n}$, using the red curve in Figure \ref{L_mn}: $\lk(L_{m,n}, L)$ is the algebraic sum of the crossings of $L$ and $L_{m,n}$, divided by 2.

The crossings of $L$ and $L_{m,n}$ are of one of two kinds: the crossings coming from crossings of the diagram of $L$, and the ones coming from cusps of $L$. The former contribute for $2m\wr(L)+2n$, and the latter for $-mc(L)$, where $\wr(L), c(L)$ are the writhe and the number of cusps of $L$.

Summing up, we get
\[q=\lk(L_{m,n},L) = m(\wr(L)-c(L)/2)+n = m\cdot t+n.\]
\end{proof}

\begin{prop}
The classical invariants for $L_{m,n}$ are:
\[
\begin{array}{l}tb(L_{m,n}) = m^2t+(m-1)n;\\r(L_{m,n}) = mr;\\ sl(L_{m,n}) = m^2t-mr+(m-1)n.\end{array}\]
\end{prop}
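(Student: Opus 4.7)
The plan is to read the three classical invariants directly off the front diagram of $L_{m,n}$ described in Figure \ref{L_mn}, using the standard identities
\[
r(L') = \tfrac{1}{2}(D(L')-U(L')), \qquad tb(L') = \wr(L') - \tfrac{1}{2}c(L'), \qquad sl(L') = tb(L') - r(L'),
\]
valid for any oriented Legendrian front, where $D$ and $U$ count down- and up-cusps.

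I would begin with the rotation number. By inspection of the construction, each down-cusp of $L$ becomes $m$ down-cusps of $L_{m,n}$ (the $m$ pushoffs run parallel to $L$ near each cusp, and the two types are preserved under a vertical shift), each up-cusp becomes $m$ up-cusps, and the twist boxes contain no cusps at all. Hence $r(L_{m,n}) = (mD(L)-mU(L))/2 = mr$, and at the same time $c(L_{m,n}) = m\,c(L)$.

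Next I would compute $tb(L_{m,n})$ by partitioning the crossings of $L_{m,n}$ into three families: (i) the crossings lifted from crossings of $L$ --- each such crossing of $L$ contributes $m^2$ crossings of $L_{m,n}$ of the same sign, so the total contribution is $m^2\wr(L)$; (ii) the crossings created near the cusps of $L$ by the $m$ $z$-shifted pushoffs as they round a cusp; (iii) the crossings inside the twist box labelled $n$, which realises a cyclic shift of the $m$ strands by $n$ positions and hence contributes $(m-1)n$ positive crossings. A direct local model at a single cusp (the one piece of data not read off from $L$ globally) should show that (ii) contributes $-\binom{m}{2}c(L)$ in total; assembling these,
\[
\wr(L_{m,n}) = m^2\wr(L) - \binom{m}{2}c(L) + (m-1)n,
\]
and therefore
\[
tb(L_{m,n}) = m^2\wr(L) - \tfrac{m^2}{2}c(L) + (m-1)n = m^2\bigl(\wr(L)-\tfrac{1}{2}c(L)\bigr) + (m-1)n = m^2 t + (m-1)n.
\]
The self-linking number then comes for free: $sl(L_{m,n}) = tb(L_{m,n}) - r(L_{m,n}) = m^2 t - mr + (m-1)n$.

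The main obstacle is step (ii): verifying that the $m$ pushoffs produce exactly $\binom{m}{2}$ negatively signed crossings in a neighbourhood of each cusp of $L$, and nothing more. Because the cabling prescription is local near a cusp, this reduces to a single model calculation in a standard neighbourhood of one cusp, which I expect to extract directly from Figure \ref{L_mn} (and to check is consistent for both left- and right-pointing cusps, using the observation in Remark \ref{uniquecabling} that the construction is independent of the side on which the twist boxes are placed). Once that local count is pinned down, the three formulas above follow at once from the standard identities.
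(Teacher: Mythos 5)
Your proposal follows exactly the paper's argument: reading $tb$, $r$, $sl$ off the front of $L_{m,n}$ by counting cusps and partitioning crossings into the same three families, with the same count of $\binom{m}{2}c(L)$ crossings coming from the cusps of $L$. The only difference is that you explicitly flag the local model computation at a cusp as the step to be verified, whereas the paper states the count without further comment.
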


\begin{proof}
This is a straightforward computation: the only cusps in the front projection for $L_{m,n}$ are the ones coming from the cusps for $L$, and they're $m$ times as many, of the same sign, so in particular
\[
r(L_{m,n}) = mr;\]
on the other hand, there are three kinds of crossings, coming from the crossings of $L$ ($m^2$ times as many), from the cusps of $L$ ($\binom{m}{2}$ of them for each cusp), and $(m-1)n$ of them coming from the $n$ twists. 

Summing them up with the appropriate signs, we get
\[tb(L_{m,n}) = m^2\wr(L)-\binom{m}{2}c(L)+(m-1)n-\frac{m}2c(L) = m^2t+(m-1)n.\]
Combining the two results, we get the self-linking number:
\[
sl(L_{m,n}) = m^2t+(m-1)n-mr.
\]
\end{proof}

One can now compare Hom's formulae for $\tau$ and $\epsilon$ with the proposition above: using Plamenevskaya's inequality (\ref{olga}), one checks:

\begin{prop}\label{homvsmain}
Let $L$ be a Legendrian knot in $(S^3,\xi_{\rm st})$ of topological type $K$, such that $\epsilon(K)\neq 0$. Then:
\begin{itemize}
\item[(i)] (SL) holds for $L_{m,n}$ if and only if (SL) and (TN) hold for $L$;

\item[(ii)] suppose that (SL) holds for $L_{m,n}$: (SC) holds for the pair $(L_{m,n},p)$ if and only if $p\ge 1-m\cdot r(L)$;

\item[(iii)] (TN) holds for $L_{m,n}$ if and only if (TN) holds for $L$.
\end{itemize}

On the other hand, if $\epsilon(K)=0$ (and therefore $\tau(K)=\nu(K)=0$):
\begin{itemize}
\item[(i')] (SL) holds for $L_{m,n}$ if and only if (SL) holds for $L$ and $n\ge 1-m\cdot tb(L)$;

\item[(ii')] suppose that (SL) holds for $L_{m,n}$: (SC) holds for the pair $(L_{m,n},p)$ if and only if $p\ge 1-m\cdot r(L)$;

\item[(iii')] (TN) holds for $L_{m,n}$ if and only if $n\ge -1-m\cdot tb(L)$.
\end{itemize}
\end{prop}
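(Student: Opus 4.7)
The proof is a direct calculation: combine Hom's cabling formula (Theorem \ref{homthm}) with the explicit classical invariants $tb(L_{m,n})=m^2t+(m-1)n$, $r(L_{m,n})=mr$ computed in the preceding proposition, then read off each of (SL), (SC), (TN) for $L_{m,n}$.

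The plan is to first dispatch condition (SC), which is purely about the rotation number of the cable: by the reformulation in the introduction, (SC) for $(L_{m,n},p)$ is $p\ge 1-r(L_{m,n})=1-mr(L)$, giving (ii) and (ii') with no further work. For (TN), the input is Hom's relation between $\epsilon$ and $\tau,\nu$: (TN) for a knot $J$ is equivalent to $\epsilon(J)\in\{0,1\}$. When $\epsilon(K)\ne 0$, part (2) of Theorem \ref{homthm} gives $\epsilon(K_{m,mt+n})=\epsilon(K)$, so (TN) for $L_{m,n}$ is equivalent to $\epsilon(K)=1$, equivalently (TN) for $L$, giving (iii). When $\epsilon(K)=0$, part (1) of Theorem \ref{homthm} yields $\epsilon(K_{m,mt+n})=\sgn(mt+n)$ when $|mt+n|\ge 2$ and $0$ otherwise, so $\epsilon(K_{m,mt+n})\in\{0,1\}$ iff $mt+n\ge -1$, giving (iii').

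For (SL), rewrite the equation $tb(L_{m,n})-r(L_{m,n})=2\tau(K_{m,mt+n})-1$ using the explicit formulas for $tb(L_{m,n})$ and $r(L_{m,n})$ and Hom's formula for $\tau(K_{m,mt+n})$. In the case $\epsilon(K)\ne 0$, a one-line cancellation reduces (SL) for $L_{m,n}$ to
\[
m(t-r)=2m\tau(K)-(m-1)\epsilon(K)-1.
\]
If (SL) and (TN) hold for $L$, then $t-r=2\tau(K)-1$ and $\epsilon(K)=1$, and both sides equal $2m\tau(K)-m$. Conversely, Plamenevskaya's inequality (\ref{olga}) applied to $L$ with the reversed orientation gives $t-r\le 2\tau(K)-1$, so the left-hand side is $\le 2m\tau(K)-m$; combined with $\epsilon(K)\in\{-1,1\}$ this forces $\epsilon(K)=1$ and equality $t-r=2\tau(K)-1$, which is exactly (SL) and (TN) for $L$. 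This proves (i).

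In the case $\epsilon(K)=0$ (so $\tau(K)=0$), the same substitution, now using $\tau(K_{m,mt+n})=\tfrac12(m-1)(mt+n-\sgn(mt+n))$, reduces (SL) for $L_{m,n}$ to
\[
m(t-r)=-(m-1)\sgn(mt+n)-1.
\]
Plamenevskaya gives $m(t-r)\le -m$, which forces $\sgn(mt+n)=1$, i.e.\ $n\ge 1-mt$, and then equality $t-r=-1=2\tau(K)-1$, i.e.\ (SL) for $L$. The converse is immediate by direct substitution, yielding (i'). The only delicate point I expect is the sign-analysis in (i'): one must be careful that $\sgn(mt+n)=1$ means $mt+n\ge 1$ (an integer), which is where the inequality $n\ge 1-m\cdot tb(L)$ arises, as opposed to $n>-mt$. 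Everything else is bookkeeping of Hom's formula against the cabling invariants.
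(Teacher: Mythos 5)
Your proof is correct and takes exactly the route the paper indicates: the paper gives no detailed argument, remarking only that ``one can now compare Hom's formulae for $\tau$ and $\epsilon$ with the proposition above: using Plamenevskaya's inequality one checks'' the statement. Your computation fills in that bookkeeping correctly, including the reduction of (SL) for $L_{m,n}$ to the equation $m(t-r)=2m\tau(K)-(m-1)\epsilon(K)-1$ in the case $\epsilon(K)\neq 0$ (and the analogous one when $\epsilon(K)=0$), the use of Plamenevskaya's inequality to force $\epsilon(K)=1$ (resp. $\sgn(mt+n)=1$) and equality in (SL) for $L$, and the careful sign-analysis $\sgn(mt+n)=1\Leftrightarrow mt+n\ge1$ that yields the integer threshold $n\ge 1-m\cdot tb(L)$.
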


Before stating the following lemma, let's introduce some notation: given a Legendrian knot $L$, we denote with $L^{(1)}=L^-$ its negative stabilisation, and recursively $L^{(n+1)} = (L^{(n)})^-$. For small values of $n$, we may use the `differential' notation $L^{(1)}=L'$, $L^{(2)}=L''$...

\begin{lemma}
$(L^{(k)})_{m,km+n}$ is isotopic to $(L_{m,n})^{(km)}$.
\end{lemma}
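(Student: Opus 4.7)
The plan is to reduce the claim to the single-stabilisation case $k=1$ by induction. Assuming the $k=1$ statement, namely that $(L^-)_{m,m+n}$ is Legendrian isotopic to $(L_{m,n})^{(m)}$, one computes
\[
(L^{(k)})_{m,km+n}=((L^{(k-1)})^-)_{m,m+((k-1)m+n)}\sim ((L^{(k-1)})_{m,(k-1)m+n})^{(m)}\sim ((L_{m,n})^{((k-1)m)})^{(m)}=(L_{m,n})^{(km)},
\]
where the first equivalence is the base case applied to $L^{(k-1)}$ (with $n$ replaced by $(k-1)m+n$) and the second is the inductive hypothesis. So the whole content is the $k=1$ case.

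For the base case, I would first check that both knots have the same topological type $K_{m,mt+n}$ and the same classical invariants. Using that a negative stabilisation decreases both $tb$ and $r$ by $1$, the formulas preceding the lemma give $tb((L^-)_{m,m+n})=m^2(t-1)+(m-1)(m+n)=m^2t+(m-1)n-m$ and $r((L^-)_{m,m+n})=m(r-1)=mr-m$; these match $tb((L_{m,n})^{(m)})$ and $r((L_{m,n})^{(m)})$. This is necessary but not sufficient, so next I would exhibit an explicit Legendrian isotopy between the front projections.

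The key is to compare the fronts region by region. Draw both knots starting from the front of $L$. The LHS is obtained by inserting a single negative zig-zag on $L$, taking $m$ Reeb push-offs, and inserting a twist box with parameter $m+n$; by the standard behaviour of Reeb push-offs near cusps, the zig-zag produces one zig-zag on each of the $m$ strands together with $m(m-1)$ induced negative crossings between distinct strands, coming from the two cusps of the zig-zag (these crossings account for the extra $-m^2$ drop in $tb$). The RHS is $m$ push-offs of $L$, a twist box with parameter $n$, and $m$ concentrated negative zig-zags on a single strand. Using the cusp/twist trade of Figure \ref{Lmn-wd}, the $m(m-1)$ inter-strand crossings near the bunch can be absorbed into the twist box, reducing its parameter from $m+n$ to $n$. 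After this step, the two fronts agree except that the LHS has one negative zig-zag on each of the $m$ strands and the RHS has $m$ negative zig-zags stacked on one strand. The final isotopy slides zig-zags across adjacent strands of the cable via a Reidemeister II/III sequence; the classical invariants remain constant throughout.

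The main obstacle is the last move — sliding zig-zags across strands — since Legendrian isotopy of Legendrian satellites is usually subtle even when classical invariants agree. I expect this to be handled by a direct front argument in the same spirit as Figure \ref{Lmn-wd}, exploiting that the ambient picture around the bunch is just the trivial $m$-cable, so a local uniqueness argument applies. As a back-up, one can instead invoke the Ding--Geiges uniqueness of the local Legendrian cabling construction (Remark \ref{uniquecabling}): both $(L^-)_{m,m+n}$ and $(L_{m,n})^{(m)}$ arise from attaching a tight contact layer to $S^3_L$ with matching dividing-curve data on the boundary and matching classical invariants, so they coincide as local constructions inside a neighbourhood of $L$.
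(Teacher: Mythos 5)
Your reduction to $k=1$ by induction on $k$ is correct and makes explicit what the paper only asserts ("it's enough to prove the result for $k=1$"). For the $k=1$ case you offer two routes. The direct front-diagram argument is in the spirit of the paper's proof, but you correctly identify its crux — sliding the $m$ concentrated zig-zags across the $m$ strands of the cable — and leave it unjustified; that slide \emph{is} the whole content of the $k=1$ statement, so as written your primary argument has a genuine gap exactly where you flag it. The paper closes precisely this gap by an induction on $m$: Figure~\ref{L21L23} exhibits an explicit sequence of LR2 moves implementing the slide for $m=2$, and the inductive step applies $2m$ further LR2 moves to reduce $m+1$ to $m$. So if you wanted to complete the direct argument, the missing ingredient is that $m$-induction with explicit Reidemeister moves, not a "local uniqueness" heuristic.

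Your back-up argument, on the other hand, is a valid and complete alternative: both $(L^-)_{m,m+n}$ and $(L_{m,n})^{(m)}$ are obtained by local modifications inside $\nu(L)$, are of the same topological cable type, and you verify they have the same classical invariants; Legendrian simplicity of torus knots in the standard tight solid torus (as in \cite{EH}, or the Ding--Geiges classification \cite{DGls} invoked in Remark~\ref{uniquecabling}) then forces them to be Legendrian isotopic. This is the same mechanism the paper uses in the proof of Lemma~\ref{Lm1ob} and in Remark~\ref{uniquecabling}, so it is perfectly legitimate to apply it here; it trades the elementary but fiddly LR2 bookkeeping for a citation to the classification of Legendrian cables. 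In summary: the proposal is correct provided one commits to the back-up route rather than the front-projection sketch, and this is a genuinely different proof of the $k=1$ case than the paper's $m$-induction with LR2 moves.
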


\begin{rmk}
Notice that, if such an identity exists, then the triple of numbers $(m, n, km)$ defining the second knot is uniquely identified by the classical invariants of the first knot. Also, since the cabling operation commutes with orientation-reversal, the same result holds for positive stabilisations (on both knots).
\end{rmk}

\begin{proof}
It's enough to prove the result for $k=1$: this case is an easy induction on $m$. For $m=2$, we can apply the second Legendrian Reidemeister (LR2, see \cite{EtnLTK} for details) move twice as in Figure \ref{L21L23}.

Let's now suppose we want to prove the result for $m+1$: we can apply LR2 $2m$ times as in the base case, and reduce to the inductive assumption.
\end{proof}

\begin{figure}
\begin{center}
\includegraphics[scale=.6]{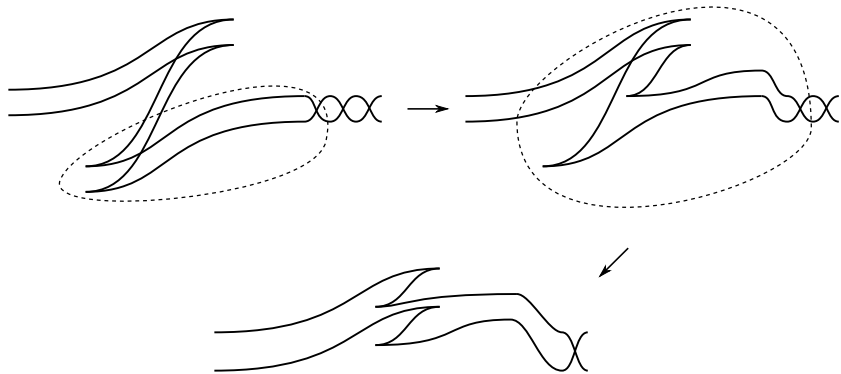}
\caption{A Legendrian isotopy from $(L')_{2,3}$ (top left) to $(L_{2,1})''$ (bottom).}\label{L21L23}
\end{center}
\end{figure}

\begin{rmk}
The cabling construction, stabilisations and the proof of the lemma above are all \emph{local}, in the sense that they all take place in a neighbourhood of $L$.

In particular, there exists a \deff{cabling layer} $(\bfT_{m,n}, \xi_{m,n})$, that is topologically a difference of solid tori: the core of the inner torus winds $m$ times around the outer solid torus, there are two $\xi_{m,n}$-dividing curves on the `outer' (respectively `inner') boundary component are homologous to the longitude of the bigger (resp. smaller) solid torus.

What the previous lemma says at the level of contact layers is that we have two isotopic layers: one is obtained by gluing a stabilisation layer $(\T^2\times I,\eta_-)$ (\emph{i.e.} a specific basic slice) from the \emph{back} to the \emph{outer} boundary of $\bfT_{m,n}$; the other is obtained by gluing $m$ stabilisation layers $(\T^2\times I,\eta_-)$ from the \emph{front} to the \emph{inner} boundary of $\bfT_{m,m+n}$.

This also allows us to generalise the notion of Legendrian cabling to Legendrian knots in any contact manifold.
\end{rmk}

This cabling layer induces a gluing map
\[\kappa_{m,n}:=\Psi_{\xi_{m,n}}: SFH(-Y_{K,t})\to SFH(-Y^3_{K_{m,mt+n},m^2t+n}).\]
Thanks to the remark, the lemma above can be translated to the sutured world as follows:

\begin{cor}\label{cablingvsstab}
$\kappa_{m,km+n}\circ\sigma_\pm^k = \sigma_\pm^{km}\circ\kappa_{m,n}$.
\end{cor}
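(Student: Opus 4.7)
The plan is to realize each side of the identity as the gluing map of a single composite contact layer inside $\nu(L)$, and then use the previous Legendrian isotopy lemma to see that these two layers are contactomorphic rel boundary.

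First I would apply the naturality of the $EH$ gluing maps under composition of layers (the unnumbered theorem following Corollary~\ref{nonvanishing}) to rewrite both sides as single gluing maps. On the one hand, $\sigma_\pm^k$ is the map associated to the concatenation of $k$ stabilization basic slices filling in $\nu(L)\setminus\nu(L^{(k)})$, and $\kappa_{m,km+n}$ is the map associated to the cabling layer $\bfT_{m,km+n}=\nu(L^{(k)})\setminus\nu((L^{(k)})_{m,km+n})$. By naturality,
\[
\kappa_{m,km+n}\circ\sigma_\pm^k=\Phi_{\eta_1},
\]
where $\eta_1$ is the contact structure on
\[
N_1=\nu(L)\setminus\nu\bigl((L^{(k)})_{m,km+n}\bigr).
\]
Symmetrically,
\[
\sigma_\pm^{km}\circ\kappa_{m,n}=\Phi_{\eta_2},
\]
where $\eta_2$ is the contact structure on
\[
N_2=\nu(L)\setminus\nu\bigl((L_{m,n})^{(km)}\bigr),
\]
obtained by first inserting $\bfT_{m,n}$ along $\partial\nu(L)$ and then attaching $km$ stabilization basic slices along $\partial\nu(L_{m,n})$.

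Next I would compare $(N_1,\eta_1)$ and $(N_2,\eta_2)$ as contact manifolds with convex boundary. The previous lemma supplies a Legendrian isotopy $(L^{(k)})_{m,km+n}\simeq(L_{m,n})^{(km)}$, and, as emphasized in the remark immediately preceding this corollary, both Legendrian cabling and stabilization are \emph{local} operations performed inside a standard neighborhood of $L$. Consequently the isotopy can be arranged to be supported in the interior of $\nu(L)$, and can be enhanced to a contact isotopy of $\nu(L)$ which fixes $\partial\nu(L)$ pointwise and sends $\nu((L^{(k)})_{m,km+n})$ to $\nu((L_{m,n})^{(km)})$ compatibly with their standard Legendrian neighborhood parametrizations. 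This produces a contactomorphism $(N_1,\eta_1)\to(N_2,\eta_2)$ that is the identity on the outer boundary and matches the inner boundaries together with their dividing sets. Since $\Phi$ depends only on the contactomorphism type of the gluing layer relative to its boundary, we deduce $\Phi_{\eta_1}=\Phi_{\eta_2}$, which is exactly the claimed identity.

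The main point requiring care is the rel-boundary identification: one must verify that the Legendrian isotopy from the lemma can be promoted to a contactomorphism of layers that pointwise preserves both the outer boundary $\partial\nu(L)$ and the standard parametrizations (and dividing curves) on the inner boundary. This is ensured precisely by the locality of the cabling and stabilization constructions highlighted in the remark, together with the uniqueness up to contact isotopy of standard Legendrian neighborhoods.
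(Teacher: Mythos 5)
Your proposal is correct and follows essentially the same argument the paper intends: the paper's Remark immediately preceding the corollary (observing that cabling, stabilisation, and the isotopy of the preceding Lemma are all local operations inside $\nu(L)$) is exactly what licenses the promotion of the Legendrian isotopy $(L^{(k)})_{m,km+n}\simeq(L_{m,n})^{(km)}$ to a rel-boundary contactomorphism of the composite gluing layers, after which naturality of $\Psi$ under composition gives the identity. The paper leaves this implicit in the one-line remark ``thanks to the remark, the lemma above can be translated to the sutured world''; you have simply spelled out that same reasoning carefully.
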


\begin{rmk}
It's clear that $\psi_\infty\circ\kappa_{m,n} = \psi_\infty$, since the corresponding layers are both $\infty$-surgery layers, so they're isotopic.
\end{rmk}

We conclude the subsection with a remark on the action of $\kappa_{m,n}$ on sutured Floer homology, when $K\subset S^3$. 

\begin{prop}
$\kappa_{m,n}$ sends stable elements to stable elements, and therefore descends to a map of unstable complexes, still denoted with $\kappa_{m,n}$,
\[\kappa_{m,n}: SFH(-S^3_{K,t})/S\to SFH(-S^3_{K_{m,mt+n},m^2t+n})/S.\]
\end{prop}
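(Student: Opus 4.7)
The plan is to use Lemma \ref{stablevspsiinfty} as the characterisation of stable elements---a homogeneous element is stable if and only if it is annihilated by $\psi_\infty$---together with the identity $\psi_\infty\circ\kappa_{m,n}=\psi_\infty$ recorded in the remark immediately above the statement. Modulo an Alexander-grading bookkeeping step for $\kappa_{m,n}$, these two ingredients will imply the proposition almost formally.

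First I would observe that the stable subspace $S = S_+\oplus S_-$ has the Alexander-homogeneous basis $\{d_{i,j}, d_{i,j}^*\}$ of Definition \ref{defn3.3}, each basis vector being by construction homogeneous and stable. Lemma \ref{stablevspsiinfty} then gives $\psi_\infty|_S = 0$.

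Second I would verify that $\kappa_{m,n}$ is Alexander-homogeneous, i.e.\ shifts the Alexander grading by a single constant depending only on the layer $(\bfT_{m,n},\xi_{m,n})$. In the standard sutured TQFT formalism, a contact gluing map respects the splitting of $SFH$ by relative $\spin$ structures; the contact class on $\bfT_{m,n}$ picks out a single such structure, and since the Alexander grading is read off from the first Chern class of a generator's relative $\spin$ structure, the shift is uniform. With this granted, for any Alexander-homogeneous $x\in S$ the image $\kappa_{m,n}(x)$ is again homogeneous and satisfies $\psi_\infty(\kappa_{m,n}(x))=\psi_\infty(x)=0$; Lemma \ref{stablevspsiinfty} applied in the target then forces $\kappa_{m,n}(x)$ to be stable. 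Extending by linearity over the basis $\{d_{i,j}, d_{i,j}^*\}$ of $S$ gives $\kappa_{m,n}(S)\subset S$, and the descent to the quotient $SFH/S$ is then automatic.

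The main obstacle is the homogeneity step for $\kappa_{m,n}$: one has to trace through a partial open book adapted to the cabling layer to see that the Honda-Kazez-Mati\'c gluing map concentrates in a single relative $\spin$ summand. A partial alternative that sidesteps this uses only Corollary \ref{cablingvsstab}: for $x\in S_\pm$ killed by $\sigma_\pm^N$ one obtains $\sigma_\pm^{Nm}\kappa_{m,n}(x)=0$, placing $\kappa_{m,n}(x)$ in $\bigcup_M\ker\sigma_\pm^M$. This coincides cleanly with $S_\pm$ in the target when its framing lies in the range $\le 2\tau$, but in the complementary range one still has to rule out a component in the unstable complex, which forces a grading argument of the same flavour.
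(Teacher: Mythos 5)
Your ``partial alternative'' is in fact the paper's proof verbatim: the stable subspace is spanned by $\ker\sigma_-^N$ and $\ker\sigma_+^N$ (Theorem \ref{stabmaps}), so it suffices to treat $x$ with $\sigma_\pm^k(x)=0$, and Corollary \ref{cablingvsstab} then gives $\sigma_\pm^{km}(\kappa_{m,n}(x))=\kappa_{m,km+n}(\sigma_\pm^k(x))=0$. The residual worry you raise about the range $>2\tau$ does not materialise: in the setting where $\kappa_{m,n}$ is used, $t=tb(L)$ for a Legendrian knot $L$ in $(S^3,\xi_{\rm st})$, and the target framing is $tb(L_{m,n})$ for the Legendrian knot $L_{m,n}$ in $(S^3,\xi_{\rm st})$, so Plamenevskaya's inequality (\ref{olga}) applied to $L$ and to $L_{m,n}$ separately puts \emph{both} the source and the target strictly below their respective $2\tau$ thresholds, where $S_\pm=\bigcup_N\ker\sigma_\pm^N$ canonically. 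So this argument is already complete; you should promote it from ``partial alternative'' to the proof.

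Your primary route via $\psi_\infty\circ\kappa_{m,n}=\psi_\infty$ and Lemma \ref{stablevspsiinfty} is genuinely different, but as written it has exactly the gap you flag: Lemma \ref{stablevspsiinfty} characterises only \emph{homogeneous} stable elements, and since $\psi_\infty$ lands in $\HF(-S^3)=\F$, which carries no Alexander grading, the vanishing of $\psi_\infty(\kappa_{m,n}(d_{i,j}))$ does not preclude $\kappa_{m,n}(d_{i,j})$ from having several homogeneous components (possibly including unstable ones) whose images cancel. Everything therefore hinges on $\kappa_{m,n}$ being Alexander-homogeneous of a uniform degree shift; the relative-$\spin$ heuristic you offer is plausible but is proved nowhere in the paper and does not follow formally from Corollary \ref{cablingvsstab} together with Corollary \ref{sigma-grading}. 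Unless you supply that homogeneity statement, the $\psi_\infty$ route remains incomplete --- which is presumably why the paper avoids it in favour of the commutation relation with the stabilisation maps, where no grading input is needed.
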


\begin{proof}
Recall from Section 3 that the set of stable elements is $\ker (\sigma_+^N\circ \sigma_-^N)$ for some sufficiently large $N$, and by Theorem \ref{stabmaps} it's also spanned by $\ker(\sigma_-^N)$ and $\ker(\sigma_+^N)$. It therefore suffices to show that the proposition holds for an element $x\in SFH(-S^3_{K,t})$ such that $\sigma_\pm^k(x)=0$ for some $k$. It follows from Corollary \ref{cablingvsstab} that $\sigma_\pm^{km}(\kappa_{m,n}(x)) = \kappa_{m,mk+n}(\sigma_\pm^k(x)) = 0$.
\end{proof}

\subsection{Contact surgeries and cabling}

Let's start with an easy, general observation:

\begin{rmk}
Let $L$ be a nullhomologous Legendrian knot in $(Y,\xi)$, of topological type $K$. The Legendrian knot $L_{m,n}$ is of topological type $K_{m,m\cdot tb(L)+n}$. If we do contact $+n$-surgery on $L_{m,n}$, the topological surgery coefficient is $tb(L_{m,n})+n = m(m\cdot tb(L)+n)$, so the underlying manifold is reducible: more precisely it splits as $Y_{tb(L)+n/m}(K)\# L(m,n)$.
\end{rmk}

It's natural to ask whether there are natural contact structures on the two factors that realise this topological decomposition as a contact connected sum: in fact, this happens to be true (regardless of the homological assumption $[K]=0\in H_1(Y)$), when $n=1$.

\begin{prop}\label{1overm}
Contact $+1$-surgery on $L_{m,1}$ is isotopic to a contact connected sum of a contact $1/m$-surgery and a tight contact structure on $L(m,1)$.
\end{prop}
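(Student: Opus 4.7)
The plan is to prove the proposition by an open-book manipulation that directly generalizes the computation in Claim~\ref{11m1mclaim}. The goal is to exhibit $\xi^+_1(L_{m,1})$ via an open book whose monodromy factors, after a sequence of lantern relations and Giroux destabilizations, into a Murasugi sum of an open book supporting $\xi_{1/m}(L)$ with the standard open book for a tight contact structure on $L(m,1)$; by Torisu's theorem, Murasugi sum of open books corresponds to the contact connected sum, and this immediately yields the conclusion.

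First I would choose an open book $(F,h)$ supporting $(Y,\xi)$ and compatible with $L$, so that $L$ lies on a page and is homologically essential. Perform $m-1$ positive Giroux stabilizations along arcs boundary-parallel to $L$; on the new page $F'$ the Legendrian $(m,1)$-cable $L_{m,1}$ is realized as a simple closed curve running exactly once through each of the $m-1$ added one-handles. This is the page-level transcription of the front-projection picture in Figure~\ref{L_mn}, and it is legitimate by the local-cabling characterization of Remark~\ref{uniquecabling}. Contact $+1$-surgery on $L_{m,1}$ is then presented by the open book $(F', h' \circ T_{L_{m,1}}^{-1})$, where $h'$ denotes $h$ composed with the $m-1$ positive Dehn twists coming from the stabilization arcs. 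I would then iterate the lantern-and-destabilization trick of Figure~\ref{1miso}: insert $m-1$ canceling pairs of $\pm 1$ Dehn twists (one pair for each stabilization arc), apply the lantern relation once per pair to produce a Giroux-destabilization arc, and destabilize. After all $m-1$ iterations the open book splits as a plumbing: on the original page $F$ the monodromy becomes $T_L^{-m}$, which is Ozbagci's open book for $\xi_{1/m}(L)$ (see Section~\ref{cont_surg}), while an annular summand remains, carrying a single negative Dehn twist along its core.

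The main obstacle is verifying that the iterated lantern-and-destabilization genuinely produces a \emph{plumbing} decomposition, rather than entangling $T_L^{-m}$ with the leftover twist; equivalently, one must show that the remaining negative Dehn twist is supported on an annulus glued to $F$ along a single Murasugi rectangle disjoint from a neighborhood of $L$. The base case $m=2$ is precisely Claim~\ref{11m1mclaim}, and the inductive step follows by applying the $m=2$ computation to one pair of cable strands at a time. Identification of the annular summand as supporting a \emph{tight} contact structure on $L(m,1)$ (and not an overtwisted one) follows from the classical fact that an annulus with a single negative Dehn twist along its core is an open book for the universally tight $L(m,1)$, which can be verified by recognizing it as the open book obtained from the disk open book for $(S^3,\xi_{\rm st})$ after positively stabilizing and performing Legendrian surgery on the core, i.e.\ it is Stein fillable. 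The same manipulation also shows that the contact structure restricted to the complement of the annular Murasugi summand agrees with $\xi_{1/m}(L)$, concluding the proof.
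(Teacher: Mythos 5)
Your proposal diverges from the paper's approach and contains concrete errors. The paper realises $L_{m,1}$ on the page of an open book by a \emph{single} positive stabilisation of $(F,h)$ along an arc $c$ meeting $L$ once (Lemma~\ref{Lm1ob}); $L_{m,1}$ is then the simple closed curve isotopic to $\lambda^{-m}(b)$, where $\lambda = D_L$, $b$ is the core of the new handle and $\beta = D_b$. Both open books in Proposition~\ref{1overm} --- the one for contact $(+1)$-surgery on $L_{m,1}$, and the Murasugi sum of $(F,\lambda^{-m}h)$ with the annular open book for the tight $L(m,1)$ carrying $m$ \emph{positive} twists --- live on this same page $F'$, and the proof reduces to the mapping class group identity
\[
\lambda^{-m}\beta^{-1}\lambda^m\beta h = \lambda^{1-m}\beta^m\lambda^{-1}h,
\]
which is the $m$-th power of the braid relation $\beta^{-1}\lambda\beta = \lambda\beta\lambda^{-1}$. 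No lantern relation and no Giroux destabilisation occur in this argument; those tools belong to the proof of Proposition~\ref{stabvsQsurg}, which is a different statement.

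Your plan instead performs $m-1$ stabilisations and iterates a lantern-and-destabilisation. Three specific problems. First, the claimed realisation of $L_{m,1}$ on a page obtained by $m-1$ boundary-parallel stabilisations is not justified and does not agree with Lemma~\ref{Lm1ob}'s single-stabilisation model, which is verified via a computation of classical invariants. Second, Claim~\ref{11m1mclaim} is not the $m=2$ case of Proposition~\ref{1overm}: it compares $\xi^-_{1+1/m}(L^-)$ with $\xi_{1/m}(L)$ and involves neither a cable nor a lens space summand, so your claimed base case is a different statement. Third, an annular open book with a \emph{single negative} Dehn twist along its core supports an overtwisted $S^3$, not the universally tight $L(m,1)$ --- the open book you want has $m$ \emph{positive} twists, and your parenthetical ``it is Stein fillable'' construction (positive stabilisation plus Legendrian surgery on the core) in fact produces that positive-twist open book, contradicting the negative twist you claim one step earlier. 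Finally, the step you yourself flag as ``the main obstacle'' --- that the iterated manipulation yields a genuine plumbing rather than an entangled monodromy --- is the actual mathematical content of the proposition, and you assert it by induction without carrying out the argument.
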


Before getting to the proof, let's find an open book for $(S^3,\xi_{\rm st})$ compatible with $L_{m,1}$. Start off with an open book $(F,h)$ for $(S^3,\xi_{\rm st})$ for which $L$ sits on a page, and is not nullhomologous in that page; we can assume that $L\subset F$ is a simple closed, not nullhomologous curve. Consider a properly embedded arc $c\in F$ that intersect $L$ in a single point, and consider the positive (Giroux) stabilisation $(F',h')$ of $(F,h)$ along $c$: the situation is depicted in Figure \ref{OBcable}. 

\begin{lemma}\label{Lm1ob}
The curve depicted on the right hand side of the figure represents $L_{m,1}$.
\end{lemma}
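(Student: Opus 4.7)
The plan is to identify the curve $\gamma$ drawn on the right-hand side of Figure \ref{OBcable} as $L_{m,1}$ by matching three pieces of data: its topological type, its Thurston--Bennequin number, and its rotation number. Once these match the classical invariants computed in the previous subsection, the uniqueness of local Legendrian cablings discussed in Remark \ref{uniquecabling} (following Ding--Geiges \cite{DGls}) will force $\gamma$ to be Legendrian isotopic to $L_{m,1}$, because the whole construction takes place inside an arbitrarily small neighbourhood of $L$: the stabilisation arc $c$ and the curve $\gamma$ can be chosen inside a standard contact neighbourhood $\nu(L)$ of $L$, so the operation is purely local.

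First I would compute the topological type of $\gamma$. The curve $\gamma$ sits inside the union of an annular neighbourhood of $L$ in the original page $F$ and the stabilisation $1$-handle attached along $c$, so it is topologically isotopic to a simple closed curve on $\partial N(L)$. Intersecting $\gamma$ with a meridional disk of $N(L)$ one counts $m$ intersection points, hence $[\gamma] = m[L] + q[\mu_L]$ for some $q$. To determine $q$, I would compute $\mathrm{lk}(\gamma, L) = q$ by pushing $L$ slightly off the page and counting signed intersections; the $m$ parallel strands contribute $m\cdot tb(L)$ (since the page framing of $L$ inside $F$ is its contact framing $tb(L)$), while the single trip through the stabilisation handle contributes an extra $+1$. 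Hence $q = m\cdot tb(L)+1$, so $\gamma$ represents $K_{m,mt+1}$, as required by Proposition \ref{top-type}.

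Next I would compute the classical invariants of $\gamma$, viewed as a Legendrian curve on the page $F'$. Since a curve sitting on a convex page is automatically Legendrian with contact framing equal to the page framing, $tb(\gamma)$ is the self-linking of $\gamma$ with its page pushoff. A direct computation in the stabilised page gives $tb(\gamma) = m^2\cdot tb(L)+(m-1)$, matching the formula for $tb(L_{m,1})$. For the rotation number I would use that $[\gamma]\in H_1(F')$ is $m$ times the class of $L$ (the arc running through the stabilisation handle is nullhomologous in $F'$), together with the expression of $r$ in terms of the evaluation of $e(F')$ on the homology class of the Legendrian curve, yielding $r(\gamma) = m\cdot r(L) = r(L_{m,1})$. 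With topological type, $tb$ and $r$ all matched, the uniqueness statement of Remark \ref{uniquecabling} concludes the identification.

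The main obstacle I expect is the careful bookkeeping in the framing computation through the stabilisation handle: one must verify that the single trip across the $1$-handle contributes precisely $+1$ (not $+m$, $-1$, or some other value) to both the topological coefficient $q$ and to the page framing, and that the orientation conventions for Giroux stabilisation agree with those used when drawing $L_{m,1}$ in Figure \ref{L_mn}. Once the sign of the contribution is pinned down (which can be double-checked by doing the $m=2$ case by hand and comparing with Figure \ref{L21L23}), the general case follows by the same local model.
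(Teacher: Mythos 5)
Your proposal follows essentially the same route as the paper: identify the topological type and the classical invariants of the curve sitting on the stabilised page, and then invoke the locality of the construction together with the Ding--Geiges uniqueness of Legendrian cables (equivalently, Legendrian simplicity of torus knots in a standard solid torus) to conclude. The only place where the paper supplies detail that you leave as a ``direct computation'' is the evaluation of $tb$ and $r$ on the new page, which it carries out via the Ozsv\'ath--Stipsicz open book formulas and two short homology computations for the stabilised monodromy; the values it obtains, $tb(L')=m^2\,tb(L)+m-1$ and $r(L')=m\,r(L)$, agree with yours.
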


Before giving the proof of the lemma, let's recall a result of Ozsv\'ath and Stipsicz (see \cite[Section 4]{OzvSt}):

\begin{prop}
If $(F,h,L)$ is an open book decomposition with connected binding for $(Y,\xi)$ compatible with the nullhomologous Legendrian knot $L$, then:
\begin{itemize}
\item[(i)] there exists $[Z] = \zeta \in H_1(F)$ such that $[L] = h_*(\zeta)-\zeta$;
\item[(ii)] $r(L)$ is the Euler class of a 2-chain $P$ such that $\de P = L+Z-h(Z)$;
\item[(iii)] $tb(L)$ is $Z\cdot L$.
\end{itemize}
\end{prop}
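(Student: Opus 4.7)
The common ingredient for all three items is a Seifert surface for $L$ made transverse to a chosen page. Fix an open book $(F,h)$ compatible with $\xi$ such that $L$ is a simple closed curve on a page $F$. Since $L$ is nullhomologous, it bounds a Seifert surface $\Sigma\subset Y$; after a small isotopy I can arrange that $\Sigma$ meets $F$ transversely in a properly embedded $1$-submanifold $Z\subset F\setminus L$, oriented as the boundary of the ``upper half'' of $\Sigma$.

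For (i), cut $Y$ open along $F$. Since the binding is connected, the resulting manifold (away from the binding) is the product $F\times[0,1]$ with the identification $(x,1)\sim (h(x),0)$. The surface $\Sigma$ cuts open to a $2$-chain $P\subset F\times[0,1]$ whose boundary, tracked with signs, is $L+Z-h(Z)$: the $Z$-piece sits on $F\times\{0\}$ and the $-h(Z)$-piece on $F\times\{1\}$, the opposite sign reflecting the opposite orientation of the page at height $1$. Pushing this relation into $H_1(F)$ gives $[L]=h_*\zeta-\zeta$ for $\zeta:=[Z]$, establishing (i); the same chain $P$ will serve for (ii).

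For (iii), it is standard that when $L$ lies on a page of an open book compatible with $\xi$, the contact framing of $L$ coincides with the framing induced by the page $F$ (the pages are transverse to the Reeb vector field of a compatible contact form, so $TF$ projects isomorphically onto $\xi$ near $L$). Hence $tb(L)=\lk(L,L^+)$ for a page pushoff $L^+\subset F$, and $\lk(L,L^+)=L^+\cdot\Sigma$. In a neighbourhood of $F$ the surface $\Sigma$ is modelled on $Z\times(-\varepsilon,\varepsilon)$, so this intersection reduces to $L^+\cdot Z$ computed on $F$, which equals $Z\cdot L$.

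For (ii), $r(L)$ is the obstruction to extending the tangent section $TL\subset\xi|_L$ to a nowhere vanishing section of $\xi|_\Sigma$. The compatibility of $\xi$ with the open book provides a canonical non-vanishing section of $\xi$ along curves on $F$ (a framing of $\xi$ intrinsic to the page, obtained for instance from the line field $\xi\cap TF$ away from the Legendrian locus, extended via the Reeb direction where needed); pick this section on $Z$ and on $h(Z)$. Then $r(L)$ equals the relative Euler class $e(\xi|_P,s)$, where $s$ is $TL$ on $L$ and the page-intrinsic section on $Z$ and $h(Z)$. The principal obstacle is to verify that the page sections on $Z$ and on $h(Z)$ are identified under the gluing $h$, so that $e(\xi|_P,s)$ descends to $e(\xi|_\Sigma, TL)=r(L)$. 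This matching follows from $h$ extending to a contactomorphism of a collar of $F$ (a consequence of Giroux compatibility), whence $h$ preserves the intrinsic page framing of $\xi$ up to homotopy, and the contributions of the two page boundary components cancel in the relative Euler class computation.
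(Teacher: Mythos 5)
The paper does not actually prove this proposition: it is imported verbatim from Ozsv\'ath--Stipsicz \cite[Section 4]{OzvSt} as an ingredient for computing the classical invariants of $L_{m,1}$, so there is no internal argument to compare yours against. Your strategy -- cut a Seifert surface $\Sigma$ open along a page and read off all three invariants from the resulting relative $2$-chain -- is the standard route and essentially the one in the cited source. For (i) and (iii), however, there is a point you skip over: you take $Z=\Sigma\cap F$ to be a \emph{properly embedded} $1$-submanifold, i.e.\ you implicitly allow $\de Z\subset\de F$, but then $[Z]$ only lives in $H_1(F,\de F)$, while the statement needs an absolute class $\zeta\in H_1(F)$. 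You must first arrange $\Sigma\cap B=\varnothing$, where $B$ is the binding; this is possible precisely because the page $F$ is itself a Seifert surface for the connected binding and $L$ sits on a page, so $\lk(L,B)=0$ and the intersection points of $\Sigma$ with $B$ cancel in pairs after tubing $\Sigma$ along $B$. With $Z$ closed, your homological argument for (i) and the computation $tb(L)=\lk(L,L^+)=L^+\cdot\Sigma$ for (iii) go through, up to fixing the orientation convention for $Z$ so that the answer comes out as $Z\cdot L$ rather than $L\cdot Z$.

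Part (ii) is where the argument has a genuine gap. The claim that $h$ ``preserves the intrinsic page framing of $\xi$ up to homotopy'' because it ``extends to a contactomorphism of a collar'' is not something Giroux compatibility hands you: the monodromy is defined only up to isotopy and is not a contactomorphism of any contact structure on the page. The way this step is actually closed is via the Thurston--Winkelnkemper/Giroux construction: the compatible $\xi$ can be isotoped to be $C^0$-close to $TF$ on the interior of each page, so a trivialization of $TF$ induces one of $\xi$ there; and since $F\times\{0\}$ and $F\times\{1\}$ have the \emph{same} image page in $Y$, the sections along $Z$ and $h(Z)$ are literally the same section of $\xi$ along the same page, so they match tautologically rather than by any dynamical property of $h$. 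You should also make explicit what ``the Euler class of the $2$-chain $P$'' means (it is the relative Euler number of $\xi$ over $P$ with respect to these boundary sections, equivalently an Euler-measure count on the page) and why it equals $\langle e(\xi|_\Sigma, TL),[\Sigma]\rangle=r(L)$ after regluing.
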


This has some very interesting consequences, whose proof is straightforward:

\begin{cor}
If $L,L_1,L_2$ are three embedded, homologically nontrivial curves in $F$, such that $[L] = [L_1]+[L_2]$, and $Z_i$ is associated to $L_i$ as in \emph{(i)}, then:
\begin{itemize}
\item $tb(L) = tb(L_1)+tb(L_2) + Z_1\cdot L_2 + Z_2\cdot L_1$;
\item $r(L) = r(L_1)+r(L_2)$.
\end{itemize}
\end{cor}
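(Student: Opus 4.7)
The plan is to deduce both formulas directly from the preceding proposition, using linearity of the Euler class and bilinearity of the intersection pairing.

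First I would choose a witness for condition (i) applied to $L$. Since $[L_i] = h_*(\zeta_i) - \zeta_i$ with $\zeta_i = [Z_i]$, the hypothesis $[L] = [L_1] + [L_2]$ gives
\[
[L] = h_*(\zeta_1 + \zeta_2) - (\zeta_1 + \zeta_2),
\]
so $\zeta = \zeta_1 + \zeta_2$ is a valid class, realised on the chain level by $Z := Z_1 + Z_2$. Since the invariants $tb$ and $r$ do not depend on the choice of witness in (i)--(iii), I am free to use this particular $Z$.

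For the Thurston--Bennequin formula I would simply apply (iii):
\[
tb(L) = Z \cdot L = (Z_1 + Z_2) \cdot (L_1 + L_2) = Z_1 \cdot L_1 + Z_2 \cdot L_2 + Z_1 \cdot L_2 + Z_2 \cdot L_1,
\]
and recognise $Z_i \cdot L_i = tb(L_i)$ by another application of (iii). For the rotation number, I would use (ii): pick 2-chains $P_i$ in $F$ with $\partial P_i = L_i + Z_i - h(Z_i)$ and Euler class $r(L_i)$. Then $P := P_1 + P_2$ satisfies
\[
\partial P = L_1 + L_2 + (Z_1 + Z_2) - h(Z_1 + Z_2) = L + Z - h(Z),
\]
so it is a valid 2-chain for $L$ in the sense of (ii). Since the Euler class evaluated on a 2-chain is additive, $r(L) = e(P) = e(P_1) + e(P_2) = r(L_1) + r(L_2)$.

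The only subtlety I anticipate is to justify that $Z = Z_1 + Z_2$ is a legitimate representative for the homology class $\zeta$ associated with $L$ (rather than something only well-defined modulo boundaries that would spoil the intersection count), and correspondingly that $P_1 + P_2$ is an honest 2-chain realising $\partial P = L + Z - h(Z)$ on the nose. Both points are essentially bookkeeping: condition (i) is a homological statement, but once a chain-level representative is fixed the intersection $Z \cdot L$ and the Euler class $e(P)$ depend only on the appropriate homology data, so choosing $Z_1 + Z_2$ and $P_1 + P_2$ as above is consistent. Hence the two bulleted formulas follow immediately.
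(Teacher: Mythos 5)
Your argument is correct and is precisely the ``straightforward'' computation the paper alludes to (the paper omits the proof entirely): choose $Z = Z_1 + Z_2$ as the witness for $L$, expand $Z\cdot L$ by bilinearity to get the $tb$ formula, and add the 2-chains $P_1 + P_2$ together with additivity of the Euler class for the $r$ formula.
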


\begin{proof}[Proof of Lemma \ref{Lm1ob}]
Let's remark that $L_{m,n}$ is Legendrian isotopic to a torus knot in a standard Legendrian neighbourhood $\nu(L)$ of $L$.

Call $L'$ the Legendrian knot represented by the curve on the right hand side of Figure \ref{OBcable}, and suppose that $\de F'$ is connected; in particular $\de F$ has two connected components.

Stabilising along $c$ corresponds to a connected sum of $(Y,\xi)$ with $(S^3,\xi_{\rm st})$. In particular, we can suppose that a neighbourhood of $c$ in $Y$ is contained in $\nu(L)$; call $b$ the core of the annulus we do Murasugi sum with (which is the closure of $c$ inside the new 1-handle of $F'$). In this case, the connected sum can be performed inside $\nu(L)$. In particular, $b$ is nullhomologous in $S^3$. Observe now that the curve $L'$ is isotopic in $S^3$ to a curve on the boundary of $\nu(L)$; $L'$ is homologous to $m\lambda_\xi + b$, where $b$ is given the orientation such that $b\cdot L = 1$, and in particular $L'$ represents a $(m,m\cdot tb(L)+1)$-cable of $K$.

Since $L'$ and $L_{m,1}$ are both local modifications of $L$, and torus knots in the standard solid torus are Legendrian simple \cite{EH}, the knot represented by $L'$ is isotopic to $L_{m,1}$ provided they have the same classical invariants.

But the corollary above provides us the tools we need to compute $tb(L')$ and $r(L')$: we know that $[L'] = m[L]+[b]$, and we know that for some $Z_0\subset F$, $[h(Z_0)]-[Z_0] = [L]$.

\begin{claim}
If $Z_b$ is parallel to a boundary component of $F\subset F'$, oriented so that $b\cdot Z_b = 1$, then $[h'(Z_b)]-[Z_b] = [b]$.
\end{claim}

\begin{proof}
The action of $D_b$ on homology is given by $[D_b(\gamma)] = [\gamma] + (b\cdot\gamma)[b]$. Since $[h(Z_b)] = [Z_b]$ and $h' = D_b\circ h$, it follows that $[h'(Z_b)] = [Z_b]+(b\cdot Z_b)[b]$.
\end{proof}

\begin{claim}
$[h'(Z_0)] - [Z_0] = [L]+(b\cdot Z_0+1)[b]$.
\end{claim}

\begin{proof}
We know that $[h(Z_0)] = [Z_0]+[L]$, and that $[D_b(c)] = [c]+(b\cdot c)[b]$:
\[[h'(Z_0)] = [D_b(h(Z_0))] = [D_b(Z_0)] + [D_b(L)] = [Z_0] + [L] + (b\cdot Z_0+b\cdot L)[b],\]
that is what we wanted, since, by assumption, $b\cdot L = 1$.
\end{proof}

In particular, if we let $Z$ be any curve in the homology class $[Z_0] - (b\cdot Z_0+1)[Z_b]$, then $[h(Z)]-[Z] = [L]$.

We can now compute $tb(L')$:
\[
tb(L') = m^2tb(L) + tb(b) + m(Z\cdot b + Z_b\cdot L) = m^2tb(L)-1+m,
\]
where we used that, $b\cdot Z = b\cdot Z_0$, $Z_b\cdot L = 0$ and $tb(b) = -1$. This last identity comes from the fact that $b$ represents a Legendrian unknot with Thurston-Bennequin number $-1$ in the $(S^3,\xi_{\rm st})$ connected summand, corresponding to the stabilisation made on $F$ to get $F'$.

As we said, the rotation number is linear, so $r(L') = mr(L)+r(b) = mr(L)$, and in particular $L'$ and $L_{m,1}$ have the same classical invariants, and Ding and Geiges' results \cite{DGls} (see Remark \ref{uniquecabling} above) imply that they're isotopic.
\end{proof}

Recall that $b$ is the core of the annulus we do Murasugi sum with; let's also denote with $\beta$ the positive Dehn twist along $b$, and with $\lambda$ the positive Dehn twist along $L\subset F'$.

\begin{figure}
\begin{center}
\includegraphics[scale=.55]{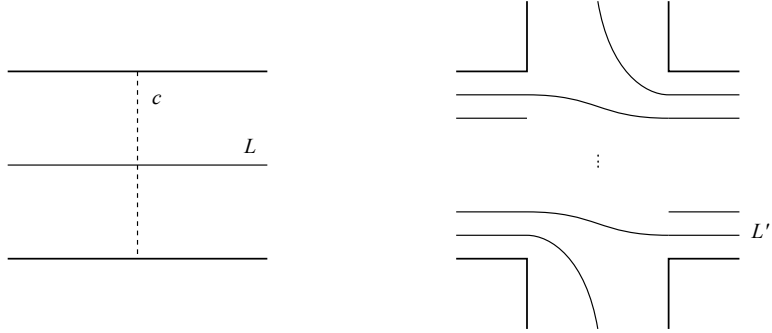}
\end{center}
\caption{The figure on the left shows the open book $(F,\phi,L)$ and the stabilisation arc $c$. The figure on the right represents $(F',\phi',L_{m,1})$: $L_{m,1}$ runs $m$ times along $L$ and once along the new 1-handle.}\label{OBcable}
\end{figure}

\begin{proof}[Proof of Theorem \ref{1overm}]
An open book for $+1$ surgery on $L_{m,1}$ has $F'$ as a page, and the monodromy is the composition of $h'$ with a negative Dehn twist along the curve representing $L_{m,1}$; this last curve is isotopic to $\lambda^{-m}b$, so the monodromy can be written as $(\lambda^{-m}\beta\lambda^m)^{-1} h' = \lambda^{-m}\beta^{-1}\lambda^m\beta h$.

Fix a contact structure $\xi_m$ on $L(m,1)$ supported by an open book with an annular page $A$, and monodromy given by $m$ positive twists along the core of the annulus. Fix also a properly embedded arc $d$ connecting the two boundary components of the annulus.

Similarly, an open book for $+1/m$-surgery along $L$ is given by $(F,\lambda^{-m}h)$; an open book for the connected sum of this with $\xi_m$ can be realised via a Murasugi sum of $F$ and $A$ along the arcs $d\subset A$ and an arc $c'\subset F$. Instead of using $c'=c$, though, we'll use the arc $c' = \lambda^{1-m}c$, so to simplify the proof: heuristically, we're sort of forced to choose $c' = \lambda^pc$ for some integer $p$, since, being the result local, we expect the proof to be local around $L$, and the only action that the monodromy can have on $c$, local around $L$, is by powers of $\lambda$.

The new surface is diffeomorphic to $F'$, and the monodromy, under the obvious identification, is $\lambda^{1-m}\beta^m\lambda^{-(1-m)} \cdot \lambda^m \cdot h = \lambda^{1-m}\beta^m\lambda^{-1}h$.

Notice that $b$ and $L$ intersect exactly once by assumption.

To prove isotopy of the two contact structures, we prove that the two monodromies are isotopic, and this is an easy computation in the mapping class group of $F'$:
\[
\lambda^{-m}\beta^{-1}\lambda^m\beta h = \lambda^{1-m}\beta^m\lambda^{-1} h \,\Longleftrightarrow\, \beta^{-1}\lambda^m \beta = \lambda\beta^m\lambda^{-1},
\]
and this last equation follows from taking the $m$-th power of the braid relation $\beta^{-1}\lambda\beta = \lambda\beta\lambda^{-1}$.
\end{proof}

\begin{rmk}
There is also a less direct proof of Proposition \ref{1overm}: the idea is that for some $L$, \emph{e.g.} the right-handed trefoil with maximal Thurston-Bennequin, every Legendrian cable has $tb(L_{m,1}) = 2g(L_{m,1})-1$, and therefore the contact structure $\xi_{+1}(L_{m,1})$ has nonvanishing contact invariant \cite{LS1}. This implies that the layer $\kappa_{m,1}\cup\bfT^-_n$ has nonvanishing contact invariant, and in particular is tight. But $\kappa_{m,1}\cup\bfT^-_n$ \emph{topologically} decomposes as $\bfT_{1/m}\# L(m,1)$, and the contact structure on the $\bfT_{1/m}$ factor has to be tight, and is the only tight solid torus that gives contact $1/m$-surgery.

Notice that, if $m\ge 4$, this doesn't immediately say anything about the contact structure on the lens space factor: it's not unreasonable that one can extract this information from the $\spin$-structures of the contact invariants $c(\xi_{1/m}(L))$ and $c(\xi_{+1}(L_{m,1}))$.
\end{rmk}

\vskip 0.2 cm

We now want to turn to the case of $L_{m,n}$ with $n\equiv 1 \pmod{m}$: we start off with an example.

\begin{ex}
Let's consider the case of the Legendrian positive trefoil $L_0\subset (S^3,\xi_{\rm st})$ with $tb(L_0)=1$ and $r(L_0)=0$. $L_0$ satisfies all three conditions in Theorem \ref{mainthm}; more precisely, since $\epsilon(T_{3,2}) = 1$, proposition \ref{homvsmain} tells us that all of its Legendrian cables $(L_0)_{m,n}$ satisfy the three hypotheses in Theorem \ref{mainthm} for any positive surgery coefficient.

Therefore, if we accept the `if' direction of our main theorem (whose proof won't rely on these facts), we know that $\xi^-_n((L_0)_{m,n})$ has nonvanishing contact invariant\footnote{This follows also from Lisca and Stipsicz's main theorem in \cite{LS1}, since positive cables of the trefoil have $\tau = g$.} (in fact, also $\xi^+_n((L_0)_{m,n})$ does).
\end{ex}

Thanks to the example, we know that $c(\xi^-_n(L_{m,n}))\neq 0$ for some Legendrian $L$: in particular, the contact invariant $EH(\bfT_{m,n}\cup \bfT^-_n, \xi_{m,n}\cup \xi^-_n)$ of the union of the cabling layer and the surgery layer has nonvanishing contact invariant (see Corollary \ref{nonvanishing}), and therefore is tight.

The layer $\bfT_{m,n}\cup \bfT^{\pm}_n$ splits as a connected sum $\bfT_{n/m}\# L(m,n)$ of tight manifolds, therefore we proved:

\begin{prop}
The contact structure $\xi^-_n(L_{m,n})$ splits as the connected sum $\xi_{n/m}(L)\#\eta_{m,n}$ for some choice of $n/m$ surgery along $L$ and a contact structure $\eta_{m,n}$ on $L(m,n)$; both the surgery layer and the contact structure on the lens space are independent of $L$.
\end{prop}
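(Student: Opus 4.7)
The proposition follows from combining the preceding discussion with a standard contact-topological decomposition argument. My plan has three steps: establish tightness of the combined layer, upgrade the topological connected-sum decomposition to a contact one, and check $L$-independence.

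For the first step, by the example immediately preceding the proposition, the right-handed Legendrian trefoil $L_0$ with $tb(L_0)=1$ satisfies (SL), (SC), (TN), so by the ``if'' direction of Theorem \ref{mainthm} (proved independently of this proposition) we have $c(\xi^-_n((L_0)_{m,n}))\neq 0$. Under the gluing map $\Phi_{\xi_{m,n}\cup\xi^-_n}$ associated to the layer $(\bfT_{m,n}\cup\bfT^-_n, \xi_{m,n}\cup\xi^-_n)$, this invariant is the image of $EH(L_0)$, so by Corollary \ref{nonvanishing} the combined layer has nonvanishing $EH$ and is tight. Both the cabling layer $(\bfT_{m,n},\xi_{m,n})$ and the surgery layer $(\bfT^-_n,\xi^-_n)$ are defined intrinsically, so the tight layer $(\bfT_{m,n}\cup\bfT^-_n,\xi_{m,n}\cup\xi^-_n)$ is independent of $L$.

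For the second step, topologically the combined layer is $\bfT_{n/m}\# L(m,n)$, as realised by a separating $2$-sphere $S$ in its interior. Inside a tight contact manifold, any embedded $2$-sphere can be isotoped to be convex, and by the Giroux criterion its dividing set is a single circle. Cutting the combined layer along such an $S$ and filling each side with the unique tight $3$-ball produces two tight contact manifolds whose contact connected sum recovers the original layer: a tight solid torus $(\bfT_{n/m},\xi')$ carrying the convex torus boundary of the combined layer, and a closed tight lens space $(L(m,n),\eta_{m,n})$. Because $S$ lies in the interior, the dividing slope on $\partial \bfT_{n/m}$ matches that of the combined layer, which is exactly the slope for an $n/m$-surgery layer; hence gluing $(\bfT_{n/m},\xi')$ to $(S^3_L,\xi|_{S^3_L})$ realises one of the preferred contact $n/m$-surgeries $\xi_{n/m}(L)$ from Section \ref{cont_surg}.

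Assembling the pieces yields $\xi^-_n(L_{m,n}) \cong \xi_{n/m}(L) \# \eta_{m,n}$, and $L$-independence of both factors is immediate since the sphere $S$, its dividing curve, and the tight ball fillings are all determined inside the combined layer, which itself depends only on $(m,n)$. The main obstacle I anticipate is verifying that the resulting tight solid torus $(\bfT_{n/m},\xi')$ is genuinely one of the preferred surgery tori rather than some exotic tight filling, and more generally pinning down precisely which $n/m$-surgery we recover; this should be settled by comparing dividing slopes on $\partial\bfT_{n/m}$ before and after the split, and by appealing to Honda's classification of tight contact structures on solid tori \cite{Ho}.
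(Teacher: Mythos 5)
Your proof is correct and follows the same strategy as the paper: establish tightness of the combined layer $(\bfT_{m,n}\cup\bfT^-_n, \xi_{m,n}\cup\xi^-_n)$ by evaluating $c(\xi^-_n((L_0)_{m,n}))\neq 0$ on a trefoil representative and invoking Corollary~\ref{nonvanishing}, then use the topological splitting $\bfT_{m,n}\cup\bfT^\pm_n \cong \bfT_{n/m}\#L(m,n)$ together with tightness to obtain a contact connected sum. The paper is terser here -- it simply states the tight layer splits as a connected sum of tight pieces, leaving the convex-sphere/Giroux-criterion/tight-ball-filling argument implicit (and relying on the Ding--Geiges prime decomposition theorem \cite{DGpd}, cited in a remark shortly after) -- while you usefully make that step explicit. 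One small clarification: the ``exotic tight filling'' concern you raise at the end is not actually an obstacle for \emph{this} proposition, since by the paper's definition a contact $n/m$-surgery along $L$ is exactly the result of gluing in \emph{any} tight solid torus with the prescribed convex boundary slope; so once the $\bfT_{n/m}$ factor is tight and has the right dividing slope it automatically qualifies as ``some choice of $n/m$-surgery.'' Pinning down \emph{which} surgery, as you correctly anticipate, is a genuinely harder task and is the content of the following Proposition~\ref{cablingvssurg}.
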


We want to pin down the choice of the contact structures $\xi_{n/m}$ and $\eta_{m,n}$ in the statement above, when $n\equiv 1\pmod{m}$.

\begin{prop}\label{cablingvssurg}
Suppose $m\ge 1$. Contact $n$-surgery on $L_{m,n}$ yields the connected sum $\xi^-_{n/m}(L)\#\eta_m$, where $\eta_m$ is obtained by $-1$-surgery on the Legendrian unknot with $(tb,r) = (1-m, 2-m)$.
\end{prop}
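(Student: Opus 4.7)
The plan is to prove the proposition by induction on $k$, where $n = km + 1$, using Proposition \ref{1overm} as the base case $k = 0$ and combining the cabling-vs-stabilization Lemma of Section \ref{cont_cabling} with the sliding relation of Proposition \ref{stabvsQsurg} for the inductive step. The base case itself gives $\xi_{+1}(L_{m,1}) \simeq \xi_{1/m}(L) \# \eta_{m,1}$ for a tight contact structure $\eta_{m, 1}$ on $L(m, 1)$, and $\xi_{1/m}(L) = \xi^-_{1/m}(L)$ because no stabilization choices arise at the coefficient $1/m$. The remaining base-case task is to identify $\eta_{m, 1}$ with $\eta_m$: from the proof of Proposition \ref{1overm}, $\eta_{m, 1}$ is supported by an open book with annular page and monodromy equal to $m$ positive Dehn twists along the core. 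Via Ozbagci's algorithm this is also the open book one obtains by stabilizing the trivial open book of $(S^3, \xi_{\rm st})$ to place a Legendrian unknot with $(tb, r) = (1-m, 2-m)$ on a page and then adding the positive Dehn twist prescribed by $(-1)$-contact surgery; the match can be made explicit by the same lantern-relation simplification used in the proof of Proposition \ref{1overm}.

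For the inductive step I will exhibit an invariance of both sides of the claimed identity under $(L, k) \mapsto (L^-, k + 1)$. On the left, the cabling-vs-stabilization Lemma applied with $k' = 1$ and $n' = km + 1$ gives the Legendrian isotopy $(L^-)_{m, (k+1)m+1} \simeq (L_{m, km+1})^{(m)}$, and $m$ applications of the integer case of Proposition \ref{stabvsQsurg}, namely $\xi^-_j(M) \simeq \xi^-_{j+1}(M^-)$, then yield $\xi^-_{km+1}(L_{m, km+1}) \simeq \xi^-_{(k+1)m+1}((L^-)_{m, (k+1)m+1})$. On the right, Proposition \ref{stabvsQsurg} itself gives $\xi^-_{k+1/m}(L) \# \eta_m \simeq \xi^-_{(k+1)+1/m}(L^-) \# \eta_m$. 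Reading this invariance backwards, from $(L, k)$ to a de-stabilization $(L', k-1)$ when such an $L'$ exists, the inductive hypothesis at level $k - 1$ supplies the desired equivalence; the remaining case of a maximal-$tb$ representative is handled by moving forward along the invariance to a non-maximal representative, where the argument applies, and then pulling the identification back.

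The main obstacle is the base-case open-book identification $\eta_{m, 1} = \eta_m$, which requires careful matching of the monodromies via Giroux stabilizations and the lantern relation. The inductive step itself is essentially a formal consequence of the interplay between Legendrian cabling, stabilization, and contact surgery established earlier in the paper.
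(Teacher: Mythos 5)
Your overall architecture (base case from Proposition \ref{1overm}, then relating level $k$ to level $k+1$ via the identity $(L^-)_{m,(k+1)m+1}\simeq(L_{m,km+1})^{(m)}$ and $m$ applications of the integer case of Proposition \ref{stabvsQsurg}) matches the computation the paper actually performs, and the two invariance statements you write down are correct. But your inductive step has a genuine gap: it only establishes the proposition for those Legendrian knots $L$ that admit $k$ successive negative destabilisations, since each downward step requires writing $L=(L')^-$. Your proposed remedy for the remaining representatives --- ``moving forward along the invariance to a non-maximal representative, where the argument applies, and then pulling the identification back'' --- is circular: moving forward sends $(L,k)$ to $(L^-,k+1)$, and the only way your argument ``applies'' at $(L^-,k+1)$ is by destabilising back to $(L,k)$, which is precisely the unproven case. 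Nothing at level $k+1$ is known independently, so nothing can be pulled back.

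The missing ingredient, which is the crux of the paper's proof, is \emph{locality}: both $\xi^-_n(L_{m,n})$ and $\xi^-_{n/m}(L)\#\eta_m$ are obtained from the fixed sutured complement $S^3_L$ by gluing contact layers ($\bfT_{m,n}\cup\bfT^-_n$ on one side, a tight $(n/m)$-surgery solid torus summed with a lens-space piece on the other) that do not depend on $L$ at all. Once this is observed, it suffices to verify the identity for a \emph{single} convenient representative, and the paper takes $L=L_0^{(k)}$, a $k$-fold negatively stabilised trefoil --- exactly the kind of knot your induction does reach --- for which, moreover, tightness of the result is known and the Lisca--Stipsicz overtwistedness criterion (Remark \ref{mix_stab}) excludes every competing stabilisation choice in the Ding--Geiges expansion. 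Adding this locality step would close your gap; note also that the base-case identification $\eta_{m,1}=\eta_m$, which you correctly flag as requiring an open-book computation, is treated as essentially immediate from the description of $\xi_m$ in the proof of Proposition \ref{1overm} rather than reproved.
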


This proof is similar in spirit to the proof of Proposition \ref{stabvsQsurg}: the key point of the proof is Remark \ref{mix_stab}. Recall that when $n\equiv 1 \pmod{m}$ is larger than 1, we have $m$ stabilisations to choose, and the last $m-1$ choices commute (and all choices commute if $n=m+1$). In the following, $L_0$ will be the Legendrian right-handed trefoil with $tb(L_0)=0$.

\begin{proof}
We first take care of the lens space summand.

\begin{claim}
The contact structure on the lens space is $\eta_m$.
\end{claim}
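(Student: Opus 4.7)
The plan is to reduce the identification to the case $n=1$ settled in Proposition~\ref{1overm}, and then recognise the tight structure on $L(m,1)$ obtained there as $\eta_m$.

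Write $n=km+1$ with $k\ge 0$. The Legendrian isotopy $(L^{(k)})_{m,km+1}\cong (L_{m,1})^{(km)}$ proved earlier in this section, combined with iterated applications of Proposition~\ref{stabvsQsurg}, lets me compute contact $n$-surgery on this knot in two ways. Viewing it as a cable of $L^{(k)}$ and applying the first (preliminary) part of Proposition~\ref{cablingvssurg} gives
\[\xi^-_n\bigl((L^{(k)})_{m,km+1}\bigr)\;=\;\xi^-_{(km+1)/m}(L^{(k)})\;\#\;\eta_{m,km+1}\;=\;\xi^-_{1/m}(L)\;\#\;\eta_{m,km+1},\]
where the second equality uses Proposition~\ref{stabvsQsurg} applied $k$ times on the surgery-coefficient side. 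Viewing the same knot as $(L_{m,1})^{(km)}$ and again iterating Proposition~\ref{stabvsQsurg} identifies the surgery with $\xi_{+1}(L_{m,1})$, which by Proposition~\ref{1overm} is $\xi_{1/m}(L)\#\eta_{m,1}$. Comparing the two decompositions and using that the lens-space summand is independent of $L$ yields $\eta_{m,km+1}=\eta_{m,1}$, so it suffices to treat $n=1$.

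For this, the proof of Proposition~\ref{1overm} exhibits $\eta_{m,1}$ as the contact structure supported by the annular open book $(A,D_c^m)$, where $c$ is the core of $A$. To match this with $-1$-contact surgery on the Legendrian unknot $U'$ of classical invariants $(tb,r)=(1-m,2-m)$, I would start from the standard open book $(A,D_c)$ for $(S^3,\xi_{\rm st})$ (whose core realises the maximal-$tb$ Legendrian unknot $U_0$), then represent $U'$ by performing $m-2$ negative Legendrian stabilisations of $U_0$ inside the page, so that after stabilising the open book accordingly $U'$ sits on a page of an open book supporting $(S^3,\xi_{\rm st})$. Adding the positive Dehn twist along $U'$ prescribed by the Ding-Geiges algorithm for $(-1)$-surgery, and then applying the lantern relation and Giroux destabilisations in the style of Claim~\ref{11m1mclaim}, should collapse the resulting open book back to $(A,D_c^m)$, completing the identification.

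The delicate part is this last open-book simplification: a wrong choice of signs during the stabilisations of $U_0$ would land us on one of the other $m-2$ tight contact structures on $L(m,1)$ classified by Honda. As a cross-check I would compute the homotopy class of the underlying plane field (via Ding-Geiges-Stipsicz's $d_3$ formula applied to the surgery diagram for $U'$, and via the open-book formula applied to $(A,D_c^m)$) and verify that they match; combined with the fact that both contact structures are manifestly Stein fillable, this pins down $\eta_{m,1}=\eta_m$ uniquely.
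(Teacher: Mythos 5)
Your reduction to $n=1$ is essentially the paper's argument. The paper simply takes $L=L_0^{(k)}$, a $k$-th negative stabilisation of the right-handed trefoil, and writes
\[
\xi^-_n(L_{m,n}) = \xi^-_n\bigl((L_0)_{m,1}^{(mk)}\bigr) = \xi_{+1}\bigl((L_0)_{m,1}\bigr) = \xi_{1/m}(L_0)\#\eta_m,
\]
citing the cabling--stabilisation commutation, Proposition~\ref{stabvsQsurg} iterated $mk$ times, and Proposition~\ref{1overm}. Your version compares two decompositions of the same contact manifold and invokes uniqueness of the lens-space summand, but it is the same computation in slightly different clothing.

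Where you go beyond the paper is in flagging and attempting the identification $\eta_{m,1}=\eta_m$. Proposition~\ref{1overm} as stated only produces \emph{some} tight structure on $L(m,1)$ (its proof names it $\xi_m$, supported by the annular open book $(A,D_c^m)$), while the statement of Proposition~\ref{cablingvssurg} describes $\eta_m$ by a Legendrian surgery diagram; the paper's proof of the claim passes from one to the other without comment. You are right that this is a separate verification, and your open-book route is plausible. One caveat on the proposed cross-check: $d_3$ is unchanged under conjugation, so by itself it cannot separate the $(tb,r)=(1-m,2-m)$ surgery from the $(tb,r)=(1-m,m-2)$ one; the sign bookkeeping in the destabilisation sequence must actually do the work. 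That said, wherever the paper subsequently uses $\eta_m$, only the fact that it is obtained by Legendrian surgery on $(S^3,\xi_{\rm st})$, hence has nonvanishing contact invariant, is needed, and that is just as visible from the open book $(A,D_c^m)$; so the imprecision is harmless for the main results, even if the finer description in the statement is left unproved.
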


\begin{proof}
Let $k= \lfloor n/m \rfloor = (n-1)/m$; in particular, $mk=n-1$.

Since the contact structure on $L(m,1)$ doesn't depend on the particular Legendrian knot, we can pick any $L$: let $L = L_0^{(k)}$ be a $k$-th negative stabilisation of the trefoil $L_0$. Then:
\[\xi^-_n(L_{m,n}) = \xi^-_n((L_0)_{m,1}^{(mk)}) = \xi_{+1}((L_0)_{m,1}) = \xi_{1/m}(L_0)\#\eta_m,\]
where the first equality follows from Lemma \ref{cablingvsstab}, the second from Proposition \ref{stabvsQsurg}, and the third from Proposition \ref{1overm}.
\end{proof}

\begin{rmk}
In the proof of the previous claim, we need to use a knot $L_0$ such that $\xi_{+1}(L_0)_{m,1}$ is tight in order to have uniqueness (up to isomorphism) of the connected sum decomposition (see \cite{DGpd}). As a byproduct of the proof, we obtain that $\xi_{n/m}(L)$ has to be tight for a $k$-th stabilisation of $L_0$.
\end{rmk}

As in the proof of Proposition \ref{stabvsQsurg}, we now rule out all other possibilities for $\xi_{n/m}(L)$.

\begin{claim}
$\xi^-_{m+1}(L_{m,m+1}) = \xi^-_{1+1/m}(L)\#\eta_m$.
\end{claim}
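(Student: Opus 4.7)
The plan is to exploit the $L$-independence of the surgery layer established just before Proposition \ref{cablingvssurg}: combining that proposition with the previous claim in this proof already yields $\xi^-_{m+1}(L_{m,m+1}) = \overline{\xi}_{1+1/m}(L)\#\eta_m$, where $\overline{\xi}$ is \emph{some} contact $(1+1/m)$-surgery on $L$ whose sign choices in the Ding-Geiges algorithm are the same for every $L$. It therefore suffices to identify $\overline{\xi}$ on a single convenient test knot.

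I will take $L = L_0$, the right-handed trefoil with maximal Thurston-Bennequin number $tb(L_0)=1$, $r(L_0)=0$. A direct computation using the formulas for the classical invariants of a Legendrian cable, together with Hom's Theorem \ref{homthm} (noting $\epsilon(T_{3,2})=1$), yields $tb((L_0)_{m,m+1}) = 2m^2-1$ and $\tau((L_0)_{m,m+1})=m^2 = g((L_0)_{m,m+1})$, so the hypothesis $tb=2g-1$ of the main result of \cite{LS1} is satisfied; hence $c(\xi^-_{m+1}((L_0)_{m,m+1}))\neq 0$. The connected-sum behaviour of the contact invariant, together with the identification of the lens-space factor from the previous claim, then forces $c(\overline{\xi}_{1+1/m}(L_0))\neq 0$, and in particular $\overline{\xi}_{1+1/m}(L_0)$ is tight.

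To pin down the stabilisation signs, I will imitate the concluding argument in the proof of Proposition \ref{stabvsQsurg}: suppose that $p\ge 0$ of the $m$ stabilisations prescribed by Ding-Geiges for $\xi_{1+1/m}(L_0)$ are positive. By Proposition \ref{stabvsQsurg} this surgery is contactomorphic to a contact $(2+1/m)$-surgery on $L_0^-$ in which the lone push-off has been positively stabilised $p$ times; by the $n=0$ case of Remark \ref{mix_stab} (Lisca and Stipsicz's overtwisted-disc construction), this structure is overtwisted whenever $p>0$. Tightness of $\overline{\xi}_{1+1/m}(L_0)$ therefore forces $p=0$, so $\overline{\xi}_{1+1/m}(L_0) = \xi^-_{1+1/m}(L_0)$, and the $L$-independence of the sign choice upgrades this to $\overline{\xi}_{1+1/m}(L) = \xi^-_{1+1/m}(L)$ for all $L$.

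The main obstacle is this last step. Remark \ref{mix_stab} is phrased in terms of $\xi_{n+1+1/m}(L^-)$ rather than in terms of $(1+1/m)$-surgeries on $L_0$ directly, so one has to track carefully, through the isotopy of Proposition \ref{stabvsQsurg}, which of the $m$ stabilisations in the two Ding-Geiges algorithms correspond to one another, and in particular check that the ``first (resp.\ any) stabilisation'' in the remark's hypothesis really matches a positive stabilisation in the $L_0$-algorithm.
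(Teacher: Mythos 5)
Your overall architecture (exploit the $L$-independence of the surgery layer, then identify it on a single test knot by combining a tightness statement with an overtwistedness criterion) matches the paper's, and the first half of your argument is fine: the computation $tb((L_0)_{m,m+1})=2m^2-1=2g-1$, the nonvanishing of $c(\xi^-_{m+1}((L_0)_{m,m+1}))$, and the resulting tightness of $\overline{\xi}_{1+1/m}(L_0)$ all go through. The gap is in the sign-pinning step, and it is not just the bookkeeping issue you flag at the end. The ``any stabilisation positive $\Rightarrow$ overtwisted'' part of Remark \ref{mix_stab} is its $n=0$ case, which concerns $(1+1/m)$-surgeries on a \emph{stabilised} knot $L^-$: the Lisca--Stipsicz overtwisted disc exists because, relative to the unstabilised $L$, the Legendrian surgery curve carries stabilisations of \emph{both} signs. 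Your test knot $L_0$ has maximal Thurston--Bennequin invariant, so it is not a stabilisation and the criterion does not apply to $\xi_{1+1/m}(L_0)$ directly; and passing to a $(2+1/m)$-surgery on $L_0^-$ puts you in the $n=1$ case of the remark, which only yields overtwistedness when the \emph{first} stabilisation is positive, so it cannot exclude choices where the positive stabilisations occur among the later ones.

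Worse, the conclusion you need (``overtwisted whenever $p>0$'') is actually false for $L_0$: taking all $m$ stabilisations positive gives $\xi^+_{1+1/m}(L_0)=\xi^-_{1+1/m}(-L_0)$, and $-L_0$ has the same classical invariants $(tb,r)=(1,0)$, so this structure is tight as well (e.g.\ by Corollary \ref{ge-n} applied to $\xi_{+1}(-L_0)$). Hence tightness of $\overline{\xi}_{1+1/m}(L_0)$ cannot, even in principle, single out the all-negative layer: your chosen test knot sees at least two tight sign choices. This is precisely why the paper runs the test on $L=L_0'=L_0^-$ instead: tightness there is supplied by the previous claim (via $\xi^-_{m+1}((L_0')_{m,m+1})=\xi_{1/m}(L_0)\#\eta_m$), and since $L_0'$ \emph{is} a negative stabilisation, the $n=0$ case of Remark \ref{mix_stab} applies verbatim and kills every choice involving a positive stabilisation. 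Replacing your test knot by $L_0^-$ (keeping the rest of your setup) repairs the argument.
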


\begin{proof}
Suppose that the contact structure on $\xi^-_{1+1/m}(L)$ was obtained by doing at least one positive stabilisation on the (only) push-off of $L$, as dictated by the Ding-Geiges's algorithm.

Suppose that $L=L_0'$: by the remark above, we know that the contact structure $\xi_{1+1/m}(L)$ is tight, and by Remark \ref{mix_stab} we have that if there is one positive stabilisation, then $\xi_{1+1/m}$ is overtwisted. Therefore, in this particular case, the surgery layer is $\bfT^-_{1+1/m}$.

But this layer is independent of $L$, concluding the proof.
\end{proof}

\begin{claim}
$\xi^-_{km+1}(L_{m,km+1}) = \xi^-_{k+1/m}(L)\#\eta_m$.
\end{claim}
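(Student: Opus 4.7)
The argument follows the template of the two previous claims in the proof of Proposition \ref{cablingvssurg}. The combined contact layer $(\bfT_{m,km+1}\cup\bfT^-_{km+1},\,\xi_{m,km+1}\cup\xi^-_{km+1})$ is a fixed contact manifold, independent of the Legendrian knot $L$ to which it is attached. Topologically it is the connected sum $\bfT_{k+1/m}\# L(m,km+1)$, and by the uniqueness of the prime decomposition of tight contact 3-manifolds its contact structure splits uniquely as a tight contact structure on each factor. It therefore suffices to identify the two summands in one well-chosen example; the $L(m,km+1)$-summand having already been identified as $\eta_m$ in the first claim of this proposition, the task reduces to identifying the contact-solid-torus summand.

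The plan is to reduce to the already settled case $k=1$ via stabilisations. Pick any Legendrian knot $M$ and set $L=M^{(k)}$. By the cabling/stabilisation commutation lemma preceding Corollary \ref{cablingvsstab}, the cable $L_{m,km+1}$ is Legendrian isotopic to $(M_{m,1})^{(km)}$. Iterating the integer case of Proposition \ref{stabvsQsurg} a total of $km$ times yields
\[
\xi^-_{km+1}((M_{m,1})^{(km)}) \;=\; \xi_{+1}(M_{m,1}),
\]
and by Proposition \ref{1overm} this in turn equals $\xi_{1/m}(M)\#\eta_m$. On the other hand, iterating the full Proposition \ref{stabvsQsurg} $k$ times in the $1/m$ direction gives $\xi_{1/m}(M)=\xi^-_{k+1/m}(M^{(k)})=\xi^-_{k+1/m}(L)$. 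Chaining these equalities proves the claim whenever $L$ is a $k$-th negative stabilisation.

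To remove this assumption, one observes that the tight contact-solid-torus appearing as the $\bfT_{k+1/m}$-summand of the combined layer depends only on the cabling and surgery data, not on $L$. The computation above identifies this solid torus, when glued to $S^3_L$ with $L=M^{(k)}$, as the one producing $\xi^-_{k+1/m}(L)$; hence gluing the same solid torus to $S^3_{L'}$ for an arbitrary Legendrian knot $L'$ produces $\xi^-_{k+1/m}(L')$, and the claim follows. I expect the main subtlety to be confirming that no sign-assignment other than ``all negative'' in the Ding-Geiges algorithm for the $(k+1/m)$-summand is compatible with the computation: this will be handled by Lisca and Stipsicz's overtwisted-disc trick as in Remark \ref{mix_stab}, since any positive stabilisation would force that summand to be overtwisted, contradicting the tightness of the combined layer exhibited in the special case. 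Throughout, one can take $M$ to be a positive Legendrian trefoil with maximal Thurston-Bennequin number, so that all contact invariants involved are non-vanishing and the relevant contact structures are Legendrian simple.
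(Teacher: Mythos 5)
Your proof is correct and takes essentially the same route as the paper's. You redo the calculation that the paper delegates to the first claim of the proposition (reducing via the stabilisation/cabling commutation to $\xi_{+1}(M_{m,1})$ and applying Proposition \ref{1overm}), then invoke Proposition \ref{stabvsQsurg} and the Lisca--Stipsicz overtwisted-disc trick of Remark \ref{mix_stab} to pin down the sign choice, and finally use independence of the layer from $L$ -- exactly the ingredients the paper uses, merely spelled out in more detail. The only point worth sharpening is the logical role of the ``subtlety'' you flag at the end: the chain of isotopies via Proposition \ref{stabvsQsurg} only shows that gluing the layer's solid-torus factor to $S^3_{M^{(k)}}$ produces something \emph{isotopic} to $\xi^-_{k+1/m}(M^{(k)})$; to upgrade this to an identification of the solid torus itself as $\bfT^-_{k+1/m}$ (which is what layer-independence then transports to arbitrary $L'$), the mix\_stab argument is not a backup but the essential step, since a priori two non-isotopic tight solid tori could produce isotopic contact structures upon gluing to some fixed knot complement. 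Ruling out all other sign choices by tightness, as both you and the paper do, closes this.
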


\begin{proof}
As above, let $L=L_0^{(k)}$. On one hand, we know that $\xi_{k+1/m}(L)$ is tight, and on the other hand (see Remark \ref{mix_stab}, and the proof of the claim above) every $\xi_{k+1/m}(L)$ that involves a positive stabilisation in the algorithm is overtwisted.

Again, the surgery layer is independent of the particular choice of $L$.
\end{proof}

Since we had proved the statement for $n=1/m$ in the previous section, we've exhausted all cases.
\end{proof}

\section{The main theorem}\label{mainproof}

\subsection{Technical lemmas}

We introduce here three technical lemmas, whose proofs will be given in the next section. The first one is due to Honda (unpublished), and is implicit in \cite{HKM2}. We'll give our own proof for convenience.

\begin{prop}\label{ass1}
The gluing map $\Psi_\xi: SFH(-M,-\Gamma) \to SFH(-M', -\Gamma')$ associated to an overtwisted contact structure $\xi$ on $N=M'\setminus {\rm Int}(M)$ is trivial.
\end{prop}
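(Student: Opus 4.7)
The strategy is to isolate the overtwistedness in a small ball and use the composition law for gluing maps to reduce the problem to the vanishing of the contact invariant of an overtwisted closed $3$--manifold. Since $\xi$ is overtwisted on $N$, fix an overtwisted disk $D\subset\mathrm{Int}(N)$ and thicken it to a ball $B\subset\mathrm{Int}(N)$ with convex boundary carrying a single dividing curve $\gamma_B$, so that $(B,\xi|_B)$ is an overtwisted contact ball. Set $N_1:=N\setminus\mathrm{Int}(B)$ and form the intermediate sutured manifold $(M'',\Gamma''):=(M\cup N_1,\,\Gamma'\cup\{\gamma_B\})=(M'\setminus\mathrm{Int}(B),\,\Gamma'\cup\{\gamma_B\})$; by the composition law $\Psi_{\xi\cup\xi'}=\Psi_{\xi'}\circ\Psi_\xi$ for gluing maps, we have
\[\Psi_\xi=\Psi_{\xi|_B}\circ\Psi_{\xi|_{N_1}},\]
so it suffices to show that $\Psi_{\xi|_B}\colon SFH(-M'',-\Gamma'')\to SFH(-M',-\Gamma')$ vanishes.

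Topologically, passing from $M''$ to $M'$ fills in a ball, and on the Floer side this is captured by the canonical isomorphism $SFH(-M'',-\Gamma'')\cong SFH(-M',-\Gamma')$ coming from the fact that removing an interior ball with a single suture does not alter $SFH$ (equivalently, the connected sum with $S^3(1)$ is the identity on $SFH$ after K\"unneth). Under this identification, $\Psi_{\xi|_B}$ corresponds to the contact connected sum with the closed contact manifold $(S^3,\hat\xi)$ obtained by capping $(B,\xi|_B)$ off with a standard Darboux ball; the structure $\hat\xi$ is overtwisted since $\xi|_B$ is, so $c(S^3,\hat\xi)=0$. Finally, via the K\"unneth decomposition $\HF(-Y\#-S^3)\cong \HF(-Y)\otimes \HF(-S^3)=\HF(-Y)$, the contact connected sum gluing map acts as $x\mapsto x\otimes c(S^3,\hat\xi)=0$, which concludes the argument.

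The main obstacle lies in the identification of $\Psi_{\xi|_B}$ with the contact connected sum map induced by $(S^3,\hat\xi)$. Two ingredients are needed: first, an appeal to Eliashberg's classification of overtwisted contact structures to decompose $(B,\xi|_B)$, up to isotopy rel boundary, as the contact connected sum of the Darboux ball with $(S^3,\hat\xi)$; and second, the compatibility of HKM gluing maps with connected sum, namely that the gluing map associated to a closed overtwisted summand acts as multiplication by its contact class under the K\"unneth decomposition of sutured Floer homology. Both are fairly standard manipulations at the level of partial open books, but together they form the technical heart of the proof.
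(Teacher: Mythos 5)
Your proposal isolates the overtwistedness in a ball $B\subset\mathrm{Int}(N)$, splits off $B$ via the composition law for gluing maps, and then tries to identify the resulting ``gluing in $B$'' with tensoring by the contact class of a capped-off overtwisted $S^3$. This is conceptually appealing, but it sidesteps the actual difficulty, and the factorisation as written is outside the framework the paper is working in. The gluing maps of Theorem \ref{EHmap} (following HKM) are only defined -- and the composition law $\Phi_{\xi\cup\xi'}=\Phi_{\xi'}\circ\Phi_\xi$ only asserted -- for layers $N$ each of whose components intersects $\partial M'$; this is stated explicitly in Section \ref{SFH+gluing}. Your intermediate layer $B$ does not touch $\partial M'$: gluing it in kills the sphere boundary component $\partial B$. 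So $\Psi_{\xi|_B}$ is not a gluing map in this framework, and the decomposition $\Psi_\xi=\Psi_{\xi|_B}\circ\Psi_{\xi|_{N_1}}$ is not available. The paper avoids exactly this issue by connecting $B$ to a component of $\partial N$ that touches $\partial M'$ via a neighbourhood $A$ of an arc, and setting $N'=A\cup B\cup(\text{collar})$: each component of $N'$ then meets $\partial M'$, the composition law applies, and one reduces to showing $\Psi_{\xi|_{N'}}=0$.

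Even granting an extension of the gluing-map theory to ball fillings, the two ``ingredients'' you flag as the technical heart would carry the full weight of the proof and are not obviously standard. Identifying the ball-filling gluing map with the K\"unneth multiplication by $c(S^3,\hat\xi)$, and decomposing $(B,\xi|_B)$ rel boundary as the connected sum of a Darboux ball with a closed overtwisted $S^3$, are precisely the kind of claims the paper's proof is designed to circumvent: note also the remark following Proposition \ref{ass1}, which explains that the ``soft'' arguments via contact classes only handle the subspace $\mathcal{EH}(M,\Gamma)$, which is in general a proper subspace of $SFH(-M,-\Gamma)$, whereas the statement must hold on all of $SFH(-M,-\Gamma)$. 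The paper's actual proof instead replaces the overtwisted gluing with a double bypass attachment across a single suture (after a contact $1$-handle attachment to keep $R_+$ connected), realises each bypass as a triangle count on arc diagrams, and shows by an explicit domain analysis (Claims \ref{zerolemma} and \ref{nopositive}) that the relevant $\Theta$-element vanishes in the composed triangle count. You should either follow that route, or supply genuine proofs of the two ingredients you invoke -- at which point you will most likely find yourself rederiving the paper's triangle-count argument in different language.
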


\begin{rmk}
Proposition \ref{ass1} is easily seen to be true if we restrict $\Psi_\xi$ to the subspace $\mathcal{EH}(M,\Gamma)$ of $SFH(-M,-\Gamma)$ generated by contact invariants $EH(M,\xi')$ such that $\de M$ is $\xi'$-convex and is divided by $\Gamma$.

In general $\mathcal{EH}(M,\Gamma)$ is a proper subset of $SFH(-M,-\Gamma)$: whenever the maximal Thurston-Bennequin number $\overline{tb}(K)$ of a knot $K\in S^3$ is strictly smaller than $2\tau(K)-1$, no unstable element in $SFH(-S^3_{K,2\tau(K)-1})$ can belong to $\mathcal{EH}(S^3_{K,2\tau(K)-1})$. 
\end{rmk}

Recall that a framed knot $(K,f)$ gives a surgery cobordism $W_f$, and the latter induces a map $F_{-W_f}: \HF(-S^3)\to \HF(-S^3_f(K))$. In Definitions \ref{psiinfty_def} and \ref{psin_def} we introduced the maps $\psi_\infty$ and $\psi_{+1}$ associated to contact $\infty$- and $+1$-surgery. In the last section we'll prove the following:

\begin{prop}\label{ass2}
The following diagram commutes:
\[
\xymatrix{
SFH(-S^3_{K,f})\ar[rr]^{\psi_{+1}}\ar[drr]_{\psi_\infty} & & SFH(-S^3_{f+1}(K)(1))\ar[r]^{\sim} &\HF(-S^3_{f+1}(K))\\
& & SFH(-S^3(1))\ar[r]^{\sim} & \HF(-S^3)\ar[u]^{F_{-W_{f+1}}}.
}
\]
\end{prop}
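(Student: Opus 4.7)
The plan is to show $\psi_{+1}=F_{-W_{f+1}}\circ\psi_\infty$ by decomposing the contact $+1$-surgery layer as an $\infty$-surgery layer followed by a contact $2$-handle, and then identifying the resulting contact-handle gluing map with the topological cobordism map.

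First, I would decompose the contact $+1$-surgery solid torus. Arranging the Darboux ball used to form $\bfT_{+1}(1)$ to lie inside a standard Legendrian neighborhood $\nu(L)$ of $L$, we can write the gluing of $\bfT_{+1}(1)$ to $Y_{K,f}$ as a two-step process: first glue in $\nu(L)(1)$, which contactomorphically closes up the knot complement to $S^3(1)$ with the standard contact structure and induces $\psi_\infty$; then perform the $+1$-surgery modification, corresponding topologically to attaching a $4$-dimensional $2$-handle along $K$ with framing $f+1$. The composition law for gluing maps stated after Theorem~\ref{EHmap} gives
$$\psi_{+1}=\Phi_C\circ\psi_\infty,$$
where $\Phi_C\colon \HF(-S^3)\to\HF(-S^3_{f+1}(K))$ is the gluing map for the ``contact $2$-handle'' step (identifying $SFH(-Y(1))\cong\HF(-Y)$ via Proposition~\ref{HFvsSFH}). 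The proposition is thus reduced to showing $\Phi_C=F_{-W_{f+1}}$.

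Second, I would prove this identity by computing both maps on a common Heegaard triple $(\Sigma,\ba,\bb,\bc)$ subordinate to $K$ with framing $f+1$: $(\Sigma,\ba,\bb)$ represents $S^3$, $(\Sigma,\ba,\bc)$ represents $S^3_{f+1}(K)$, and $(\Sigma,\bb,\bc)$ represents $\#^{g-1}(S^1\times S^2)$ with canonical top-degree generator $\bfTheta$. By construction, $F_{-W_{f+1}}$ is the triangle-counting map $\x\mapsto\x\cdot\bfTheta$, summed over $\spin$ structures on the cobordism. For $\Phi_C$, I would realize the contact structure by a partial open book built from a convex open book for $(S^3,\xi_{\text{st}})$ compatible with $L$, modified by a negative Dehn twist along the curve representing $L$ on the page (as in the Ding--Geiges--Ozbagci description of contact $+1$-surgery). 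The associated sutured Heegaard diagram can be identified with $(\Sigma,\bb,\bc)$ so that the HKM contact cycle coincides with $\bfTheta$; it follows that $\Phi_C$ agrees on chains with the triangle-counting formula for $F_{-W_{f+1}}$.

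The main obstacle is the $\spin^c$ bookkeeping: $F_{-W_{f+1}}=\sum_\mathfrak{t}F_{-W_{f+1},\mathfrak{t}}$ is a sum over $\spin(-W_{f+1})$, whereas the contact structure on $C$ picks out a single relative $\spin$ class. This is not a genuine discrepancy, because the HKM gluing construction is defined at the chain level prior to any $\spin$ splitting: the contact structure only determines which cycle $\bfTheta$ to insert in the triangle count, not which homotopy classes of triangles contribute, so the resulting map coincides with the total cobordism map. A cleaner alternative, valid when $\HF(-S^3)\cong\F$, is to verify the identity on the unique nonzero class, reducing it to the statement that $c(\xi_{+1}(L))$ is the image of $c(\xi_{\text{st}})$ under $F_{-W_{f+1}}$, up to the action of the mapping class group as in Remark~\ref{naturalityrmk}.
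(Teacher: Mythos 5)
Your first step has a gap that I don't think can be repaired as written. You propose to factor $\psi_{+1}=\Phi_C\circ\psi_\infty$ via the Honda--Kazez--Mati\'c composition law for gluing maps, with $\Phi_C$ a ``contact $2$-handle'' gluing map from $\HF(-S^3)$ to $\HF(-S^3_{f+1}(K))$. But the HKM composition law (the theorem following Theorem~\ref{EHmap}) applies only to a nested chain of three-dimensional sutured manifolds $(M,\Gamma)\subset(M',\Gamma')\subset(M'',\Gamma'')$, where each successive piece is filled in by a contact layer. Once you have filled the knot complement by $\nu(L)(1)$ to get $S^3(1)$, there is no contact layer $N$ with $S^3(1)\cup N\cong S^3_{f+1}(K)(1)$, because the two ambient manifolds differ by Dehn surgery, not by adding material; equivalently, the solid torus $\bfT_{+1}(1)$ does not contain $\nu(L)(1)$ as a subset. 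The passage from $S^3$ to $S^3_{f+1}(K)$ is a four-dimensional cobordism, and $F_{-W_{f+1}}$ is a four-dimensional cobordism map, not a HKM gluing map, so the composition law simply does not apply. For the same reason your ``cleaner alternative'' (evaluating on the generator of $\HF(-S^3)$) does not rescue the argument: the middle object $\Phi_C$ was never well defined, and evaluating on the contact class only recovers the easy special case already noted in the remark following Proposition~\ref{ass2}, namely commutativity on the subspace $\mathcal{EH}(S^3_{K,f})$, whereas the point of the proposition is that the diagram commutes on all of $SFH(-S^3_{K,f})$.

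The part of your sketch that \emph{does} point in the right direction is the idea of realising the maps as triangle counts in a Heegaard triple; the actual proof takes this further. One builds a single quadruple sutured Heegaard diagram $(\Sigma,\ba,\bb,\bc,\bd)$ (obtained from a diagram for the knot complement by adding a stabilising handle and three curves $\beta_0,\gamma_0,\delta_0$ that pairwise intersect once) so that $\psi_\infty=F_{\alpha\gamma\beta}(\cdot\otimes\bfTheta_{\beta\gamma})$, $\psi_{+1}=F_{\alpha\delta\beta}(\cdot\otimes\bfTheta_{\beta\delta})$, and $F_{-W_{f+1}}=F_{\alpha\delta\gamma}(\cdot\otimes\bfTheta_{\gamma\delta})$, the first two by Rasmussen's triangle-count description of gluing maps (Proposition~\ref{Jakesurgeryprop}) and the last by Ozsv\'ath--Szab\'o. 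The identity then reduces, via the holomorphic-triangle associativity (Proposition~\ref{associativity}), to the concrete computation $f_{\delta\gamma\beta}(\bfTheta_{\gamma\delta}\otimes\bfTheta_{\beta\gamma})=\bfTheta_{\beta\delta}$ in $\H_{\delta\gamma\beta}$ (Lemma~\ref{assotheta}), which is a domain-by-domain Maslov index argument. That associativity step is the substitute for the composition law you tried to invoke, and it is what makes the argument work on the entire group rather than just on contact classes.
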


\begin{rmk}
As it happens for Proposition \ref{ass1}, it's easy to see that this triangle has to be commutative whenever we restrict the domain of $\psi_{+1}$ and $\psi_\infty$ to the subspace $\mathcal{EH}(S^3_{K,f})\subset SFH(-S^3_{K,f})$.
\end{rmk}

There is one more lemma, which is implicit in \cite{OSQsurg}:

\begin{prop}[\cite{OSQsurg}]\label{cobnon0}
The surgery cobordism map
\[
F_{-W_f}: \HF(-S^3)\to\HF(-S^3_f(K))
\]
is injective for $f=2\nu(K)$ and is zero for $f=2\nu(K)-2$.
\end{prop}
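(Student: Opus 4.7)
The plan is to invoke Ozsv\'ath--Szab\'o's integer surgery formula to identify $F_{-W_f}$ in terms of the maps $v_s$ from the definition of $\nu(K)$. For $f>0$, the integer surgery formula (from the paper(s) cited as \cite{OSQsurg}) computes $\HF(-S^3_f(K))$ as the homology of a mapping cone of the form
\[
\bigoplus_{s\in\Z} A_s \xrightarrow{\;D_f\;} \bigoplus_{s\in\Z} B_s,
\]
where $B_s\cong \CFK(S^3,K)$ with differential $\de$, and the component of $D_f$ out of $A_s$ is given by $v_s\colon A_s\to B_s$ together with $h_s\colon A_s\to B_{s-f}$ (up to the obvious identifications). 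Under this model, the cobordism map $F_{-W_f}$ is the map induced by the natural inclusion $\bigoplus_s B_s\hookrightarrow \mathrm{Cone}(D_f)$, restricted to the top-graded generator of the $B_0$-summand (together with its degree-shifted copies indexed by $\spin$-structures).

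More precisely, the $\spin$-decomposition $F_{-W_f}=\sum_s F_{-W_f,\mathfrak{t}_s}$ admits, via the large surgery theorem, an identification of each piece $F_{-W_f,\mathfrak{t}_s}$ with (the dual of) $v_s$. The grading considerations in the hat theory restrict which $\mathfrak{t}_s$ contribute nontrivially, yielding an effective range of $s$-values whose maximum is $\lfloor f/2\rfloor$. Moreover, distinct $s$ in this range restrict to distinct $\spin$-structures on $S^3_f(K)$, so the contributions $F_{-W_f,\mathfrak{t}_s}$ land in distinct summands of $\HF(-S^3_f(K))$ and cannot cancel each other; hence the total map $F_{-W_f}$ is nonzero if and only if $v_s\neq 0$ for some $s$ in the effective range.

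Now apply the definition of $\nu(K)$: $v_s\neq 0$ iff $s\ge \nu(K)$. For $f=2\nu(K)$, the effective range contains $s=\nu(K)$, and the term $v_{\nu(K)}\neq 0$ contributes a nonzero piece to $F_{-W_{2\nu(K)}}$; since the domain $\HF(-S^3)=\F$ is one-dimensional, the map is injective. For $f=2\nu(K)-2$, every $s$ in the effective range satisfies $s\le \nu(K)-1<\nu(K)$, so $v_s=0$ uniformly, and $F_{-W_{2\nu(K)-2}}=0$.

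The main technical obstacle is the precise identification of $F_{-W_f,\mathfrak{t}_s}$ with $v_s$, together with the determination of the correct effective range of $s$-values in the hat theory; both are a careful unpacking of the (rational) surgery formula of \cite{OSQsurg}, which is why the statement is attributed to that paper. Once this identification is granted, the proposition reduces immediately to the defining property of $\nu(K)$.
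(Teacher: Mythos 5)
The paper's argument is quite different from yours: it places $F_{-W_f}$ inside the surgery exact triangle
\[
\HF(-S^3_f(K))\longrightarrow\HF(-S^3_{f-1}(K))\longrightarrow\HF(-S^3)\longrightarrow\HF(-S^3_f(K))
\]
and then invokes the closed rank formula $\dim \HF(-S^3_f(K)) = |f|+2\max\{0,2\nu(K)-1-f\}+D$ from \cite[Proposition 9.1]{OSQsurg}. Once one knows the ranks of all three groups, injectivity or vanishing of the map is forced by exactness (``if $\dim U+\dim V=\dim W$ in an exact triangle, then $U\to V$ is zero''), with a mirror trick to reduce to $\tau(K)\ge 0$. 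This is a black-box counting argument that does not need to see the mapping cone, the $\spin$-decomposition, or the individual $v_s$'s. Your proposal instead tries to open up the mapping cone and identify the $\spin$-refined summands $F_{-W_f,\mathfrak t_s}$ with the maps $v_s$ directly. That is morally close to how the rank formula is \emph{proved} inside \cite{OSQsurg}, so the spirit is right, but it is a genuinely different (and lower-level) route.

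There is a concrete gap in the proposal as written. The crucial sentence is: ``The grading considerations in the hat theory restrict which $\mathfrak t_s$ contribute nontrivially, yielding an effective range of $s$-values whose maximum is $\lfloor f/2\rfloor$.'' You give no derivation of this, and it is not obviously correct. The grading shift of $F_{W_f,\mathfrak t_s}$ is governed by $c_1(\mathfrak t_s)^2$, and the range of Maslov gradings in which $\HF(-S^3_f(K))$ is supported depends on the knot (its genus, and the width of $\HFK$), not solely on $f$; so the cutoff for which $\mathfrak t_s$ can act nontrivially is not a universal $\lfloor f/2\rfloor$. You also need to rule out contributions of the ``other'' maps $h_s$ (or, dually, establish that only the $v_s$-type components appear in this direction of the cobordism), and your claim that distinct $s$ in the effective range hit distinct $\spin$-structures on $S^3_f(K)$ requires the whole effective range to be shorter than $f$ — neither point is addressed. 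Until the identification of the $\spin$-pieces with $v_s$'s and the determination of the contributing range are actually carried out, the argument does not close; as it stands you are asserting exactly the technical core that would need to be proved. The paper sidesteps all of this by quoting the rank formula and using the exact triangle, which is both shorter and requires less unpacking of \cite{OSQsurg}.
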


\subsection{Algebraic identities}

Recall that in Definition \ref{stabmaps_def} we introduced the notation $\sigma_{\pm}$ to denote the two gluing maps associated to the two types of stabilisation of an oriented knot $K$. In Definition \ref{psin_def} we introduced the notation $\psi_n^\pm$ for gluing maps associated to contact $n$-surgery. Lemma \ref{stabvsQsurg} (or rather, its proof) has a translation to the sutured world:

\begin{prop}\label{stabvssurg}
We have the following identities:
\begin{itemize}
\item[(i)] $\psi^{\pm}_{n+1}\circ \sigma_\pm = \psi^{\pm}_n$;
\item[(ii)] $\psi^{\pm}_n\circ\sigma_{\mp}=0$.
\end{itemize}
\end{prop}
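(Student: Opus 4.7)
The plan is to invoke the naturality of gluing maps (Theorem \ref{EHmap}: a composition of gluing maps is the gluing map of the concatenated layer), thereby reducing both identities to the identification, up to contactomorphism rel boundary parametrisation, of a contact solid torus obtained by stacking a stabilisation basic slice on top of a contact surgery torus.

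For (i), the composition $\psi^{\pm}_{n+1}\circ\sigma_\pm$ is the gluing map of the layer $N_\pm$ formed by gluing the $\pm$-stabilisation basic slice to $(\bfT^\pm_{n+1}(1), \xi^\pm_{n+1})$. The sutures on $\de N_\pm$ match those on $\de \bfT^\pm_n(1)$. I would argue that $N_\pm$ is contactomorphic rel boundary to $(\bfT^\pm_n(1),\xi^\pm_n)$, which is just the sutured/layer reformulation of Proposition \ref{stabvsQsurg} in the case $m=1$. Indeed, the proof of that proposition is carried out via open-book manipulations (lantern relation plus a Giroux destabilisation) which are supported inside this very solid torus, and its concluding claim pins down that the tight filling thus obtained is $\bfT^\pm_n$ with the matching sign rather than the other tight filling. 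Naturality then gives $\psi^{\pm}_{n+1}\circ\sigma_\pm=\psi^{\pm}_n$.

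For (ii), the composition $\psi^{\pm}_n\circ\sigma_\mp$ corresponds to the concatenated layer $N'$ obtained by gluing the $\mp$-stabilisation basic slice to $(\bfT^\pm_n(1),\xi^\pm_n)$, where the sign mismatch forces a bypass to be attached on the `wrong' side of the surgery torus. This is precisely the configuration in Remark \ref{mix_stab}, where Lisca and Stipsicz's trick exhibits an overtwisted disc sitting inside the corresponding surgery cobordism (and hence inside $N'$). Proposition \ref{ass1} says that the gluing map associated to any overtwisted contact structure on the interpolating region vanishes, so $\psi^{\pm}_n\circ\sigma_\mp = 0$.

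The main obstacle is the sign identification in (i): Honda's classification of tight contact structures on solid tori with prescribed boundary sutures leaves two candidate fillings, and one must show that the concatenated layer is the expected one. The cleanest way to do this is to run the argument of (ii) in reverse: were the sign to be opposite, the layer would be overtwisted and would kill the contact class $EH(L)$ for every Legendrian $L$ through the composite gluing map; but this contradicts the existence of Legendrian knots (such as the $tb=1$ positive trefoil used in the final claim of the proof of Proposition \ref{stabvsQsurg}) for which $c(\xi^-_n(L))\neq 0$ and $EH(L)$ is not killed by $\sigma_\pm$. This loop of reasoning is self-consistent because the nonvanishing examples are independent of the identities we are proving.
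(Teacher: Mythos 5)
Your approach matches the paper's: item (i) is proved by citing Proposition~\ref{stabvsQsurg} (the case $m=1$ says $\xi^-_n(L)$ and $\xi^-_{n+1}(L^-)$ are isotopic, which via naturality of gluing maps translates into $\psi^{\pm}_{n+1}\circ\sigma_\pm=\psi^{\pm}_n$), and item (ii) follows from Remark~\ref{mix_stab} (the Lisca--Stipsicz trick exhibiting an overtwisted disc in the concatenated layer) together with Proposition~\ref{ass1}, exactly as you propose. The one thing to flag is your closing paragraph: the claim ``were the sign to be opposite, the layer would be overtwisted'' is not accurate --- the other candidate is the \emph{other} tight solid torus filling, not an overtwisted one, so overtwistedness cannot by itself rule it out. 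That sign identification is genuinely needed, but it is already handled inside the proof of Proposition~\ref{stabvsQsurg} (its final claim uses the $tb=1$ positive trefoil together with the Lisca--Stipsicz trick to exclude the wrong-sign tight filling), so citing that proposition carries the full burden and no additional argument is required here.
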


These properties will be used along all the proof, and will be the tool that allow us to switch from small to very large surgery coefficients and back.

\begin{proof}
The first part follows directly from Proposition \ref{stabvsQsurg}; the second part is Lisca and Stipsicz's trick of opposite stabilisations (see Remark \ref{mix_stab}), coupled with Proposition \ref{ass1}.
\end{proof}

The following two propositions allow us to simplify the proof.

\begin{prop}\label{kills}
If $x$ is stable in $SFH(S^3_{K,f})$ then $\psi^{\pm}_n(x)=0$ for any $n\ge1$.
\end{prop}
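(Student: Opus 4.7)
The plan is to bootstrap from the case $n=1$, where Proposition \ref{ass2} combined with Lemma \ref{stablevspsiinfty} gives the result directly: $\psi^+_1 = \psi^-_1 = \psi_{+1} = F_{-W_{f+1}}\circ\psi_\infty$, and $\psi_\infty(x)=0$ for stable $x$, so $\psi_{+1}(x)=0$. The challenge is to leverage this, together with the identities of Proposition \ref{stabvssurg} and the explicit formulas of Theorem \ref{stabmaps}, to handle all $n\ge 2$.

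Decompose $x = x_+ + x_-$ with $x_\pm\in S_\pm$ and split into four cases $\psi^\epsilon_n(x_\delta)$ for $\epsilon,\delta\in\{+,-\}$. The two \emph{matching-sign} cases ($\epsilon=\delta$) are quick: by Theorem \ref{stabmaps}, every $d_{i,j}\in S_+$ is killed by a sufficiently high power of $\sigma_+$ (its second index gets shifted past $\delta(i)$), and symmetrically every $d^*_{i,j}\in S_-$ by $\sigma_-^k$; hence $\sigma_\epsilon^k(x_\epsilon)=0$ for $k$ large. Iterating Proposition \ref{stabvssurg}(i) gives $\psi^\epsilon_n = \psi^\epsilon_{n+k}\circ\sigma_\epsilon^k$, and evaluating on $x_\epsilon$ yields $\psi^\epsilon_n(x_\epsilon)=0$.

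The main obstacle lies in the two \emph{cross-sign} cases, since the identity $\psi^\epsilon_n\circ\sigma_{-\epsilon}=0$ from Proposition \ref{stabvssurg}(ii) alone does not suffice (for instance $d^*_{i,1}$ is not in the image of $\sigma_-$, and $\sigma_+^k$ does not kill $d^*_{i,j}$). The key observation from Theorem \ref{stabmaps} is that $\sigma_-$ sends each basis element $d_{i,j}\in S_+$ to the homonymous element at the adjacent framing, with identical formulas in both regimes $f\le 2\tau$ and $f>2\tau$, and symmetrically for $\sigma_+$ on $d^*_{i,j}$. So if $x_+ = \sum c_{i,j}d_{i,j}$ in $SFH(-S^3_{K,f})$, the element $\tilde{x}_+ := \sum c_{i,j}d_{i,j}$ in $SFH(-S^3_{K,f+n-1})$ is stable and satisfies $\sigma_-^{n-1}(\tilde{x}_+) = x_+$. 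Iterating Proposition \ref{stabvssurg}(i) telescopically yields
\[
\psi_{+1}|_{f+n-1} \;=\; \psi^-_n|_f\circ\sigma_-|_{f+1}\circ\sigma_-|_{f+2}\circ\cdots\circ\sigma_-|_{f+n-1},
\]
so $\psi^-_n(x_+) = \psi_{+1}(\tilde{x}_+)$, which vanishes by the $n=1$ case applied at framing $f+n-1$. The case $\psi^+_n(x_-)$ is symmetric, and this completes all four cases.
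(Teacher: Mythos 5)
Your proof is correct, and the bootstrap structure for $n\ge 2$ is essentially the one the paper uses: for the matching-sign cases you postcompose with $\psi^\epsilon_{n+k}\circ\sigma_\epsilon^k$ and kill $x_\epsilon$ with a high power of $\sigma_\epsilon$; for the cross-sign cases you write $x_\delta$ as a telescoping stabilization $\sigma_\epsilon^{n-1}(\tilde x_\delta)$ of a stable element at a higher framing and reduce to $n=1$ via $\psi^\epsilon_n\circ\sigma_\epsilon^{n-1}=\psi_{+1}$. (The paper phrases the cross-sign step more compactly by saying $\sigma_\epsilon$ restricted to $S_\delta$ is an isomorphism, rather than writing out the coefficients as you do, but it is the same move.)

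The one place you genuinely diverge from the paper is the base case $n=1$, where you invoke Proposition~\ref{ass2} together with Lemma~\ref{stablevspsiinfty} to conclude $\psi_{+1}=F_{-W_{f+1}}\circ\psi_\infty$ vanishes on stable elements. That is correct, but it reaches for the heavier commuting-diagram result of Section~\ref{technical} where a purely formal argument suffices: for $x\in S_\delta$, one has $\psi_{+1}=\psi^\delta_{N+1}\circ\sigma_\delta^N$ by iterating Proposition~\ref{stabvssurg}(i), and $\sigma_\delta^N(x)=0$ for $N$ large. This is exactly the same computation you already perform for the matching-sign cases at $n\ge 2$, so you could have run it at $n=1$ as well and kept the whole proof inside Proposition~\ref{stabvssurg} and Theorem~\ref{stabmaps}. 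The payoff of the paper's version is that Proposition~\ref{kills} then stands independently of the (later and harder) Proposition~\ref{ass2}, which is cleaner from the standpoint of dependency; your version trades that for a slightly shorter verbal argument once \ref{ass2} is in hand. Both are logically valid, and there is no circularity, since \ref{ass2} does not rely on \ref{kills}.
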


\begin{proof}
Suppose first that $x\in S_- = \ker \sigma_-^N$. As a warm up, let's prove the theorem in the case $n=1$: for some sufficiently large $N$, we have
\[
\psi_1(x) = (\psi^-_{N+1}\circ\sigma^N_-)(x) = \psi^-_{N+1}(0) = 0.
\]
Theorem \ref{stabmaps} tells us that $\sigma_\pm$ is an isomorphism on $S_\mp$, so each $x\in S_-\subset SFH(S^3_{K,f})$ is the image of an element $x'\in S_-\subset SFH(S^3_{K,f-n+1})$, in the sense that we have $x=\sigma_+^{n-1}(x')$.

Using this and the warm-up, we have that for all $x\in S_-$ we can write:
\[
\psi^+_n(x) = (\psi^+_n\circ\sigma_+^{n-1})(x') = \psi_1(x'),
\]
and the latter vanishes, thanks to the warm up.

On the other hand,
\[
\psi^-_n(x) = (\psi^-_{n+N}\circ\sigma_-^N)(x) = 0,
\]
so we've proven that $S_-\subset\ker \psi^\pm_n$.

We can now exchange the roles of $+$ and $-$ signs in the proof, and obtain that the results holds also for $x\in S_+$, and this concludes the proof.
\end{proof}

Recall that in Lemma \ref{stablevspsiinfty} we proved an analogous result for $\psi_\infty$: we can therefore draw the following corollary.

\begin{cor}
Positive and $\infty$-surgery maps factor through the unstable complex.
\end{cor}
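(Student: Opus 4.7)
The plan is to invoke the two vanishing results already in hand and observe that they immediately give the desired factorisation through the unstable quotient. Recall from Definition \ref{defn3.3} (and the discussion following it in Remark \ref{naturality}) that we have a canonical \emph{stable subspace} $S = S_+\oplus S_-\subset SFH(-S^3_{K,f})$, and a canonically defined quotient $SFH(-S^3_{K,f})/S$, which is what we are calling the unstable quotient; the unstable complex itself is just a (non-canonical) section of the projection to this quotient.

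First, by Proposition \ref{kills}, for every stable element $x\in S_+\oplus S_-$ and every $n\ge 1$ we have $\psi^{\pm}_n(x)=0$. Hence each map $\psi^{\pm}_n$ vanishes on $S$ and therefore descends uniquely through the quotient map $\pi\colon SFH(-S^3_{K,f})\to SFH(-S^3_{K,f})/S$, giving a well-defined linear map $\overline{\psi}^{\pm}_n$ with $\psi^{\pm}_n = \overline{\psi}^{\pm}_n\circ\pi$.

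Similarly, Lemma \ref{stablevspsiinfty} states that a homogeneous element $x$ is stable if and only if $\psi_\infty(x)=0$; in particular $\psi_\infty$ kills every homogeneous element of $S$, and by linearity the whole of $S$. So $\psi_\infty$ also factors as $\overline{\psi}_\infty\circ\pi$ for a unique map $\overline{\psi}_\infty$ defined on the unstable quotient.

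There is no real obstacle here: the only mild subtlety is that the statement as phrased refers to ``the unstable complex,'' but, as emphasised in Remark \ref{naturality}, the genuinely canonical object is the unstable quotient $SFH(-S^3_{K,f})/S$, and it is through this quotient that the factorisation takes place; once one chooses a section realising the unstable complex inside $SFH(-S^3_{K,f})$, one can transport $\overline{\psi}^{\pm}_n$ and $\overline{\psi}_\infty$ to maps out of the unstable complex itself.
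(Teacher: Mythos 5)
Your proof is correct and is exactly the argument the paper intends: Proposition \ref{kills} kills $S$ under $\psi^{\pm}_n$, Lemma \ref{stablevspsiinfty} kills the (homogeneous, hence all of the) stable subspace under $\psi_\infty$, and both maps therefore descend to the unstable quotient. Your extra remark distinguishing the canonical unstable quotient from a chosen section is also consistent with the paper's own Remarks \ref{naturality} and \ref{unstable_quotient}.
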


\begin{rmk}\label{unstable_quotient}
In the following we will often implicitly replace $SFH(-S^3_{K,f})$ with the \deff{unstable quotient} $SFH(-S^3_{K,f})/(S_++S_-)$ (see Remark \ref{naturality}). This latter group, as said, is a direct sum of copies of $\F$, sitting in different Alexander gradings.
\end{rmk}

In particular, we'll replace the maps $\psi^-_n$ and $\psi_\infty$ with their compositions with the projection $SFH(-S^3_L)\to SFH(-S^3_L)/(S_++S_-)$: we'll continue to call these maps $\psi_n^-$ and $\psi_\infty$, keeping in mind the domain change. The maps $\sigma_\pm$ will be considered as maps between unstable complexes, too. In what follows, this will be used without further mention.

We find it very convenient to organise all the unstable complexes (as $f$ varies among integers smaller than $2\tau(K)$) in a picture, like in Figure \ref{unst_cx}.

\begin{figure}
\begin{center}
\includegraphics[scale=1.8]{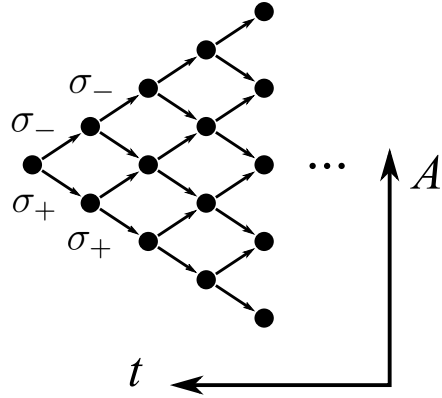}
\end{center}
\caption{Every dot in the picture represents a generator in the unstable complex of some $SFH(-S^3_{K,f})$. Each column is a (Alexander-)homogenous spanning set for the unstable comples for $SFH(-S^3_{K,f})$ for a \emph{fixed} slope $f$; this slope \emph{decreases} when moving to the right. The vertex represents the generator of $SFH(-S^3_{K,2\tau(K)-1})$. The vertical direction gives the Alexander grading induced by the identification with $\HFK(S^3_{f}(K),\ltilde{K})$. The arrows represent the action of $\sigma_\pm$ on the unstable complexes. The (unstable projection of the) invariant $EH(L)$ lands inside this triangle by Plamenevskaya's inequality (\ref{olga}) and Proposition \ref{EHgrading}.}\label{unst_cx}
\end{figure}

\subsection{The proof -- Independence and sufficiency}

We start by proving the last statement in the theorem.

\begin{prop}\label{independenceprop}
The contact invariant $c(\xi^{\pm}_n(L))$ is independent of the Legendrian isotopy class of $L$, when the classical invariants are fixed.
\end{prop}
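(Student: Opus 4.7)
The plan is to express $c(\xi_n^\pm(L)) = \psi_n^\pm(EH(L))$ and exploit the one-dimensionality of each Alexander-graded piece of the unstable quotient. By Proposition \ref{kills}, $\psi_n^\pm$ factors through the unstable quotient $SFH(-S^3_{K,tb(L)})/(S_+\oplus S_-)$, and by Proposition \ref{EHgrading} the class $EH(L)$ lives in Alexander degree $-r(L)/2$, which depends only on the classical invariants. From Theorem \ref{Heddenthm}, the $u_\ell$'s occupy distinct Alexander degrees, so in degree $-r(L)/2$ the unstable quotient is either zero or one-dimensional. Hence the unstable projection of $EH(L)$ is either trivial or equal to the unique generator in that degree, and it suffices to show that the unstable projections of $EH(L_1)$ and $EH(L_2)$ agree for any two Legendrians $L_1, L_2$ with the same classical invariants.

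To establish this, I would invoke the Fuchs--Tabachnikov stabilisation theorem: there exist nonnegative integers $M, N$ such that after $M$ positive and $N$ negative stabilisations both $L_1$ and $L_2$ become Legendrian isotopic. Since $tb$ and $r$ evolve identically under stabilisation, the same pair $(M,N)$ works on both sides. Naturality of $EH$ under the stabilisation gluing maps of Definition \ref{stabmaps_def} then gives
\[
\sigma_+^M\sigma_-^N(EH(L_1)) \;=\; \sigma_+^M\sigma_-^N(EH(L_2))
\]
in $SFH(-S^3_{K, tb(L)-M-N})$. By Plamenevskaya's inequality (\ref{olga}), $tb(L) \le 2\tau(K)-1$, and since stabilisation only lowers $tb$, every intermediate slope remains strictly below $2\tau(K)$. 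Consequently, the first case of Theorem \ref{stabmaps} applies throughout: on the unstable quotient $\sigma_-$ is the identity and $\sigma_+$ is the shift $u_\ell \mapsto u_{\ell+1}$, so the composition $\sigma_+^M\sigma_-^N$ is injective. Pulling back, the unstable projections of $EH(L_1)$ and $EH(L_2)$ must already coincide in $SFH(-S^3_{K,tb(L)})$, so $EH(L_1) - EH(L_2) \in S_+\oplus S_-$, and Proposition \ref{kills} finishes the job.

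The main obstacle will be naturality, in the spirit of Remark \ref{naturalityrmk}. The invariants $EH(L_i) \in SFH(-S^3_{L_i})$ \emph{a priori} live in different sutured Floer groups, and comparing them requires a choice of diffeomorphism $S^3_{L_1}\to S^3_{L_2}$ identifying both with $S^3_{K,tb(L)}$; correspondingly, the resulting equality of contact invariants holds only up to the mapping class group of $S^3_{tb(L)+n}(K)$ acting on $\HF(-S^3_{tb(L)+n}(K))$. A careful execution must check that the Fuchs--Tabachnikov isotopy, composed with the stabilisation and surgery gluing maps $\sigma_\pm$ and $\psi_n^\pm$, produces compatible identifications at every stage, or equivalently that $c(\xi_n^\pm(L_1))$ and $c(\xi_n^\pm(L_2))$ lie in a common orbit under this mapping class group action.
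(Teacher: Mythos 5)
Your argument is correct in outline, but it reaches the key intermediate statement --- that $EH(L_1)-EH(L_2)$ is stable --- by a genuinely different route from the paper. The paper's proof is shorter: both $EH(L_i)$ are homogeneous of the same Alexander degree (Proposition \ref{EHgrading}) and satisfy $\psi_\infty(EH(L_i))=c(\xi_{\rm st})\neq 0$ in the one-dimensional group $\HF(-S^3)$, so their difference is a homogeneous element of $\ker\psi_\infty$ and hence stable by Lemma \ref{stablevspsiinfty}; Proposition \ref{kills} then finishes exactly as you say. (Note that your own observation that each graded piece of the unstable quotient is at most one-dimensional would close the argument immediately if you knew both unstable projections were \emph{nonzero} --- and that nonvanishing is precisely what $\psi_\infty(EH(L_i))\neq 0$ plus Lemma \ref{stablevspsiinfty} gives you; you were one step from the paper's proof.) Your route instead imports the Fuchs--Tabachnikov stabilisation theorem and uses injectivity of $\sigma_+^M\sigma_-^N$ on the unstable quotient, which is indeed valid here since Plamenevskaya's inequality (\ref{olga}) keeps every intermediate slope strictly below $2\tau(K)$, so the first case of Theorem \ref{stabmaps} applies throughout. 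This works and avoids $\psi_\infty$ entirely, but at the cost of an extra external input and a harder naturality problem: the paper disposes of the identification ambiguity in Lemma \ref{EHnaturalitylemma} by the same $\psi_\infty$ trick (any identification carries $EH(L_2)$ to a homogeneous class of the same degree that is still not killed by $\psi_\infty$, so the difference is stable regardless of the identification), whereas in your scheme you must either control how the non-canonical Fuchs--Tabachnikov isotopy interacts with the chosen identifications at every stage, or argue separately that the mapping class group acts trivially on each one-dimensional graded piece of the unstable quotient --- plausible, since the action preserves the Alexander grading, but it requires knowing the action commutes with $\sigma_\pm$, which the paper does not establish and which would need its own proof.
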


Recall (see Remark \ref{naturalityrmk}) that what we mean by the statement above is that the \emph{orbit} of the contact invariant $c(\xi^{\pm}_n(L))$ is independent of the Legendrian isotopy class.

We need to make a small digression about a similar issue affecting the $EH$ invariants: suppose that $L_1$ and $L_2$ are two Legendrian representatives of $K$ in $(S^3,\xi_{\rm st})$. Then $EH(L_i)$ is an element in $SFH(S^3_{L_i})$, and if we want to compare $EH(L_1)$ with $EH(L_2)$, we need to give an identification $\alpha$ of the two knot complements. Any two such identifications differ by an element of $MCG(S^3_{L_i},\de S^3_{L_i})$, and this mapping class group acts on $SFH(-S^3_{L_i})$ \cite{JOT}.

We start with a lemma to say how much this identification matters, if $L_1$ and $L_2$ also have the same rotation number. Whenever possible, we'll implicitly assume that a specific identification has been made, without keeping track of it in the notation.

\begin{lemma}\label{EHnaturalitylemma}
Regardless of the identification of $S^3_{L_1}$ with $S^3_{L_2}$, $EH(L_1)-EH(L_2)$ is stable.
\end{lemma}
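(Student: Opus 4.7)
The plan is to work modulo the stable subspace, exploiting the fact that the unstable quotient is one-dimensional in each Alexander degree. Given a sutured diffeomorphism $\phi \colon S^3_{L_1} \to S^3_{L_2}$, I want to show that $\phi_\ast(EH(L_1)) - EH(L_2) \in SFH(-S^3_{L_2})$ lies in the stable subspace $S$, independently of $\phi$.

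First I would show that the stable subspace is canonical and $\phi_\ast$ preserves it. By Remark \ref{naturality}, $S = S_+ + S_-$ with $S_\pm = \ker \sigma_\pm^N$ for $N \gg 0$. The maps $\sigma_\pm$ are gluing maps for basic slices whose framing is read off the boundary sutures, and hence are natural under any boundary-fixing sutured diffeomorphism. Therefore $\phi_\ast$ carries $S$ to $S$ and descends to an isomorphism $\bar\phi_\ast \colon SFH(-S^3_{L_1})/S \to SFH(-S^3_{L_2})/S$ of unstable quotients.

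Next I would analyse the grading structure. By Theorem \ref{Heddenthm}, the unstable generators $u_\ell$ lie in pairwise distinct Alexander degrees, so each graded piece of the unstable quotient is at most one-dimensional over $\F$. Since $\phi$ fixes the boundary torus pointwise, it preserves the meridian and the Seifert longitude with their orientations, hence preserves the relative $\spin$ decomposition, hence the Alexander grading. On each one-dimensional graded piece, $\bar\phi_\ast$ is therefore forced to act as the identity. Combining with Proposition \ref{stablevsOT} and the tightness of $\xi_{\rm st}$, we know that $EH(L_1)$ and $EH(L_2)$ are \emph{not} stable, so their projections to the unstable quotient are non-zero; by Proposition \ref{EHgrading}, both projections are Alexander-homogeneous of the common degree $-r(L_1)/2 = -r(L_2)/2$. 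Since this graded piece is one-dimensional, both projections equal the unique non-zero element, and applying $\bar\phi_\ast$ yields $\bar\phi_\ast[EH(L_1)] = [EH(L_2)]$, i.e.\ $\phi_\ast(EH(L_1)) - EH(L_2) \in S$ for every $\phi$.

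The main obstacle is the Alexander-grading naturality: one must check that $\phi$, fixing the boundary torus pointwise, acts trivially on the relative $\spin$ structures parametrising the Alexander decomposition of $SFH(-S^3_{K,tb(L)})$. This should be forced by the fact that the oriented meridian and Seifert longitude are both $\phi$-fixed, but does require unpacking the definition of the Alexander grading in \eqref{defAlex}. Once this naturality is granted, the argument reduces to the one-dimensional dimension count sketched above, and the independence of the identification is immediate.
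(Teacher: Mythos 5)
Your argument is correct, and it takes a genuinely different route from the paper. The paper shows directly that the pull-back class $\alpha^*(EH(L_2)) = EH(\alpha^*\xi_{\rm st}|_{S^3_{L_2}})$ is not stable: it extends $\alpha$ to a diffeomorphism $\tilde\alpha$ of $S^3$, observes that $\tilde\alpha^*\xi_{\rm st}$ is a \emph{tight} structure on $S^3$ obtained from $\alpha^*\xi_{\rm st}|_{S^3_{L_2}}$ by contact $\infty$-surgery (using uniqueness of the tight structure on the solid torus), concludes $\psi_\infty(\alpha^*(EH(L_2))) = \tilde\alpha^*(c(\xi_{\rm st})) \neq 0$, and then invokes Lemma \ref{stablevspsiinfty} together with the one-dimensionality of $\HF(-S^3)$. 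You avoid the geometric pull-back entirely and instead argue at the level of the unstable quotient, using the one-dimensionality of its graded pieces (from Theorem \ref{Heddenthm}) rather than of $\HF(-S^3)$. The trade-off is that your approach needs $\phi_*$ to preserve the stable subspace $S$, which you obtain from naturality of the stabilisation maps $\sigma_\pm$ under the diffeomorphism --- a plausible but nontrivial piece of functoriality for SFH gluing maps that the paper never states and, via its geometric argument, never needs. Both approaches require the identification to preserve the Alexander grading; the paper asserts this without comment, whereas you flag it as the main point to verify, and your sketch (via the action on the oriented meridian, the Seifert class $[F,\de F]$, and relative $\spin^c$ structures) is the right one --- note that it helps to observe that any sutured identification $S^3_{L_1}\to S^3_{L_2}$ automatically extends over $S^3$, since a meridian is characterised as the unique slope bounding in the complementary solid torus. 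In short: the paper buys a smaller naturality input at the cost of a geometric contact-topology argument; you buy a purely algebraic argument at the cost of additional SFH naturality.
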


\begin{proof}
Fix a diffeomorphism $\alpha: S^3_{L_1}\to S^3_{L_2}$. As a notational shorthand, call $\xi_2$ the restriction of $\xi_{\rm st}$ to $S^3_{L_2}$.

The diffeomorphism $\alpha$ induces a map $\alpha^*: SFH(-S^3_{L_2})\to SFH(-S^3_{L_1})$ that preserves the Alexander grading and carries $EH(L_2)$ to $EH(\alpha^*\xi_2)$. Since $L_1$ and $L_2$ also have the same rotation number, $EH(L_1)$ and $EH(L_2)$ have the same degree, and so does $EH(\alpha^*\xi_2)$. To prove the lemma, it's enough to show that $EH(\alpha^*\xi_2)$ is not stable: if so, it has the same degree as $EH(L_1)$, and $\psi_\infty(EH(\alpha^*\xi_2)) = \psi_\infty(EH(L_1))$, and their difference is stable by Lemma \ref{stablevspsiinfty}.

Consider any extension $\ltilde{\alpha}: S^3_1\to S^3_2$ of $\alpha$ from $S^3_1 = S^3_{L_1}\cup\nu(L_1)$ to $S^3_2 = S^3_{L_2}\cup\nu(L_2)$. Both $S^3_1$ and $S^3_2$ are diffeomorphic to $S^3$, but we keep the indices to keep them distinct.

If we do $\infty$-surgery on $L_2$, we get the standard contact structure $\xi_{\rm st; 2}$ on $S^3_2$. When we pull-back using $\ltilde\alpha$ we get a tight contact structure $\alpha^*\xi_{\rm st; 2}$ on $S^3_1$.

Since $\ltilde\alpha$ maps $\nu(L_1)$ diffeomorphically to $\nu(L_2)$, the pull back of the contact structure on $\nu(L_2)$ is a tight contact structure on $\nu(L_1)$: since such a contact structure is unique up to isotopy \cite{Ho}, it means that $\ltilde\alpha^*\xi_{\rm st; 2}$ is obtained from $\alpha^*\xi_2$ by contact $\infty$-surgery.

In other words, $\psi_\infty(EH(\alpha^*\xi_2)) = \ltilde\alpha^*(c(\xi_{\rm st; 2})) \neq 0$, and by Lemma \ref{stablevspsiinfty} $EH(\alpha^*\xi_2)$ is not stable.
\end{proof}

We now turn back to the independence statement.

\begin{proof}[Proof of Proposition \ref{independenceprop}]
If $L_1$ and $L_2$ have the same classical invariants, $EH(L_1)$ and $EH(L_2)$ both belong to $SFH(-S^3_{L_1})$ and are homogeneous of the same degree $r(L_1)$. Moreover, since both knots are Legendrian in the standard contact $S^3$, we have that $\psi_{\infty}(EH(L_1)) = \psi_{\infty}(EH(L_2)) = c(\xi_{\rm st})$, and in particular $EH(L_1)-EH(L_2)\in \ker \psi_\infty$, and since this difference is homogeneous, it's stable by Lemma \ref{stablevspsiinfty}.

The statement now is a straightforward consequence of Proposition \ref{kills}: since $\psi^{\pm}_n$ kills the stable subspace,
\[c(\xi^{\pm}_n(L_1)) - c(\xi^{\pm}_n(L_2)) = \psi^\pm_n(EH(L_1)-EH(L_2)) = 0.\]
\end{proof}

\begin{rmk}
Notice that the previous statement, together with Proposition \ref{stablevsOT} also proves that, if $(S^3,\xi)$ is overtwisted and $L$ is $\xi$-Legendrian, then $c(\xi^-_n(L))=0$ for all $n\ge 0$. In particular, this justifies our focus on surgeries on the standard contact $S^3$.
\end{rmk}

We now prove the sufficiency of the three conditions:

\begin{prop}
If (SL), (SC) and (TN) hold, $c(\xi^-_n(L))\neq 0$.
\end{prop}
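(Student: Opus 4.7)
The strategy is to (i) locate the image of $EH(L)$ in the unstable quotient, (ii) reduce the computation of $\psi^-_n(EH(L))$ along the diagonal $n+t=\text{const.}$ via Proposition~\ref{stabvssurg}(i) to the apex of the unstable triangle, and (iii) complete the argument there using Proposition~\ref{ass2} and Proposition~\ref{cobnon0}.

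First I would show that $EH(L)$ projects to the distinguished unstable generator $v^{(t)}\in SFH(-S^3_{K,t})$ that lies in the $\sigma_-$-orbit of $u_1^{(2\tau(K)-1)}$. Proposition~\ref{EHgrading} together with (SL) pins down the Alexander degree of $EH(L)$, and Theorem~\ref{Heddenthm} identifies this degree with that of $v^{(t)}=\sigma_-^{2\tau(K)-1-t}\bigl(u_1^{(2\tau(K)-1)}\bigr)$, the only Alexander-homogeneous line in that degree. Lemma~\ref{stablevspsiinfty} combined with $\psi_\infty(EH(L))=c(\xi_{\rm st})\neq 0$ shows that $EH(L)$ is non-stable, so its unstable projection is indeed $v^{(t)}$.

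Next, by Corollary~\ref{ge-n} it suffices to prove $c(\xi^-_{n_{\min}}(L))\neq 0$ for the minimum coefficient $n_{\min}=2\tau(K)-t$ permitted by (SC). Iteratively applying Proposition~\ref{stabvssurg}(i) along the diagonal $n+t=2\tau(K)$ gives
\[
\psi^-_{n_{\min}}\bigl(v^{(t)}\bigr)\;=\;\psi^-_{n_{\min}-1}\bigl(u_1^{(t+1)}\bigr)\;=\;\cdots\;=\;\psi^-_{1}\bigl(u_1^{(2\tau(K)-1)}\bigr)\;=\;\psi_{+1}\bigl(u_1^{(2\tau(K)-1)}\bigr).
\]
Finally, Proposition~\ref{ass2} rewrites this as $F_{-W_{2\tau(K)}}\bigl(\psi_\infty(u_1^{(2\tau(K)-1)})\bigr)=F_{-W_{2\tau(K)}}\bigl(c(\xi_{\rm st})\bigr)$, where the second equality uses Lemma~\ref{stablevspsiinfty} together with $\dim_{\F}\HF(-S^3)=1$. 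Condition (TN) gives $2\tau(K)=2\nu(K)$, Proposition~\ref{cobnon0} says $F_{-W_{2\nu(K)}}$ is injective, and $c(\xi_{\rm st})\neq 0$; hence the image is nonzero, and Corollary~\ref{ge-n} extends the conclusion to all $n\geq n_{\min}$.

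The main obstacle will be the calibration step: tracking the Alexander grading through the identifications of Proposition~\ref{HFvsSFH}, Proposition~\ref{EHgrading}, and Theorem~\ref{Heddenthm} so as to verify that the projection of $EH(L)$ lies in the $\sigma_-$-orbit of $u_1^{(2\tau(K)-1)}$, rather than in the image of $\sigma_+$ from a higher slope---which, by Proposition~\ref{stabvssurg}(ii), would force $\psi^-_n(EH(L))$ to vanish. Once that calibration is in place, the diagonal reduction and the apex computation are essentially formal consequences of the tools developed in the previous sections.
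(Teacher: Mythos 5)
Your proposal is correct and follows essentially the same route as the paper's proof: reduce to the extremal case $n=2\tau(K)-tb(L)$, use (SL) to place the unstable projection of $EH(L)$ in the $\sigma_-$-orbit of the apex generator of $SFH(-S^3_{K,2\tau(K)-1})/S$, pull back along $\sigma_-$ via Proposition~\ref{stabvssurg}(i), and conclude with Proposition~\ref{ass2}, the injectivity of $F_{-W_{2\nu(K)}}$ from Proposition~\ref{cobnon0}, and $\psi_\infty(EH(L))=c(\xi_{\rm st})\neq 0$. The ``calibration'' you flag is exactly the point the paper handles by reading (SL) off Figure~\ref{unst_cx}, so there is no substantive difference.
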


\begin{proof}
Call $E:=EH(L), \tau:=\tau(K)=\nu(K), t:=tb(L)$ and $c:=c(\xi^-_n(L))$.

Recall that $\xi^-_{n+1}(L)$ is obtained from $\xi_n^-(L)$ through a Legendrian surgery, and if the latter has nonvanishing contact invariant, so does the former. Therefore, it's enough to prove the result when we have equality in \emph{(SC)}, that is when $n=2\tau-t$.

Graphically, the condition \emph{(SL)} means that $EH(L)$ lands on the top edge of the infinite triangle of Figure \ref{unst_cx}. In particular we can find $x$ in $SFH(S^3_{K,2\tau-1})/S$ such that $\sigma_-^{n-1}(x) = E$, and Proposition \ref{stabvssurg} tells us that $c = \psi^-_n(E) = \psi_1(x)$. By Proposition \ref{ass2}, $\psi_1(x) = F_{-W_{2\tau}}(\psi_\infty(x))$, and the latter is nonzero by the injectivity of $F_{-W_{2\nu(K)}}$ (Proposition \ref{cobnon0}), the condition \emph{(TN)}, and the nonvanishing of $c(\xi_\textrm{st}) = \psi_\infty(x)$.
\end{proof}

\subsection{The proof -- Necessity}

We now turn to the necessity of the three conditions: first we're going to prove that \emph{(SL)} is necessary, then we're going to prove that if \emph{(SL)} holds then \emph{(SC)} is necessary, and finally we're going to prove that if both \emph{(SL)} and \emph{(SC)} hold, then also \emph{(TN)} is necessary.

In the following, we'll call $E:=EH(L), \tau:=\tau(K), \nu:=\nu(K), t:=tb(L)$ and $c:=c(\xi^-_n(L))$.

\begin{prop}\label{SLnecessary}
If (SL) doesn't hold, then $c(\xi^-_n(L))=0$.
\end{prop}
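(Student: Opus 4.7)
The plan is to show directly that $\psi^-_n(EH(L)) = 0$, which equals $c(\xi^-_n(L))$ by Theorem \ref{EHmap}. Since Proposition \ref{kills} tells us that $\psi^-_n$ annihilates every stable element, it suffices to analyse $EH(L)$ modulo the stable subspace, inside the unstable quotient $SFH(-S^3_{K,t})/S$ depicted in Figure \ref{unst_cx}, where $t = tb(L)$.

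First I would apply Plamenevskaya's inequality (\ref{olga}) to the orientation-reversed knot $-L$, whose Thurston--Bennequin number is still $t$ but whose rotation number is $-r(L)$: this yields $t - r(L) \le 2\tau(K) - 1$. The condition (SL) is precisely the equality case of this bound, so if (SL) fails then the inequality is strict.

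Next, I would use Proposition \ref{EHgrading}, which records that $EH(L)$ is Alexander-homogeneous of degree $-r(L)/2$, together with Theorem \ref{Heddenthm}, which records the Alexander gradings of the generators $u_1,\dots,u_{2\tau-t}$ of the unstable complex. A direct grading comparison (using also (\ref{defAlex})) identifies (SL) with the statement that the class of $EH(L)$ in the unstable quotient $SFH(-S^3_{K,t})/S$ is the ``top vertex generator'', namely the class reached from the single generator of $SFH(-S^3_{K,2\tau-1})/S$ via $\sigma_-^{2\tau-1-t}$. If (SL) fails, the class of $EH(L)$ is not this top generator. By the explicit description of $\sigma_+$ in Theorem \ref{stabmaps} (which on the unstable part of $SFH(-S^3_{K,t+1})$ with $t+1 \le 2\tau$ sends $u_\ell \mapsto u_{\ell+1}$), every element of the unstable quotient at slope $t$ other than the top generator lies in the image of $\sigma_+\colon SFH(-S^3_{K,t+1})/S \to SFH(-S^3_{K,t})/S$. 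Hence we can write
\[
EH(L) = \sigma_+(y) + s
\]
with $y \in SFH(-S^3_{K,t+1})$ and $s \in S$.

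Finally, applying $\psi^-_n$ and using Proposition \ref{stabvssurg}(ii) (which gives $\psi^-_n \circ \sigma_+ = 0$) together with Proposition \ref{kills} (which kills the stable term $s$), I conclude
\[
c(\xi^-_n(L)) = \psi^-_n(EH(L)) = \psi^-_n(\sigma_+(y)) + \psi^-_n(s) = 0.
\]
The main obstacle in this plan is the grading bookkeeping in the second step: one must carefully check that $-r(L)/2$, the Alexander degree of $EH(L)$ predicted by Proposition \ref{EHgrading}, matches, up to the shift coming from (\ref{defAlex}) and the identification $SFH(-S^3_L) \cong \HFK(-S^3_t(K),\ltilde K)$, the Alexander degree of the top generator of the unstable complex at slope $t$ precisely when (SL) holds. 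Once this identification is verified, everything else is a formal consequence of the algebraic relations $\psi^-_n \circ \sigma_+ = 0$ and $\psi^-_n|_S = 0$ already established.
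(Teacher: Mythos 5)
Your proof follows exactly the paper's argument: identify $EH(L)$ modulo the stable subspace, observe via Proposition \ref{EHgrading} and the grading of the unstable generators that (SL) is precisely the condition for $EH(L)$ to be the top unstable generator, conclude from its failure that $EH(L) = \sigma_+(y)$ up to a stable term, and then kill everything with $\psi^-_n \circ \sigma_+ = 0$ and Proposition \ref{kills}. Your extra explicitness about the stable summand $s$ and the degree bookkeeping matches what the paper does implicitly (via Figure \ref{unst_cx}), so this is the same proof, just written more carefully.
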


\begin{proof}
Suppose that \emph{(SL)} doesn't hold. In this case, $A(E) = -r(L)$ is not maximal in the unstable complex, since the top generator in the unstable complex for $SFH(-S^3_{K,t})$ has Alexander degree $t-2\tau+1$; pictorially, the unstable component of $E$ is not the top generator in the relevant column on the left hand side of Figure \ref{onlyif}. This implies that $E$ is in the image of $\sigma_+$, and we can write $E = \sigma_+(x)$. So, by Proposition \ref{stabvssurg}:
\[
c = \psi_n^-(E) = \psi_n^-(\sigma_+(x)) = 0.
\]
\end{proof}

\begin{prop}
If (SL) holds but (SC) doesn't, then $c(\xi^-_n(L))=0$.
\end{prop}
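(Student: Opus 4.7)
The plan is to reduce $c = \psi_n^-(E)$ to a surgery cobordism map applied to $c(\xi_{\rm st})$, and then invoke the vanishing theorem for such maps below the critical surgery coefficient $2\nu(K)$. First, I would use (SL) to pinpoint $E := EH(L)$ in the unstable complex of $SFH(-S^3_{K,t})$ (writing $t:=tb(L)$). By Proposition \ref{EHgrading}, the Alexander grading of $E$ matches that of the top unstable generator $u_1^{(t)}$, since (SL) forces $r(L) = t-2\tau+1$. Proposition \ref{kills} says $\psi_n^-$ annihilates stable elements, so, modulo the stable subspace, we may replace $E$ with $u_1^{(t)}$ and compute $c = \psi_n^-(u_1^{(t)})$.

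Next, from Theorem \ref{stabmaps} and the picture in Figure \ref{unst_cx}, $u_1^{(t)}$ sits on the top edge of the triangle: $u_1^{(t)} = \sigma_-^{2\tau-1-t}(u_1^{(2\tau-1)})$, where $u_1^{(2\tau-1)}$ is the unique unstable generator at the apex. I would then absorb these $\sigma_-$ factors into $\psi_n^-$ using the identity $\psi_N^-\circ\sigma_- = \psi_{N-1}^-$ (Proposition \ref{stabvssurg}(i), read right-to-left). Here is where $\neg$(SC) bites: since $n \leq 2\tau - 1 - t$, we run out of room in $\psi$ before exhausting the $\sigma_-$'s. Only $n-1$ factors can be absorbed before $\psi_1^-$ is reached, leaving $2\tau - t - n$ factors of $\sigma_-$ acting on $u_1^{(2\tau-1)}$. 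Since $\sigma_-$ preserves top unstable generators, the remainder is $u_1^{(t+n-1)}$, giving
\[ c \;=\; \psi_1^-\bigl(u_1^{(t+n-1)}\bigr). \]

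Now I would apply Proposition \ref{ass2} to factor $\psi_1^- = F_{-W_{t+n}}\circ\psi_\infty$. Lemma \ref{stablevspsiinfty} ensures that $\psi_\infty$ does not annihilate the unstable element $u_1^{(t+n-1)}$, so its image must equal the generator $c(\xi_{\rm st})$ of $\widehat{HF}(-S^3) = \F$. The net effect is the clean formula
\[ c \;=\; F_{-W_{t+n}}\bigl(c(\xi_{\rm st})\bigr). \]
Since (SC) fails, $t+n \le 2\tau - 1 \le 2\nu(K) - 1$, so the surgery coefficient lies strictly below the critical value $2\nu(K)$; the cobordism map therefore vanishes on $c(\xi_{\rm st})$, yielding $c = 0$.

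The delicate part is the last step. As stated, Proposition \ref{cobnon0} records vanishing only at the single value $f = 2\nu(K)-2$, whereas the argument needs it for every $f < 2\nu(K)$. For $f \le 2\nu - 2$ one can in principle propagate vanishing along Stein (Legendrian-surgery) cobordisms starting from Proposition \ref{cobnon0}, but the borderline case $f = 2\nu - 1$ --- which occurs precisely when (TN) holds and (SC) fails by exactly one --- is not directly covered and appears to require the full mapping-cone description of the integer-surgery cobordism maps from \cite{OSQsurg}, of which Proposition \ref{cobnon0} is only a special case. I expect the technical core of the proof to be hidden in this stronger vanishing statement, which the paper either cites implicitly or defers to its Section \ref{technical}.
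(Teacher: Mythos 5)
Your argument is correct in its overall structure, but it takes a genuinely different route from the paper, and it leans on a vanishing statement that is not literally contained in Proposition \ref{cobnon0} as stated.

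The paper's own proof is purely algebraic and never touches the surgery cobordism map. After the same preliminary reduction (using Proposition \ref{stabvssurg}(i) to peel off $\sigma_-$'s and land at a $\psi_1$ applied to a generator of the unstable quotient of $SFH(-S^3_{K,t+n-1})$), the paper reduces to the case $n=1$ and then plays the two stabilisation identities off against each other: since $t+1 < 2\tau$, the class $\sigma_+(E)$ lies in the image of $\sigma_-$, say $\sigma_+(E) = \sigma_-(x)$, and then
\[
\psi_1(E) \;=\; \psi_2^+(\sigma_+(E)) \;=\; \psi_2^+(\sigma_-(x)) \;=\; 0
\]
by Proposition \ref{stabvssurg}(ii). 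This is nothing but the Lisca--Stipsicz ``opposite stabilisation'' trick, translated into the language of gluing maps, and it never invokes Propositions \ref{ass2} or \ref{cobnon0}. Your route instead follows the same template the paper uses for \emph{sufficiency}: convert everything to $F_{-W_{t+n}}(c(\xi_{\rm st}))$ and invoke the $2\nu(K)$ threshold. That is conceptually illuminating (it explains \emph{why} $2\nu(K)$ is the critical slope), but it is heavier machinery for a statement the paper handles more elementarily.

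You are right to be suspicious of the last step. The literal statement of Proposition \ref{cobnon0} only records vanishing at $f=2\nu(K)-2$, while your argument needs $F_{-W_f}=0$ for every $f<2\nu(K)$, including the borderline $f=2\nu(K)-1$. Your suggestion of propagating vanishing along Stein cobordisms does not quite work as phrased (the composite $W_f\cup(\text{blowdown})$ is not $W_{f-1}$), so you cannot get the missing case this way. In fact the stronger statement you need is true and is established in Section \ref{technical}: the rank computation from the mapping cone gives $\dim\HF(-S^3_f(K)) = |f| + 2\max\{0,2\nu(K)-1-f\}+D$, from which it follows that $F_{-W_f}$ is injective if and only if $f\ge 2\nu(K)$, hence (since the domain is $\F$) zero for all $f<2\nu(K)$. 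So the gap you flag is a gap in the \emph{statement} of Proposition \ref{cobnon0}, not in its proof; once you have access to that proof, your argument closes cleanly. Still, the paper's route is preferable here since it is shorter and self-contained within the $\sigma_\pm$/$\psi$-calculus.
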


\begin{proof}
Call $s$ the surgery index $s=t+n<2\tau$.

\begin{claim}
We can assume $n=1$.
\end{claim}

\begin{proof}
Since \emph{(SL)} holds, but \emph{(SC)} doesn't, the unstable part of $E$ lies in the top (slanted) row of Figure \ref{unst_cx}, and $E=\sigma_-^{n-1}(y)$ for some $y$ in the unstable part of $SFH(S^3_{K,s-1})$. By Proposition \ref{stabvssurg}, $\psi_n^-(E) = \psi_n^-(\sigma^{n-1}_-(y)) = \psi_1(y)$, and $y$ also fails to satisfy the condition \emph{(SC)}, in the sense that the slope $s-1$ of the sutures (that is, the algebraic counterpart of the Thurston-Bennequin number for $L$) satisfies $s-1+1 = s < 2\tau$.
\end{proof}

So, let's assume $n=1$, that is $E=y$ in the proof of the claim. Since $s=1+t<2\tau$, the unstable part of $E$ is not the left vertex of the triangle of the right hand side of Figure \ref{onlyif}. In particular, $\sigma_+(E)$ is in the image of $\sigma_-$, so we can write $\sigma_+(E)=\sigma_-(x)$ for some $x$ in the unstable part of $SFH(S^3_{K,t})$.

By Proposition \ref{stabvssurg}, $c = \psi_1(E) = \psi_2^+(\sigma_+(E)) = \psi_2^+(\sigma_-(x)) = 0$.
\end{proof}

\begin{figure}
\begin{center}
\includegraphics[scale=1.85]{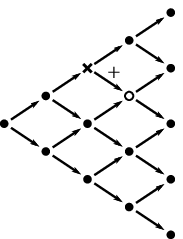}
\hskip 2.5 cm
\includegraphics[scale=1.85]{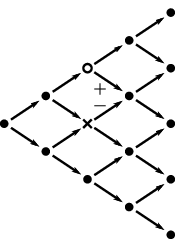}
\caption{The figure on the left represent the situation when \emph{(SL)} doesn't hold: the hollow circle is (the stable part of) $EH(L)$, and the $\times$ is the element $x$ in the proof. The figure on the right represent the situation when \emph{(SL)} holds but \emph{(SC)} doesn't: the hollow circle represents the element $y$, and the $\times$ represents the element $x$ in the proof.}\label{onlyif}
\end{center}
\end{figure}

Finally, for the last part we call into play the Legendrian cabling and surgeries along them.

\begin{prop}
Suppose (SL) and (SC) hold, but (TN) doesn't, then $c(\xi^-_n(L))=0$.
\end{prop}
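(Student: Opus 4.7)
My plan is to rerun the sufficiency argument verbatim and observe that the cobordism map at its heart now vanishes identically instead of being injective. Set $E := EH(L)$, $\tau := \tau(K)$, $\nu := \nu(K)$ and $t := tb(L)$. The failure of \emph{(TN)}, combined with $\tau \le \nu \le \tau + 1$, forces $\nu = \tau + 1$, so $2\tau = 2\nu - 2$. Proposition \ref{cobnon0} then tells us that the cobordism map
\[
F_{-W_{2\tau}}\colon \HF(-S^3) \longrightarrow \HF(-S^3_{2\tau}(K))
\]
is identically zero. This is the only substantive change from the sufficiency direction, and everything else will be a mechanical consequence.

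Next I would reduce the statement to the minimal surgery coefficient $n_0 := 2\tau - t$ compatible with \emph{(SC)}. Combining \emph{(SL)} with Plamenevskaya's inequality \eqref{olga} gives $2tb(L) \le 4\tau - 2$, so $tb(L) < 2\tau$ and $n_0 \ge 1$. For any $n \ge n_0$, the contact manifold $\xi^-_n(L)$ is obtained from $\xi^-_{n_0}(L)$ by a sequence of Legendrian surgeries, and the associated cobordism maps send contact invariant to contact invariant; hence $c(\xi^-_{n_0}(L)) = 0$ automatically propagates to $c(\xi^-_n(L)) = 0$ for all $n \ge n_0$. It therefore suffices to treat $n = n_0$.

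Finally I would run the algebraic computation at $n = n_0$ exactly as in the sufficient direction. Condition \emph{(SL)} places the unstable part of $E$ on the top slanted edge of the triangle in Figure \ref{unst_cx}, so modulo the stable subspace we can write $E = \sigma_-^{n_0 - 1}(x)$ for some $x$ in the unstable part of $SFH(-S^3_{K, 2\tau - 1})$. Iterating Proposition \ref{stabvssurg}(i), and using Proposition \ref{kills} to discard the stable contribution, gives
\[
c(\xi^-_{n_0}(L)) \;=\; \psi^-_{n_0}(E) \;=\; \psi^-_{n_0}\bigl(\sigma_-^{n_0 - 1}(x)\bigr) \;=\; \psi_1(x).
\]
By Proposition \ref{ass2}, $\psi_1(x) = F_{-W_{2\tau}}(\psi_\infty(x))$, which vanishes since $F_{-W_{2\tau}} = 0$. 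The only delicate point to monitor is that the element $x$ is only determined up to the stable subspace, but this ambiguity is harmless: $F_{-W_{2\tau}}$ is zero on all of $\HF(-S^3)$, so every choice of $x$ yields the same (zero) output. Combined with the reduction above, this gives $c(\xi^-_n(L)) = 0$ for every $n$ satisfying \emph{(SC)}, as required.
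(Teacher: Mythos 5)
Your reduction to $n = n_0 := 2\tau - t$ is the gap, and it is fatal to the argument as written. You claim that since $\xi^-_n(L)$ is obtained from $\xi^-_{n_0}(L)$ by a sequence of Legendrian surgeries, vanishing of $c(\xi^-_{n_0}(L))$ propagates to vanishing of $c(\xi^-_n(L))$ for $n > n_0$. The cobordism map goes the wrong way for this: if $(Y',\xi')$ is obtained from $(Y,\xi)$ by Legendrian surgery along a Weinstein cobordism $W\colon Y\to Y'$, then $F_{-W}(c(\xi')) = c(\xi)$, so vanishing propagates \emph{downward}, from larger to smaller surgery coefficient. This is precisely the fact the paper uses in the sufficiency direction (``if the latter has nonvanishing contact invariant, so does the former''), and its contrapositive gives $c(\xi^-_{n+1}(L))=0 \Rightarrow c(\xi^-_n(L))=0$, not the converse you need. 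Vanishing at $n_0$ therefore gives no information at all about $n > n_0$.

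The remainder of your argument, specialised to $n = n_0$, is correct and is a clean dualisation of the sufficiency proof: the failure of \emph{(TN)} forces $\nu = \tau+1$, so $2\tau = 2\nu-2$, Proposition \ref{cobnon0} gives $F_{-W_{2\tau}} = 0$, and $\psi^-_{n_0}(E) = \psi_1(x) = F_{-W_{2\tau}}(\psi_\infty(x)) = 0$. But for $n > n_0$ the same chain yields $c(\xi^-_n(L)) = \psi^-_{n-n_0+1}(x)$ with index at least $2$; Proposition \ref{ass2} only identifies $\psi_{+1}$ with $F_{-W}\circ\psi_\infty$, and $x$ sits at the vertex framing $2\tau-1$, so there is no further room to trade surgery coefficient for framing via $\sigma_\pm$. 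The paper treats all $n$ by a genuinely different mechanism: Corollary \ref{ge-n} passes to the \emph{larger} rational slope $n+1/2$, which is the direction in which vanishing does pull back, and the cabling decomposition $c(\xi^-_{n+1/2}(L))\otimes c(\eta_2) = c(\xi^-_{2n+1}(L_{2,2n+1}))$ of Proposition \ref{cablingvssurg}, combined with $\epsilon(K)=-1$ and Proposition \ref{homvsmain}, shows that the cable $L_{2,2n+1}$ fails \emph{(SL)}, so the right-hand side vanishes by Proposition \ref{SLnecessary}. That cabling step is the new idea your proposal is missing.
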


\begin{proof}
By Corollary \ref{ge-n}, it's enough to show that $c(\xi^-_{n+1/2}(L))=0$.

By Proposition \ref{cablingvssurg}, $c(\xi^-_{n+1/2}(L)) \otimes c(\eta_2)= c(\xi^-_{2n+1}(L_{2,2n+1}))$. Since \emph{(TN)} doesn't hold, though, $\epsilon(K)=-1$, and Proposition \ref{homvsmain} tells us that $L_{2,2n+1}$ fails to satisfy \emph{(SL)}, so the right hand side vanishes by Proposition \ref{SLnecessary}. Since $\eta_2$ is obtained from $\xi_{\rm st}$ through Legendrian surgery, $c(\eta_2)$ is nonvanishing, and therefore $c(\xi^-_{n+1/2}(L))=0$, concluding the proof.
\end{proof}

\subsection{Corollaries}

We're going to prove the following statement, which is slightly stronger than Corollary \ref{qtight}:

\begin{prop}
If $\epsilon(K)=1$ (respectively $\epsilon(K)=0$) and there's a Legendrian representative $L$ of $K$ that satisfies (SL), then for all $q>2\tau(K)-1$ (resp. $q\ge 0$) the manifold $S^3_q(K)$ supports a tight contact structure.
\end{prop}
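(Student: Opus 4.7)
The plan is to reduce the rational-surgery statement to the integer Theorem~\ref{mainthm} using two devices introduced earlier: the Legendrian cabling decomposition of Proposition~\ref{cablingvssurg}, which writes a surgery of the form $\xi^-_{k+1/m}(L)$ as a contact connected sum with a Stein-fillable lens space factor $\eta_m$, together with the rational-extension Corollary~\ref{ge-1m}. As a preliminary observation, negative stabilisation decreases both $tb$ and $r$ by one and so preserves (SL); we may thus replace the given $L$ by any $L' = L^{(j)}$ without losing the hypothesis, arranging $r(L')$ (when $\epsilon(K)=1$) or $tb(L')$ (when $\epsilon(K)=0$) to be as negative as we wish.

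For $\epsilon(K)=1$, the hypothesis forces $\tau(K)=\nu(K)$, so (TN) is automatic. Fix $L'$ with $r(L') \le -1$ and set $n := -r(L') \ge 1$. I apply Proposition~\ref{homvsmain} to the cable $L'_{m,mn+1}$: (SL) and (TN) for the cable pass from $L'$ since $\epsilon(K)\ne 0$, and (SC) at the surgery coefficient $mn+1$ reduces to the equality $mn+1 = 1 - m\cdot r(L')$. Theorem~\ref{mainthm} therefore gives $c(\xi^-_{mn+1}(L'_{m,mn+1})) \ne 0$. Proposition~\ref{cablingvssurg}, together with the Stein fillability of $\eta_m$ (it is Legendrian surgery on a Legendrian unknot in $(S^3,\xi_{\rm st})$), translates this into $c(\xi^-_{n+1/m}(L')) \ne 0$ for every $m \ge 1$. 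Corollary~\ref{ge-1m} then produces, for every rational $p/q' > n$, a contact $p/q'$-surgery on $L'$ with nonvanishing contact invariant, hence tight; the corresponding topological coefficient $tb(L') + p/q'$ sweeps out exactly the set $\{\rho \in \Q : \rho > tb(L') - r(L') = 2\tau(K)-1\}$.

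The case $\epsilon(K)=0$ (so $\tau(K)=\nu(K)=0$) is parallel: take $L'$ with $tb(L') \le -1$, set $n := -tb(L') \ge 1$, and note that $r(L')=tb(L')+1$. The binding condition from the $\epsilon=0$ half of Proposition~\ref{homvsmain} is now the extra cable-(SL) constraint $mn+1 \ge 1 - m\cdot tb(L')$, again saturated by this choice of $n$, with (SC) and (TN) for the cable automatic. The same cabling-and-connected-sum argument yields $c(\xi^-_{n+1/m}(L')) \ne 0$ for every $m$, and Corollary~\ref{ge-1m} covers every rational $\rho > tb(L') + n = 0$. The single remaining value $\rho = 0$ falls outside the strict inequality of the corollary; for it I invoke Theorem~\ref{mainthm} directly on integer contact $n$-surgery on $L'$, whose topological coefficient is $tb(L')+n = 0 = 2\tau(K)$, with (SC) saturated and (SL), (TN) automatic, so $c(\xi^-_n(L')) \ne 0$.

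The step I expect to require the most care is the arithmetic bookkeeping in Proposition~\ref{homvsmain}: one must verify that with $n = -r(L')$ (respectively $n = -tb(L')$) the cable surgery coefficient $mn+1$ saturates the relevant threshold \emph{for every} $m \ge 1$, and that the resulting $n$ is a positive integer so that Corollary~\ref{ge-1m} may be applied. Once this is in place, the rest is a direct repackaging of results already in the paper.
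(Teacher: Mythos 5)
Your argument is correct and follows essentially the same route as the paper: cable the knot, apply Theorem \ref{mainthm} via Proposition \ref{homvsmain}, split off the lens-space summand with Proposition \ref{cablingvssurg}, and finish with Corollary \ref{ge-1m}. The only differences are cosmetic: your preliminary negative stabilisation guarantees that the integer $n$ fed to Corollary \ref{ge-1m} is positive (neatly covering the edge case $r(L)=0$, where the paper's appeal to that corollary sits at the boundary of its stated hypotheses), and you run the cabling machinery in the $\epsilon(K)=0$ case as well, whereas the paper simply observes that Theorem \ref{mainthm} together with Corollary \ref{ge-n} already covers all $q\ge 2\tau(K)=0$ there.
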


\begin{rmk}
This is in fact stronger than Corollary \ref{qtight}, since Hom \cite{Hom} proved that $\epsilon(K)=0$ implies $\tau(K)=0$. Observe also that if $\tau(K)=g(K)>0$, then automatically $\epsilon(K)=1$ (see \cite{Hom}); in fact, we have the same implication under the weaker hypothesis $\tau(K)=g_*(K)>0$: this is obtained as a combination of Proposition 2.1 in \cite{LS1} and Proposition \ref{cobnon0}. In particular, if $K$ is not slice and its maximal self-linking number $\overline{sl}(K)$ is $2g_*(K)-1$, every manifold $S^3_q(K)$ with $q>2g_*(K)-1$ supports a tight contact structure (compare with the main results in \cite{LS1} and \cite{LStrans}).
\end{rmk}

\begin{proof}
Suppose that $L$ and $K$ are as in the statement, with $L$ satisfying \emph{(SL)} and $K$ satisfying \emph{(TN)}. Theorem \ref{mainthm} tells us that for all integers $m\ge 2\tau(K)$, we have a contact structure on $S^3_m(K)$ with nonvanishing contact invariant, and therefore tight.

Using Corollary \ref{ge-n}, we obtain contact structures on $S^3_q(K)$ with nonvanishing contact invariants for all $q\ge 2\tau$.

Now, let's call $t=tb(L), r=r(L)$, and recall that \emph{(SL)} implies $r\le0$. There is nothing left to prove when $\epsilon(K)=0$, so we can suppose that $\epsilon(K)=1$.

Proposition \ref{homvsmain} tells us that for every $n\ge1-mr\ge 1$ the Legendrian cable $L_{m,n}$ satisfies the three hypotheses in our main theorem, so that $c(\xi_{1-mr}^-(L_{m,1-mr}))\neq 0$. But, by Proposition \ref{cablingvssurg}, $\xi^-_{1-mr}(L_{m,1-mr})$ splits as a connected sum $\xi^-_{-r+1/m}(L)\#\eta_{m}$, and in particular $c(\xi^-_{1/m-r}(L))\neq 0$. This is a contact structure on $S^3_{t-r+1/m}(K) = S^3_{2\tau(K)-1+1/m}(K)$.

Appealing to Corollary \ref{ge-1m} concludes the proof in the case $\epsilon(K) = 1$.
\end{proof}

\begin{rmk}
Notice that the same trick doesn't work if $\epsilon(K)=0$, because of the odd behaviour of $\tau$ and $\epsilon$ for cables when the cabling coefficient $q$ in Theorem \ref{homthm} goes from positive to negative: these values are the `critical' values that allow us to reach slopes below $2\tau(K)$ when $\epsilon(K)=1$.
\end{rmk}

Let's recall now the definition of the transverse invariant $\tilde{c}$ \cite{LStrans}. Fix a topological knot $K\subset S^3$: the sequence of groups $\left(\HF(-S^3_n(K))\right)$ comes with a collection of maps $F_{\overline{W}_n}: \HF(-S^3_{n}(K))\to \HF(-S^3_{n-1}(K))$, and together they give rise to an inverse system $\left\{\HF(-S^3_{n}(K)), \phi_{f,g}\right\}_{g<f}$, where $\phi_{f,g}$ is the composition $F_{\overline{W}_f}\circ\dots\circ F_{\overline{W}_{g+1}}$. Lisca and Stipsicz call this inverse limit $H(S^3,K)$.

\begin{defn}
Given a transverse knot $T$, the invariant $\stilde{c}(T)$ is the class of the sequence $(c(\xi^-_n(L)))_{n\in\mathbb{N}}$ in $H(S^3,K)$, where $L$ is a Legendrian approximation of $T$.
\end{defn}

There's an ambiguity in the definition of $\stilde{c}$, coming from the ambiguity in the definition of $c$: once we fix a Legendrian approximation $L$ of $T$ and an identification of $S^3_L$ with the ``abstract'' sutured manifold $S^3_{K,tb(L)}$, though, $\stilde{c}$ is well-defined. The equality in the statement of Corollary \ref{ctilde} has to be understood in the sense that the two elements are the same up to fixing the two identifications.

It's proved in \cite{LStrans} that the invariant above is non-trivial (in the sense that it's not identically zero). On the other hand, we prove here that it doesn't detect more than the classical invariants:

\begin{proof}[Proof of Corollary \ref{ctilde}]
We know that $c(\xi^-_n(L))=0$ if $c(\xi)=0$, since $S_\pm \subset \ker\psi^\pm_n$, and we know that if $\xi=\xi_{\rm st}$, $c(\xi^-_n(L))=0$ unless $sl(T)=2\tau(K)-1$ and $\tau(K)=\nu(K)$.

Suppose therefore that $sl(T)=2\tau(K)-1 = 2\nu(K)-1$, and let $L'$ be \emph{any} Legendrian knot of topological type $K$ such that $tb(L')-r(L') = 2\tau(K)-1$ ($L'$ doesn't need to be a Legendrian approximation of $T$). Call $d$ the difference $d=tb(L)-tb(L')$, and suppose that $d>0$. Then for every $n>|d|$, and for every two identifications of $S^3_{L^{(d)}}$ and $S^3_{L'}$ with $S^3_{K,tb(L)}$ we have $c(\xi^-_{n}(L)) = c(\xi^-_{n+d}(L'))$, by Theorem \ref{mainthm}: as a consequence, the classes of the two sequences in $H(S^3,K)$ coincide.

Therefore $\tilde{c}$ can only see whether the two equalities $sl(T)=2\tau(K)-1$ and $\tau(K)=\nu(K)$ hold, and these are equalities in the classical invariants for $T$.
\end{proof}

\section{Proofs of technical lemmas}\label{technical}

This section will be rather dry, and is a detailed account of the various technical ingredients used in the proof.

\subsection{The Heegaard Floer lemma}

Recall that we want to prove that the surgery cobordism map $F_{-W_f}$ induced by the surgery cobordism from $-S^3$ to $-S^3_f(K)$ is injective for $f=2\nu(K)$ and vanishes for $f=2\nu(K)-2$.

Similar results appeared in \cite[Proposition 3.1]{OSHF} and \cite[Proposition 3.1]{He2}; this refined result follows from a computation in \cite{OSQsurg}.

\begin{proof}[Proof of Proposition \ref{cobnon0}]
The map $F_{-W_f}$ fits into the surgery exact triangle
\[
\xymatrix{
\HF(-S^3_f)\ar[rr] & & \HF(-S^3_{f-1})\ar[dl]\\
& \HF(-S^3)\ar[ul]
}
\]
Recall that if in an exact triangle of vector spaces $(U,V,W)$ we have $\dim U + \dim V = \dim W$, then the map between $U$ and $V$ is the zero map.

Having this in mind, we can prove by direct computation, using the `mapping cone' construction of \cite{OSint} (see also \cite{Jake}), that:
\[
\dim \HF(-S^3_f(K)) - \dim \HF(-S^3_{f-1}(K)) = \pm\dim \HF(-S^3),
\]
where the sign is a plus if $f = 2\nu(K)$ and is a minus if $f=2\nu(K)-2$. In fact, in \cite[Proposition 9.1]{OSQsurg}, Ozsv\'ath and Szab\'o compute the ranks of the two groups on the left hand side when $\tau(K)\ge 0$:
\[
\dim \HF(-S^3_f(K)) = |f|+2\max\{0,2\nu(K)-1-f\}+D,
\]
where $D$ is a constant, depending only on $K$.

The condition $\tau(K)\ge0$ can be always achieved by taking the mirror of the knot, if needed. If $\tau(K)=\nu(K)=0$, this dimension has two minima at $f=\pm1$, and therefore the map $F_{-W_f}$ is injective if $f=0$ and $f\ge 2$, and zero otherwise. If $\nu(K)\ge 1$, on the other hand, the dimension has a single minimum at $f=2\nu(K)-1$ (in fact, the graph of the dimension is a traslation of the graph of the absolute value), therefore $F_{-W_f}$ is injective if and only if $f\ge 2\nu(K)$.

We can now use Hom's results \cite{Hom} to recover what happens when $\tau(K)<0$: in that case, $\tau(\overline{K}) > 0$, and in particular $\epsilon(\overline{K}) = -\epsilon(K)\neq 0$. If $\epsilon(K)=1$, then $\epsilon(\overline{K})=-1$, and $\nu(K)=\tau(K)$, while $\nu(\overline{K}) = \tau(\overline{K})+1$, and $\dim\HF(-S^3_{f}(\overline{K})) = \dim\HF(S^3_{-f}(K))$ has a single minimum at $-f = 2\nu(\overline{K})-1$, that is exactly $f=2\nu(K)-1$. Similarly, if $\epsilon(K)=-1$, $\nu(K)=\tau(K)+1$ and $\nu(\overline{K})=\tau(\overline{K})$, and again $\dim\HF(-S^3_f(\overline{K}))$ has a single minimum at $f=2\nu(\overline{K})-1$,

The same argument used in the case $\tau(K)\ge 0$ shows that in either case $F_{-W_f}$ is injective if and only if $f\ge2\nu(K)$.
\end{proof}

\subsection{Sutured Floer lemmas}

One of the two key ingredients in the proof of \ref{ass1} and \ref{ass2} is the associativity of maps in triple Heegaard diagrams: recall the following result of Ozsv\'ath and Szab\'o.

Suppose that we have a quadruple Heegaard diagram $(\Sigma,\ba,\bb,\bc,\bd,z)$, satisfing some additional admissibility assumption \cite{OSPA}: there are triangle count maps associate to the triple Heegaard diagram. Call them $f_{\alpha\beta\gamma}$, $f_{\alpha\beta\delta}$, $f_{\alpha\gamma\delta}$, $f_{\beta\gamma\delta}$, so that, for example, $f_{\alpha\beta\gamma}: \CF(\Sigma,\ba,\bb,z)\to \CF(\Sigma,\ba,\bc,z)$, and label with the capitalized letters $F$ the induced maps on the homology level.

\begin{prop}[\cite{OStriangles}]\label{associativity}
These maps satisfy the identity:
\[F_{\alpha\gamma\delta}(F_{\alpha\beta\gamma}(x\otimes y)\otimes v) =  F_{\alpha\beta\delta}(x\otimes F_{\beta\gamma\delta}(y\otimes v))\]
for all $x\in \HF(Y_{\alpha\beta})$, $y\in \HF(Y_{\beta\gamma})$ and $v\in \HF(Y_{\gamma\delta})$.
\end{prop}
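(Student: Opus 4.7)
The plan is to prove the identity by introducing a rectangle-counting chain homotopy between the two compositions. More precisely, I will define a quadrilateral map $f_{\alpha\beta\gamma\delta}: \CF(Y_{\alpha\beta})\otimes \CF(Y_{\beta\gamma})\otimes \CF(Y_{\gamma\delta})\to \CF(Y_{\alpha\delta})$ by counting pseudo-holomorphic maps from a disc with four boundary marked points into $\Sym^g(\Sigma)$, where the four boundary arcs are sent to $T_\alpha, T_\beta, T_\gamma, T_\delta$ respectively, and whose four corners are sent to four given intersection points; as usual, only domains avoiding the basepoint $z$ are counted.

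The first step is to set up admissibility: one must arrange that the quadruple diagram is \emph{strongly admissible} so that for each fixed homotopy class of rectangle, the sum over domains is finite, and so that the component maps $f_{\alpha\beta\gamma}$ etc.\ are themselves well-defined. This is the analogue of the admissibility condition for triple diagrams in \cite{OSHF,OSPA} and can be achieved by winding transverse to each attaching set. The second step is to analyse the moduli space $\MS(\phi)$ of holomorphic representatives of a rectangle class $\phi$. The domain of a rectangle carries one real modulus (the cross-ratio of the four boundary marked points), so when the expected dimension of $\MS(\phi)$ is one, the modulus varies in $(0,\infty)$ as we move through $\MS(\phi)$.

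The heart of the proof is the compactification of $\MS(\phi)$: the two non-compact ends correspond to the modulus degenerating to $0$ and to $\infty$, and in each case the holomorphic rectangle splits off into a pair of holomorphic triangles glued along one of the two diagonals of the rectangle. Degeneration at one end gives compositions of the form $f_{\alpha\gamma\delta}\circ (f_{\alpha\beta\gamma}\otimes \mathrm{id})$, while the other end gives $f_{\alpha\beta\delta}\circ (\mathrm{id}\otimes f_{\beta\gamma\delta})$; additional ends correspond to disc-bubbles on each of the four boundary arcs, which together assemble into a $\de$-term of the form $f_{\alpha\beta\gamma\delta}\circ\de$ plus $\de\circ f_{\alpha\beta\gamma\delta}$. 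Counting the total boundary of the compactified one-dimensional moduli space gives zero mod 2, which translates to the chain-level identity
\[
f_{\alpha\gamma\delta}\circ(f_{\alpha\beta\gamma}\otimes \mathrm{id}) + f_{\alpha\beta\delta}\circ(\mathrm{id}\otimes f_{\beta\gamma\delta}) = \de\circ f_{\alpha\beta\gamma\delta} + f_{\alpha\beta\gamma\delta}\circ\de.
\]
Passing to homology, the right-hand side vanishes, giving the desired associativity.

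The main obstacles are the analytical ones: verifying Gromov compactness for the degeneration of the rectangle (including ruling out sphere bubbles through $z$ by choosing the almost-complex structure generically), checking that the gluing theorem matches holomorphic triangle pairs bijectively with the ends of $\MS(\phi)$, and, a priori, making sure the sum defining $f_{\alpha\beta\gamma\delta}$ has only finitely many nonzero terms in each degree (again an admissibility issue). All of these are carried out in detail in \cite{OStriangles}; the role of this proposition in our setting is purely as a black box, so I would simply import their proof verbatim and quote the statement as above.
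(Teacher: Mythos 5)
Your proposal is correct: the rectangle-counting chain homotopy, with the two ends of the one-parameter family of conformal structures degenerating into the two triangle compositions, is precisely the argument of \cite{OStriangles}, and the paper itself gives no independent proof of this proposition --- it imports it verbatim from that reference, exactly as you suggest doing. The only caveat worth recording is that in the sutured/arc-diagram applications later in the paper one must additionally require all multiplicities to vanish at the regions meeting the base disc, but this does not affect the argument.
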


The other key ingredient is given in Rasmussen's paper \cite{JakeHKM}: the philosophy is that gluing maps can be computed via triangle counts, given a handle decomposition of the gluing layer. In this thesis, we need three instances of this general fact: bypass attachments (Proposition \ref{Jakebypassprop} below), $\infty$- and +1-surgery maps (Proposition \ref{Jakesurgeryprop} below).

When we attach a bypass to a sutured manifold $(M,\Gamma)$ to obtain $(M,\Gamma')$, we change the sutures as in Figure \ref{bypass}: up to a 1-handle attachment (see below), we can assume that both $R_+$ and $R_+'$ are connected, so that both $(M,\Gamma)$ and $(M,\Gamma')$ are represented by an arc diagram. We can also suppose (see the rightmost picture in Figure \ref{bypass}) that the two arc diagrams live on the same Heegaard surface, that they share the $\alpha$-curves, all $\beta$-curves and all but one $\beta$-arc. Finally, we can assume that the two $\beta$-arcs where they differ intersect at exactly one point. Arguing as in the closed case, this determines a preferred $\Theta$-element in a triple arc diagram, which in turn allows us to define a triangle count: this triangle count is chain-homotopic to the bypass attachment map.

If we have a sutured manifold $(M,\Gamma)$ with torus boundary and $|\Gamma|=2$, we can attach a +1-surgery layer to get $(M',\{\gamma\})$ with sphere boundary. As before, we construct an arc diagram for $(M,\Gamma)$, and an arc diagram for $M'$ on the same Heegaard surface: all $\alpha$- and $\beta$-curves can be chosen to coincide, and the new $\beta$-curve can be chosen to intersect the $\beta$-arc exactly once. This determines a $\Theta$-element in a triple arc diagram, and the resulting triangle count induces $\psi_{+1}$ in homology.

\subsubsection{The proof of Proposition \ref{ass1}}

Recall that we want to prove that gluing maps associated to overtwisted contact structures vanish.

\begin{proof}[Proof of Proposition \ref{ass1}]
Suppose that there's an overtwisted disc $D\subset N$, and consider a small neighbourhood $B$ of it, with convex boundary. Then join $B$ to a boundary component of $N$ that is going to be glued to $M$, using a small neighbourhood $A$ of an arc. Call $N'$ the union of $A$, $B$ and a neighbourhood of the component of the boundary we've joined $B$ to, and suppose that the boundary of $N'$ is convex with respect to $\xi$. Call $N''$ the closure of the complement of $N'$ in $N$. Finally, let $\xi', \xi''$ be the restrictions of $\xi$ to $N'$ and $N''$ respectively.

\begin{claim}
We can suppose $N=N'$.
\end{claim}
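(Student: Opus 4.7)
The plan is to invoke the naturality of gluing maps under composition in order to reduce to a much simpler geometric situation. Decompose $N = N' \cup N''$, so that $M' = M \cup N' \cup N''$ with $N'$ sharing a boundary component with $M$. Then, by the composition theorem stated immediately after Theorem \ref{EHmap}, we have
\[
\Phi_\xi \;=\; \Phi_{\xi|_{N''}} \circ \Phi_{\xi|_{N'}},
\]
where the intermediate sutured manifold $(M \cup N', \Gamma_1)$ carries the dividing set $\Gamma_1$ induced by $\xi$ on the newly created boundary component.

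The first task is to check that this factorisation is legitimate. This amounts to verifying that $\partial N'$ is $\xi$-convex, which we arranged at the outset by the careful choice of the arc $A$, the convex neighbourhood $B$ of the overtwisted disc $D$, and the collar of the boundary component of $N$ being glued to $M$. The second observation, which is the heart of the reduction, is that $\xi|_{N'}$ is itself overtwisted: by construction $B \subset N'$ still contains the overtwisted disc $D$.

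Combining these two points gives the reduction: if the proposition is known in the special case $N = N'$, then applied to $\xi|_{N'}$ it yields $\Phi_{\xi|_{N'}} = 0$, and hence $\Phi_\xi = \Phi_{\xi|_{N''}} \circ 0 = 0$ in general. Consequently, we may and do assume $N = N'$ henceforth. The real work, which I expect to be the principal obstacle of the overall proof (addressed in the subsequent steps), is then to verify vanishing in this restricted setting, where $N$ has the very concrete form of a collar of $\partial M$, a thin tube along an arc, and a standard convex neighbourhood of an overtwisted disc; this should be accessible by exhibiting a partial open book or an arc diagram for $(M', \Gamma')$ in which $\Phi_\xi$ is represented by a triangle-count that manifestly vanishes.
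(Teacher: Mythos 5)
Your argument is correct and coincides with the paper's: both reduce via the composition formula $\Phi_\xi = \Phi_{\xi|_{N''}}\circ\Phi_{\xi|_{N'}}$ (naturality of gluing maps), noting that $\xi|_{N'}$ is still overtwisted since it contains $D$. The extra remarks about arranging $\partial N'$ to be convex and about what the reduced setting will look like are fine elaboration but do not change the substance.
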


\begin{proof}
By naturality of gluing maps, $\Psi_\xi = \Psi_{\xi''}\circ \Psi_{\xi'}$, and if $\Psi_{\xi'}=0$, then in particular $\Psi_\xi=0$.
\end{proof}

Following Ozbagci \cite{Ozbh} (see also Giroux's criterion for overtwistedness of contact structures near a convex surface), we can write the gluing of the overtwisted disc as a double bypass attachment, along a curve that makes a small dollar symbol \$ across a single suture, as in the top left of Figure \ref{zeromap}: unfortunately, there's a small technical detail we need to face: attaching the second bypass disconnects $R_+$. To overcome this obstacle, we first attach a contact 1-handle $H$ -- and this doesn't affect the sutured Floer homology groups, since it's the inverse of a product disc decomposition -- and then attach the two bypasses to the new manifold, as shown in the second left figure in \ref{zeromap}.

Suppose that we start off with an arc diagram $\H_{0} = (\Sigma_0, \ba_0, \bb^a_0, \bb^c_0, D_0)$ for $(M,\Gamma)$, as in the top left of Figure \ref{zeromap}. We obtain an arc diagram for $(M\cup H, \Gamma)$ by adding a 1-handle to $\Sigma_0$, obtaining a surface $\Sigma=\Sigma_0\# T^2$. The set of $\alpha$-curves is the same as before, plus a single $\alpha$-curve $\alpha_0$ that is the belt of the (3-dimensional) handle $H$. The set of $\beta$-curves is $\bb_0^c$, and we add a single $\beta$-arc $\beta_0$ that runs once through the handle, as in the top right corner of Figure \ref{zeromap}. Call $\H_\beta$ this new diagram.

Attaching the first bypass we obtain an arc diagram $\H_\gamma$. After attaching the second bypass in the same region, we obtain a third diagram $\H_\delta$, looking like the bottom right picture in Figure \ref{zeromap}. Call $\H_{\alpha\beta\gamma}$, $\H_{\alpha\beta\delta}$, $\H_{\alpha\gamma\delta}$ and $\H_{\beta\gamma\delta}$ the three triple Heegaard diagram we obtain.

It's straightforward to check that the admissibility conditions of \cite{OSPA} are satisfied by the arc diagram $(\Sigma,\ba,\bb,\bc,\bd,D)$.

As for the proof of Proposition \ref{EHgrading}, in order to obtain the bypass attachment maps we need to count triangles in the triple Heegaard diagrams $\H_{\alpha\gamma\beta}$, $\H_{\alpha\delta\gamma}$ and then take the associated \emph{cohomological} maps. More precisely, to the first bypass attachment on $\H_\beta$ we can associate a $\Theta$-element $\bfTheta_{\beta\gamma}$ constructed as follows: the point on the arc $\beta_0$ is the only intersection point of $\beta_0$ with the arc $\gamma_0$; every other $\gamma$-curve in $\H_\gamma$ is a small perturbation of a $\beta$-curve in $\H_\beta$, and therefore there's a preferred choice among the two intersection points as in \cite{OSHF}. We then have:

\begin{prop}[\cite{JakeHKM}]\label{Jakebypassprop}
The map induced in cohomology by the triangle count map $f_{\alpha\gamma\beta}(\cdot\otimes\bfTheta_{\beta\gamma})$ is the gluing map associated to the bypass attachment.
\end{prop}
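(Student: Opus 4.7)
The plan is to realise the bypass layer as a contact handle attachment (or a short sequence of such) and match the corresponding gluing map against the triangle count in $\H_{\alpha\beta\gamma}$. Recall that Honda's description of a bypass presents it as a neighbourhood of a Legendrian arc running across a dividing curve, which in Giroux--convex language decomposes as a contact $1$-handle followed by a contact $2$-handle (or, dually, as a single $2$-handle attachment once we allow a preparatory stabilisation of the Heegaard surface). Our arc diagrams $\H_\beta$ and $\H_\gamma$ have been chosen so that they already share all data except for a single pair of $\beta$-arcs $(\beta_0,\gamma_0)$ meeting transversely in one point; this is exactly the combinatorial footprint left by such a $2$-handle attachment, and it suggests that the triangle map with the canonical $\bfTheta_{\beta\gamma}$ should reproduce the handle's gluing map.

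My first step would be to verify this in the model case. Concretely, take a neighbourhood $N_0$ of the bypass arc with convex boundary, represented by a small sub-arc diagram in which only $\beta_0$ and $\gamma_0$ appear, and write down an explicit partial open book supporting the unique tight contact structure on $N_0$ prescribed by Honda. Computing $EH$ from this partial open book and comparing to the triangle count on the small diagram will identify, in this local model, the preferred top-degree intersection point on every perturbed pair $(\beta_i,\gamma_i)$ together with the unique intersection point on $(\beta_0,\gamma_0)$ as exactly the cycle representing $\bfTheta_{\beta\gamma}$. This is the standard Ozsv\'ath--Szab\'o argument that identifies top-degree generators in triple diagrams with stabilised pieces.

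Next I would globalise using the naturality of gluing maps together with the associativity statement in Proposition \ref{associativity}. The ambient sutured manifold $(M,\Gamma)$ is represented by the sub-diagram of $\H_\beta$ disjoint from $N_0$, and the full arc diagram $\H_\beta$ is obtained by gluing the local sub-diagram onto it; the same holds for $\H_\gamma$. Because the $\alpha$-curves and all remaining $\beta/\gamma$-data agree outside $N_0$, every holomorphic triangle in $\H_{\alpha\beta\gamma}$ counted by $f_{\alpha\beta\gamma}(\cdot\otimes\bfTheta_{\beta\gamma})$ splits as a triangle supported in the local model (computing the $EH$ of the bypass layer) tensored with a ``constant'' contribution from the small Hamiltonian isotopies outside $N_0$. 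Associativity then lets me identify the full triangle map with the composition (tensor product) of the local bypass gluing map and the identity on the complement, which by HKM's definition is precisely $\Phi_{\xi_{\rm bypass}}$.

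The hard step, and the one I would expect to occupy the bulk of the argument, is the local model computation in step one: namely pinning down the partial open book so that its associated Heegaard diagram is literally the local $\H_{\alpha\beta\gamma}$ (after handleslides), and then matching the $EH$-cycle with the top-degree generator. The rest is bookkeeping via associativity and the standard stretching-the-neck argument that justifies the splitting of triangle counts across the two pieces. Finally, since $SFH$ uses the orientation-reversed sutured manifold, the chain-level triangle map becomes a \emph{cohomological} map in the notation of the proposition, which is why the arguments are written as $f_{\alpha\gamma\beta}(\cdot\otimes\bfTheta_{\beta\gamma})$.
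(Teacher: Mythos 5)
You should be aware that the paper does not actually prove this statement: it is quoted verbatim from Rasmussen's \emph{Triangle counts and gluing maps} \cite{JakeHKM}, which is listed as ``in preparation,'' and the surrounding text only sets up the arc diagrams $\H_\beta$, $\H_\gamma$ and the element $\bfTheta_{\beta\gamma}$ so that the citation can be invoked. There is therefore no in-paper argument to compare yours against; what follows is an assessment of your sketch on its own terms.

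Your overall strategy --- decompose the bypass layer into contact handles, verify the identification of the gluing map with a triangle count in a local model via a partial open book, then globalise --- is consistent with the philosophy the paper attributes to \cite{JakeHKM}, and your closing remark about why the map is \emph{cohomological} (with the reversed index order $f_{\alpha\gamma\beta}$) matches the paper's conventions. However, there is a genuine misstep in the globalisation: Proposition \ref{associativity} is the Ozsv\'ath--Szab\'o associativity law for a \emph{quadruple} diagram, relating two ways of composing triangle maps; it cannot be used to split a single triangle count into a ``local piece tensor identity on the complement.'' The localisation you need --- that every positive Maslov index zero triangle with the $\bfTheta_{\beta\gamma}$ corner is supported near the bypass region and contributes the expected nearest-point map outside it --- must instead come from a direct analysis of positive domains in the triple diagram (the kind of argument the paper carries out explicitly in Claim \ref{zerolemma} and Lemma \ref{assotheta}, using the vanishing of multiplicities at regions touching the base disc) or from a genuine neck-stretching argument with the attendant gluing analysis. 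You gesture at this but supply neither. Likewise the ``hard step'' you flag --- matching the partial open book's Heegaard diagram and its $EH$-cycle with the local triple diagram and $\bfTheta_{\beta\gamma}$ --- is precisely the content of the proposition in the model case, so leaving it entirely open means the sketch does not yet contain a proof; it reformulates the statement as two unproved sub-claims.
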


Similarly, there's a $\Theta$-element $\bfTheta_{\gamma\delta}$ in $\H_{\alpha\delta\gamma}$, and the associated triangle count map $f_{\alpha\delta\gamma}(\cdot\otimes\bfTheta_{\gamma\delta})$ induces the gluing map associated to the second bypass attachment.

Since we're working over the field $\F_2$, studying the maps induced in cohomology is the same as studying the maps associated in homology, which is what we're going to do from now on.

\begin{figure}
\begin{center}
\includegraphics[scale=.53]{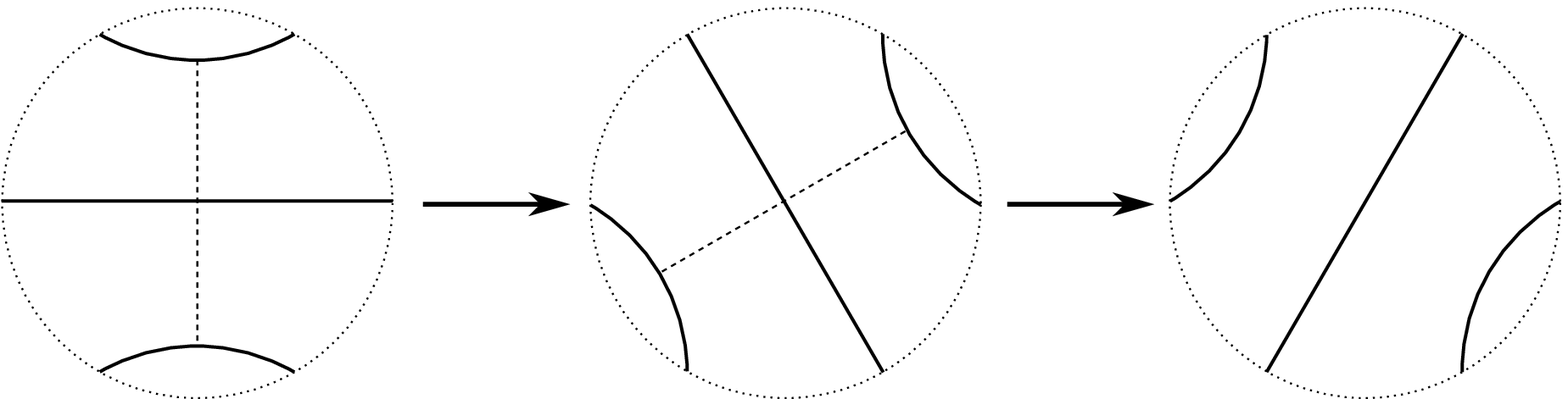}
\hskip 1 cm
\includegraphics[scale=.53]{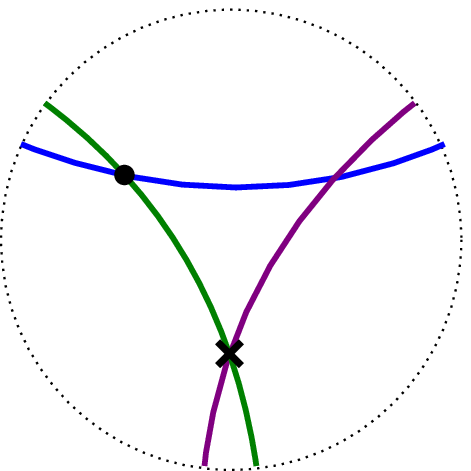}
\end{center}
\caption{The three circles on the left show the (local) effect of a double bypass attachment to the dividing curves of a convex boundary. The figure on the right shows what happens locally to the $\beta$-curves of the three arc diagrams coming from the figure on the left: the blue curve is a $\beta$-arc for the first diagram, the green curve is a $\gamma$-arc for the second diagram, and the purble one is a $\delta$-arc for the third diagram. The two intersection points in evidence are the points in $\bfTheta_{\beta\gamma}$ and $\bfTheta_{\gamma\delta}$ on the arcs shown.}\label{bypass}
\end{figure}

Call $(M',\Gamma')$ the sutured manifold defined by $\H_\delta$, so that, at the three-manifold level, $M' = M \cup H \cup N$, and let $\bfTheta_{\beta\delta}$ be the $\Theta$-element in the triple Heegaard diagram $\H_{\alpha\delta\beta}$. The following claim is a triangle count in $\H_{\delta\gamma\beta}$.

\begin{claim}\label{zerolemma}
$f_{\delta\gamma\beta}(\bfTheta_{\beta\gamma}\otimes\bfTheta_{\gamma\delta}) = \bfTheta_{\beta\delta}$.
\end{claim}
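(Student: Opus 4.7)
The plan is to verify the identity by direct inspection of holomorphic triangles in the quadruple diagram, splitting the Heegaard surface $\Sigma$ into a ``bypass region'' $R$ containing the active arcs $\beta_0, \gamma_0, \delta_0$ and a ``Hamiltonian region'' $\Sigma\setminus R$ where the three curve systems agree up to small perturbation. The relevant components of the three $\bfTheta$-elements split accordingly: outside $R$ they are precisely the top-graded nearest-point generators of Ozsv\'ath--Szab\'o, while inside $R$ they consist of the three distinguished intersection points of the active arcs.

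On the Hamiltonian region I would invoke the standard ``nearest-point'' model from \cite{OStriangles}: for curves differing by small exact Hamiltonian isotopies, the triangle count is concentrated in small index-zero triangles and sends the pair of top-degree generators to the corresponding top-degree generator of $\HF(\bb,\bd)$. Inside $R$, the configuration depicted on the right of Figure \ref{bypass} shows that the distinguished intersection points $\theta^{\rm loc}_{\beta\gamma}\in\beta_0\cap\gamma_0$ and $\theta^{\rm loc}_{\gamma\delta}\in\gamma_0\cap\delta_0$ cobound, together with the unique intersection point $\theta^{\rm loc}_{\beta\delta}\in\beta_0\cap\delta_0$, a single small embedded triangular disc $T$ in $\Sigma$. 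A direct check shows that $T$ has Maslov index zero, has multiplicity one on exactly one region and zero elsewhere, and represents a genuine pseudo-holomorphic triangle for a generic almost-complex structure close to a split one; it therefore contributes $1\in\F$ to the local count.

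The main obstacle will be ruling out other contributions to the triangle count. After arranging the quadruple $(\Sigma,\ba,\bb,\bc,\bd,D)$ to be admissible in the sense of \cite{OSPA} (which can always be done by winding, preserving the bypass-region picture), the remaining contributions come from triangles whose domains differ from that of $T$ or the nearest-point ones by a triply-periodic domain. Any such extra triangle ending at the expected output vertex must have strictly larger area, while any triangle ending at a different vertex $\by\in\CF(\Sigma,\bd,\bb)$ either acquires negative multiplicity somewhere in $R$ or picks up positive multiplicity at a region touching the base-disc $D$ of the arc diagram; both possibilities are forbidden. Putting the local and global counts together yields
\[
f_{\delta\gamma\beta}(\bfTheta_{\beta\gamma}\otimes\bfTheta_{\gamma\delta}) \;=\; \bfTheta_{\beta\delta},
\]
as required. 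A cross-check is available via Proposition \ref{associativity}: the composition of the two bypass gluing maps described by Proposition \ref{Jakebypassprop} must agree with the gluing map of the combined contact layer (by naturality, Theorem \ref{EHmap}), which on non-stable classes is detected by $\bfTheta_{\beta\delta}$; this forces the coefficient of $\bfTheta_{\beta\delta}$ in the output to be $1$ and provides an independent consistency check on the sign of the local model.
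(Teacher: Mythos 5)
Your overall strategy --- exhibit the obvious small-triangle domain $\overline{\D}$ and rule out every other contribution --- is the same as the paper's, and your localisation of the count into a bypass region and a Hamiltonian-perturbation region mirrors the paper's decomposition $\D=\sum_i\D_i$, which in the arc-diagram setting is justified by the requirement that domains have multiplicity zero at every region touching the base disc $D$. The gap is in the exclusion step. You claim that any competing triangle either has strictly larger area, or acquires a negative multiplicity, or picks up positive multiplicity at a region touching $D$. Neither area nor this dichotomy is the right exclusion principle: the count runs over \emph{positive} domains of Maslov index zero, so a positive domain of larger area would still contribute, and there genuinely are positive domains avoiding $D$ other than $\overline{\D}$. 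Concretely, in each Hamiltonian region there is an immersed triangle running around the handle, with multiplicity two on the small shaded triangle of Figure \ref{fassotheta0}; and for the alternative output vertex $y_i'\in\beta_i\cap\delta_i$ there are two positive domains obtained by adding one of the bigons in $\pi_2(y_i,y_i')$ to the small triangle. None of these is excluded by your criteria. The paper excludes them by computing their Maslov index via Sarkar's formula for Whitney $n$-gons, showing each such local piece contributes strictly more than $1/2$ to $\mu(\D)$, whence $\mu(\D)>0=\mu(\overline{\D})$ for every positive $\D\neq\overline{\D}$. Without some such index (or equivalent) computation the enumeration is not established.

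The proposed cross-check via associativity and naturality of gluing maps does not close the gap either: composition laws for gluing maps only pin down their behaviour on the subspace $\mathcal{EH}(M,\Gamma)$ spanned by contact classes, which --- as remarked after Proposition \ref{ass1} --- is in general a proper subspace of $SFH(-M,-\Gamma)$, whereas Claim \ref{zerolemma} is needed precisely to control the triangle-count map on the whole group.
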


\begin{figure}
\begin{center}
\includegraphics[scale=.75]{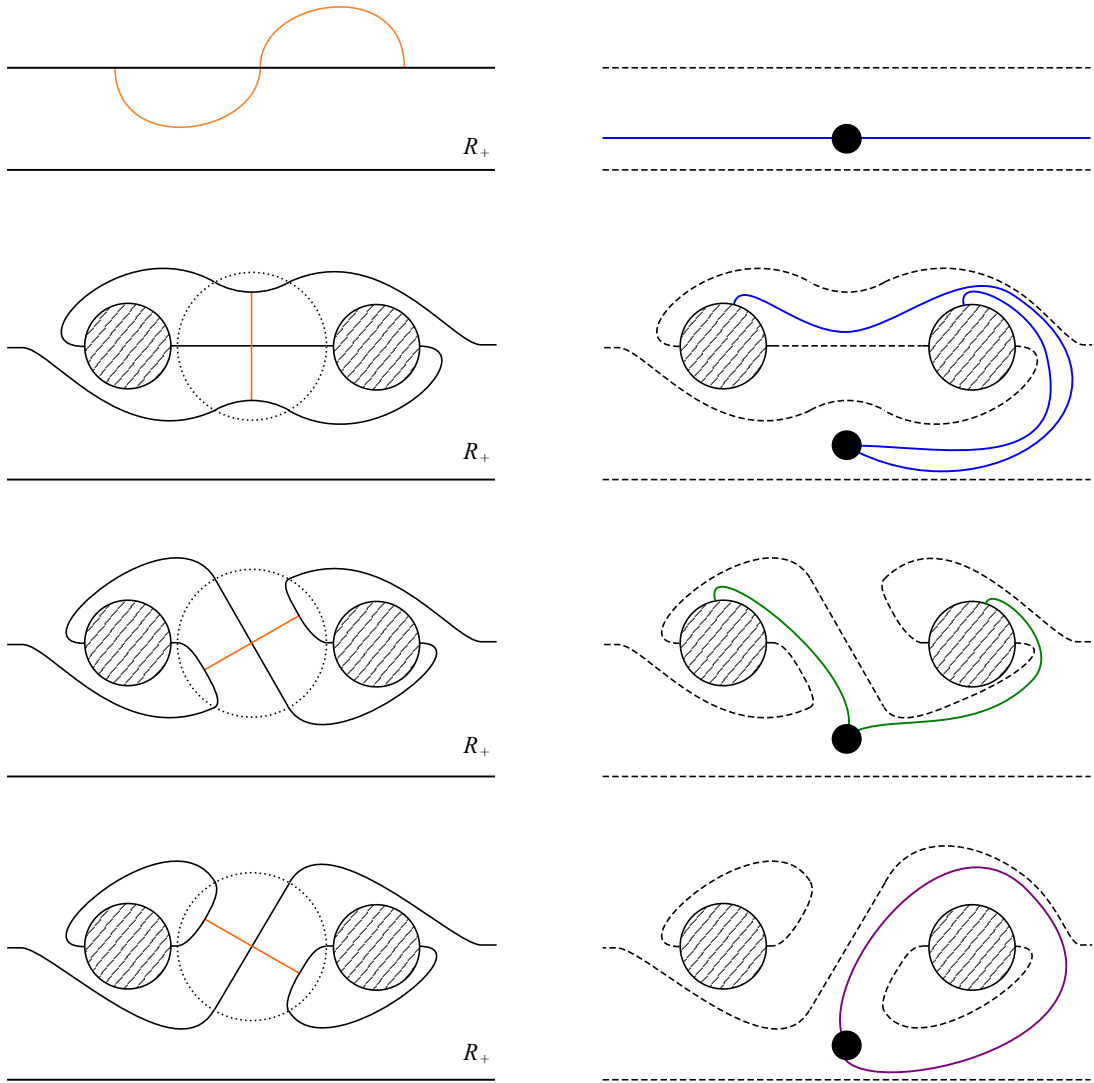}
\end{center}
\caption{On the left column we show $\de M$, sutures, bypasses and their effect on the sutures. On the right, we show associated arc diagrams.}\label{zeromap}
\end{figure}

\begin{figure}
\begin{center}
\includegraphics[scale=.85]{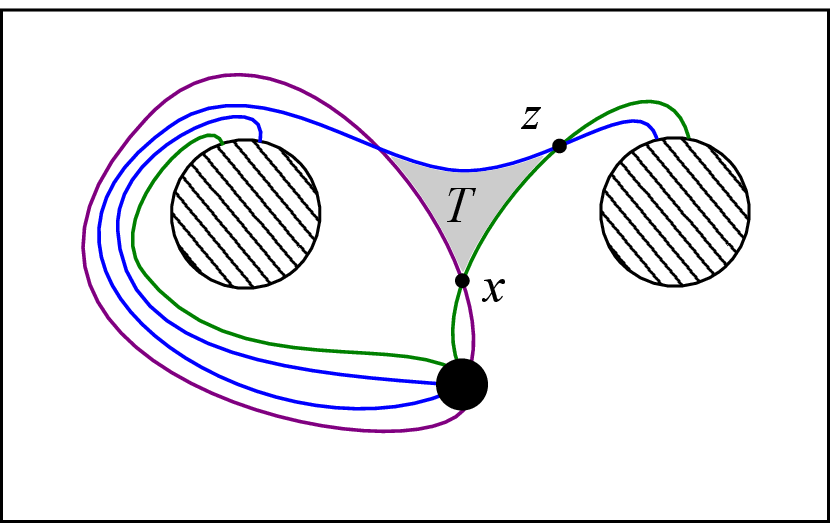}
\end{center}
\caption{The interesting portion of the triple arc diagram of Claim \ref{zerolemma}.}\label{bypass-arcs}
\end{figure}

\begin{proof}
We want to count all possible triangular domains $\D$ in $\H_{\delta\gamma\beta}$.

For each index $i$, $\beta_i$ intersects $\gamma_i$, $\delta_i$ and no other curve. Moreover, for $i>0$, both $\beta_i$ and $\delta_i$ are adjacent to a region touching the base disc $D$ on \emph{both} sides, so $\D$ can have positive multiplicities in this area only. In particular, $\D = \sum \D_i$, with $\D_i$ supported in the spanning region for all $i>0$, and $\D_0$ supported near $\beta_0$.

There is a domain $\overline{\D}\in\pi_2(\bfTheta_{\beta\gamma}, \bfTheta_{\gamma\delta}, \bfTheta_{\beta\delta})$ which is easy to spot: it is the sum of the small triangle $T$ in Figure \ref{bypass-arcs} and the small triangles shaded in figure \ref{fassotheta0}. It's well known (see \cite{OStriangles}) that this domain has Maslov index 0 and that the associated moduli space of triangles contains one element, thus providing us with a $\bfTheta_{\beta\delta}$ summand. We want to show that this is the only positive domain of Maslov index 0 in the triple Heegaard diagram.

Let's suppose that $\D=\sum \D_i$ as before, is a positive triangular domain, with multiplicity zero at every region touching the base disc.

In what follows, we'll call $z_i := (\Theta_{\beta\gamma})_i, x_i:=(\Theta_{\gamma\delta})_i, y_i:=(\Theta_{\beta\delta})_i$, and, when $i>0$, $y'_i$ the other intersection point of $\beta_i$ and $\delta_i$.

Let's first consider what happens in the region containing $\beta_0, \gamma_0$ and $\delta_0$: here all pairwise intersections are fixed, and are $x_0,y_0$ and $z_0$. The base disc $D$ lies on all three arcs, only one of the two segments into which the three intersection points divide the arcs can be part of $\de\D_0$. In particular, $\de\D_0$ has to coincide with $\de\overline{\D}$. Also, at every intersection, three of the four angles are contained in regions touching the base disc, therefore multiplicities have to be zero outside $T$, and in particular $\D_0 = T$.

\begin{figure}[h]
\begin{center}
\includegraphics[scale=.8]{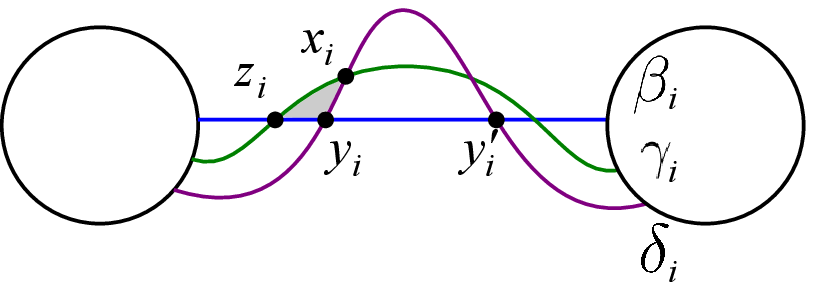}
\end{center}
\caption{The triple Heegaard diagram near $\beta_i$ for $i>0$.}\label{fassotheta0}
\end{figure}

Suppose $\D_i\in\pi_2(z_i, x_i, y_i)$, when $i>0$ (see Figure \ref{fassotheta0}): let's follow $\de\D$ from $z_i$ with the orientation given by $\D_i$. We have to stop at $x_i$ without winding multiple times, because there's a region that touches both sides of $\beta_i$ (and also of $\delta_i$) and the base disc $D$, so the multiplicity of $\D_i$ in that region has to be 0.

There are two possible segments: one is contained in the plane in Figure \ref{fassotheta0}, the other one runs inside the handle. In the first case, when we arrive at $x_i$ we have to turn \emph{left} (because of orientations) and we have to stop at $y_i$ without running around $\delta_i$ multiple times (because now $\delta_i$ touches a region containing the basepoint from both sides), and in particular $\D_i$ is the small triangle shaded in Figure \ref{fassotheta0}.

In the second case, the domain is an immersed triangle that has multiplicity two on the small triangle region shaded: using Sarkar's computation \cite{Sarkar}, we see that this domain gives a contribution to $\mu(\D)$ which is strictly bigger than 1/2.

Suppose now $\D_i\in\pi_2(z_i, x_i, y'_i)$: reasoning as above, we see that there are only two choices for $\D_i$, each obtained by adding one of the bigons in $\pi_2(y_i,y_i')$ to the small shaded triangle. Again, using Sarkar's computation, we see that these domains give a contribution bigger than 1/2 to $\mu(\D)$.

Summing up, if $\D\neq\overline{\D}$, $\mu(\D)$ is strictly bigger than $\mu(\overline{\D})=0$, and therefore $\D$ is not involved in the triangle count.
\end{proof}

Thanks to the claim and Proposition \ref{associativity}, we can consider the single triangle count $f_{\alpha\delta\beta}(\cdot\otimes\bfTheta_{\beta\delta})$. In order to achieve admissibility for $\H_{\alpha\delta\beta}$, we need to perturb the new $\alpha$-curve so that it intersects the new $\delta$-arc in a pair of canceling points as in Figure \ref{zeromap3}.

\begin{figure}
\begin{center}
\includegraphics[scale=.70]{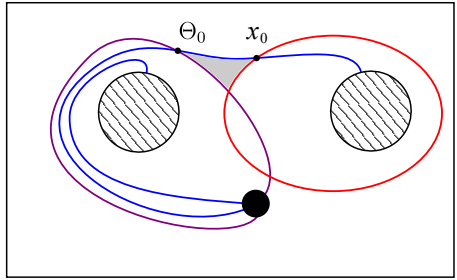}
\end{center}
\caption{The portion of $\H_{\alpha\delta\beta}$ considered in Claim \ref{nopositive}.}\label{zeromap3}
\end{figure}

\begin{claim}\label{nopositive}
There are no positive triangular domains in $\H_{\alpha\delta\beta}$ that appear in the triangle count for $f_{\alpha\delta\beta}$.
\end{claim}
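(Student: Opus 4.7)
The plan is to mimic the local analysis carried out in Claim~\ref{zerolemma}, but now applied to the single triple arc diagram $\H_{\alpha\delta\beta}$ depicted in Figure~\ref{zeromap3}. First, I would identify the regions of $\H_{\alpha\delta\beta}$ that are adjacent to the base disc $D$ on both sides of some curve or arc: any domain that contributes to $f_{\alpha\delta\beta}$ is required to have multiplicity zero in such regions. In particular, as in the previous claim, the relevant $\delta$-arc $\delta_0$ on the handle $H$ bounds, on each of its sides, a region touching $D$, and similarly for the old $\beta$-arc $\beta_0$; this strongly restricts how $\partial \D$ can travel along these arcs.

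Next, I would fix a candidate positive triangular domain $\D = \sum \D_i$ in $\pi_2(\x,\bfTheta_{\beta\delta},\y)$ appearing in the triangle count (so $\x$ is a generator of $\CF(\H_{\alpha\delta})$ and $\y$ of $\CF(\H_{\alpha\beta})$), and decompose it according to the local pieces of the diagram exactly as was done for $\D = \sum \D_i$ in the proof of Claim~\ref{zerolemma}. Away from the handle $H$ the analysis is formally identical, since the $\gamma$-curves were obtained from the $\beta$-curves by small Hamiltonian perturbation and the $\delta$-curves from the $\beta$-curves by small double perturbation, so the only place where the geometry genuinely differs is in the neighborhood of $\beta_0$, $\delta_0$, and the perturbed $\alpha$-curve $\alpha_0$ shown in Figure~\ref{zeromap3}.

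The heart of the argument is therefore a local combinatorial check in this neighborhood. I would follow $\partial \D_0$ starting from the local component of $\bfTheta_{\beta\delta}$ on $\beta_0 \cap \delta_0$, use the two canceling intersection points between the perturbed $\alpha_0$ and $\delta_0$ to pin down the two possible short $\alpha\delta$-arcs in $\partial \D_0$, and then verify that in each case closing $\partial \D_0$ back to the $\alpha\beta$-vertex of $\y$ either forces $\D_0$ to pick up positive multiplicity in a region adjacent to $D$ (which is forbidden), or produces an immersed domain whose contribution to $\mu(\D)$, via Sarkar's formula, is strictly positive. Combined with the estimate $\mu(\D_i) \ge 0$ for $i>0$ with equality only on the small shaded triangles already used in Claim~\ref{zerolemma}, this forces $\mu(\D) > 0$, so $\D$ does not appear in the triangle count and $f_{\alpha\delta\beta}(\,\cdot\otimes\bfTheta_{\beta\delta}) = 0$.

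The main obstacle is purely local: one has to enumerate the small positive domains near the perturbed $\alpha_0$ carefully enough to be sure that the two canceling intersections do not conspire to produce a Maslov-index-zero triangle, the way the small shaded triangle did in Claim~\ref{zerolemma}. The geometric reason this does not happen is that, after bypass attachment, the portion of $D$ adjacent to the handle sits on \emph{both} sides of the relevant segment of $\delta_0$, whereas in Claim~\ref{zerolemma} there was an actual free region (the small triangle $T$) allowing a count of $1$. Once this local verification is complete, the vanishing of $f_{\alpha\delta\beta}(\,\cdot\otimes\bfTheta_{\beta\delta})$, together with Claim~\ref{zerolemma} and the associativity statement of Proposition~\ref{associativity}, gives $\Psi_{\xi'} = 0$ and hence $\Psi_\xi = 0$, completing the proof of Proposition~\ref{ass1}.
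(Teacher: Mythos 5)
Your plan is genuinely different from the paper's argument, and substantially more elaborate than it needs to be. The paper's proof of Claim~\ref{nopositive} does not decompose $\D=\sum\D_i$, does not invoke Sarkar's formula, and never estimates $\mu(\D)$: it simply observes that the arc $\beta_0$ carries only two intersection points, the $\bfTheta_{\beta\delta}$-component $\Theta_0$ (on $\beta_0\cap\delta_0$) and the $\alpha\beta$-vertex $x_0$ (on $\beta_0\cap\alpha_0$), so for any candidate $\D$ the portion $\de\D\cap\beta_0$ would have to be the unique segment joining them. But in Figure~\ref{zeromap3} the region adjacent to that segment touches the base disc $D$, so its multiplicity must be $0$, which is incompatible with $\de\D$ running along the segment. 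Hence $\pi_2(\cdot,\cdot,\bfTheta_{\beta\delta})=\varnothing$: there are no triangular domains of \emph{any} Maslov index respecting the base-point condition, not merely none of index zero.

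Your plan does mention the base-disc constraint as one of two possible obstructions, but it then defers to a Maslov-index estimate (the second branch) as the decisive one, and you concede this enumeration is left undone. That is the gap: you have not shown that the index really is positive in the cases you would need, and in fact that computation is unnecessary, because the first branch is the only one that occurs and it already closes the argument. The Sarkar-style Maslov accounting is genuinely needed in the proof of Claim~\ref{zerolemma} (where $\pi_2$ is nonempty and one must isolate the one index-$0$ representative), which is presumably why you reached for it here; but after the double bypass the geometry near $\beta_0$ is strictly more constrained, and the base-disc condition alone kills everything. You would do well to look for this shortcut before setting up the full decomposition-and-index machinery: once you see that $\de\D$ cannot even leave $\Theta_0$ along $\beta_0$, the claim is immediate.
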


\begin{proof}
Consider Figure \ref{zeromap3}: this is the same part of the diagram of Figure \ref{fassotheta0}, but we're now drawing the $\alpha$-curve instead of the $\gamma$-arc. We'll argue by contradiction: let $\D$ be such a domain.

The two points $\Theta_0$ and $x_0$ are the only two intersection points on the arc $\beta_0$: reasoning as in Claim \ref{zerolemma}, the boundary $\de\D\cap\beta_0$ is the segment between these two points, oriented from $\Theta_0$ to $x_0$. But the region above the segment in Figure \ref{zeromap3} touches the base disc $D$, therefore the multiplicity there has to be zero, showing that $\de\D\cap\beta_0 = \varnothing$.

In particular, $\pi_2(\cdot,\cdot,\bfTheta_{\beta\delta}) = \varnothing$.
\end{proof}

This immediately shows that $f_{\alpha\beta\delta}(\cdot\otimes\bfTheta_{\beta\delta}) = 0$, which in turn implies $\Psi_{\xi} = 0$.
\end{proof}

\subsubsection{The proof of Proposition \ref{ass2}}

Recall now that Proposition \ref{ass2} says that the diagram
\[
\xymatrix{
SFH(-S^3_{K,f})\ar[rr]^{\psi_{+1}}\ar[drr]_{\psi_\infty} & & SFH(-S^3_{f+1}(K)(1))\ar[r]^{\sim} &\HF(-S^3_{f+1}(K))\\
& & SFH(-S^3(1))\ar[r]^{\sim} & \HF(-S^3)\ar[u]^{F_{-W_{f+1}}}
}
\]
commutes. Notice that each of the three maps involved is computed by a triangle count in some triple Heegaard diagram.

\begin{rmk}
Ozv\'ath and Szab\'o \cite{OScontact} proved that the total cobordism map associated to a contact $+1$-surgery cobordism $W$ carries $c(\xi)$ to $c(\xi_{+1})$; more recently, Baldwin \cite{B} proved that there exists a $\spin$-structure $\mathfrak{t}_0$ on $W$ such that $F_{W,\mathfrak{t}_0}$ carries $c(\xi)$ to $c(\xi_{+1})$. In Proposition \ref{ass2} we don't worry about $\spin$-structures, and consider the total map only.
\end{rmk}

\begin{proof}[Proof of Proposition \ref{ass2}]
Let's fix any Heegaard diagram $\H'$ for $-(S^3,K)$, attach a 1-handle with feet next to the two basepoints $z,w$, and build the three Heegaard diagrams involved in the statement in the usual way: all curves except the ones intersecting the core $\alpha_0$ of the 1-handle are small Hamiltonian perturbations one of the other, and $\gamma_0$ and $\beta_0$ are parallel outside a small neigbhourhood of $\beta_0$. Moreover, any two among $\beta_0, \gamma_0$ and $\delta_0$ intersect in one single point. As for the proof of Proposition \ref{ass1}, we can associate a $\Theta$-element $\bfTheta_{\beta\gamma}$, $\bfTheta_{\gamma\delta}$, $\bfTheta_{\beta\delta}$ to each of the three triple diagrams.

It's also easy to check that all three triple diagrams are compatible \cite{OSPA}.

The map $F_{-W_f}$ is the map induced in cohomology by the triangle count $f_{\alpha\delta\gamma}(\cdot\otimes\bfTheta_{\gamma\delta})$ \cite{OStriangles}.

\begin{prop}[\cite{JakeHKM}]\label{Jakesurgeryprop}
The map $\psi_\infty$ is the map induced in cohomology by the triangle count $f_{\alpha\gamma\beta}(\cdot\otimes\bfTheta_{\beta\gamma})$.

The map $\psi_{+1}$ is the map induced in cohomology by the triangle count $f_{\alpha\delta\beta}(\cdot\otimes\bfTheta_{\beta\delta})$.
\end{prop}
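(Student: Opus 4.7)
The plan is to treat the two assertions in parallel, since $\psi_\infty$ and $\psi_{+1}$ correspond to gluing a tight solid torus with different dividing slopes (the meridian for $\psi_\infty$, the slope $f+1$ for $\psi_{+1}$), and both fit in the general correspondence between contact gluings and triangle counts developed by Honda--Kazez--Mati\'c \cite{HKM2} and refined by Rasmussen \cite{JakeHKM}. First I would verify that the triple diagrams $(\Sigma,\ba,\bc,\bb)$ and $(\Sigma,\ba,\bd,\bb)$ are weakly admissible and that they represent the correct fillings: $\gamma_0$ is isotopic to a meridian of $K$ after gluing, while $\delta_0$ realises the slope $f+1$, so when combined with a $3$-handle they close the torus boundary off into $S^3$ or $S^3_{f+1}(K)$ with a single basepoint $z$ left over.

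Next, the strategy is to identify each gluing map with a chain-level map induced by a natural correspondence of intersection points. For each generator $\x\in SFC(\H_\beta)$ (which by construction does not occupy $\beta_0$) there is a canonical generator $\x'$ of the filled diagram obtained by adjoining the unique point of $\beta_0\cap\gamma_0$ (respectively $\beta_0\cap\delta_0$). The ``nearest-point'' triangle in $\pi_2(\x,\bfTheta_{\beta\gamma},\x')$ is the product of small Whitney triangles at each coordinate; arguing exactly as in the proof of Claim \ref{zerolemma} above, one shows that it is the only positive triangular domain of Maslov index $0$ contributing to $f_{\alpha\gamma\beta}(\x\otimes\bfTheta_{\beta\gamma})$. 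The constraint that the multiplicity at the regions adjacent to the base disc $D$ (or, after capping, to the basepoint $z$) vanish kills any domain that wraps around the new $1$-handle, while the standard perturbation argument on the pairs $(\beta_i,\gamma_i)$ and $(\beta_i,\delta_i)$ for $i\ge 1$ reduces the remaining components to their small-triangle summands. The analogous computation proves the statement for $\psi_{+1}$.

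The main obstacle will be identifying the $\Theta$-elements $\bfTheta_{\beta\gamma}$ and $\bfTheta_{\beta\delta}$ with the $EH$-invariants of the respective gluing layers, so that one can invoke Theorem \ref{EHmap} to match the chain map above with the gluing map on the $EH$ classes, and then propagate this identification across the whole of $SFH$. For $\psi_\infty$ the gluing layer is a standard tight neighbourhood of the Legendrian minus a Darboux ball: it is Stein fillable, and its contact class is the top-Maslov-degree generator of the corresponding sutured Floer group, which on the diagram side is precisely $\bfTheta_{\beta\gamma}$. For $\psi_{+1}$ the tight solid torus representing contact $+1$-surgery is likewise Stein fillable (being built from a Weinstein $+1$-handle), and the same identification yields $\bfTheta_{\beta\delta}$. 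Since the triangle count already equals the chain-level map sending $\x$ to $\x'$, which is itself the canonical representative of the contact handle-attachment gluing map in the sense of Rasmussen \cite{JakeHKM}, the two maps agree on all of $SFH$, completing the proof.
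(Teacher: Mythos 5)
This proposition is not proved in the paper at all: it is quoted verbatim from Rasmussen's (in preparation) work \cite{JakeHKM}, and the surrounding text only sets up the triple diagrams and $\Theta$-elements so that the statement makes sense. So there is no internal proof to compare against; judged on its own terms, however, your proposal has two genuine gaps.

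First, the ``nearest-point'' description of the triangle count cannot be right. The unique point of $\beta_0\cap\gamma_0$ is a $\beta$--$\gamma$ intersection point -- it is a coordinate of $\bfTheta_{\beta\gamma}$, not of any generator of the filled diagram $(\Sigma,\ba,\bc)$, whose generators are tuples of $\alpha$--$\gamma$ intersections; since $\gamma_0$ (a meridian) and the closure of $\beta_0$ (a framing curve) meet the $\alpha$-curves in completely different ways, there is no canonical bijection $\x\mapsto\x'$ to speak of. More decisively, a chain map that were a bijection on generators would induce an isomorphism on homology, whereas $\psi_\infty$ and $\psi_{+1}$ both have large kernels (Lemma \ref{stablevspsiinfty} and Proposition \ref{kills}: they kill the entire stable subspace). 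The domain analysis of Claim \ref{zerolemma} only constrains a triangular domain \emph{near the new handle and the base disc}, where the relevant curves are in special position; away from that region the $\alpha$- and $\beta$-curves of the original diagram for the knot complement are in general position, and the count is a genuine holomorphic-triangle count with no reason to be supported on small triangles.

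Second, identifying $\bfTheta_{\beta\gamma}$ and $\bfTheta_{\beta\delta}$ with the $EH$ classes of the gluing layers and invoking Theorem \ref{EHmap} only determines the maps on the subspace $\mathcal{EH}(S^3_{K,f})$ spanned by contact invariants, which the paper explicitly warns is in general a \emph{proper} subspace of $SFH(-S^3_{K,f})$ (see the remarks following Propositions \ref{ass1} and \ref{ass2}: whenever $\overline{tb}(K)<2\tau(K)-1$ the unstable generators are not contact classes). The step you describe as ``propagate this identification across the whole of $SFH$'' is exactly the content of Rasmussen's theorem and is where all the work lies; it cannot be deduced from the behaviour on contact classes alone.
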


Since we're working with $\F$ coefficients, proving the cohomological statement is equivalentt o proving the dual homological statement: Proposition \ref{ass2} can now be rephrased as:
\[F_{\alpha\delta\beta}(\cdot\otimes\bfTheta_{\beta\delta}) = F_{\alpha\gamma\beta}(F_{\alpha\delta\gamma}(\cdot\otimes\bf\Theta_{\gamma\delta})\otimes\bfTheta_{\beta\gamma}).\]

Let's call $\phi_{\gamma\beta}, \phi_{\delta\beta}, \phi_{\delta\gamma}$ the three triangle counts: namely, $\phi_{\gamma\beta} = f_{\alpha\gamma\beta}(\cdot\otimes\bfTheta_{\beta\gamma})$, and similarly for the other $\phi$-maps.

\begin{lemma}\label{assotheta}
$f_{\delta\gamma\beta}(\bfTheta_{\gamma\delta}\otimes\bfTheta_{\beta\gamma}) = \bfTheta_{\beta\delta}$.
\end{lemma}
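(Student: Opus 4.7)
The plan is to prove Lemma \ref{assotheta} by a direct holomorphic triangle count in the triple arc diagram $\H_{\delta\gamma\beta}$, in complete analogy with Claim \ref{zerolemma}. The setup is identical in spirit: the three sets of attaching curves $\bb$, $\bc$, $\bd$ agree outside a small neighbourhood of the added 1-handle, and for each index $i>0$ the curves $\beta_i,\gamma_i,\delta_i$ are pairwise small Hamiltonian perturbations of one another, so that the local picture there is the standard "little triangle" picture of \cite{OStriangles}. The only genuinely new feature occurs at index $0$, where $\beta_0,\gamma_0,\delta_0$ pairwise meet in exactly one point. So the first step is to exhibit the candidate domain $\overline{\D} = \D_0 + \sum_{i>0} \D_i$, where each $\D_i$ for $i>0$ is the standard small shaded triangle connecting the preferred intersection points, and $\D_0$ is the single small triangle bounded by the three segments of $\beta_0,\gamma_0,\delta_0$ cut out by the triple intersection $\{x_0,y_0,z_0\}$. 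This domain has Maslov index $0$, moduli space a single point, and evaluates to $\bfTheta_{\beta\delta}$; so it contributes the $\bfTheta_{\beta\delta}$ summand we want.

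The remaining task is uniqueness: I must show that no other positive domain $\D\in\pi_2(\bfTheta_{\gamma\delta},\bfTheta_{\beta\gamma},\bfTheta_{\beta\delta})$ with multiplicity zero at every region adjacent to $D$ has Maslov index $0$. I would argue exactly as in Claim \ref{zerolemma}: decompose $\D = \sum_i \D_i$ according to the regions spanned by $\beta_i,\gamma_i,\delta_i$. For $i>0$, the local analysis in Figure \ref{fassotheta0} applies verbatim and forces $\D_i$ to equal the small shaded triangle, with any alternative (running across the handle, or adding bigons between $y_i$ and $y'_i$) contributing more than $1/2$ to $\mu(\D)$ by Sarkar's formula \cite{Sarkar}. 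For $i=0$, a parallel analysis applies: each of the three arcs $\beta_0,\gamma_0,\delta_0$ is cut by the three intersection points $\{x_0,y_0,z_0\}$ into two segments, exactly one of which is not adjacent to the base disc on both sides; the orientation condition then forces $\de\D_0$ to be the boundary of the small triangle $\D_0$, and the admissible region for positive multiplicities is itself a single triangle.

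The key step, and the main obstacle, will be the local analysis near $i=0$: unlike indices $i>0$, this is a genuine triple intersection rather than a small perturbation, so one cannot directly quote the standard small-triangle picture. I will rely on the fact that in the configuration around the 1-handle (compare the rightmost diagram of Figure \ref{zeromap} with the neighbourhood drawn for $\bfTheta_{\beta\delta}$), three of the four complementary regions at each of the vertices $x_0,y_0,z_0$ touch the base disc $D$, so the multiplicity of $\D_0$ is forced to vanish outside of $\D_0$ itself. Any positive domain that deviates from $\D_0$ in this region must wrap around one of the arcs, and a direct application of Sarkar's formula then produces a strictly positive contribution to $\mu(\D)$.

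Putting the two pieces together yields $\D=\overline\D$ as the unique positive triangular domain of Maslov index $0$ projecting to $\bfTheta_{\beta\delta}$, so the triangle count gives exactly $f_{\delta\gamma\beta}(\bfTheta_{\gamma\delta}\otimes\bfTheta_{\beta\gamma}) = \bfTheta_{\beta\delta}$, as required. Once this is in hand, the associativity of triangle maps (Proposition \ref{associativity}) reduces the commutativity of the diagram in Proposition \ref{ass2} to $F_{\alpha\delta\beta}(\cdot\otimes\bfTheta_{\beta\delta}) = F_{\alpha\gamma\beta}(F_{\alpha\delta\gamma}(\cdot\otimes\bfTheta_{\gamma\delta})\otimes\bfTheta_{\beta\gamma})$, which is just the associativity identity applied to the quadruple diagram $(\Sigma,\ba,\bd,\bc,\bb)$ paired with $\bfTheta_{\gamma\delta}\otimes\bfTheta_{\beta\gamma}$.
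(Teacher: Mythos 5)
Your overall strategy — exhibit $\overline{\D}$, then argue uniqueness by decomposing $\D = \sum_i \D_i$ and pinning down each $\D_i$ — is the same as the paper's, and your reduction of Proposition \ref{ass2} to Lemma \ref{assotheta} via Proposition \ref{associativity} is correct. The problem is in your local analysis at index $0$.

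You claim that "three of the four complementary regions at each of the vertices $x_0,y_0,z_0$ touch the base disc $D$," and that the admissible region for positive multiplicities near $i=0$ is a single small triangle bounded by arcs of $\beta_0,\gamma_0,\delta_0$. This imports the geometry of Claim \ref{zerolemma}, where $\beta_0,\gamma_0,\delta_0$ are all \emph{arcs} (for the double-bypass picture), so all three touch the base disc $D$ and the small-triangle picture really does hold at every vertex. But here the situation is different: $\H_\gamma$ represents $-S^3(1)$ and $\H_\delta$ represents $-S^3_{f+1}(K)(1)$, both of which are sutured manifolds with sphere boundary, so their arc diagrams have \emph{no} arcs. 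Thus $\gamma_0$ and $\delta_0$ are closed curves (surgery/filling slopes), and only $\beta_0$ is an arc. At the vertex $x_0 = \gamma_0 \cap \delta_0$, which lies on no arc, the base disc $D$ need not be adjacent to any of the four complementary regions, so your constraint simply does not apply there, and the uniqueness argument for $\D_0$ breaks. The relevant domain at $i=0$ is in fact the ``bigger'' surgery-type triangle, not a small triangle cut out by a triple intersection. The paper's actual argument instead uses only the fact that $D$ lies on $\beta_0$ to force the segment $\de\D_0\cap\beta_0$, then uses the adjacent large region touching the basepoint to force multiplicities $0$ and $1$ across $\beta_0$, and from there recovers all of $\D_0$; this is a different and more careful chain of constraints than the three-arc picture you quoted. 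You would need to redo the $i=0$ case along those lines rather than quoting the Claim \ref{zerolemma} configuration.

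Two minor points: your reference to Figure \ref{zeromap} for the $i=0$ neighbourhood should be to Figure \ref{fassotheta}, and your description of $\D_0$ as ``the single small triangle bounded by the three segments\dots'' does not match the surgery geometry for the reasons above.
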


\begin{proof}
Let's consider Figure \ref{fassotheta}: there are small triangles in the region spanned by $\beta_i$ during the Hamiltonian isotopy that brings $\beta_i$ to $\gamma_i$ and $\delta_i$, as shown in the top part of the figure; there's also a ``bigger'' triangle, shown in the bottom part of the figure, around the three curves $\beta_0,\gamma_0, \delta_0$ involved in the surgeries: as before, the domain $\overline{\D}$ obtained by summing these triangular regions gives the summand $\bfTheta_{\beta\delta}$.

We claim that there are no other positive domains of Maslov index 0 in the sum $f_{\beta\gamma\delta}(\bfTheta_{\beta\gamma}\otimes\bfTheta_{\gamma\delta})$. Suppose that $\D$ is one of these triangular domains.

\begin{figure}[h!]
\begin{center}
\includegraphics[scale=.75]{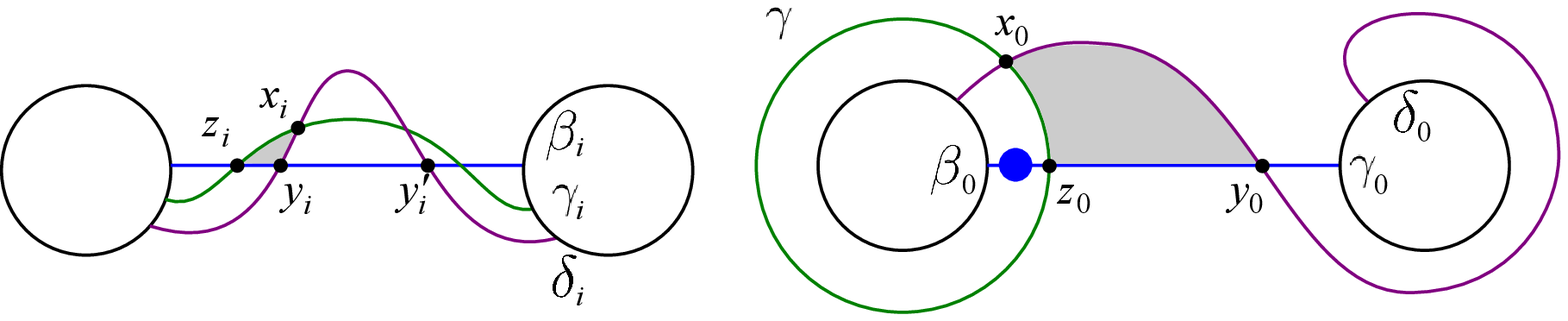}
\end{center}
\caption{The triple arc diagram $(\Sigma,\bd,\bc,\bb)$ of Lemma \ref{assotheta}.}\label{fassotheta}
\end{figure}

\begin{claim}
If $\D$ is as above, $\D = \overline{\D}$.
\end{claim}

As before, we'll call $z_i := (\Theta_{\beta\gamma})_i, x_i:=(\Theta_{\gamma\delta})_i, y_i:=(\Theta_{\beta\delta})_i$, and, when $i>0$, let $y'_i$ be the other intersection point of $\beta_i$ and $\delta_i$.

\begin{proof}
The situation is very similar to the situation in the proof of Lemma \ref{zerolemma}: for each index $i$, $\beta_i$ intersects $\gamma_i$, $\delta_i$ and no other curve. Moreover, for $i>0$, the boundary of every neighbourhood of the area spanned by $\beta_i$ under the isotopy lies in a region that touches the base disc $D$, so $\D$ can have positive multiplicities in this area only. In particular, $\D = \sum \D_i$, with $\D_i$ supported in the spanning region for all $i>0$, and $\D_0$ supported near $\beta_0$.

Let's consider what happens in the region containing $\beta_0, \gamma_0$ and $\delta_0$: here all pairwise intersections are fixed, and are $x_0,y_0$ and $z_0$. The base disc $D$ lies on $\beta_0$, so one of the two arcs into which $z_0$ and $y_0$ divide $\beta_0$ can't be part of $\de\D_0$. In particular, $\de\D_0\cap\beta_0$ has to coincide with $\de\overline{\D}\cap\beta_0$. Also, the big region (below this arc in the figure) touches the basepoint, so the multiplicity here has to be 0, and the multiplicity above it has to be 1 (we're crossing an arc in $\de\D_0$), and therefore $\D_0$ coincides with $\overline{\D}$ near $\beta_0$.

The situation around $\beta_i$ is exactly the same as in Lemma \ref{zerolemma}, and the same argument applies \emph{verbatim}, showing that $\D=\overline{\D}$.
\end{proof}

In particular, we have that the only summand in the triangle count is $\#\MS(\overline{\D})\cdot\bfTheta_{\beta\delta}$, concluding the proof of the lemma.
\end{proof}

Let's now get back to the proposition:
\[
F_{\alpha\delta\beta}(\cdot\otimes\bfTheta_{\delta\beta}) = F_{\alpha\delta\beta}(\cdot\otimes F_{\delta\gamma\beta}(\bfTheta_{\gamma\delta}\otimes\bfTheta_{\beta\gamma})) = F_{\alpha\delta\gamma}(F_{\alpha\gamma\delta}(\cdot\otimes\bf\Theta_{\gamma\delta})\otimes\bfTheta_{\gamma\delta}),
\]
which is exactly what we wanted to prove.
\end{proof}

\end{document}